\newtheorem{theorem}{Theorem}[section]
\newtheorem{lemma}[theorem]{Lemma}
\newtheorem{corollary}[theorem]{Corollary}
\newtheorem{proposition}[theorem]{Proposition}
\theoremstyle{definition}
\newtheorem{definition}[theorem]{Definition}
\newtheorem{remark}[theorem]{Remark}
\newcommand\supp{\mathrm{supp}}
\newcommand{\joinR}{\hspace{-.0pt}}
\newcommand{\RomanI}{\text{I}}
\newcommand{\RomanII}{\mbox{\RomanI\joinR\RomanI}}
\let\Re=\undefined\DeclareMathOperator{\Re}{Re}
\def\intavg{\,\ThisStyle{\ensurestackMath{
    \stackinset{c}{0\LMpt}{c}{0\LMpt}{\SavedStyle-}{\SavedStyle\phantom{\int}}}
    \setbox0=\hbox{$\SavedStyle\int\,$}\kern-\wd0}\int}
\numberwithin{equation}{section}
\begin{document}

\title[The Benjamin--Ono equation with quasi-periodic data]{The Benjamin--Ono equation with quasi-periodic data}
\author{Hagen Papenburg}
\maketitle

\begin{abstract}
We construct local solutions to the Benjamin--Ono equation for quasi-periodic initial data. 
The solution is unique among limits of smooth solutions and depends continuously on the data. 
Our result applies to a richer class of quasi-periodic functions than previous theorems. 
Central to the argument is an a-priori estimate, the proof of which utilizes Strichartz estimates for quasi-periodic functions obtained recently via decoupling, and a quasi-periodic extension of Tao's gauge transform. 
As a byproduct of our method, we also establish new local wellposedness results in certain anisotropic Sobolev spaces.
\end{abstract}

\section{introduction}
This paper is concerned with the Cauchy problem for the
Benjamin--Ono equation with quasi-periodic data $u_0$:
\begin{equation}\label{BOequation1}
    u_t+Hu_{xx}=\partial_x(u^2), \quad u(0)=u_0.
\end{equation}
Here $H$ is the Hilbert transform, see (\ref{Hilberttransform}).
The Benjamin--Ono equation is an effective model for internal waves in stratified fluids of large depth \cite{Benjamin_1967}. 
The function $u(x,t)$ represents the vertical displacement of the interface between two fluids of differing densities, 
known as the \textit{pycnocline}. Internal waves explain phenomena such as \textit{dead water}, observed for example in fjords, when a ship's propulsion produces internal waves between a fresh surface water layer and a saltier, denser bottom layer, and those waves in turn affect the ship's maneuverability. We refer the reader to~\cite{BONA2008538},~\cite[Chapter 3]{MR4400881}, \cite[Chapter 5]{alma9959979203606533} for a discussion of the modeling aspect of the Benjamin--Ono equation. \\  

The Benjamin--Ono equation has been studied extensively for initial data in Sobolev spaces on the line $\mathbb{R}$ and on the circle $\mathbb{T}=\mathbb{R}/\mathbb{Z}$. It is of physical interest to consider initial data that is neither periodic nor spatially decaying. One step in that direction is to investigate quasi-periodic data. There are few results for initial data that is not a local perturbation of either periodic or decaying data. \\ 

A function $f:\mathbb{R} \rightarrow \mathbb{R}$ is called \textit{quasi-periodic} with two-dimensional frequency module if it can be expressed in the form $f(t)=g(t\omega+\mathbb{Z}^2)$ for a function $g:\mathbb{R}^2/\mathbb{Z}^2 \rightarrow \mathbb{R}$ and a frequency vector $\omega = (\omega_1,\omega_2) \in \mathbb{R}^2$ with incommensurable entries. Such a function admits a Fourier representation $f(t)=\sum_{n \in \mathbb{Z}^2} \hat{f}(n) e^{it(\omega\cdot n)},$
where the quasi-periodic Fourier coefficients $\hat{f}(n)$ agree with the classical periodic Fourier coefficients of the function $g:\mathbb{R}^2/\mathbb{Z}^2 \rightarrow \mathbb{R}$, see Section \ref{sectionnotation}.
By a slight abuse of notation we will often identify a quasi-periodic function $f$ with its representative $g$ on the torus. Taking this perspective, we denote by $\partial_x$ the directional derivative on the two-dimensional torus in the $\omega$-direction in order to match the notation in (\ref{BOequation1}). Equivalently, this is the Fourier multiplier with symbol $\omega\cdot n$. 
Thus, we can regard the Benjamin-Ono equation with quasi-periodic data as an equation on the torus. 

From this viewpoint, the dispersion is degenerate; the dispersive term involves derivatives \textit{in only one direction} on the torus. This causes various traditional dispersive PDE tools to break down for this problem.
\\

We indicate the breakdown of some methods based on the complete integrability of the Benjamin-Ono equation.
The Benjamin--Ono equation has been known to have many attributes of a completely integrable system since the late 1970s:
The existence of infinitely many conserved quantities was first observed by Nakamura in \cite{MR550203}, the equation admits formulation as the compatibility condition of a Lax pair \cite{MR591320, MR3484397}, the dynamics has been identified as a member of an infinite hierarchy of commuting Hamiltonian flows \cite{MR637024}, and an inverse scattering apparatus is under development
\cite{MR1073870, MR686248, MR4275336, MR1627325, MR3738309}.

The complete integrability of the Benjamin--Ono equation plays a fundamental role for the wellposedness theories on the line and circle; the sharp wellposedness result in the hierarchy of Sobolev spaces on the circle has been obtained using a Birkhoff normal form transformation \cite{MR4652410}. On the line, the sharp result has been obtained using the method of commuting flows \cite{MR4743514}. Both methods exploit the complete integrability of the equation.

There are still infinitely many conserved quantities in the quasi-periodic setting, the first few being
\begin{equation}\label{conserved1}
    I_1[u]=\int_{\mathbb{T}^2}u(x_1,x_2) \ dx_2dx_2 = \lim_{M \rightarrow \infty} \frac{1}{2M} \int_{-M}^M u(\omega x) \ dx
\end{equation}
\begin{equation}
    I_2[u]= \int_{\mathbb{T}^2} u^2 = \lim_{M \rightarrow \infty} \frac{1}{2M} \int_{-M}^M u^2(\omega x) \ dx
\end{equation}
\begin{equation}
   I_3[u]=\int_{\mathbb{T}^2} \frac{1}{2}uH\partial_xu + \frac{1}{3}u^3 = \lim_{M \rightarrow \infty} \frac{1}{2M} \int_{-M}^M \big(\frac{1}{2}uH\partial_xu + \frac{1}{3}u^3\big)(\omega x) dx 
\end{equation}
\begin{equation}\label{conserved4}
I_4[u]= \int_{\mathbb{T}^2} u^4 + 6u^2Hu_x - 6[\partial_xu]^2 = \lim_{M \rightarrow \infty} \frac{1}{2M}\int_{-M}^M (u^4 + 6u^2Hu_x - 6[\partial_xu]^2)(\omega x)dx.
\end{equation}
Conservation of the mean $I_1[u]$ is immediate as the right-hand side of the Benjamin--Ono equation is a complete derivative. Conservation of the $L^2$-norm $I_2[u]$ follows from a short calculation using integration by parts and exploiting anti-selfadjointness of the Hilbert transform.
The quantity $I_3[u]$ is, in fact, the Hamiltonian generating the Benjamin--Ono dynamics. The first nontrivial conserved quantity is the higher-order expression $I_4[u]$. In the periodic or spatially decaying setting, the first three conserved quantities have physical interpretations as mass, momentum, and energy, respectively. In the quasi-periodic setting, the mass, momentum and energy are generally infinite, and the conserved quantities (\ref{conserved1})-(\ref{conserved4}) represent averages of the corresponding densities. \\

However, in the quasi-periodic setting, these conserved quantities can no longer be used to control the Sobolev norms. The conserved quantities for quasi-periodic solutions (including (\ref{conserved1})-(\ref{conserved4})) only involve derivatives $\partial_x$ in the $\omega$-direction on the two-dimensional torus; they do not provide any control on joint derivatives $\nabla^su$.
See also the discussion in \cite[Section 5]{AmbroseAitzhan}, where it is shown by scaling considerations that a certain generalization of the Gagliardo-Nirenberg inequality to quasi-periodic functions can not hold; in the setting on the line or on the circle, the Gagliardo-Nirenberg inequality features prominently in the common argument deriving bounds on the Sobolev norms of solutions at half-integer levels by using the conserved quantities  \cite[Exercise 4.33]{Taolocalandglobal}. \\

The lack of control over Sobolev norms presents two major challenges for the study of the Benjamin--Ono equation with quasi-periodic data. First, local wellposedness results for Sobolev spaces of quasi-periodic functions do not automatically become global by iteration. Second, the method of commuting flows is not applicable in this setting due to the central role of a-priori bounds for that approach, for example in establishing equicontinuity properties \cite{MR4743514}.  \\

We now state our main theorem.
\begin{theorem}\label{maintheorem}
    Let $\sigma>7/8$, $\omega \in \mathbb{R}^2$ and denote $Y:=Y^{\sigma}_{\omega}$ as in Definition \ref{Yspace}. There exists an absolute constant $\rho_0$ such that for any initial data $u_0 \in B_{Y}(0,\rho_0)$ there exists a solution to the Benjamin--Ono equation in $C([0,1],H^{1+\tilde{\sigma},\tilde{\sigma}}_{\omega}) \cap L^4([0,1],V^{1,\infty}_{\omega})$ for $\tilde{\sigma}<\sigma$. Further, this solution is unique among limits of smooth solutions and the data-to-solution map is Lipschitz from $B_{Y}(0,\rho_0)$ equipped with the $L^2_{\omega}$-norm to $C([0,1],L^2_{\omega})$. The data-to-solution map is also continuous from $B_Y(0,\rho_0)$ equipped with the $Y$-norm to $C([0,1],H_{\omega}^{1+\tilde{\sigma}, \tilde{\sigma}})$.
\end{theorem}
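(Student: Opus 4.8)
The plan is to obtain the solution as a limit of smooth solutions, the limit being controlled by two a priori estimates. For data $u_0\in C^\infty(\mathbb{T}^2)$ the equation has a classical local solution by the usual energy method (the nonlinearity is a first-order term with smooth coefficient), and this solution persists on any time interval on which its $L^4_tV^{1,\infty}_\omega$ norm stays finite, by standard persistence of regularity; thus the theorem reduces entirely to quantitative control of smooth solutions on $[0,1]$ in terms of $\|u_0\|_Y$.

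The central estimate to prove is: there is an absolute $\rho_0>0$ such that every smooth solution with $\|u_0\|_Y\le\rho_0$ satisfies
\[
  \|u\|_{C([0,1],\,H^{1+\tilde\sigma,\tilde\sigma}_\omega)}+\|u\|_{L^4([0,1],\,V^{1,\infty}_\omega)}\ \lesssim\ \|u_0\|_Y
\]
for every $\tilde\sigma<\sigma$. To establish it I would: (i) conjugate the equation by the quasi-periodic analogue of Tao's gauge transform, $w=G[u]$, designed to remove the resonant, derivative-losing part of $\partial_x(u^2)$, so that $w$ solves an equation whose nonlinearity is amenable to multilinear estimates; (ii) close the estimate for $w$ by a bootstrap combining $L^2$-type energy estimates with the quasi-periodic Strichartz norms obtained via decoupling, using the smallness of $\|u_0\|_Y$ to absorb the quadratic and higher terms — the threshold $\sigma>7/8$ emerging from balancing the derivative count in the gauged nonlinearity against the available Strichartz gain; (iii) prove $G$ and $G^{-1}$ bi-Lipschitz on the relevant balls and transfer the bound back to $u$. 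Step (iii) is where the space $Y$ of Definition~\ref{Yspace} enters: in the periodic theory the gauge involves $\partial_x^{-1}$, whereas on $\mathbb{T}^2$ the directional derivative $\partial_x$ has symbol $\omega\cdot n$ with arbitrarily small nonzero values, so $\partial_x^{-1}$ is unbounded on ordinary Sobolev spaces; $Y$ is precisely the space on which the gauge is well-defined and controlled, and taking $\|u_0\|_Y$ small handles the resulting small divisors uniformly.

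The second estimate is a low-regularity difference bound: for two smooth solutions $u,v$ with $Y$-data of size $\le\rho_0$,
\[
  \|u-v\|_{C([0,1],\,L^2_\omega)}\ \lesssim\ \|u_0-v_0\|_{L^2_\omega}.
\]
This one needs no gauge: the derivative in $\partial_x((u+v)(u-v))$ disappears upon symmetrizing the integration by parts in the $L^2$ energy identity for $u-v$, and Gronwall together with the $L^4_tV^{1,\infty}_\omega$ control of $u$ and $v$ from the first estimate finishes it. The theorem then follows by a routine limiting argument. Given $u_0\in B_Y(0,\rho_0)$, pick smooth $u_0^{(n)}\to u_0$ in $Y$; the solutions $u^{(n)}$ are uniformly bounded in $C([0,1],H^{1+\tilde\sigma,\tilde\sigma}_\omega)\cap L^4([0,1],V^{1,\infty}_\omega)$ by the first estimate and Cauchy in $C([0,1],L^2_\omega)$ by the second, hence converge to a solution $u$; interpolating the uniform bound at a regularity just below $\sigma$ against the $L^2$ convergence upgrades this to convergence in $C([0,1],H^{1+\tilde\sigma,\tilde\sigma}_\omega)$ for every $\tilde\sigma<\sigma$. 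Uniqueness among limits of smooth solutions and the $L^2_\omega$-Lipschitz dependence on the data are immediate from the difference estimate, and the continuity from the $Y$-topology into $C([0,1],H^{1+\tilde\sigma,\tilde\sigma}_\omega)$ is the same interpolation argument run along a sequence converging in $Y$.

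I expect the first a priori estimate, steps (i)--(iii) above, to be the main obstacle. Transcribing Tao's gauge transform to the quasi-periodic torus, showing it bi-Lipschitz on the correct function space, and then closing the gauged multilinear estimates using only the decoupling-based Strichartz inequalities — which are genuinely weaker than their Euclidean counterparts because the dispersion is degenerate, acting in a single direction on $\mathbb{T}^2$ — is exactly where a naive transcription of the line/circle theory fails, and it is what forces the introduction of the space $Y$, the decoupling input, and the restriction $\sigma>7/8$.
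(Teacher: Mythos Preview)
Your high-level strategy --- regularize the data, prove an a-priori bound via the gauge transform and Strichartz, prove an $L^2$ difference bound via Gronwall, then pass to the limit with interpolation --- matches the paper's, and the second estimate together with the limiting argument is exactly right.

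The genuine gap is the sentence ``this solution persists on any time interval on which its $L^4_tV^{1,\infty}_\omega$ norm stays finite, by standard persistence of regularity.'' On the line or circle this is true because the $H^s$ energy estimate for Benjamin--Ono closes under control of $\|\partial_x u\|_{L^\infty}$. But here $u$ lives on $\mathbb{T}^2$ and the standard Kato--Ponce energy estimate for $\|u\|_{H^s(\mathbb{T}^2)}$ produces a commutator bounded by $\|\nabla u\|_{L^\infty}$, not $\|\partial_x u\|_{L^\infty}$; see equation~\eqref{symmetricSobolevnormgrowthcontrol} and Remark~\ref{remarksymmetricSobolevnormgrowthcontrol}. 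The a-priori bound from the gauge only controls the \emph{tangential} derivative $\partial_x u$, so it cannot be fed back into the $H^s(\mathbb{T}^2)$ local theory to iterate. This is precisely the second of the two difficulties flagged in the introduction, and it is not resolvable by a standard argument.

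The paper's fix is the content of Section~\ref{wellposednessanisotropicspace}: it introduces the anisotropic space $X^{s_1,s_2}_\omega$ (Definition~\ref{definitionX}) with $s_2<1$, proves local wellposedness there by an energy method using directional commutator estimates (Theorem~\ref{wellposednessinX}), and --- crucially --- shows that the $X$-norm growth is controlled by $\|\partial_x u\|_{L^\infty}+\|u\|_{L^\infty}$ alone (Proposition~\ref{normgrowthX}). The regularized solutions $u^N$ are then produced not as ``smooth solutions'' in $H^s(\mathbb{T}^2)$ but as $C([0,1],X)$ solutions obtained by iterating this anisotropic local theory with the a-priori bound. The space $X$ also contains the $\partial_x^{-1}u$ control needed to make sense of $F$ in the gauge transform when justifying the calculations of Section~\ref{sectionaprioriproposition}. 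Without this intermediate anisotropic wellposedness, the approximating sequence may not exist on a uniform time interval at all.
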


The following function spaces were used:
\begin{definition}\label{Yspace} Let $\sigma\geq0$, $\omega \in \mathbb{R}^2$. The space $Y^{\sigma}_{\omega}(\mathbb{T}^2)$ is defined as the closure of 
$C^{\infty}(\mathbb{T}^2)$ with respect to the norm \begin{equation}\label{Ynorm}\|u_0\|_Y := \|\langle \nabla \rangle^{\sigma}\langle \partial_x \rangle u_0\|_{L^2(\mathbb{T}^2)} + \|\langle \nabla \rangle^{\sigma}\partial_x^{-1} u_0\|_{L^2(\mathbb{T}^2)}.\end{equation}
Further, we define the associated space of quasi-periodic functions
$$Y^{\sigma}_{\omega}:= \{ x \mapsto f(x)=g(\omega x) \ | \ g \in Y^{\sigma}_{\omega}(\mathbb{T}^2)\} \ \text{ equipped with the norm } \|f\|_{Y^{\sigma}_{\omega}}:=\|g\|_{Y^{\sigma}_{\omega}(\mathbb{T}^2)}.$$
\end{definition}

\begin{definition}[Anisotropic Sobolev spaces]\label{anisotropicSobolev}
Let $p \in [1,\infty]$, $\omega \in \mathbb{R}^2$, $s_1,s_2 \geq 0$. The space $H_{\omega}^{s_1,s_2,p}(\mathbb{T}^2)$ is defined as the closure of $C^{\infty}(\mathbb{T}^2)$ with respect to the norm \begin{equation}\label{Hps1s2norm}\|u\|_{H_{\omega}^{s_1,s_2,p}(\mathbb{T}^2)}:=\|\langle \partial_x \rangle^{s_1} u \|_{L^p(\mathbb{T}^2)} + \|\langle \partial_y \rangle^{s_2} u \|_{L^p(\mathbb{T}^2)}.\end{equation}
    Further, we define the associated space of quasi-periodic functions
\begin{equation*}\begin{split}H_{\omega}^{s_1,s_2,p}:=\{ x \mapsto f(x)=g(\omega x)\  | \ g \in H^{s_1,s_2,p}(\mathbb{T}^2)\} \\ \text{ equipped with the norm } \|f\|_{H^{s_1,s_2,p}_{\omega}}:=\|g\|_{H^{s_1,s_2,p}_{\omega}(\mathbb{T}^2)}.\end{split}\end{equation*}
    In the special cases $s_2=0$ and $p=2$, we denote $$V_{\omega}^{s_1,p}:=H_{\omega}^{s_1,0,p} \quad \text{and} \quad H^{s_1,s_2}_{\omega}:=H^{s_1,s_2,2}_{\omega},$$ respectively.
    The associated norms are denoted accordingly.
    When $p=2$ and $s_1=s_2=:s$, the space $H_{\omega}^{s,s,p}$ reduces to the space $H_{\omega}^s$ consisting of functions of the form $x \mapsto f(x):=g(\omega x)$ for $g$ in the ordinary Sobolev space $H^s(\mathbb{T}^2)$.
\end{definition}

Here, $\partial_x^{-1}$ denotes the multiplier operator that outputs the mean-zero primitive with respect to differentiation in the $\omega$-direction for mean-zero functions; that is, the Fourier multiplier with symbol $\frac{1}{i(\omega\cdot n)}1_{n \neq 0}(n)$. The $\omega$-dependence of the space $Y_{\omega}^{\sigma}(\mathbb{T}^2)$ is implicit in $\partial_x^{-1}$ and $\langle \partial_x \rangle$.
We will often suppress sub- and superscripts and write $Y:=Y_{\omega}^{\sigma}$.
Further, denote by $\partial_y$ the derivative in the normal direction to $\omega$, that is, the multiplier operator on $\mathbb{T}^2$ with symbol $i(\omega^{\perp}\cdot n)$, where $\omega^{\perp}:=(\omega_2,-\omega_1)$. Inhomogeneous fractional derivatives $\langle \partial_x \rangle^{s_1}, \langle \partial_y \rangle^{s_2}, \langle \nabla \rangle^{\sigma}$ are defined accordingly. \\

Given that the equation (\ref{BOequation1}) features tangential derivatives $\partial_x$, it is natural to distinguish between tangential and normal derivatives, $\partial_x$ and $\partial_y$, when considering the regularity of a quasi-periodic function. 
This is captured by the notion of anisotropic Sobolev space (Definition \ref{anisotropicSobolev}).

 Provided that $s_1,s_2>\frac{1}{2}$ with $\frac{1}{s_1}+\frac{1}{s_2}<2$, there is an embedding $ H^{s_1,s_2}_{\omega}(\mathbb{T}^2) \hookrightarrow C(\mathbb{T}^2)$, see Lemma \ref{anisotropicproperties}. In particular, the associated quasi-periodic functions $x \mapsto g(x\omega)$ in $H^{s_1,s_2}_{\omega}$ are well-defined pointwise. 
The notion of solution in Theorem \ref{maintheorem} is in the distributional sense: we say that a function $u \in C([0,1],H_{\omega}^{1+\tilde{\sigma},\tilde{\sigma}})$ solves the Benjamin--Ono equation iff
$$\int_{[0,1]} \int_{\mathbb{R}} u\cdot \partial_t \psi \ dxdt = \int_{[0,1]} \int_{\mathbb{R}} Hu \cdot \partial_{xx}\psi + u^2 \cdot \partial_x\psi \ dx dt$$ holds for any $\psi \in C_c^{\infty}([0,1] \times \mathbb{R}).$ \\

For a frequency vector $\omega=(\omega_1,\omega_2)$ for which the ratio of its entries $\alpha:=\omega_1/\omega_2$ has Roth-Liouville irrationality measure $\mu(\alpha)=2$, we can embed $$H^s_{\omega} \hookrightarrow Y^{\sigma}_{\omega} \quad \textit{for} \ s>15/8 \quad \textit{and} \quad 7/8<\sigma<s-1.$$
These embeddings are discussed in Section \ref{Diophantinesection}.
For such frequency vectors, this result thus goes below the threshold in the known local wellposedness theory for the Benjamin--Ono equation in ordinary Sobolev spaces of quasi-periodic functions $H^s_{\omega}$ \cite{AmbroseAitzhan}, see the discussion below on prior work.
Almost all real numbers $\alpha$ have Liouville-Roth irrationality measure $2$ (!) In particular, this applies to all badly approximable numbers (cf. Remark \ref{badlyapproximable}). Combining the above embedding with our main theorem we formulate the following corollary. 

\begin{theorem}\label{maincorollary}
    Suppose $\omega=(\omega_1,\omega_2) \in \mathbb{R}^2$ is a frequency vector for which the ratio of its entries $\alpha:=\omega_1/\omega_2$ has Roth-Liouville irrationality measure $\mu(\alpha)=2$, and suppose $s>15/8$. There exists an absolute constant $\rho_0$ such that for any initial data $u_0 \in B_{H^s_{\omega}}(0,\rho_0)$ there exists a solution to the Benjamin-Ono equation in $C([0,1],H_{\omega}^{1+\tilde{\sigma},\tilde{\sigma}}) \cap L^4([0,1],V^{1,\infty}_{\omega})$ for any $\tilde{\sigma}<s-1$. Further, this solution is unique among limits of smooth solutions and the data-to-solution map is Lipschitz from $B_{H^s_{\omega}}(0,\rho_0)$ equipped with the $L^2_{\omega}$-norm to $C([0,1],L^2_{\omega})$.
\end{theorem}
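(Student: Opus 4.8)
The plan is to deduce Theorem~\ref{maincorollary} from Theorem~\ref{maintheorem} together with the embedding $H^s_{\omega}\hookrightarrow Y^{\sigma}_{\omega}$ announced above and discussed in Section~\ref{Diophantinesection}. Fix $s>15/8$ and a target exponent $\tilde{\sigma}<s-1$. Because $s-1>7/8$, the interval $\big(\max\{7/8,\tilde{\sigma}\},\,s-1\big)$ is nonempty, and I would choose any $\sigma$ in it; then $7/8<\sigma<s-1$ and $\tilde{\sigma}<\sigma$, so Theorem~\ref{maintheorem} applied with this $\sigma$ produces solutions in $C([0,1],H^{1+\tilde{\sigma},\tilde{\sigma}}_{\omega})$ at the level we want. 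The Diophantine hypothesis $\mu(\alpha)=2$ enters only through the embedding: examining the two terms of the $Y$-norm, the crude bound $|\omega\cdot n|\lesssim_{\omega}\langle n\rangle$ controls $\langle\nabla\rangle^{\sigma}\langle\partial_x\rangle$ by $\langle\nabla\rangle^{\sigma+1}\le\langle\nabla\rangle^{s}$, while the small-divisor estimate $|\omega\cdot n|\gtrsim_{\alpha,\eps}\langle n\rangle^{-1-\eps}$, valid for every $\eps>0$ precisely when $\mu(\alpha)=2$, controls $\langle\nabla\rangle^{\sigma}\partial_x^{-1}$ by $\langle\nabla\rangle^{\sigma+1+\eps}\le\langle\nabla\rangle^{s}$ once $\eps\le s-1-\sigma$. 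This yields a constant $C=C(\omega,s,\sigma)$ with $\|u_0\|_{Y^{\sigma}_{\omega}}\le C\|u_0\|_{H^s_{\omega}}$.

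Letting $\rho_0^{\mathrm{BO}}$ denote the absolute constant of Theorem~\ref{maintheorem}, I would then take $\rho_0:=\rho_0^{\mathrm{BO}}/C$, so that $B_{H^s_{\omega}}(0,\rho_0)\subseteq B_{Y^{\sigma}_{\omega}}(0,\rho_0^{\mathrm{BO}})$. Theorem~\ref{maintheorem} then applies to every $u_0$ in this ball, yielding a solution in $C([0,1],H^{1+\tilde{\sigma},\tilde{\sigma}}_{\omega})\cap L^4([0,1],V^{1,\infty}_{\omega})$ that is unique among limits of smooth solutions. The Lipschitz claim is inherited by restriction: Theorem~\ref{maintheorem} gives that the data-to-solution map is Lipschitz from $\big(B_{Y^{\sigma}_{\omega}}(0,\rho_0^{\mathrm{BO}}),\,\|\cdot\|_{L^2_{\omega}}\big)$ into $C([0,1],L^2_{\omega})$, and on the subset $B_{H^s_{\omega}}(0,\rho_0)$ the inherited metric is precisely the $L^2_{\omega}$-distance, so the restricted map is again Lipschitz into $C([0,1],L^2_{\omega})$.

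Granting the embedding --- the one substantive external input --- the remainder is bookkeeping, with a single point deserving care: the constant $C$, and hence the admissible radius $\rho_0$, depends on $\sigma$ and degenerates as $\sigma\uparrow s-1$, since the small-divisor constant $c_{\eps}$ degenerates as $\eps\downarrow 0$; correspondingly, a fixed $u_0\in H^s_{\omega}$ of small norm need not lie in $B_{Y^{\sigma}_{\omega}}(0,\rho_0^{\mathrm{BO}})$ for $\sigma$ near $s-1$. When $\alpha$ is badly approximable one may take $\eps=0$, hence $\sigma=s-1$ is admissible in the embedding with a uniform constant, and the scheme above already delivers a $\tilde{\sigma}$-independent radius valid for all $\tilde{\sigma}<s-1$. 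For a general frequency with $\mu(\alpha)=2$ the scheme yields, for each fixed $\tilde{\sigma}<s-1$, a radius $\rho_0=\rho_0(\omega,s,\tilde{\sigma})$; to obtain a single $\tilde{\sigma}$-independent radius I would run Theorem~\ref{maintheorem} once at a fixed low value of $\sigma$ and then upgrade the resulting solution to $C([0,1],H^{1+\tilde{\sigma},\tilde{\sigma}}_{\omega})$ for each $\tilde{\sigma}<s-1$ by persistence of regularity, exploiting that $H^s_{\omega}$ embeds into every target space $H^{1+\tilde{\sigma},\tilde{\sigma}}_{\omega}$ and that one has a-priori estimates at each such level. I expect this persistence step --- propagating the extra tangential and normal regularity of the data through the flow --- to be the only part requiring more than a direct appeal to Theorem~\ref{maintheorem}.
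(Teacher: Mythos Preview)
Your approach matches the paper's exactly: combine the embedding $H^s_\omega \hookrightarrow Y^\sigma_\omega$ from Lemma~\ref{embeddingirrationalitymeasure} (case $\mu(\alpha)=2$) with Theorem~\ref{maintheorem}; the paper's entire proof is the single line ``Combining this in the case $\mu(\alpha)=2$ with Theorem~\ref{maintheorem}, yields Theorem~\ref{maincorollary}.'' You are in fact more careful than the paper in flagging that the embedding constant degenerates as $\sigma\uparrow s-1$, and your proposed resolution---fix one $\sigma_0\in(7/8,s-1)$ to determine $\rho_0$, then upgrade the solution to $C([0,1],H^{1+\tilde\sigma,\tilde\sigma}_\omega)$ for each $\tilde\sigma<s-1$ via the Gronwall norm-growth bound (Lemma~\ref{Hshnormgrowth}, exactly as in \eqref{secondaprioribound}) applied to the approximating sequence $u^N$---is precisely what the paper's machinery supplies.
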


For a detailed account of the history of work on wellposedness of the Benjamin--Ono equation on the line and the circle, we refer the reader to \cite{MR4743514, MR4400881}. Here, we shall only mention three aspects as relevant to our paper:

First, early results on wellposedness relied on energy method arguments; see for example \cite{MR1044731, MR385355}. Such techniques carry over to the quasi-periodic setting, as was demonstrated recently in \cite{AmbroseAitzhan}. There, it is shown that the Cauchy problem for the Benjamin--Ono equation is locally wellposed in the quasi-periodic Sobolev space $H^s_{\omega}$ for $s>2$ (the space consisting of functions of the form $x \mapsto g(x\omega)$ for $g \in H^s(\mathbb{T}^2)$, with  $\omega \in \mathbb{R}^2$ being a fixed frequency vector, see Definition \ref{anisotropicSobolev}). 
We also mention the paper \cite{zhao2024localwellposednessdispersiveequations}, which carries out an energy method argument for a different space (termed uniform local Sobolev space by the author) which includes some quasi-periodic functions. 

Second, it is known that the data-to-solution map for the Benjamin--Ono equation fails to be uniformly continuous on any neighbourhood of the origin \cite{MR2172940}. This rules out constructing solutions directly via contraction mapping, as such approaches would automatically yield analytic dependence of solutions on data. See also \cite{MR1885293} for earlier results in that direction. 

Third, the paper \cite{MR2052470} introducing a gauge transform was very influential in the study of Benjamin--Ono. For example, before the method of commuting flows, the record for wellposedness on the line was long held by \cite{MR2291918}, which combines the gauge transform with $X^{s,b}$-space techniques. The gauge transform from \cite{MR2052470} is also a key ingredient in this paper. \\

We turn to the Cauchy problem with quasi-periodic data.
Besides the already mentioned works \cite{AmbroseAitzhan, zhao2024localwellposednessdispersiveequations}, the result in \cite{MR4936333} is also applicable to the Benjamin--Ono equation and considers a class of analytic quasi-periodic data. More generally, dispersive equations with quasi-periodic data have recently received increased attention. This is motivated in part by a conjecture made by Deift \cite{MR2411922, MR3622647}. While this conjecture in its strongest interpretation has been disproven \cite{MR4757532}, many questions remain open; the wellposedness theory for quasi-periodic data is not nearly as well understood as in the setting on the real line or on the circle.

For example, on the line or circle, the question of global wellposedness of the KdV equation, which might be the best studied dispersive equation and can serve as a benchmark for progress, is completely settled for initial data in the hierarchy of Sobolev spaces by the method of commuting flows \cite{MR3990604}. This relies of course on the complete integrability structure of KdV. 
For quasi-periodic data, global wellposedness is (to the best of the author's knowledge) only known for initial data in a comparatively restrictive class (which includes quasi-periodic data with small exponentially decaying Fourier coefficients and with a frequency vector satisfying a Diophantine condition) \cite{MR3859361, MR3486173}. The proof relies on spectral properties of the Schr\"odinger operator associated with the initial data.\footnote{For this class of data the Deift conjecture has been answered affirmatively \cite{MR3859361}}.

Given the additional complexity in the quasi-periodic setting, there has been considerable activity in studying completely integrable dispersive equations with quasi-periodic data. Besides the already mentioned works, the KdV equation with quasi-periodic data is also considered in \cite{MR3023416}, where a local result is obtained by $X^{s,b}$-space techniques. The nonlinear Schr\"odinger equation (NLS) and the derivative NLS with quasi-periodic data are addressed in \cite{damanik2024existenceuniquenessasymptoticdynamics, MR3328142, MR4918730, xu2024weaklynonlinearschrodingerequation}, \cite{DamanikLiFeiNLSII, MR4936333}, respectively. Other dispersive equations with quasi-periodic data, including general models, are considered in \cite{MR4751185, MR4668338, MR4936333, zhao2024localwellposednessdispersiveequations}. The method in \cite{AmbroseAitzhan} also applies to more general models. \\

Our result goes, as already mentioned, below the threshold in the known local wellposedness theory for the Benjamin--Ono equation in ordinary Sobolev spaces of quasi-periodic functions $H^s_{\omega}$ \cite{AmbroseAitzhan}. The threshold $s>2$ obtainable by the energy method approach in \cite{AmbroseAitzhan} is dictated by the need to control the quantity $\|\partial_xu\|_{L^{\infty}(\mathbb{T}^2)}$ by $\|u\|_{H^s(\mathbb{T}^2)}$ via Sobolev embedding on the two-dimensional torus.

In this paper, we get an a-priori bound on $\|\partial_xu\|_{L^1([0,1],L^{\infty}_{\omega})}$ on the uniform time interval $[0,1]$ for sufficiently regular solutions --- provided the initial data is small in the low regularity norm $\|\cdot\|_Y$! The predicate `sufficiently regular' is specified by requiring the solution to be a continuous map into a certain space $X=X_{\omega}^{s_1,s_2}$ (for an appropriate choice of $s_1,s_2$), see Definition \ref{definitionX} below.

\begin{restatable}[a-priori bound]{theorem}{aprioriformaltheorem}\label{aprioriformal}
    There exist an absolute constant $\rho_0>0$ such that the following holds: For every $0<\rho<\rho_0$ there exists $M(\rho)>0$ such that for any $0<T\leq 1$, any solution $u\in C([0,T],X)$ to the Benjamin--Ono equation with initial data satisfying $\|u(0)\|_{Y} \leq \rho$ obeys the a-priori estimate \begin{equation}\label{aprioriformalequation}\|u\|_{L^4([0,T],L^{\infty}(\mathbb{T}^2))}+\|\partial_xu\|_{L^4([0,T],L^{\infty}(\mathbb{T}^2))} \leq M(\rho).\end{equation}
\end{restatable}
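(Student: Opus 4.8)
The plan is to set up a bootstrap (continuity) argument on the quantity $N(T):=\|u\|_{L^4([0,T],V^{1,\infty}_\omega)}$, exploiting that the initial data is small in the \emph{low}-regularity norm $\|\cdot\|_Y$ while $u\in C([0,T],X)$ guarantees that $N(T)$ is finite and depends continuously on $T$ with $N(0)=0$. The heart of the matter is an estimate of the schematic form
\begin{equation*}
N(T)\ \lesssim\ \|u(0)\|_{Y}\,+\,\|u(0)\|_Y\,N(T)\,+\,N(T)^2,
\end{equation*}
valid uniformly for $0<T\le 1$, from which a standard continuity/bootstrap argument yields $N(T)\le M(\rho)$ once $\rho<\rho_0$ is small enough; the $L^4_t L^\infty_x$ bound on $u$ itself (without the derivative) then follows a fortiori, or is carried along in the same estimate.

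\emph{Key steps, in order.} First I would introduce Tao's gauge transform in its quasi-periodic incarnation (the ``quasi-periodic extension of Tao's gauge transform'' advertised in the abstract): write $w=P_{+{\rm hi}}(e^{-iF}u)$ where $F$ is a primitive of $u$ in the $\omega$-direction (this is where $\partial_x^{-1}u$ and hence the second term in the $Y$-norm enters, and why smallness of $\|\langle\nabla\rangle^\sigma\partial_x^{-1}u\|_{L^2}$ is needed so that $e^{\pm iF}$ is controlled in suitable norms). The gauge transform removes the derivative-in-the-nonlinearity obstruction, turning the worst interaction term $\partial_x(u^2)$ into terms that are either of lower order or amenable to the dispersive estimate. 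Second, I would apply the Strichartz estimates for quasi-periodic functions obtained via decoupling — these are the quantitative input that the paper says is ``obtained recently via decoupling'' — to the Duhamel formula for $w$, gaining the needed $L^4_t L^\infty_x$-type control at the regularity level $V^{1,\infty}_\omega$; the choice of the exponent $4$ and the threshold $7/8$ (so that $\sigma>7/8$, giving $1+\sigma>15/8$, matching the $X$-regularity and the decoupling loss) is dictated here. Third, I would estimate the nonlinear/commutator terms produced by the gauge transform: these split into (a) terms controlled by $\|u(0)\|_Y$ times $N(T)$, using that the conserved $L^2$-mass and the $Y$-norm control the low-frequency/antiderivative pieces uniformly in time, and (b) genuinely quadratic terms in $w$ controlled by $N(T)^2$ via Hölder in time on the short interval $[0,T]\subseteq[0,1]$ and the Strichartz norms. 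Fourth, I would undo the gauge transform, showing $\|u\|_{L^4_tV^{1,\infty}_\omega}\lesssim \|w\|_{L^4_tV^{1,\infty}_\omega}+(\text{lower order})$, again using boundedness of $e^{\pm iF}$ and the $Y$-smallness; combining gives the displayed inequality, and the bootstrap closes.

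\emph{Main obstacle.} The crux — and the step I expect to be hardest — is making the quasi-periodic gauge transform \emph{quantitatively} compatible with the decoupling-based Strichartz estimates: on the real line or circle the gauge transform is by now routine, but here $F=\partial_x^{-1}u$ lives on the torus with a degenerate (one-directional) symbol, so $e^{\pm iF}$ need not be well-behaved in the anisotropic spaces unless the $Y$-norm (which is precisely tailored to include $\langle\nabla\rangle^\sigma\partial_x^{-1}u$) is small, and even then one must control paraproduct/commutator errors $[\,e^{-iF},\partial_x\,]$ and the ``bad'' frequency interactions where the normal frequency $\omega^\perp\!\cdot n$ is large but the tangential frequency $\omega\!\cdot n$ is small — exactly the regime where dispersion gives nothing. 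Taming those interactions, presumably by further frequency decomposition and absorbing them into the $\|u(0)\|_Y N(T)$ term, is where the bulk of the technical work will go; the remaining Strichartz and Hölder-in-time estimates are comparatively standard once the functional framework ($X$, $Y$, and the anisotropic spaces of Definition \ref{anisotropicSobolev}) is fixed.
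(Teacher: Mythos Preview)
Your broad outline---gauge transform, Strichartz, bootstrap, invert the gauge---matches the paper's architecture, but the mechanism you propose will not close as written. First a correction: the gauge is $w=P_{+hi}(e^{-iF})$ with $F=\partial_x^{-1}u$, not $P_{+hi}(e^{-iF}u)$; one then works with $q=w_{xx}$, which satisfies a schematic quadratic NLS. The genuine gap is your schematic inequality $N(T)\lesssim\|u_0\|_Y+\|u_0\|_Y\,N(T)+N(T)^2$. The quasi-periodic Strichartz estimate from decoupling \emph{loses derivatives}: after Sobolev embedding to $L^4L^\infty$ one pays $\langle\nabla\rangle^r$ with $r>3/4$. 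Applied to Duhamel for $q=w_{xx}$ this gives at best
\[
\|q\|_{S([0,T])}\ \lesssim\ \varepsilon^2\ +\ \|q\|_{L^4L^\infty}\,\|\langle\nabla\rangle^{r}q\|_{L^\infty L^2},
\]
and the last factor is \emph{not} controlled by $N(T)$ or by your bootstrap hypothesis; you cannot absorb the loss simply by taking $\sigma>7/8$. The paper's remedy is to run a \emph{separate} Gronwall energy argument on $\|\langle\nabla\rangle^{r+\eta}\partial_x^j w\|_{L^\infty L^2}$ directly from the $w$-equation (Section~\ref{sectionenergyestimatepartI}), and the bootstrap is accordingly two-headed: part~$\RomanI$ on $w,w_x,w_{xx}$ in Strichartz norms and part~$\RomanII$ on $F_{xx}$ in both $L^4L^p$ and $L^{4/3}H^{r+\eta}$.

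Tying these two tiers together forces a second ingredient you did not anticipate. After distributing $\langle\nabla\rangle^{r}$ onto the nonlinearity $\partial_{xx}P_{+hi}(P_-(F_{xx})\,w)$ one must move the $\partial_{xx}$ onto the high-frequency factor $w$ \emph{while placing that factor in $L^\infty$}, so that the factor carrying $\langle\nabla\rangle^{r}$ sits in $L^2$ and is fed by the energy estimate. This is Lemma~\ref{paraproducttwo}, a paraproduct bound of the form $\||\partial_x|^s P_{+hi}(P_-(f)g)\|_{L^2}\lesssim\|P_-f\|_{L^2}\||\partial_x|^s g\|_{L^\infty}$, which has no analogue in Tao's line argument and whose proof requires Coifman--Meyer theory plus bilinear transference (the Littlewood--Paley argument for the companion Lemma~\ref{paraproductone} fails at the $L^\infty$ endpoint). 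Your ``main obstacle'' paragraph correctly senses that frequency interactions are delicate, but locates the difficulty in the wrong place: the real issue is not $e^{\pm iF}$ misbehaving in anisotropic spaces, but the Strichartz derivative loss forcing a coupled Strichartz--energy bootstrap and the new paraproduct lemma needed to close it.
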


As already indicated, the gauge transform introduced in \cite{MR2052470} is of importance in this paper. A central step in the paper \cite{MR2052470} about Benjamin-Ono on the line is to establish a bound 
\begin{equation}
\|\partial_x u\|_{L^1([0,1],L^{\infty}(\mathbb{R}))} \lesssim 1
\end{equation}
on a uniform time interval $[0,1]$ for smooth solutions to the Benjamin--Ono equation provided the initial data is small in the low-regularity space $H^1(\mathbb{R})$.
The approach using the gauge transform encounters several difficulties in the quasi-periodic setting compared to the setting on the line, the following two chief among them:
\begin{itemize}
    \item The Strichartz estimate for the Schr\"odinger propagator incurs a derivative loss in the quasi-periodic setting. The following estimate was recently obtained via decoupling \cite{MR4918730}:
    \begin{equation}\label{Strichartz1}
\|e^{it\partial_{xx}}u_0\|_{L^4([0,T],L^4(\mathbb{T}^2))} \lesssim T^{1/8}\|\langle \nabla \rangle^{\kappa}u_0\|_{L^2(\mathbb{T}^2)}, \quad \text{for} \quad \kappa>1/4, \ T \in (0,1]
    \end{equation}
    \item There is no global high-regularity wellposedness theory in the quasi-periodic setting. If we take an approximating sequence of smooth initial data, the existence times of solutions provided by local theories \cite{AmbroseAitzhan, ,MR4936333, zhao2024localwellposednessdispersiveequations} might shrink to zero. 
\end{itemize}

We give a heuristic outline of the proof of Theorem \ref{aprioriformal}, thereby explaining how the first difficulty manifests and how it is overcome. This requires us to touch upon the argument in \cite{MR2052470}.  Afterwards, we address the second difficulty. \\

As mentioned in \cite{MR2052470}, the gauge transform is motivated by the Cole-Hopf transform which can be used to solve Burger's equation $u_t-u_{xx}+\frac{1}{2}(u^2)_x=0, u(0)=u_0$ on the real line. Burger's equation only differs from the Benjamin--Ono equation in the absence of the Hilbert transform. If $u$ solves Burger's equation, then the spatial primitive $v(t,x)=\int_{-\infty}^x u(t,y)dy$ satisfies a second order parabolic PDE with quadratic nonlinearity: $$v_t-v_{xx} + \frac{1}{2}v_x^2=0, \quad v(0,x)=\int_{- \infty}^x u_0(y)dy.$$ Making the Ansatz $w:=\phi(v)$ for some smooth function leads to the equation $$w_t=w_{xx} -[\phi''(v)+\phi'(v)]w_x^2, \quad w(0)=\phi(v(0)).$$
Taking $\phi(v)=\exp(-v)$, this becomes a linear equation: in fact, the wave equation, whose solution can be represented by an explicit formula. Inverting the transformation $\phi$ and differentiating gives a formula for the solution $u$ of Burger's equation. See \cite[Section 4.4.1]{MR1625845}.\\

Now suppose $u$ is a spatially quasi-periodic solution to the Benjamin-Ono equation with zero mean, and denote by $F=\partial_x^{-1}u$ the mean-zero primitive 
with respect to differentiation in the $\omega$-direction. Similar to the Cole-Hopf transform, taking the transformation $\phi$ to be $P_{+hi}(\exp(-i(\cdot)))$ 
leads to the gauge transformed function \begin{equation}w:=\phi(v)=P_{+hi}(e^{-iF})\end{equation} satisfying a simpler equation, see Section \ref{partIbootstrap1}, compare \cite{MR2052470}:
\begin{equation}\label{wequation}
    w_t - iw_{xx}=-2P_{+hi}(P_-(F_{xx})w) -2P_{+hi}(P_-(F_{xx})P_{lo}(e^{-iF})).
\end{equation}
Here, $P_{+hi}$ ($P_-$) denote certain Littlewood-Paley projections to positive high (negative) frequencies with respect to the $\omega$-direction, see Section \ref{sectionnotation}. Note that the assumption about vanishing mean of $u$ is not very restrictive, due to the Galilean symmetry $v(t,x)=u(t,x-ta)-a$ of the equation. 

While far from being linear, the nonlinearity in this equation is much better behaved compared to Benjamin--Ono.
There is no high-low frequency interaction with a derivative falling on the high frequency present anymore.
This is the kind of frequency interaction known to cause failure of uniform continuity of the data-to-solution map for Benjamin--Ono, see \cite{MR2172940}. The second term is low frequency due to all the projections involved and will be regarded as an error term.

While we no longer have an explicit formula for solving the $w$-equation, the equation can be used to gain control over the quantity $\|w_{xx}\|_{L^{4}([0,T],L^{\infty}(\mathbb{T}^2))}$ via bootstrapping as we outline now. This in turn will give us control over the quantity $\|u_x\|_{L^4([0,T], L^{\infty}(\mathbb{T}^2)} = \|F_{xx}\|_{L^4([0,T],L^{\infty}(\mathbb{T}^2))}$ 
appearing in the statement of Theorem \ref{aprioriformal} by `inverting' the gauge transform (Section \ref{partIIbootstrap}). \\

Differentiating equation (\ref{wequation}) twice in the tangential direction, it becomes a semi-linear equation for $q:=w_{xx}$ which schematically looks like a quadratic nonlinear Schr\"odinger equation without derivative in the nonlinearity
\begin{equation}\label{approximateequation}q_t -iq_{xx} \cong q^2.\end{equation}
Indeed, the term $-2P_{+hi}(P_-(F_{xx})P_{lo}(e^{-iF}))$ can be regarded as an error term and is shown to be small in the appropriate sense in Lemma \ref{BestimateforI}. Further, Lemmas \ref{paraproductone} and \ref{paraproducttwo} assert paraproduct estimates which allow us to let the derivatives $\partial_{xx}$ effectively fall on the second factor of $-2P_{+hi}(P_-(F_{xx})w)$. 
Finally, in Section \ref{partIIbootstrap} the gauge transform is `inverted' allowing us to regard the term $P_-(F_{xx}) = \overline{P_+(F_{xx})}$ as behaving similar to $w_{xx}=q$, in the sense that the latter controls the former in suitable function spaces. \\

In the setting on the real line, Strichartz estimates tell us that a solution to the quadratic nonlinear Schr\"odinger equation $q_t + iq_{xx} = q^2$ with small initial data $q_0 = O_{L^2(\mathbb{R})}(\varepsilon^2)$ satisfies 
\begin{align*}\|q\|_{S([0,T]\times\mathbb{R})} & \lesssim \|q_0\|_{L^2(\mathbb{R})}+\|q^2\|_{L^1([0,T],L^2(\mathbb{R}))} \\ & \leq \varepsilon^2+ T^{3/4}\|q\|_{L^4([0,T],L^{\infty}(\mathbb{R}))}\|q\|_{L^{\infty}([0,T],L^2(\mathbb{R}))} \lesssim \varepsilon^2+ T^{3/4}\|q\|_{S([0,T]\times \mathbb{R})}^2.\end{align*}
Here, the Strichartz norm is
$\|q\|_{S([0,T]\times\mathcal{D})}:=\|q\|_{L^{\infty}([0,T],L^2(\mathcal{D}))}+\|q\|_{L^4([0,T],L^{\infty}(\mathcal{D}))},$ for $\mathcal{D}=\mathbb{R}, \mathbb{T}^2$.
This allows one to bootstrap: $\|q\|_{S([0,T]\times \mathbb{R})} \lesssim \varepsilon \ \text{implies} \ \|q\|_{S([0,T]\times \mathbb{R})} \lesssim \varepsilon^2.$ In the quasi-periodic setting, however, the Strichartz estimate (\ref{Strichartz1}) incurs a derivative loss. At best, we can only say that 
\begin{align*}\|q\|_{S([0,T]\times \mathbb{T}^2)} & \lesssim \|q_0\|_{L^2(\mathbb{T}^2)}+\|\langle \nabla \rangle^rq^2\|_{L^1([0,T],L^2(\mathbb{T}^2))}\\ & \lesssim \varepsilon^2+ T^{3/4} \|q\|_{L^4([0,T],L^{\infty}(\mathbb{T}^2))}\|\langle \nabla \rangle^rq\|_{L^{\infty}([0,T],L^2(\mathbb{T}^2))}, \ r>3/4.\end{align*}
This follows from the Strichartz estimate (\ref{Strichartz1}) combined with Sobolev embedding $\|u\|_{L^{\infty}(\mathbb{T}^2)} \lesssim \|\langle \nabla \rangle^hu\|_{L^4(\mathbb{T}^2)}$ for $h>1/2$; 
see Section \ref{partIbootstrap1}. \\

In order to be able to bootstrap, \begin{equation*}\|q\|_{S([0,T]\times \mathbb{T}^2)} \lesssim \varepsilon \implies \|q\|_{S([0,T]\times \mathbb{T}^2)} \lesssim \varepsilon^2,\end{equation*} we will separately establish the bound $\|\langle \nabla \rangle^rq\|_{L^{\infty}([0,T],L^2(\mathbb{T}^2))} \lesssim \varepsilon$ by a Gronwall 
argument using equation (\ref{wequation}).  
This is done in Section \ref{sectionenergyestimatepartI}. \\

Of course, equation (\ref{approximateequation}) is only a schematic approximation. In actuality, one of the terms we need to control after applying the Strichartz estimate to equation (\ref{wequation}) twice differentiated, is
$$\| \partial_{xx}P_{+hi}((\langle \nabla \rangle^rP_-(F_{xx}))w)\|_{L^1([0,T],L^2(\mathbb{T}^2))} + \| \partial_{xx}P_{+hi}(P_-(F_{xx})(\langle \nabla \rangle^rw))\|_{L^1([0,T],L^2(\mathbb{T}^2))}$$
(see \eqref{eq19}, \eqref{eq20}, \eqref{eq21}). For both summands, we want to let the derivatives $\partial_{xx}$ fall on the higher frequency factor $w$ and put the factor which gets hit by $\langle \nabla\rangle^r$ into the norm $\|\cdot\|_{L^{\infty}([0,T],L^2(\mathbb{T}^2))}$ so that we can hope to control this term by a Gronwall argument, while the other factor goes into $\|\cdot\|_{L^1([0,T],L^{\infty}(\mathbb{T}^2))}$ and will be controlled by the boostrap hypothesis. To do so, we will establish two paraproduct estimates, Lemmas \ref{paraproductone} and \ref{paraproducttwo}.
The first paraproduct estimate is similar to a corresponding statement in the setting on the real line \cite[Lemma 3.2]{MR2052470}. The second paraproduct estimate, however, addresses the new challenge of putting the high-frequency factor, on which we want\hspace{-0.1em} the\hspace{-0.1em} derivatives\hspace{-0.1em} $\partial_{xx}$\hspace{-0.1em} to\hspace{-0.1em} fall, into $L^{\infty}(\mathbb{T}^2)$-norm.
This estimate (Lemma \ref{paraproducttwo}) is more delicate,~its proof relying on tools from paradifferential calculus (and transference techniques). See Section \ref{paraproductestimatessection}. \\

We return to the details of Theorem \ref{maintheorem}; 
Given arbitrary initial data in $B_Y(0,\rho_0)$, one would like to approximate by smooth initial data and then use the a-priori estimate on $\|\partial_xu\|_{L^4([0,T],L^{\infty}(\mathbb{T}^2))}$ in Theorem \ref{aprioriformal} together with a Gronwall argument to conclude that the sequence of emanating solutions existing on a uniform time interval is Cauchy in $C([0,1],L^2(\mathbb{T}^2))$.

In the setting on the line or on the circle, the functions $u$ and $w$ for regularized data exist a priori globally due to the high-regularity global wellposedness theory. In our setting, however, the solutions provided by the high regularity wellposedness theory in $H^s_{\omega}$ \cite{AmbroseAitzhan} exist only on a time interval depending on the size of the initial data in the \text{high}-regularity norm $H^{s}_{\omega}$, and the $H^s_{\omega}$-norm might blow up as we approximate the given initial data in $Y$-norm. 
We would like the solutions for regularized data to exist on a uniform time interval. \\

One might hope to overcome this issue by using the bound (\ref{aprioriformalequation}) to control the growth of the $H^{s}_{\omega}$-norm and iterate the local $H^{s}_{\omega}$ wellposedness theory. In analogy to a well-known estimate controlling the growth of the Sobolev norm for solutions to KdV on the line or circle \cite[equation (9.10)]{MR3308874}, there is the estimate \begin{equation}\label{symmetricSobolevnormgrowthcontrol}\|u(t)\|_{H^{s}_{\omega}} \leq \|u(0)\|_{H^s_{\omega}} \cdot \exp \bigg( c \int_0^t \|\nabla u(s)\|_{L^{\infty}} ds \bigg),\end{equation} obtained by a Gronwall argument involving the Kato-Ponce commutator estimate, see Remark \ref{remarksymmetricSobolevnormgrowthcontrol}. However, using this estimate to iterate the $H^{s}_{\omega}$-wellposedness theory would require control over $\|\langle \nabla \rangle u\|_{L^1([0,T],L^{\infty})}$ and not just $\|\partial_xu\|_{L^4([0,T],L^{\infty})}$. As we will see, controlling derivatives $\partial_y$ in the normal direction is hard. \\ 

Thus, for our purposes, the local wellposedness theory in $H^{s}_{\omega}$ is not quite adequate. We are looking for a space defined by a norm that is weak enough so that its growth can be controlled using only `tangential' derivatives $\partial_x$,
yet strong enough so that wellposedness in $X$ provides enough regularity to justify the calculations in the proof of Theorem \ref{aprioriformal}.
We discovered that the following function space (with appropriate choices of parameters $s_1,s_2$) does the job!

\begin{restatable}{definition}{Xspace}\label{definitionX}
Let $\omega \in \mathbb{R}^2$ and $s_1,s_2 \geq 0$. 
Define the space $X_{\omega}^{s_1,s_2}(\mathbb{T}^2)$ to be the closure of $C^{\infty}(\mathbb{T}^2)$ with respect to the norm \begin{equation*}\|u\|_X:= \|[\langle \partial_x \rangle^{s_1} + \langle \partial_y \rangle^{s_2}]u\|_{L^2} + \|\langle \partial_y \rangle^{s_2}\partial_x^{-1}u\|_{L^2}.\end{equation*} 
We define the associated space of quasi-periodic functions $$X=X^{s_1,s_2}_{\omega}:=\{x \mapsto f(x)=g(x\omega)  | \ g \in X_{\omega}^{s_1,s_2}(\mathbb{T}^2)\} \ \text{equipped with the norm} \ \|f\|_X:=\|g\|_{X_{\omega}^{s_1,s_2}(\mathbb{T}^2)}.$$
\end{restatable}
The parameters $\omega \in \mathbb{R}^2$, $s_1,s_2 \geq 0$ will be fixed, so that we often suppress subscripts and simply write $X$.

    \begin{restatable}{theorem}{Xwellposedness}\label{wellposednessinX}
        Let $\omega \in \mathbb{R}^2$, suppose $1/2<s_2<1$ and $s_1$ is sufficiently large depending on $s_2$. 
        The Benjamin--Ono equation is locally wellposed in the space $X=X_{\omega}^{s_1,s_2}$. 
   \end{restatable}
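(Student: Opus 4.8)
The plan is to run a standard energy-method argument for quasilinear dispersive equations, adapted to the anisotropic space $X$. The key point, anticipated in the discussion preceding the statement, is that the $X$-norm is engineered so that its growth along the flow is controlled using only tangential derivatives $\partial_x$ — which, unlike normal derivatives $\partial_y$, are the ones tied to the dispersion and the conservation laws. First I would establish the a-priori energy estimate: for a smooth solution $u$, differentiate $\|u(t)\|_X^2$ in time and use the equation \eqref{BOequation1}. The term coming from $H\partial_{xx}u$ is skew-adjoint (the relevant Fourier multiplier $\omega\cdot n$ commutes with $\langle\partial_x\rangle^{s_1}$, $\langle\partial_y\rangle^{s_2}$, $\partial_x^{-1}$ and the Hilbert transform), so it contributes nothing to the energy balance. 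The nonlinear term $\partial_x(u^2)$ is handled by a Kato–Ponce / commutator estimate in the two-dimensional torus setting: after moving one $\langle\partial_x\rangle^{s_1}$ or $\langle\partial_y\rangle^{s_2}$ onto $u^2$ and commuting, all top-order contributions are paired against a factor of at most one full derivative of $u$ in $L^\infty$, so one obtains $\tfrac{d}{dt}\|u\|_X^2 \lesssim (\|u\|_{L^\infty}+\|\partial_x u\|_{L^\infty}+\|u\|_{C^1})\|u\|_X^2$, and for $s_1,s_2$ large enough Sobolev embedding on $\mathbb{T}^2$ bounds the $L^\infty$-type quantities by $\|u\|_X$. (This is exactly the place where "$s_1$ sufficiently large depending on $s_2$" enters, together with the mixed-derivative embedding $H^{s_1,s_2}_\omega(\mathbb{T}^2)\hookrightarrow C(\mathbb{T}^2)$ from Lemma \ref{anisotropicproperties}; note $s_2>1/2$ gives that embedding, and we need one extra tangential derivative in $L^\infty$, which costs a little more of $s_1$.) The $\partial_x^{-1}$ piece of the norm is benign: since $\partial_x(u^2)$ is the $\partial_x$-derivative of something, $\partial_x^{-1}\partial_x(u^2)=u^2$ up to the mean, so that component closes with no derivative loss at all.

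Next I would upgrade the a-priori estimate to genuine wellposedness by the usual Bona–Smith–type scheme. Regularize the data, $u_0^{(n)}:=P_{\leq n}u_0\to u_0$ in $X$; for smooth data, short-time existence of smooth solutions can be quoted from the $H^s_\omega$ theory of \cite{AmbroseAitzhan} (with $s>2$, which smooth data certainly satisfies), and the energy estimate above then gives a uniform existence time $T=T(\|u_0\|_X)$ and a uniform bound $\sup_{[0,T]}\|u^{(n)}(t)\|_X\lesssim \|u_0\|_X$. For the difference of two such solutions $u^{(n)}-u^{(m)}$, which solves a linear equation with forcing $\partial_x((u^{(n)}+u^{(m)})(u^{(n)}-u^{(m)}))$, I would run the same energy computation \emph{at one derivative lower} — in a space like $X^{s_1-1,s_2-1}_\omega$ or simply in $L^2_\omega$ plus the $\partial_x^{-1}$ component — to get Lipschitz dependence at low regularity, then interpolate with the uniform high-regularity bound and use a Bona–Smith mollification argument to get convergence in $X$ itself and continuity (not merely weak continuity) of $t\mapsto u(t)\in X$. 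Uniqueness in $C([0,T],X)$ follows from the low-regularity difference estimate, and continuous dependence of the data-to-solution map follows by the same difference estimate combined with the mollification trick applied to two different data.

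The main obstacle I expect is the commutator estimate in the anisotropic setting: one needs a Kato–Ponce inequality for the \emph{mixed} operator $\langle\partial_x\rangle^{s_1}+\langle\partial_y\rangle^{s_2}$ (or, equivalently, separate Leibniz-rule estimates for $\langle\partial_x\rangle^{s_1}$ and $\langle\partial_y\rangle^{s_2}$ acting on a product) on $\mathbb{T}^2$, arranged so that the surviving top-order term never carries more than one derivative of one factor with the other in $L^\infty$, and in particular so that a \emph{normal} derivative $\partial_y$ is never forced into $L^\infty$ (controlling $\partial_y u$ in $L^\infty$ is precisely what the paper flags as hard). Because the worst frequency interaction — high tangential $\times$ low with a tangential derivative on the high factor — is handled by the skew-symmetry of the dispersive term rather than by the commutator, this should go through; the remaining interactions lose at most the "allowed" one derivative. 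A secondary technical point is checking that $\partial_x^{-1}$, defined only on mean-zero functions, interacts cleanly with the flow: the mean of $u$ is conserved (it is $I_1[u]$), so after subtracting it — or invoking the Galilean normalization to $I_1[u]=0$ mentioned in the introduction — the $\partial_x^{-1}$ component of the $X$-norm is well-defined along the solution, and its time derivative is controlled as noted above. These are routine once the anisotropic Leibniz estimate is in hand, so the crux is really that one inequality.
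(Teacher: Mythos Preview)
Your proposal is correct and follows the same energy-method strategy as the paper: the heart is the anisotropic commutator estimates (proved in the paper as Lemma~\ref{KatoPonceVegaQuasih} --- a directional Kato--Ponce for $\langle\partial_x\rangle^{s_1}$ and a Kenig--Ponce--Vega estimate for $\langle\partial_y\rangle^{s_2}$, both obtained by transference from the line), which indeed force only $\|\partial_x u\|_{L^\infty}$, never $\|\partial_y u\|_{L^\infty}$, into the energy balance; so the stray $\|u\|_{C^1}$ in your display should simply be dropped. The $\partial_x^{-1}$ component is handled exactly as you say, via the equation for $F=\partial_x^{-1}u$.

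The one implementation difference worth noting: rather than quoting \cite{AmbroseAitzhan} for smooth data and running Bona--Smith, the paper constructs approximants by a Galerkin truncation of the \emph{equation} (the projected system is a finite-dimensional Hamiltonian ODE with conserved $L^2$, hence global), combined with a \emph{separate} data-regularization parameter $\delta(n)\ll n$. This separate regularization is what closes the Cauchy estimate in the \emph{top} norm $X$ itself rather than an $\varepsilon$-weaker one, and it is also precisely where the upper restriction $s_2<1$ enters: the argument needs the norm-growth control (hence the KPV commutator, valid only for exponents $\le 1$) at a slightly higher normal regularity than $s_2$. Your Bona--Smith route should work as well, but would run into the same constraint for the same reason, and you would still need to iterate the \cite{AmbroseAitzhan} local theory using the $X$-energy bound (via $\|\partial_x u\|_{L^\infty}\lesssim\|u\|_X$ and the resulting blow-up alternative) to get the approximants onto a uniform time interval.
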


\begin{restatable}{theorem}{Hswellposedness}
\label{wellposednessinHs1s2}
    Let $\omega \in \mathbb{R}^2$, suppose $1/2<s_2<1$ and $s_1$ is sufficiently large depending on $s_2$. 
    The Benjamin--Ono equation is locally wellposed in the space $H^{s_1,s_2}_{\omega}$.
\end{restatable}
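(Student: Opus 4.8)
The strategy is to bootstrap from the wellposedness in $X$ (Theorem~\ref{wellposednessinX}), which has already been established, by comparing the two function spaces. First I would record the elementary relation between the norms: for $1/2<s_2<1$ and $s_1$ large, one has the continuous embeddings $X^{s_1,s_2}_\omega(\mathbb{T}^2) \hookrightarrow H^{s_1,s_2}_\omega(\mathbb{T}^2)$ — since $\|\langle\partial_x\rangle^{s_1}u\|_{L^2}+\|\langle\partial_y\rangle^{s_2}u\|_{L^2} \lesssim \|[\langle\partial_x\rangle^{s_1}+\langle\partial_y\rangle^{s_2}]u\|_{L^2}$ trivially — while conversely $H^{s_1,s_2}_\omega$ is strictly larger only because of the extra low-tangential-frequency primitive term $\|\langle\partial_y\rangle^{s_2}\partial_x^{-1}u\|_{L^2}$ in the $X$-norm. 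The point is that this primitive term is harmless: using the Galilean symmetry $v(t,x)=u(t,x-ta)-a$ of the Benjamin--Ono equation (already invoked in the excerpt) one may normalize to mean-zero data, and for mean-zero functions $\partial_x^{-1}$ is a bona fide Fourier multiplier. On the \emph{finitely many} low tangential frequencies $0<|\omega\cdot n|\le 1$, the operator $\langle\partial_y\rangle^{s_2}\partial_x^{-1}$ is bounded on $L^2$ only if those frequencies are quantitatively bounded away from zero — which is not automatic for arbitrary $\omega$. So the cleaner route is: the $X$-norm and the $H^{s_1,s_2}_\omega$-norm are in fact \emph{equivalent} on the subspace of functions whose tangential frequency support avoids a fixed neighborhood of $0$, and the flow essentially preserves such a frequency localization at low frequencies because the nonlinearity $\partial_x(u^2)$ carries a tangential derivative.

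Concretely, the steps I would carry out are: (i) reduce to mean-zero data by Galilean invariance; (ii) given $u_0\in H^{s_1,s_2}_\omega$ with mean zero, observe that the purely low-tangential-frequency part $P_{\mathrm{lo}}u_0$ (frequencies with $0<|\omega\cdot n|\le 1$) lies automatically in every Sobolev space and in $X$, since only finitely many Fourier modes are involved — so $u_0\in X^{s_1,s_2}_\omega$ after all, just with a constant depending on $\omega$ (through $\min\{|\omega\cdot n| : n\in\mathbb{Z}^2,\ \omega\cdot n\ne 0,\ |\omega\cdot n|\le 1\}$, which is positive once $\omega$ has incommensurable entries); (iii) apply Theorem~\ref{wellposednessinX} to get a solution $u\in C([0,T],X^{s_1,s_2}_\omega)$; (iv) since $X\hookrightarrow H^{s_1,s_2}_\omega$, this $u$ is the desired $H^{s_1,s_2}_\omega$-solution, and existence time, uniqueness among limits of smooth solutions, and continuous dependence all transfer, the data-to-solution map being continuous $H^{s_1,s_2}_\omega \to C([0,T],H^{s_1,s_2}_\omega)$ because on each ball the $X$- and $H^{s_1,s_2}_\omega$-norms are comparable by step (ii). For uniqueness and continuous dependence one also checks that the mean-zero reduction and its inverse are continuous operations on the relevant spaces, which is immediate.

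The main obstacle is step (ii): making the passage $H^{s_1,s_2}_\omega\to X^{s_1,s_2}_\omega$ genuinely quantitative and uniform on balls. The subtlety is that the constant relating the norms depends on how close the nonzero tangential frequencies $\omega\cdot n$ can come to $0$ within the unit ball of frequencies; for a fixed $\omega$ with incommensurable components this infimum over the finite set $\{n : 0<|\omega\cdot n|\le 1\}$ is a fixed positive number, so there is no real Diophantine issue here (unlike in the embeddings of Section~\ref{Diophantinesection}), but it must be stated carefully since the resulting local existence time and the Lipschitz constant of the flow will depend on $\omega$ through this quantity. I would isolate this as a short lemma: \emph{for $1/2<s_2<1$ and $s_1\ge 0$, the spaces $X^{s_1,s_2}_\omega(\mathbb{T}^2)$ and $H^{s_1,s_2}_\omega(\mathbb{T}^2)$ coincide with equivalent norms.} With that lemma in hand, Theorem~\ref{wellposednessinHs1s2} is a direct corollary of Theorem~\ref{wellposednessinX}, and no new analysis of the equation is required.
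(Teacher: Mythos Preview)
Your key lemma --- that $X^{s_1,s_2}_\omega$ and $H^{s_1,s_2}_\omega$ coincide with equivalent norms --- is false, and the error is in step (ii). The set $\{n\in\mathbb{Z}^2:0<|\omega\cdot n|\le 1\}$ is \emph{not} finite: it is an infinite strip of lattice points. Worse, by Dirichlet's approximation theorem there are infinitely many $n=(q,-p)$ with $|\omega\cdot n|=|\omega_2||\alpha q-p|<|\omega_2|/q$, where $\alpha=\omega_1/\omega_2$, so $\inf\{|\omega\cdot n|:n\neq 0\}=0$. For such a mode one has $|\omega^\perp\cdot n|\sim q$ while $|\omega\cdot n|\lesssim 1/q$, so the ratio of the $X$-weight $\langle\omega^\perp\cdot n\rangle^{s_2}/|\omega\cdot n|$ to the $H^{s_1,s_2}_\omega$-weight $\langle\omega\cdot n\rangle^{s_1}+\langle\omega^\perp\cdot n\rangle^{s_2}$ is $\sim q\to\infty$. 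Hence $X^{s_1,s_2}_\omega\subsetneq H^{s_1,s_2}_\omega$ strictly, and there is no $\omega$-dependent constant making the norms comparable. This is precisely the small-divisor phenomenon that forces Diophantine hypotheses in Section~\ref{Diophantinesection}; the paper's remark that $\|u\|_X\sim\|u\|_{H^{s_1,s_2}_\omega}+\|\partial_x^{-1}u\|_{H^{s_1,s_2}_\omega}$ already signals that the second summand is a genuine additional constraint.

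Because of this, you cannot deduce Theorem~\ref{wellposednessinHs1s2} from Theorem~\ref{wellposednessinX}. The paper in fact proceeds in the opposite logical order: Theorem~\ref{wellposednessinHs1s2} is proved directly by Galerkin approximation and energy arguments (Sections~\ref{regularization}--\ref{sectioncontinuitydatatosolution}), using the commutator estimates of Lemma~\ref{KatoPonceVegaQuasih} together with the key embedding $\|\partial_x u\|_{L^\infty}\lesssim\|u\|_{H^{s_1,s_2}_\omega}$ from Lemma~\ref{anisotropicproperties}, and only afterwards is Theorem~\ref{wellposednessinX} obtained as a variation by establishing an additional Cauchy bound on $\|\langle\partial_y\rangle^{s_2}\partial_x^{-1}u\|_{L^2}$ (Section~\ref{sectionwellposednessinX}). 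Any correct proof of Theorem~\ref{wellposednessinHs1s2} must confront the equation directly rather than pass through $X$.
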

    
    The notion of wellposedness here is the classical one due to Hadamard. Given $R>0$ there exist $T:=T(R)$ such that for any initial data $u_0 \in B_{X}(0,R)$ there exists a unique classical solution $u \in C([0,T],X)$ to the Benjamin--Ono equation. 
    Further, the data-to-solution map $B_{X}(0,R) \rightarrow C([0,T],X): u_0 \mapsto u$ is continuous. \\

We explain the motivation leading us to work with the space $X$. 
A key observation is that \begin{equation}\label{crucialestimate}\|\partial_xu\|_{L^{\infty}} \lesssim \|u\|_{H^{s_1,s_2}_{\omega}},\end{equation}
provided $s_2>1/2$ and $s_1>3s_2/(2s_2-1)$, see Lemma \ref{anisotropicproperties}.
This suggests that energy arguments could be employable to establish wellposedness in the space $H^{s_1,s_2}_{\omega}$. This is indeed the case, and we will prove Theorem \ref{wellposednessinHs1s2} by using Galerkin approximation and energy arguments. This is done in Section \ref{wellposednessanisotropicspace}.

Theorem \ref{wellposednessinX} is a variation of this result. The first part of the norm defining the space $X$ is precisely the anisotropic Sobolev norm. The second part of the norm tells us that the function $\partial_x^{-1}u =F$ featured in the gauge transform $w=P_{+hi}(e^{iF})$ also belongs to the anisotropic Sobolev space $H^{s_1,s_2}_{\omega}$. In fact, we can write $$\|u\|_{X_{\omega}^{s_1,s_2}} \sim \|u\|_{H^{s_1,s_2}_{\omega}} + \|\partial_x^{-1}u\|_{H^{s_1,s_2}_{\omega}}.$$
Compared to Theorem \ref{wellposednessinHs1s2}, the proof of Theorem \ref{wellposednessinX} essentially only requires us to obtain additional energy bounds on the second part $\|\langle \partial_y \rangle^{s_2}\partial_x^{-1}u\|_{L^2}$ of the norm. This can be done by using the dynamical equation for $F=\partial_{x}^{-1}u$ obtained from the Benjamin--Ono equation by applying the operator $\partial_x^{-1}$. See Section \ref{sectionwellposednessinX}. \\

We will show that continuity $u:[0,T] \rightarrow X$ provides enough regularity to justify the calculations in the proof of Theorem \ref{aprioriformal}, see Section \ref{sectionaprioriproposition}. 
For initial data satisfying $\|u_0\|_Y \leq \eta$, this a-priori estimate together with the following growth bound on the norm $\|\cdot\|_X$ will allow us to iterate the local wellposedness theory in $X$ on the time interval $[0,1]$.

    \begin{restatable}[growth bound for $X$-norm]{proposition}{Xnormgrowth}\label{normgrowthX}
        The solution provided by the wellposedness theory in $X$ (Theorem \ref{wellposednessinX}) satisfies the bound $$\|u(t)\|_{X} \leq \|u_0\|_{X} \exp \bigg( c  \int_0^t \|u(s)\|_{L^{\infty}} + \|\partial_xu(s)\|_{L^{\infty}} ds \bigg).$$ Here $c$ is some fixed absolute constant.
    \end{restatable}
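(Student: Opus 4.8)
\smallskip

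The plan is to run a Gronwall argument driven by the energy identities underlying the wellposedness proof in $X$ (Theorem \ref{wellposednessinX}), where the only quantities appearing on the right-hand side are the $L^\infty$-norms of $u$ and $\partial_x u$. Recall that, up to equivalence of norms, $\|u\|_X^2 \sim \|\langle\partial_x\rangle^{s_1}u\|_{L^2}^2 + \|\langle\partial_y\rangle^{s_2}u\|_{L^2}^2 + \|\langle\partial_y\rangle^{s_2}\partial_x^{-1}u\|_{L^2}^2$. I would differentiate each of these three squared quantities in time along a smooth solution, substitute the equation $u_t = -Hu_{xx} + \partial_x(u^2)$ for the first two pieces and the equation for $F=\partial_x^{-1}u$, namely $F_t = -HF_{xx} + u^2$ (obtained by applying $\partial_x^{-1}$ to \eqref{BOequation1} and using that $u^2$ has the same mean-zero issues handled as in the gauge-transform discussion), for the third piece. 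The dispersive term $Hu_{xx}$ is skew-adjoint and commutes with every Fourier multiplier in sight, so it contributes nothing to any of the three energy derivatives; only the nonlinear term survives, and the whole estimate reduces to commutator bounds of Kato--Ponce type.

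\smallskip

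Concretely, for the term $\tfrac{d}{dt}\|\langle\partial_x\rangle^{s_1}u\|_{L^2}^2 = 2\langle \langle\partial_x\rangle^{s_1}\partial_x(u^2),\, \langle\partial_x\rangle^{s_1}u\rangle$, I would write $\langle\partial_x\rangle^{s_1}\partial_x(u^2) = \langle\partial_x\rangle^{s_1}\partial_x(2u\cdot u)$ and split into the paraproduct where the derivatives hit the high-frequency factor (controlled after integration by parts / symmetrisation, the worst piece producing a commutator $[\langle\partial_x\rangle^{s_1}\partial_x, u]u$ whose $L^2$-norm is bounded by $\|\partial_x u\|_{L^\infty}\|\langle\partial_x\rangle^{s_1}u\|_{L^2}$ by the one-dimensional-in-direction Kato--Ponce commutator estimate, transferred to $\mathbb{T}^2$ exactly as in the discussion around \eqref{symmetricSobolevnormgrowthcontrol} and Remark \ref{remarksymmetricSobolevnormgrowthcontrol}) and the benign piece where $\langle\partial_x\rangle^{s_1}$ lands on the low-frequency factor, bounded by $\|u\|_{L^\infty}\|\langle\partial_x\rangle^{s_1}u\|_{L^2}^2$. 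The $\langle\partial_y\rangle^{s_2}$-piece is handled the same way, except now the product rule for $\langle\partial_y\rangle^{s_2}$ applied to $\partial_x(u^2)$ leaves the single tangential derivative $\partial_x$ to be distributed; the dangerous configuration is again the one where $\partial_x$ falls on the factor carrying $\langle\partial_y\rangle^{s_2}$, and the corresponding commutator $[\langle\partial_y\rangle^{s_2}\partial_x, u]u$ is controlled in $L^2$ by $\|\partial_x u\|_{L^\infty}\|\langle\partial_y\rangle^{s_2}u\|_{L^2} + \|\partial_y u\|_{L^\infty}\|\langle\partial_x\rangle^{s_2}u\|_{L^2}$ — and here one must \emph{absorb} the second summand, which involves $\partial_y u$, using that the relevant Kato--Ponce estimate for the mixed multiplier $\langle\partial_y\rangle^{s_2}\partial_x$ can be arranged to charge all of the single surviving \emph{derivative} to $\partial_x$ (since $\partial_x(u^2)$ literally contains only $\partial_x$), leaving $\langle\partial_y\rangle^{s_2}$ to act as a purely fractional, derivative-free symbol distributed by a Coifman--Meyer / fractional-Leibniz inequality. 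For the third piece, $\tfrac{d}{dt}\|\langle\partial_y\rangle^{s_2}F\|_{L^2}^2 = 2\langle\langle\partial_y\rangle^{s_2}(u^2),\,\langle\partial_y\rangle^{s_2}F\rangle$: now the nonlinearity $u^2$ carries \emph{no} derivative at all, so a fractional Leibniz rule for $\langle\partial_y\rangle^{s_2}$ gives $\|\langle\partial_y\rangle^{s_2}(u^2)\|_{L^2} \lesssim \|u\|_{L^\infty}\|\langle\partial_y\rangle^{s_2}u\|_{L^2} \lesssim \|u\|_{L^\infty}\|u\|_X$, and Cauchy--Schwarz closes this term by $\|u\|_{L^\infty}\|u\|_X^2$.

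\smallskip

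Summing the three contributions yields $\tfrac{d}{dt}\|u\|_X^2 \le c\big(\|u\|_{L^\infty} + \|\partial_x u\|_{L^\infty}\big)\|u\|_X^2$ for smooth solutions, and Gronwall's inequality integrates this to the claimed bound $\|u(t)\|_X \le \|u_0\|_X \exp\big(c\int_0^t \|u(s)\|_{L^\infty} + \|\partial_x u(s)\|_{L^\infty}\,ds\big)$; a density/approximation argument using the continuity $u:[0,T]\to X$ from Theorem \ref{wellposednessinX} and the continuous dependence on data then extends the estimate from smooth solutions to the solutions provided by the wellposedness theory. I expect the main obstacle to be the $\langle\partial_y\rangle^{s_2}$-energy for the first equation: one must be careful that the mixed commutator $[\langle\partial_y\rangle^{s_2}\partial_x, u]$ genuinely produces only $\|\partial_x u\|_{L^\infty}$-type factors and never an uncontrolled $\|\partial_y u\|_{L^\infty}$, which is precisely the anisotropic subtlety flagged in the introduction ("controlling derivatives $\partial_y$ in the normal direction is hard"). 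This should be achievable because the single derivative in $\partial_x(u^2)$ is tangential by construction, so that the fractional normal multiplier $\langle\partial_y\rangle^{s_2}$ only ever needs to be distributed across a product by a derivative-free fractional Leibniz rule, never commuted past a normal derivative — but making that bookkeeping airtight, likely via the paradifferential/transference tools cited for Lemma \ref{paraproducttwo}, is the crux.
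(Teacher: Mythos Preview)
Your overall strategy matches the paper's: differentiate the three pieces of $\|u\|_X^2$, use skew-adjointness of $H\partial_{xx}$, and close by Gronwall. The $\langle\partial_x\rangle^{s_1}$-piece and the $\partial_x^{-1}u$-piece are handled exactly as you describe (the paper's Lemma \ref{Fyqnormgrowth} does the latter, including the mean subtraction you flag).

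The one place where your bookkeeping diverges from the paper is the $\langle\partial_y\rangle^{s_2}$-energy, and this is precisely the crux you isolate. You phrase it via the mixed commutator $[\langle\partial_y\rangle^{s_2}\partial_x, u]u$ and then worry (correctly) that a Kato--Ponce bound for this object would produce an uncontrolled $\|\partial_y u\|_{L^\infty}$. The paper's resolution is cleaner and avoids the mixed commutator altogether: first apply the ordinary product rule $\partial_x(u^2)=2u\,\partial_x u$, \emph{then} commute only $\langle\partial_y\rangle^{s_2}$, writing
\[
\langle\partial_y\rangle^{s_2}(u\,\partial_x u)=[\langle\partial_y\rangle^{s_2},u]\,\partial_x u \;+\; u\,\langle\partial_y\rangle^{s_2}\partial_x u.
\]
For the first summand the paper invokes the quasi-periodic Kenig--Ponce--Vega commutator estimate \eqref{KenigPonceVegaQuasi}, namely $\|[\langle\partial_y\rangle^{s_2},f]g\|_{L^2}\lesssim\|\langle\partial_y\rangle^{s_2}f\|_{L^2}\|g\|_{L^\infty}$, valid for $0\le s_2\le 1$ --- this places $g=\partial_x u$ in $L^\infty$ with \emph{no} normal derivative anywhere. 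The second summand, paired with $\langle\partial_y\rangle^{s_2}u$, is handled by integration by parts in $x$ exactly as in the $\langle\partial_x\rangle^{s_1}$-piece. So the ``arranging'' you hope for is not a refined Kato--Ponce for the mixed symbol but rather the order of operations (product rule first, commutator second) together with the fact that Kenig--Ponce--Vega for $s_2\le 1$ needs no derivative on the second factor. This is also why the hypothesis $s_2<1$ in Theorem \ref{wellposednessinX} is natural here.

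Finally, the paper does the computation on the Galerkin approximants $u_n$ and passes to the limit $n\to\infty$, rather than invoking a separate density argument; this is a minor packaging difference from your proposal.
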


Theorem \ref{maintheorem} now follows by approximating arbitrary initial data in $B_Y(0,\rho_0)$ by a sequence of data in $X$. Combining Theorems \ref{wellposednessinX}, \ref{aprioriformal}, and Proposition \ref{normgrowthX}, the emanating solutions exist on the uniform time interval $[0,1]$, and they can be shown to form a Cauchy sequence in $C([0,1],H_{\omega}^{1+\tilde{\sigma},\tilde{\sigma}})$ by a Gronwall argument using the a-priori estimate in Theorem \ref{aprioriformal} and interpolation. That the limit belongs to $L^4([0,1],V^{1,\infty}_{\omega})$ follows from weak* compactness of the unit ball of this space and the a-priori estimate (\ref{aprioriformalequation}). Uniqueness and continuity of the data-to-solution map follow by Gronwall arguments using the a-priori bound (\ref{aprioriformalequation}). \\ 

As the main purpose of Theorem \ref{wellposednessinHs1s2} and the related Theorem \ref{wellposednessinX} in this paper is to provide a wellposedness theory at a suitable level of regularity so that the a-priori estimate in Theorem \ref{aprioriformal} can be used for iteration, we do not need to quantify the hypothesis in Theorems \ref{wellposednessinX} and \ref{wellposednessinHs1s2} that $s_1$ is sufficiently large depending on $s_2$. However, we provide the following example of a function in the space $H^{s_1,s_2}_{\omega}$ which does not belong to the space of initial data $H_{\omega}^s, s>2$ considered in the previous work \cite{AmbroseAitzhan} (nor to the space considered in \cite{MR4936333}):
\begin{align*}g(x)=\sum_{k \in \mathbb{Z}^2} \frac{1}{1+(\omega \cdot k)^{2s_1}+(\omega^{\perp} \cdot k)^{2s_2}} e^{2 \pi i x \omega \cdot k}, \text{ with } 1/2<s_2<1 \text{ and } s_1 \text{ sufficiently large}.\end{align*}
The space $H_{\omega}^{s}(\mathbb{T}^2), s>2$ considered in \cite{AmbroseAitzhan} and the space $H^{s_1,s_2}_{\omega}$ appearing in Theorem \ref{wellposednessinHs1s2} overlap, but neither space contains the other.

The space $Y=Y_{\omega}^{\sigma}, \ \sigma>7/8$ appearing in the statement of Theorem \ref{maintheorem}, however, can be taken to be strictly larger than $H_{\omega}^{s}(\mathbb{T}^2), \ s>2$, by making an appropriate choice of $1>\sigma>7/8$, provided the ratio $\alpha=\omega_1/\omega_2$ of the entries of the frequency vector $\omega=(\omega_1,\omega_2)$
has Roth-Liouville irrationality measure $\mu(\alpha)<17/8$.
See Section \ref{Diophantinesection}. The following example is a function that belongs to the space $H^{1+\sigma}_{\omega} \hookrightarrow Y=Y_{\omega}^{\sigma}$, but does not belong to any of the spaces of initial data under consideration in previous work on Benjamin--Ono with quasi-periodic data \cite{AmbroseAitzhan, MR4936333, zhao2024localwellposednessdispersiveequations}: 
$$g(x)= \sum_{N \in 2^{\mathbb{N}}} N^{-1-\sigma-\varepsilon} e^{2\pi i x \nu N}, \  \varepsilon >0, \ 7/8<\sigma<1, \ \nu \in \omega_1 \mathbb{Z} + \omega_2 \mathbb{Z}: |\nu| \sim 1.$$

\begin{remark} Just like the classical energy method, the energy method in anisotropic Sobolev spaces $H^{s_1,s_2}_{\omega}$ does not use the specific dispersion relation. It applies just as well to other dispersive equations, including the Korteweg--de Vries equation (KdV) and the derivative nonlinear Schr\"odinger equation (dNLS)
\begin{equation*}
    \partial_tu+\partial_{x}^3u = u\partial_xu \ \ \text{(KdV)}, \quad \quad \partial_tu-i\partial_{xx}u=u\partial_xu \ \ \text{(dNLS)}.
\end{equation*}
Only the anti-selfadjointness of the linear dispersive parts, $\partial_{x}^3$ and $-i\partial_{x}^2$, respectively, is used.
\begin{corollary}\label{KdVandNLS}
    The KdV equation and the dNLS equation are locally wellposed in $H^{s_1,s_2}_{\omega}$ for $1/2<s_2<1$ and $s_1$ sufficiently large depending on $s_2$.
\end{corollary}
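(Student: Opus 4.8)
The plan is to follow essentially verbatim the proof of Theorem \ref{wellposednessinHs1s2} (the anisotropic-Sobolev energy method carried out in Section \ref{wellposednessanisotropicspace}), since as the remark points out, the argument there is insensitive to the precise dispersion relation. First I would set up the Galerkin approximation: for $N \in 2^{\mathbb{N}}$ let $P_{\leq N}$ be a smooth frequency cutoff on $\mathbb{T}^2$ and consider the finite-dimensional ODE systems $\partial_t u_N + L u_N = P_{\leq N}((P_{\leq N}u_N)\partial_x(P_{\leq N}u_N))$ with $u_N(0) = P_{\leq N}u_0$, where $L = \partial_x^3$ in the KdV case and $L = -i\partial_{xx}$ in the dNLS case. (Here $\partial_x$ and the Sobolev spaces are the tangential ones of Definition \ref{anisotropicSobolev}, on the two-dimensional torus.) These ODEs have unique local solutions by Picard--Lindel\"of.

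\medskip

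The core of the argument is the energy estimate. Differentiating $\tfrac12\|u_N(t)\|_{H^{s_1,s_2}_\omega}^2$ in time, one pairs $\langle \partial_x\rangle^{s_1}u_N$ (and separately $\langle \partial_y\rangle^{s_2}u_N$) against $\langle\partial_x\rangle^{s_1}$ (resp.\ $\langle\partial_y\rangle^{s_2}$) applied to the nonlinearity. The contribution of the linear part $L$ vanishes because $\partial_x^3$ and $-i\partial_{xx}$ are anti-selfadjoint on $L^2(\mathbb{T}^2)$ and commute with the Fourier multipliers $\langle\partial_x\rangle^{s_1}$, $\langle\partial_y\rangle^{s_2}$, and $P_{\leq N}$; this is the only place the specific form of the dispersion enters, and it enters only through anti-selfadjointness, exactly as claimed. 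For the nonlinear term $\partial_x(u_N^2) = 2u_N\partial_x u_N$ one uses the Kato--Ponce commutator and product estimates adapted to the anisotropic spaces on $\mathbb{T}^2$ (the same estimates invoked in the proof of Theorem \ref{wellposednessinHs1s2}, cf.\ Remark \ref{remarksymmetricSobolevnormgrowthcontrol}) to bound it by $\lesssim \|\partial_x u_N\|_{L^\infty}\|u_N\|_{H^{s_1,s_2}_\omega}^2 + \|u_N\|_{L^\infty}\|u_N\|^2_{H^{s_1,s_2}_\omega}$, and then one controls $\|\partial_x u_N\|_{L^\infty}$ and $\|u_N\|_{L^\infty}$ by $\|u_N\|_{H^{s_1,s_2}_\omega}$ via the embedding \eqref{crucialestimate} from Lemma \ref{anisotropicproperties}, valid precisely because $s_2 > 1/2$ and $s_1 > 3s_2/(2s_2-1)$, i.e.\ $s_1$ sufficiently large depending on $s_2$. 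Gr\"onwall's inequality then yields a uniform existence time $T = T(\|u_0\|_{H^{s_1,s_2}_\omega})$ and a uniform bound $\sup_{[0,T]}\|u_N\|_{H^{s_1,s_2}_\omega} \lesssim \|u_0\|_{H^{s_1,s_2}_\omega}$.

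\medskip

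The remaining steps are the standard compactness-and-uniqueness package, identical in form to what is done for Benjamin--Ono in Section \ref{wellposednessanisotropicspace}. From the uniform bound one extracts a weak-$*$ limit $u \in L^\infty([0,T],H^{s_1,s_2}_\omega)$; using the equation to control $\partial_t u_N$ in a weaker norm and Aubin--Lions, one upgrades to strong convergence in, say, $C([0,T],H^{s_1',s_2'}_\omega)$ for slightly lower exponents, enough to pass to the limit in the nonlinearity and see $u$ solves KdV (resp.\ dNLS). Uniqueness and continuous dependence follow from an $L^2$-type difference estimate: if $u, v$ are two solutions, the equation for $u - v$ has nonlinearity $\partial_x(u^2 - v^2) = \partial_x((u-v)(u+v))$, and pairing with $u-v$, integrating by parts, and using anti-selfadjointness of $L$ again gives $\partial_t\|u-v\|_{L^2}^2 \lesssim (\|\partial_x u\|_{L^\infty} + \|\partial_x v\|_{L^\infty} + \|u\|_{L^\infty}+\|v\|_{L^\infty})\|u-v\|_{L^2}^2$, whence Gr\"onwall; continuity of the data-to-solution map at high regularity follows by the Bona--Smith argument, again as in the Benjamin--Ono case. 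The main obstacle is purely bookkeeping: one must verify that every product/commutator estimate used in the Benjamin--Ono proof holds in the anisotropic spaces $H^{s_1,s_2}_\omega(\mathbb{T}^2)$ with constants independent of $N$, and that the embedding \eqref{crucialestimate} does all the work of absorbing $\|\partial_x u_N\|_{L^\infty}$ — but since the Hilbert transform $H$ played no role beyond being a bounded, commuting, skew operator in that proof, and $\partial_x^3$, $-i\partial_{xx}$ share exactly the relevant features, no new difficulty arises. Thus the corollary follows.
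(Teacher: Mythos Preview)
Your proposal is correct and matches the paper's one-line proof: the arguments of Sections \ref{regularization}--\ref{sectioncontinuitydatatosolution} apply verbatim upon replacing $H\partial_x^2$ by $\partial_x^3$ or $-i\partial_x^2$, since only anti-selfadjointness of the linear part is used. One small deviation worth noting: you describe passing from approximate to actual solutions via weak-$*$ compactness and Aubin--Lions, whereas the paper instead shows the Galerkin sequence is directly Cauchy in $C([0,T],H^{s_1,s_2}_\omega)$ (Proposition \ref{HshCauchy}), which is precisely where the separate initial-data regularization $\delta(n)$ of \eqref{initialdata} becomes essential --- but either route works here.
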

\end{remark}

\subsection{Organization of paper}
In Section \ref{sectionnotation}, notations are fixed. In Section \ref{preliminariessection}, we prove some preliminary results, 
including estimate (\ref{crucialestimate}) and commutator estimates for tangential and normal inhomogeneous derivatives (Lemma \ref{KatoPonceVegaQuasih}). 

In Section \ref{wellposednessanisotropicspace}, we prove both Theorems \ref{wellposednessinX} and \ref{wellposednessinHs1s2}. We first prove Theorem \ref{wellposednessinHs1s2} in Sections \ref{regularizedBO}-\ref{sectioncontinuitydatatosolution} by Galerkin approximation and energy method arguments; see the beginning of Section \ref{wellposednessanisotropicspace} for an outline of the proof. In Section \ref{sectionwellposednessinX}, the necessary adaptations to prove Theorem \ref{wellposednessinX} are discussed. In Section \ref{sectionnormgrowthcontrol} we obtain Proposition \ref{normgrowthX} essentially as a corollary of the previously presented proof.

In Section \ref{sectionaprioriproposition}, we prove Theorem \ref{aprioriformal}.
A heuristic outline of the proof was already given in the introduction. We refer the reader to Section \ref{sectionaprioriproposition} for the structure of the proof as presented in Sections \ref{setup}-\ref{sectionerrorterms} of this paper.
In Section \ref{sectionfinisihingtheproof} we put everything together and prove Theorem \ref{maintheorem}.

In Section \ref{Diophantinesection}, we discuss Diophantine conditions allowing us to embed Sobolev spaces of quasi-periodic functions $H^{s}_{\omega}$ into the space $Y=Y_{\omega}^{\sigma}$ appearing in our main theorem. We consider badly approximable numbers and Roth-Liouville irrationality measure. As a corollary of our main theorem these considerations yield Theorem \ref{maincorollary}.

\subsection{Acknowledgements} The author would like to thank Rowan Killip and Monica Vi\c{s}an for many helpful discussions and for providing feedback on earlier versions of the paper.

\section{notation and basic properties}\label{sectionnotation}
We use the standard notation $A\lesssim B$ or $A=O(B)$ to indicate that there exists an absolute constant $C$ such that $A\leq CB$. The constant can change from line to line. If the constant fails to be universal, then possible dependencies are indicated by subscripts. We write $A \sim B$ if both $A \lesssim B$ and $B \lesssim A$ hold. We write $f(x)=o(g(x))$ if $f(x)/g(x)\rightarrow 0$ as $x\rightarrow \infty$. For example, $o(1)$ is a placeholder for a sequence converging to zero. 
We denote by $\cong_{\mathbb{R}}$ isomorphism of vector spaces over real numbers. \\

Given an interval $I=[0,T], \ T>0$ and a function $u:[0,T] \times \mathbb{T}^2 \rightarrow \mathbb{C}$ we use the following notation for mixed Lebesgue spacetime norms
$$\|u\|_{L^p([0,T],L^q(\mathbb{T}^2))}:=\big\|\|u(t,x)\|_{L_x^q(\mathbb{T}^2)} \big\|_{L^p_t([0,T])} = \bigg[ \int_{[0,T]} \Big(\int_{\mathbb{T}^2} |u(t,x)|^q dx \Big)^{p/q} dt\bigg]^{1/p}.$$ 
When writing $\|\cdot\|_{L^p([0,T],L^q)}$ the domain for the spatial variables is understood to be the torus $\mathbb{T}^2$.
We often abbreviate this further to $\|\cdot\|_{L^pL^q}$ when the time interval $[0,T]$ is clear from context. We do so throughout Sections \ref{partIbootstrap3}, \ref{partIIbootstrap}, \ref{sectionerrorterms}.

Further, we denote the Strichartz norm
\begin{equation*}
\|u\|_{S([0,T])}:=\|u\|_{L^4([0,T],L^{\infty})}+\|u\|_{L^{\infty}([0,T],L^2)}.
\end{equation*} 
Regarding ordinary Lebesgue spaces, the domain is understood to be $\mathbb{T}^2$ when writing $\|\cdot\|_{L^q}$, unless otherwise indicated. \\

We denote by $C([0,T],X)$ the space of continuous functions on the interval $[0,T]$ into the metric space $X$, equipped with the norm $\|u\|_{C([0,T],X)}:= \sup_{t \in [0,T]}\|u(t)\|_X$. For a function $u$ of space and time we use the  hat-notation $\hat{u}$ to denote the Fourier transform in the spatial variables only. \\

Our convention for the Fourier transform is as follows: The Fourier transform $\hat{u}:\mathbb{Z}^2 \rightarrow \mathbb{C}$ of a function $u:\mathbb{T}^2 \rightarrow \mathbb{C}$ is $$\hat{u}(n_1,n_2):= \kern-2pt \int_{\mathbb{T}^2}\kern-2pt u(x_1,x_2)e^{-2\pi i(n_1x_1+n_2x_2)}dx_1dx_2, \  \text{so} \ u(x_1,x_2)= \kern-2pt \sum_{n \in \mathbb{Z}^2} \hat{u}(n_1,n_2)e^{2\pi i (n_1x_1+n_2x_2)}.$$
With this convention, the Fourier transform is a unitary operator $L^2(\mathbb{T}^2) \rightarrow l^2(\mathbb{Z}^2)$. \\

We define directional Littlewood-Paley projections.
Fix a frequency vector $\omega\in \mathbb{R}^2$. Let $\psi_{+hi}$ be a smooth function taking values in $[0,1]$ with $\psi_{+hi}(x)=1$ for $x\geq 2$ and $\psi_{+hi}(x)=0$ for $x \leq 1/2$. Further, define
\begin{itemize}
    \item $\psi_{-hi}(x):=\psi_{+hi}(-x)$, $\psi_{lo}(x):=1-\psi_{+hi}(x) - \psi_{-hi}(x)$
    \item $\psi_{+HI}(x):=\psi_{+hi}(x/4)$, $\psi_{-HI}(x):=\psi_{-hi}(x/4)$, and $\psi_{LO}(x):=\psi_{lo}(x/4)$
\end{itemize}
We define the Littlewood-Paley projection to positive high frequencies in the $\omega$-direction as the multiplier operator on the two dimensional torus $\mathbb{T}^2$ with symbol $\psi_{+hi}(\omega \cdot n)$. That is, $$\widehat{P_{+hi}f}(n)=\psi_{+hi}(\omega \cdot n)\hat{f}(n).$$
Analogously, we define the Littlewood-Paley projections $P_{lo},P_{-hi},P_{+HI},P_{LO},P_{-HI}$ as multiplier operators with symbols $\psi_{\Box}(\omega \cdot n)$ for $\Box=-hi,lo,+HI,LO,-HI$, respectively. Note that
$$1=P_{+hi}+P_{lo}+P_{-hi} \quad \text{and} \quad 1=P_{+HI}+P_{LO}+P_{-HI}.$$
Further, $P_+$ and $P_-$ denote the multiplier operators with symbols $1_{\{\omega \cdot n > 0\}}(n)$ and $1_{\{\omega \cdot n < 0\}}(n)$. The quasi-periodic Hilbert transform in the $\omega$-direction can be expressed as \begin{equation}\label{Hilberttransform}H=-i[P_+-P_-].\end{equation}
The following lemma captures the basic boundedness properties on $L^p$-spaces of these Fourier projections.

\begin{lemma}
    The directional Littlewood-Paley projections $P_{+hi},P_{lo},P_{-hi},P_{+HI},P_{LO},P_{-HI}$ as well as $H,P_+,P_-$ are bounded on $L^p(\mathbb{T}^2)$ for $1<p<\infty$. Further, $P_{lo}$ and $P_{LO}$ are also bounded on $L^{\infty}$.
\end{lemma}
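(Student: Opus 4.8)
The plan is to reduce everything to the classical Mikhlin--H\"ormander multiplier theorem on $\mathbb{T}^2$, treating the extra feature here — that the symbols are functions of the single linear form $\omega\cdot n$ rather than of $n$ itself — by a transference/one-dimensional argument. First I would observe that each of the listed operators is a Fourier multiplier with symbol of the form $m(\omega\cdot n)$, where $m$ is one of $\psi_{+hi},\psi_{lo},\psi_{-hi},\psi_{+HI},\psi_{LO},\psi_{-HI}$ (all smooth, bounded, with all derivatives bounded since they are eventually constant) or, in the case of $H,P_+,P_-$, one of $-i(\mathbf 1_{x>0}-\mathbf 1_{x<0})$, $\mathbf 1_{x>0}$, $\mathbf 1_{x<0}$, i.e. (up to affine combination) the one-dimensional sign/Heaviside symbol, so that the operator is essentially a one-dimensional Hilbert transform composed with the orthogonal projection onto the half-lattice $\{\omega\cdot n>0\}$.

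Next I would handle the smooth Littlewood--Paley symbols. Since $m$ and all its derivatives are bounded, the two-variable symbol $\tilde m(\xi):=m(\omega\cdot\xi)$ on $\mathbb{R}^2$ satisfies the H\"ormander--Mikhlin condition $|\partial^\alpha \tilde m(\xi)|\lesssim |\xi|^{-|\alpha|}$ for $\xi$ away from the origin and $|\alpha|\le 2$: each derivative in a coordinate direction pulls out a factor of $\omega_j$ and a derivative of $m$ evaluated at $\omega\cdot\xi$, and crucially $m'=m''=0$ outside the fixed compact annulus $\{1/2\le |x|\le 2\}$, on which $|\xi|\sim 1$, so the homogeneity bound holds trivially. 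The $L^p(\mathbb{T}^2)$ boundedness for $1<p<\infty$ then follows from the standard transference principle (de Leeuw / Marcinkiewicz multiplier theorem on the torus), or directly from the periodic Mikhlin--H\"ormander theorem. For the $L^\infty$ claim about $P_{lo}$ and $P_{LO}$: their symbols are compactly supported and smooth, so the operators are convolution against a fixed smooth function on $\mathbb{T}^2$ (a trigonometric polynomial up to rapidly decaying tails — in fact a Schwartz-class sequence of Fourier coefficients), hence bounded on every $L^p$ including $L^\infty$ by Young's inequality with an $L^1$ kernel.

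For $H,P_+,P_-$ the point is that these are \emph{not} smooth multipliers (the symbol jumps across the line $\omega\cdot n=0$), so I would argue by transference to the one-dimensional Hilbert transform. Writing $\alpha=\omega_1/\omega_2$ irrational, the map $n\mapsto \omega\cdot n$ is injective on $\mathbb{Z}^2$, so one can realize $P_+$ as follows: identify $u\in L^p(\mathbb{T}^2)$ with a function whose Fourier support we split according to the sign of $\omega\cdot n$. The cleanest route is the classical fact that the conjugate-function operator (Riesz projection) on $\mathbb{T}^2$ associated to any half-space $\{n:\ell\cdot n>0\}$ defined by a linear functional $\ell$ is bounded on $L^p(\mathbb{T}^2)$, $1<p<\infty$ — this is the two-parameter analogue of M.\ Riesz's theorem and follows by composing/rotating one-dimensional conjugate functions, or by invoking that $\operatorname{sgn}(\omega\cdot\xi)$ is a H\"ormander--Mikhlin multiplier on $\mathbb{R}^2\setminus\{\omega\cdot\xi=0\}$ away from that hyperplane together with a limiting argument, and then transferring to $\mathbb{T}^2$. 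Once $P_+$ is bounded, $P_-=\mathrm{Id}-P_+-(\text{projection to }\omega\cdot n=0)$ is bounded (the last projection being again convolution with an $L^1$, in fact constant-in-one-direction, kernel), and $H=-i(P_+-P_-)$ is bounded. The main obstacle is precisely this last part: making the transference of the non-smooth half-space multiplier $\operatorname{sgn}(\omega\cdot n)$ rigorous on the torus, since $\operatorname{sgn}(\omega\cdot\xi)$ is singular along a line through the origin rather than just at the origin; I would deal with it by the standard device of writing $\operatorname{sgn}(\omega\cdot\xi)=\lim_{\eps\to 0}\tfrac{\omega\cdot\xi}{((\omega\cdot\xi)^2+\eps^2)^{1/2}}$, noting each approximant \emph{is} a genuine H\"ormander multiplier with bounds uniform in $\eps$, transferring to $\mathbb{T}^2$, and passing to the limit, or alternatively by directly citing the boundedness of directional Hilbert transforms on $L^p(\mathbb{T}^2)$.
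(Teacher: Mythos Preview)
Your overall strategy (prove $L^p$ boundedness on $\mathbb{R}^2$ and transfer via de Leeuw) matches the paper's, but two concrete steps in your execution are wrong.

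\textbf{The H\"ormander--Mikhlin argument fails.} You claim that for the smooth symbols, the derivatives of $\tilde m(\xi)=m(\omega\cdot\xi)$ vanish outside $\{1/2\le|\omega\cdot\xi|\le 2\}$, ``on which $|\xi|\sim 1$''. This is false: the set $\{1/2\le|\omega\cdot\xi|\le 2\}$ is an \emph{infinite strip} in $\mathbb{R}^2$, not a compact annulus. Along that strip $|\xi|$ is unbounded while $|\partial_{\xi_j}\tilde m(\xi)|=|\omega_j m'(\omega\cdot\xi)|$ stays of size $\sim 1$, so the condition $|\partial^\alpha\tilde m(\xi)|\lesssim|\xi|^{-|\alpha|}$ is violated. (The Marcinkiewicz product condition fails for the same reason.) The paper sidesteps this by a \emph{rotational dilation of $\mathbb{R}^2$}: after a linear change of variables aligning $\omega$ with a coordinate axis, the operator becomes a one-dimensional multiplier in one variable tensored with the identity in the other; one-dimensional Littlewood--Paley theory (resp.\ the Riesz projection for $P_\pm,H$) plus Fubini gives $L^p(\mathbb{R}^2)$ boundedness, and then de Leeuw transfers to $\mathbb{T}^2$.

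\textbf{The $L^\infty$ argument fails.} You assert that the symbol of $P_{lo}$ is ``compactly supported'' with ``Schwartz-class'' Fourier coefficients on $\mathbb{Z}^2$. It is not: $\psi_{lo}$ is compactly supported as a function of one real variable, but $n\mapsto\psi_{lo}(\omega\cdot n)$ equals $1$ on the infinite set $\{n\in\mathbb{Z}^2:|\omega\cdot n|\le 1/2\}$ (by Dirichlet there are infinitely many such lattice points, with $|n|\to\infty$), so the Fourier coefficients do not decay at all and the kernel is not a priori in $L^1(\mathbb{T}^2)$. The correct argument again goes through $\mathbb{R}^2$: after rotation the operator is convolution in one variable with the Schwartz function $\widehat{\psi_{lo}}\in L^1(\mathbb{R})$, hence bounded on $L^\infty(\mathbb{R})$ by Young, hence on $L^\infty(\mathbb{R}^2)$ by Fubini, and this transfers to $L^\infty(\mathbb{T}^2)$.

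Your treatment of $H,P_+,P_-$ via approximation or directional Hilbert transforms is fine in spirit, though the paper's rotation device handles these uniformly with the smooth projections and avoids the limiting argument.
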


This can be established easily using the transference principle stated below. The associated boundedness statement for the corresponding Fourier projections on the plane follow from classical Littlewood-Paley theory and a rotational dilation of the plane. Using transference circumvents the lack of rotational and dilational invariance of the torus.

\begin{proposition}[De Leeuw]\label{deLeeuw}\cite[Chapter VII: Theorem 3.8]{MR304972}
    Let $m(\xi)$ be an $L^{\infty}(\mathbb{R}^2)$-function. Let $T$ be the associated multiplier operator defined by $$\widehat{Tf}(\xi)=m(\xi)\hat{f}(\xi), \ \xi \in \mathbb{R}^2, \ \text{ for functions $f$ on the plane $\mathbb{R}^2$},$$ and let $\tilde{T}$ be its periodization defined by $$\widehat{\tilde{T}f}(n)=m(n)\hat{f}(n), \ n \in \mathbb{Z}^2, \ \text{for functions $f$ on the torus $\mathbb{R}^2 / \mathbb{T}^2$}.$$ If every point of the lattice $\mathbb{Z}^2$ is a Lebesgue point of $m(\xi)$ and if $T$ is bounded on $L^p(\mathbb{R}^2)$, then $\tilde{T}$ is bounded on $L^p(\mathbb{T}^2)$.
\end{proposition}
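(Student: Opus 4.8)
The plan is to reduce the statement to the estimate $\|\tilde{T} f\|_{L^p(\mathbb T^2)}\le \|T\|_{L^p(\mathbb R^2)\to L^p(\mathbb R^2)}\,\|f\|_{L^p(\mathbb T^2)}$ for every trigonometric polynomial $f$ on $\mathbb T^2$. Since trigonometric polynomials are dense in $L^p(\mathbb T^2)$ for $p<\infty$, such a uniform bound extends $\tilde T$ to a bounded operator on $L^p(\mathbb T^2)$ (with the same norm) for $1\le p<\infty$, agreeing with the multiplier definition of $\tilde T$. A few reductions come first. For $p=1$ and $p=\infty$ the theorem is classical and direct: boundedness of a Fourier multiplier on $L^1(\mathbb R^2)$ forces its symbol to be $\widehat{\mu}$ for a finite Borel measure $\mu$ (in particular continuous, so the Lebesgue-point hypothesis is automatic), and then $\tilde T$ is convolution on $\mathbb T^2$ with the periodization of $\mu$, a finite measure of no larger total mass; the case $p=\infty$ follows by duality, since $T^*$ is the multiplier with symbol $\overline m$ and $(\tilde{T})^*=\widetilde{T^*}$. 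The same duality — noting that $\overline m$ again has every lattice point as a Lebesgue point — reduces the displayed estimate for $1<p<2$ to the case of the conjugate exponent $p'>2$. So the content is the displayed estimate for $2\le p<\infty$.

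For that range I would fix $f=\sum_{k}a_k e^{2\pi i k\cdot x}$ (a finite sum), so that $\tilde{T} f=\sum_k m(k)a_k e^{2\pi i k\cdot x}$ is also a trigonometric polynomial, fix a nonzero Schwartz function $\phi$ on $\mathbb R^2$ (a Gaussian works; a band-limited $\phi$, with $\widehat{\phi}\in C_c^\infty$, makes the last step below even cleaner), and for $\varepsilon>0$ set $F_\varepsilon:=f\cdot\phi(\varepsilon\,\cdot\,)$ and $G_\varepsilon:=(\tilde{T} f)\cdot\phi(\varepsilon\,\cdot\,)$, both in $\mathcal{S}(\mathbb R^2)$. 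The first ingredient is an equidistribution fact: $\varepsilon^{2/p}\|F_\varepsilon\|_{L^p(\mathbb R^2)}\to\|f\|_{L^p(\mathbb T^2)}\,\|\phi\|_{L^p(\mathbb R^2)}$ as $\varepsilon\to 0$, and likewise with $f$ replaced by $\tilde{T} f$. This follows from the substitution $y=\varepsilon x$, which turns $\varepsilon^2\|F_\varepsilon\|_{L^p}^p$ into $\int_{\mathbb R^2}|f(y/\varepsilon)|^p|\phi(y)|^p\,dy$; since $|f|^p$ is a continuous $\mathbb Z^2$-periodic function and $|\phi|^p\in L^1(\mathbb R^2)$, approximating $|f|^p$ uniformly by a trigonometric polynomial and applying the Riemann--Lebesgue lemma to its finitely many nonzero Fourier modes shows the integral converges to $\big(\int_{\mathbb T^2}|f|^p\big)\big(\int_{\mathbb R^2}|\phi|^p\big)$.

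The second, harder ingredient is the error bound $\varepsilon^{2/p}\|TF_\varepsilon-G_\varepsilon\|_{L^p(\mathbb R^2)}\to 0$. On the Fourier side $\widehat{TF_\varepsilon-G_\varepsilon}(\xi)=\sum_k a_k\varepsilon^{-2}\big(m(\xi)-m(k)\big)\widehat{\phi}\big((\xi-k)/\varepsilon\big)$, so the substitution $\xi=k+\varepsilon\zeta$ gives $TF_\varepsilon-G_\varepsilon=\sum_k a_k e^{2\pi i k\cdot x}\,v^{(k)}_\varepsilon(\varepsilon x)$, where $v^{(k)}_\varepsilon$ is the inverse Fourier transform of $\zeta\mapsto\big(m(k+\varepsilon\zeta)-m(k)\big)\widehat{\phi}(\zeta)$; rescaling, $\varepsilon^{2/p}\|TF_\varepsilon-G_\varepsilon\|_{L^p(\mathbb R^2)}\le\sum_k|a_k|\,\|v^{(k)}_\varepsilon\|_{L^p(\mathbb R^2)}$, a finite sum. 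Here $p\ge 2$ lets me invoke Hausdorff--Young: $\|v^{(k)}_\varepsilon\|_{L^p(\mathbb R^2)}\le\big\|\big(m(k+\varepsilon\,\cdot\,)-m(k)\big)\widehat{\phi}\big\|_{L^{p'}(\mathbb R^2)}$. Using $|m(k+\varepsilon\zeta)-m(k)|^{p'}\le(2\|m\|_\infty)^{p'-1}|m(k+\varepsilon\zeta)-m(k)|$, the rapid decay of $\widehat{\phi}$, and a change of variables, the right-hand side is controlled by $\big(\varepsilon^{-2}\int_{\mathbb R^2}|m(k+\eta)-m(k)|\,w(\eta/\varepsilon)\,d\eta\big)^{1/p'}$ for a fixed nonnegative rapidly decreasing $w$; a dyadic decomposition in the shells $|\eta|\sim 2^j\varepsilon$, combined with the hypothesis that $k$ is a \emph{Lebesgue point} of $m$ (so $\int_{|\eta|<\rho}|m(k+\eta)-m(k)|\,d\eta=o(\rho^2)$ while staying $\le C\rho^2$), drives this to $0$. (With $\widehat{\phi}$ compactly supported the dyadic sum collapses to the single scale $|\eta|\lesssim\varepsilon$.)

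Putting the pieces together,
\begin{align*}
\|\tilde{T} f\|_{L^p(\mathbb T^2)}\|\phi\|_{L^p(\mathbb R^2)}
&=\lim_{\varepsilon\to 0}\varepsilon^{2/p}\|G_\varepsilon\|_{L^p(\mathbb R^2)}\\
&\le\limsup_{\varepsilon\to 0}\varepsilon^{2/p}\big(\|TF_\varepsilon\|_{L^p(\mathbb R^2)}+\|TF_\varepsilon-G_\varepsilon\|_{L^p(\mathbb R^2)}\big)\\
&\le\|T\|_{L^p(\mathbb R^2)\to L^p(\mathbb R^2)}\,\limsup_{\varepsilon\to 0}\varepsilon^{2/p}\|F_\varepsilon\|_{L^p(\mathbb R^2)}\\
&=\|T\|_{L^p(\mathbb R^2)\to L^p(\mathbb R^2)}\,\|f\|_{L^p(\mathbb T^2)}\,\|\phi\|_{L^p(\mathbb R^2)},
\end{align*}
and dividing by $\|\phi\|_{L^p(\mathbb R^2)}>0$ gives the claim; together with the reductions above this proves the theorem for all $1\le p\le\infty$. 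I expect the error bound of the previous paragraph to be the main obstacle: it is the only place where the Lebesgue-point hypothesis is genuinely needed (continuity of $m$ at lattice points is not assumed), and it is what forces the dichotomy at $p=2$, since the inverse Fourier transform of a bounded, compactly supported function need not lie in $L^p(\mathbb R^2)$ for $p<2$ --- hence the appeal to Hausdorff--Young and, below $L^2$, to duality.
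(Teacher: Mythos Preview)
Your argument is correct and is essentially the classical De Leeuw proof: cut off a trigonometric polynomial by a dilated Schwartz function, use the equidistribution limit $\varepsilon^{2/p}\|f\,\phi(\varepsilon\cdot)\|_{L^p(\mathbb R^2)}\to\|f\|_{L^p(\mathbb T^2)}\|\phi\|_{L^p(\mathbb R^2)}$, and control the error $TF_\varepsilon-G_\varepsilon$ via Hausdorff--Young and the Lebesgue-point hypothesis; then conclude for $1<p<2$ by duality.

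Note, however, that the paper does not give its own proof of this proposition --- it is quoted as \cite[Chapter~VII: Theorem~3.8]{MR304972} from Stein--Weiss, and your proof is precisely that argument (with a general $\phi$ in place of the Gaussian). The paper later \emph{adapts} the same Gaussian-cutoff transference strategy to the bilinear setting in Lemmas~\ref{PQR} and~\ref{sderivativesPw}: there the approximate identity on the Fourier side is made explicit with Gaussians $w_\delta(y)=e^{-\pi\delta|y|^2}$, and the limit is computed by a direct duality pairing $\langle \tilde T f,g\rangle_{\mathbb T^2}=\lim_{\varepsilon\to0}\varepsilon\int_{\mathbb R^2}T(fw_{\alpha\varepsilon})\,\overline{gw_{\gamma\varepsilon}}$ rather than by bounding the error $TF_\varepsilon-G_\varepsilon$ in $L^p$. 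The two routes are equivalent for the linear statement; your Hausdorff--Young step is slightly cleaner than Stein's argument and isolates exactly where the Lebesgue-point hypothesis enters.
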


Further, we denote the following Fourier projections to finitely many frequencies: \begin{equation}\label{generalprojection}P_{q,r}u(x)=\sum_{k \in \mathbb{Z}^2: |\omega\cdot k|\leq q, |\omega^{\perp} \cdot k| \leq r} \hat{u}(n) e^{ik\cdot x}, \ \  q,r \in \mathbb{R}_+, \quad \text{and} \quad P_q:=P_{q,q}.\end{equation} 
For the definition of the multiplier operators $\partial_x$, $\langle \partial_x \rangle^{s_1}$, $\partial_x^{-1}$, $\partial_y$, $\langle \partial_y \rangle^{s_2}$, $\langle \nabla \rangle^{\sigma}$, see the discussion following the statement of Theorem \ref{maintheorem} and the discussion of quasi-periodic functions in the third paragraph of the introduction. In the quasi-periodic setting, the Schr\"odinger propagator $e^{it\partial_{xx}}$ is defined as the multiplier operator with symbol $e^{-it(\omega \cdot n)^2}$.

\section{preliminaries}\label{preliminariessection}
We record some properties of anisotropic Sobolev spaces (Definition \ref{anisotropicSobolev} with $p=2$) that we will use in the body of this paper. We establish commutator estimates for quasi-periodic functions (Lemma \ref{KatoPonceVegaQuasih}) that we will need in Section \ref{wellposednessanisotropicspace}. This requires discussing transference techniques for bilinear multiplier operators.

\subsection{Properties of anisotropic Sobolev spaces}
\begin{lemma}[Properties of anisotropic Sobolev spaces]\label{anisotropicproperties}
    We have the following: \\

\noindent $\bullet$ Anisotropic Sobolev embedding: For $s_1,s_2 > \frac{1}{2}$ with $\frac{1}{s_1} + \frac{1}{s_2} <2$, we have \begin{equation}\label{anisotropicembedding} \|u\|_{L^{\infty}} \lesssim \|u\|_{H^{s_1,s_2}_{\omega}}.\end{equation}

\noindent $\bullet$ Algebra property: For $s_1,s_2 > \frac{1}{2}$ satisfying $\frac{1}{s_1} + \frac{1}{s_2} <2$ we have
\begin{equation}\label{algebraproperty}
        \|uv\|_{H^{s_1,s_2}_{\omega}}\lesssim \|u\|_{H^{s_1,s_2}_{\omega}}\|v\|_{L^{\infty}} + \|u\|_{L^{\infty}} \|v\|_{H^{s_1,s_2}_{\omega}} \lesssim \|u\|_{H^{s_1,s_2}_{\omega}} \|v\|_{H^{s_1,s_2}_{\omega}}. \end{equation}
Rescaling the norm appropriately, $H_{\omega}^{s_1,s_2}$ becomes a Banach algebra. \\

\noindent $\bullet$ Crucial estimate on $\partial_xu$: Suppose $s_2>\frac{1}{2}$ and $s_1>\frac{3s_2}{2s_2-1}$. We have \begin{equation}\label{crucialequation}\|\partial_x u\|_{L^{\infty}} \lesssim \|u\|_{H^{s_1,s_2}_{\omega}}.\end{equation}

\noindent $\bullet$ Interpolation: Suppose $0<\tilde{s_1}<s_1$, $0<\tilde{s_2}<s_2$ and $\theta \in (0,1)$ with $\tilde{s_1} \leq \theta s_1$ and $\tilde{s_2} \leq \theta s_2$. Then \begin{equation}\label{interpolationestimate}\|u\|_{H^{\tilde{s_1},\tilde{s_2}}_{\omega}} \leq \|u\|_{H^{s_1,s_2}_{\omega}}^{\theta} \cdot \|u\|_{L^2}^{1-\theta}.\end{equation} In particular, this applies to $\theta = \max (\frac{\tilde{s_1}}{s_1}, \frac{\tilde{s_2}}{s_2})$. \\

\noindent $\bullet$ Boundedness properties of derivative operators: The following maps are bounded linear:
\begin{equation}\label{boundedness1} \partial_{x}^k: H^{s_1,s_2}_{\omega} \rightarrow H^{s_1-k,\frac{s_1-k}{s_1}s_2}_{\omega} \text{  for  } k<s_1 \end{equation}
\begin{equation}\label{boundedness2} \langle \nabla \rangle^{\eta}:H^{s_1,s_2}_{\omega} \rightarrow H_{\omega}^{s_1-\eta \frac{s_1}{s_2}, s_2-\eta} \text{  for  } 0 \leq \eta \leq s_2 \leq s_1. \end{equation}
\end{lemma}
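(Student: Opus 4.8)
\textbf{Proof plan for Lemma \ref{anisotropicproperties}.}

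The plan is to work on the Fourier side throughout, exploiting that all the operators in question are Fourier multipliers on $\mathbb{T}^2$ with symbols depending only on the two linear forms $\omega\cdot n$ and $\omega^{\perp}\cdot n$. Write $\xi=\omega\cdot n$, $\eta=\omega^{\perp}\cdot n$, so that $\langle\partial_x\rangle^{s_1}$ and $\langle\partial_y\rangle^{s_2}$ have symbols $\langle\xi\rangle^{s_1}$, $\langle\eta\rangle^{s_2}$, and the $H^{s_1,s_2}_\omega$-norm is comparable to $\big\|\,(1+|\xi|^{s_1}+|\eta|^{s_2})\,\hat u\,\big\|_{\ell^2}$.

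\emph{Anisotropic Sobolev embedding.} For \eqref{anisotropicembedding} I would bound $\|u\|_{L^\infty}\le\sum_n|\hat u(n)|$ and apply Cauchy--Schwarz with the weight $w(n)=1+|\omega\cdot n|^{s_1}+|\omega^\perp\cdot n|^{s_2}$: it suffices that $\sum_n w(n)^{-2}<\infty$. Since $n\mapsto(\omega\cdot n,\omega^\perp\cdot n)$ is a fixed invertible linear map, this sum is comparable (up to an $\omega$-dependent constant, which is acceptable) to $\sum_{(a,b)\in L}(1+|a|^{s_1}+|b|^{s_2})^{-2}$ over a lattice $L$; comparing to the integral $\iint (1+|a|^{s_1}+|b|^{s_2})^{-2}\,da\,db$ and substituting $a=A^{1/s_1}$, $b=B^{1/s_2}$ reduces finiteness to $\iint (1+A+B)^{-2}A^{1/s_1-1}B^{1/s_2-1}\,dA\,dB<\infty$, which holds precisely when $s_1,s_2>1/2$ (for integrability near each axis) and $\frac1{s_1}+\frac1{s_2}<2$ (for integrability at infinity). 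This is the one place where the sharp hypotheses enter, and it is the conceptual heart of the lemma.

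\emph{Algebra property.} For \eqref{algebraproperty} the clean route is to use the \emph{embedding already proved}, so that the second inequality in \eqref{algebraproperty} follows from the first, and to prove the first by a Littlewood--Paley/paraproduct decomposition \emph{in each of the two directions separately}. Decompose $u=\sum_j P_j^x u$ and further $P_j^x u=\sum_k P_k^y P_j^x u$ into dyadic pieces in the $\xi$- and $\eta$-variables (these are honest multipliers on $\mathbb{T}^2$, bounded on every $L^p$, $1<p<\infty$, by the De Leeuw transference Proposition \ref{deLeeuw} together with classical Littlewood--Paley theory on $\mathbb{R}^2$; one must be mildly careful with the $L^\infty$ endpoint, but the low-frequency pieces are fine and the high-frequency pieces are always paired against an $L^2$-factor). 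Splitting $uv$ into high$\times$low, low$\times$high and high$\times$high interactions and summing the frequency parameters via Cauchy--Schwarz/Young gives $\|\langle\partial_x\rangle^{s_1}(uv)\|_{L^2}\lesssim\|u\|_{H^{s_1,s_2}_\omega}\|v\|_{L^\infty}+\|u\|_{L^\infty}\|v\|_{H^{s_1,s_2}_\omega}$ and symmetrically for $\langle\partial_y\rangle^{s_2}$; here $\frac1{s_1}+\frac1{s_2}<2$ is used to sum the high$\times$high term, exactly as in the scalar case. Rescaling $\|\cdot\|_{H^{s_1,s_2}_\omega}$ by a large constant absorbs the implicit constant and yields the Banach algebra statement.

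\emph{Crucial estimate, interpolation, boundedness of derivative operators.} For \eqref{crucialequation} I would again write $\|\partial_x u\|_{L^\infty}\le\sum_n|\omega\cdot n|\,|\hat u(n)|$ and Cauchy--Schwarz against the weight $w(n)$ as above; the summability condition becomes $\sum_{(a,b)\in L}\frac{a^2}{(1+a^{s_1}+b^{s_2})^2}<\infty$, which by the same change of variables is finite iff $s_1>1$, $s_2>1/2$, \emph{and} (for integrability at infinity) $\frac{2+1}{s_1}+\frac1{s_2}<2$, i.e. $s_1>\frac{3s_2}{2s_2-1}$ — note $s_1>\frac{3s_2}{2s_2-1}>1$ automatically, so this single inequality together with $s_2>1/2$ is exactly the stated hypothesis. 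The interpolation inequality \eqref{interpolationestimate} is pure Fourier-side Hölder: with exponents $1/\theta$ and $1/(1-\theta)$ applied to $|\hat u(n)|^2=\big(w_{s_1,s_2}(n)^{2}|\hat u(n)|^2\big)^{\theta}\big(|\hat u(n)|^2\big)^{1-\theta}$, one only needs the pointwise bound $w_{\tilde s_1,\tilde s_2}(n)\lesssim w_{s_1,s_2}(n)^{\theta}$, i.e. $|\xi|^{\tilde s_1}\lesssim(1+|\xi|^{s_1})^{\theta}$ and $|\eta|^{\tilde s_2}\lesssim(1+|\eta|^{s_2})^{\theta}$, which hold precisely under $\tilde s_1\le\theta s_1$, $\tilde s_2\le\theta s_2$. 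Finally \eqref{boundedness1} and \eqref{boundedness2} are immediate symbol comparisons: $|\omega\cdot n|^{k}(1+|\omega\cdot n|^{s_1-k}+|\omega^\perp\cdot n|^{\frac{s_1-k}{s_1}s_2})\lesssim 1+|\omega\cdot n|^{s_1}+|\omega^\perp\cdot n|^{s_2}$ using $|\omega\cdot n|^{k}|\omega^\perp\cdot n|^{\frac{s_1-k}{s_1}s_2}\le|\omega\cdot n|^{s_1}+|\omega^\perp\cdot n|^{s_2}$ by Young's inequality with exponents $\frac{s_1}{k}$ and $\frac{s_1}{s_1-k}$, and likewise for $\langle\nabla\rangle^\eta=\langle\xi^2+\eta^2\rangle^{\eta/2}\lesssim\langle\xi\rangle^\eta+\langle\eta\rangle^\eta$ followed by the same Young trick. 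The main obstacle is really just the embedding \eqref{anisotropicembedding}: getting the sharp exponent range requires the lattice-sum-to-integral comparison and the two-variable substitution above, after which every other item is a short computation with the same weight.
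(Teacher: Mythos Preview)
Your proposal is correct and matches the paper for the embedding, the crucial estimate, the interpolation, and the boundedness properties: all of these are done exactly the same way (Cauchy--Schwarz on the Fourier side, comparison of the lattice sum to the corresponding integral, and pointwise symbol bounds via Young's inequality). The one substantive difference is the algebra property. The paper bypasses Littlewood--Paley machinery entirely and just uses the elementary subadditivity $\langle\omega\cdot n\rangle^{s_1}\lesssim\langle\omega\cdot m\rangle^{s_1}+\langle\omega\cdot(n-m)\rangle^{s_1}$ (and likewise for $\omega^{\perp}$) on the Fourier side together with Young's convolution inequality; this yields $\|uv\|_{H^{s_1,s_2}_\omega}\lesssim\|u\|_{H^{s_1,s_2}_\omega}\|\hat v\|_{\ell^1}+\|\hat u\|_{\ell^1}\|v\|_{H^{s_1,s_2}_\omega}$, after which the Cauchy--Schwarz step from the embedding bounds $\|\hat v\|_{\ell^1}\lesssim\|v\|_{H^{s_1,s_2}_\omega}$. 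Your paraproduct route is also valid and has the advantage of giving the sharper $L^{\infty}$-form of the first inequality in \eqref{algebraproperty} directly, but one remark: your claim that ``$\tfrac{1}{s_1}+\tfrac{1}{s_2}<2$ is used to sum the high$\times$high term'' is misplaced --- the high--high piece sums under $s_1,s_2>0$ alone, and the condition $\tfrac{1}{s_1}+\tfrac{1}{s_2}<2$ enters only through the embedding \eqref{anisotropicembedding} when passing from the first inequality to the second.
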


\begin{proof}
To prove estimate \eqref{anisotropicembedding}, it suffices to 
show that $\big\|\frac{1}{1+|\omega\cdot n|^{s_1} + | \omega^{\perp} \cdot n|^{s_2}}\big\|_{l^2_n(\mathbb{Z}^2)}$ is finite.
The integral over $\mathbb{R}^2$ and the sum over the lattice $\mathbb{Z}^2$ of the function $f: \mathbb{R}^2 \rightarrow \mathbb{R}_{+}$ taking values 
\begin{equation*}f(\xi_1,\xi_2) =\frac{1}{1+ |\omega \cdot (\xi_1,\xi_2)|^{2s_1} + |\omega^{\perp} \cdot (\xi_1,\xi_2)|^{2s_2}} \sim \frac{1}{[1+|\omega \cdot (\xi_1,\xi_2)|^{s_1} + |\omega^{\perp} \cdot (\xi_1,\xi_2)|^{s_2}]^2}\end{equation*} 
are comparable:
\begin{equation*}\sum_{(n_1,n_2) \in \mathbb{Z}^2} f(n_1,n_2) \sim \int_{\mathbb{R}^2}f(\xi_1,\xi_2) \ d\xi_1 d\xi_2.\end{equation*}
Indeed, the function $f$ is slowly varying on rectangles in the sense that for any rectangle $Q$ centered at the origin it holds that
$$\forall (x,y) \in \mathbb{R}^2 \ \forall (a,b)\in Q: f(x+a, y+b) \sim_Q f(x,y).$$
After applying a rotational dilation of the plane, we conclude by invoking the following fact (which can be established by calculation): Suppose $s,t>1$. Then \begin{equation}\label{helpfulcalculation} \int_0^{\infty} \int_0^{\infty} \frac{1}{1+x^s+y^t} dxdy < \infty \quad \iff \quad \frac{1}{s} + \frac{1}{t} < 1. \end{equation} 

The algebra property (\ref{algebraproperty}) follows from (\ref{anisotropicembedding}) together with Plancherel and subadditivity (up to a constant factor) of $n \mapsto \langle \omega \cdot n \rangle^{2s_1}$ and $n \mapsto \langle \omega^{\perp} \cdot n \rangle^{2s_2}$. \\

By the same reasoning as for (\ref{anisotropicembedding}), estimate (\ref{crucialequation}) amounts to $$\int_{\mathbb{R}} \int_{\mathbb{R}} \frac{|\xi_1|^2}{1+|\xi_1|^{2s_1} + |\xi_2|^{2s_2}} d\xi_1 d\xi_2 < \infty.$$ But the latter follows again from (\ref{helpfulcalculation}) after a change of variable $\xi_1'=\xi_1^3$, provided the hypotheses $s_1>\frac{3s_2}{2s_2-1}$ and $s_2>\frac{1}{2}$ are satisfied.

The interpolation inequality (\ref{interpolationestimate}) follows by Plancherel and H\"older just as for standard Sobolev spaces. \\

The boundedness of the derivative operators \eqref{boundedness1} and \eqref{boundedness2} can be established using Young's inequality for products (and Plancherel). 
For example, boundedness of \eqref{boundedness1} follows from $$\xi_1^k(\langle \xi_1 \rangle^{s_1-k}+\langle \xi_2 \rangle^{\frac{s_1-k}{s_1}s_2}) \lesssim \langle \xi_1 \rangle^{s_1} + \langle \xi_2 \rangle^{s_2}.$$
Boundedness of \eqref{boundedness2} can be established similarly.
\end{proof}
Controlling the quantity $\|\partial_x u\|_{L^{\infty}}$ appearing in \eqref{crucialequation} is central to the energy method for dispersive equations, as carried out in Section \ref{wellposednessanisotropicspace} for anisotropic Sobolev spaces.

\subsection{Tools for dispersive PDE in the quasi-periodic setting: commutator estimates}

We prove two commutator estimates for fractional inhomogeneous tangential and normal derivatives. The estimates will play a similar role in Section \ref{wellposednessanisotropicspace} as the familiar Kato-Ponce commutator estimate does in the setting on the line. See the discussion following Definition \ref{anisotropicSobolev} for the definition of directional fractional inhomogenous derivatives $\langle \partial_x \rangle^{s_1}$ and $\langle \partial_y \rangle^{s_2}$. 

In the following, we call the bilinear operator defined by \begin{equation}\label{paraproduct}\pi(f,g)(x):=\int_{\mathbb{R}^2} \int_{\mathbb{R}^2} m(\xi_1,\xi_2)\hat{f}(\xi_1) \hat{g}(\xi_2)e^{2\pi i(\xi_1+\xi_2).x}d\xi_1 d\xi_2\end{equation} the paraproduct (or bilinear multiplier operator) $\pi$ associated to the symbol $m(\xi_1,\xi_2)$. 
A classical result by Coifman and Meyer \cite{MR511821, CoifmanMeyerAlternative} says that if the symbol $m$ is smooth and satisfies the bound
\begin{equation}\label{CoifmanMeyer}
|\partial_{\xi_1}^{\alpha} \partial_{\xi_2}^{\beta}m(\xi_1,\xi_2)| \lesssim_{\alpha,\beta} (|\xi_1|+|\xi_2|)^{-|\alpha|-|\beta|} \quad \text{for every} \ \alpha, \beta \in \mathbb{N}^2,
\end{equation}
then $\pi$ extends from Schwartz functions to a bounded bilinear map $L^{p_1}(\mathbb{R}^2) \times L^{p_2}(\mathbb{R}^2) \rightarrow L^p(\mathbb{R}^2)$ for $1<p_1,p_2 \leq \infty$ with $\frac{1}{p_1}+\frac{1}{p_2}=\frac{1}{p}$ and $0<p<\infty$. Such symbols are called Coifman-Meyer symbols. 

While the Coifman-Meyer Theorem features prominently in the original proof of the Kato-Ponce commutator estimate \cite{MR0951744}, we obtain the commutator 
estimates in Lemma \ref{KatoPonceVegaQuasih} by transference arguments. However, we still need the Coifman-Meyer theorem in proving the paraproduct estimates in Lemma \ref{paraproducttwo}. \\

Given a paraproduct $\pi$ of the form (\ref{paraproduct}), its periodization $\tilde{\pi}$ is defined by taking the restriction of its symbol $m(\xi_1,\xi_2)$ to the integer lattice $\mathbb{Z}^2$ as the symbol of $\tilde{\pi}$. That is, for functions $f,g:\mathbb{T}^2 \rightarrow \mathbb{C}$ define \begin{equation}\label{periodization}\tilde{\pi}(f,g)(x):= \sum_{n_1,n_2 \in \mathbb{Z}^2} \pi(n_1,n_2)\hat{f}(n_1) \hat{g}(n_2)\exp(2\pi i x.(n_1+n_2)).\end{equation}
In the proof of one of the commutator estimates we use the following transference principle:

\begin{proposition}\cite[Theorem 3]{MR1808390}\label{transferenceparaproduct}
    Suppose $m(\xi_1,\xi_2)$ is a $L^{\infty}(\mathbb{R}^2\times \mathbb{R}^2)$-function and let $\pi$ be the paraproduct defined by formula (\ref{paraproduct}).
    If the paraproduct $\pi$ satisfies the bound $$\|\pi(f,g)\|_{L^p(\mathbb{R}^2)} \lesssim \|f\|_{L^{p_1}(\mathbb{R}^2)}\|g\|_{L^{p_2}(\mathbb{R}^2)}$$ for some $p,p_1,p_2 \in [1,\infty]$ with $\frac{1}{p}=\frac{1}{p_1}+\frac{1}{p_2}$ and each point of the lattice $\mathbb{Z}^2$ is a Lebesgue point of $m$, then the periodization $\tilde{\pi}$ also satisfies the bound $$\|\tilde{\pi}(f,g)\|_{L^p(\mathbb{T}^2)} \lesssim \|f\|_{L^{p_1}(\mathbb{T}^2)}\|g\|_{L^{p_2}(\mathbb{T}^2)}.$$
\end{proposition}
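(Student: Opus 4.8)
The plan is to prove this by a transference argument of De~Leeuw type (cf.\ Proposition~\ref{deLeeuw}), carried out in the \emph{dual} formulation so as to accommodate the fact that $m$ is merely bounded rather than continuous. Since trigonometric polynomials are dense in $L^{p_1}(\mathbb{T}^2)$ and $L^{p_2}(\mathbb{T}^2)$ when the exponents are finite (and the bound on trigonometric polynomials is anyway what one uses at the endpoints), it suffices to prove $\|\tilde\pi(f,g)\|_{L^p(\mathbb{T}^2)}\lesssim\|f\|_{L^{p_1}(\mathbb{T}^2)}\|g\|_{L^{p_2}(\mathbb{T}^2)}$ for trigonometric polynomials $f,g$. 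Pairing with a trigonometric polynomial $h$ (with $\tfrac1p+\tfrac1{p'}=1$, and recalling that such $h$ are norming for $L^p(\mathbb{T}^2)$), this reduces to the inequality
\[
\bigl|\langle\tilde\pi(f,g),h\rangle_{L^2(\mathbb{T}^2)}\bigr|\lesssim\|f\|_{L^{p_1}(\mathbb{T}^2)}\|g\|_{L^{p_2}(\mathbb{T}^2)}\|h\|_{L^{p'}(\mathbb{T}^2)},
\]
where, by Parseval, $\langle\tilde\pi(f,g),h\rangle_{L^2(\mathbb{T}^2)}=\sum_{k,l\in\mathbb{Z}^2}m(k,l)\,\hat{f}(k)\,\hat{g}(l)\,\overline{\hat{h}(k+l)}$. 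For $t>0$ I would introduce the Gaussian $\mathcal G_t(x):=e^{-\pi t|x|^2}$ on $\mathbb{R}^2$ and the ``thickened'' Schwartz functions $f_t:=f\,\mathcal G_t$, $g_t:=g\,\mathcal G_t$, $h_t:=h\,\mathcal G_t$; the strategy is to apply the assumed $\mathbb{R}^2$-bound to $\langle\pi(f_t,g_t),h_t\rangle_{L^2(\mathbb{R}^2)}$ and then let $t\to0$.

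The heart of the matter is the asymptotic $\langle\pi(f_t,g_t),h_t\rangle_{L^2(\mathbb{R}^2)}=t^{-1}\bigl(c_0\,\langle\tilde\pi(f,g),h\rangle_{L^2(\mathbb{T}^2)}+o(1)\bigr)$ as $t\to0$, with $c_0:=\int_{\mathbb{R}^4}e^{-\pi(|u|^2+|v|^2+|u+v|^2)}\,du\,dv$. To prove it I would use $\widehat{f_t}(\xi)=t^{-1}\sum_k\hat{f}(k)\,e^{-\pi|\xi-k|^2/t}$ (and likewise for $g_t,h_t$), insert these into $\langle\pi(f_t,g_t),h_t\rangle_{L^2(\mathbb{R}^2)}=\int\!\!\int m(\xi_1,\xi_2)\,\widehat{f_t}(\xi_1)\,\widehat{g_t}(\xi_2)\,\overline{\widehat{h_t}(\xi_1+\xi_2)}\,d\xi_1\,d\xi_2$, and split the resulting triple sum over $(k,l,j)\in(\mathbb{Z}^2)^3$ into the diagonal part $j=k+l$ and the rest. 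In an off-diagonal term the three Gaussians have incompatible centres (as $|j-(k+l)|\ge1$), so the integral is $O(e^{-c/t})$; there being finitely many of them, they are negligible next to $t^{-1}$. In a diagonal term, the substitution $\xi_1=k+\sqrt t\,u$, $\xi_2=l+\sqrt t\,v$ turns the integral into $t^{-1}\int_{\mathbb{R}^4}m\bigl(k+\sqrt t\,u,\,l+\sqrt t\,v\bigr)\,e^{-\pi(|u|^2+|v|^2+|u+v|^2)}\,du\,dv$ (the prefactor $t^{-3}$ from the three copies of $\widehat{\mathcal G_t}$ meeting the Jacobian $t^{2}$), and since $(k,l)$ is a Lebesgue point of $m$ this tends to $t^{-1}c_0\,m(k,l)$; summing over the finitely many nonzero terms gives the claim.

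Separately, a Riemann-sum argument yields the norm asymptotics: because $|f|^{p_1}$ is $\mathbb{Z}^2$-periodic and bounded while $\mathcal G_t(x)^{p_1}=e^{-p_1\pi t|x|^2}$ varies slowly at scale $t^{-1/2}$, one obtains $\|f_t\|_{L^{p_1}(\mathbb{R}^2)}=(p_1 t)^{-1/p_1}\|f\|_{L^{p_1}(\mathbb{T}^2)}\bigl(1+o(1)\bigr)$, and likewise $\|g_t\|_{L^{p_2}(\mathbb{R}^2)}\sim(p_2 t)^{-1/p_2}\|g\|_{L^{p_2}(\mathbb{T}^2)}$ and $\|h_t\|_{L^{p'}(\mathbb{R}^2)}\sim(p' t)^{-1/p'}\|h\|_{L^{p'}(\mathbb{T}^2)}$, with the reading $t^{-1/\infty}=1$ also covering the endpoint exponents. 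Now I would combine: by H\"older and the hypothesis, $\bigl|\langle\pi(f_t,g_t),h_t\rangle_{L^2(\mathbb{R}^2)}\bigr|\le\|\pi(f_t,g_t)\|_{L^p(\mathbb{R}^2)}\|h_t\|_{L^{p'}(\mathbb{R}^2)}\lesssim\|f_t\|_{L^{p_1}}\|g_t\|_{L^{p_2}}\|h_t\|_{L^{p'}}$; inserting the asymptotics and using $\tfrac1{p_1}+\tfrac1{p_2}+\tfrac1{p'}=\tfrac1p+\tfrac1{p'}=1$, every power of $t$ cancels, so letting $t\to0$ gives $\bigl|\langle\tilde\pi(f,g),h\rangle_{L^2(\mathbb{T}^2)}\bigr|\lesssim\|f\|_{L^{p_1}(\mathbb{T}^2)}\|g\|_{L^{p_2}(\mathbb{T}^2)}\|h\|_{L^{p'}(\mathbb{T}^2)}$. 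Taking the supremum over admissible $h$ and then using density completes the argument.

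I expect the main obstacle to be the diagonal/off-diagonal analysis of the second paragraph: one must check carefully that $m\in L^\infty$ having each lattice point as a Lebesgue point is \emph{exactly} what is needed to extract the constant $c_0\,m(k,l)$ from the concentrating Gaussian integral --- this is the place where merely bounded symbols, rather than the continuous ones in Proposition~\ref{deLeeuw}, force one to be delicate --- and that the off-diagonal interactions really are of strictly lower order in $t$. The norm asymptotics and the bookkeeping with the exponents are routine.
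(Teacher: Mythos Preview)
Your approach is correct and matches the paper's own sketch. The paper cites this proposition from \cite{MR1808390} rather than proving it in full, but the remark following the proof of estimate~(\ref{commutatorsderivatives}) outlines exactly your argument: Lemma~\ref{PQR} is precisely your diagonal/off-diagonal Gaussian analysis of the trilinear pairing, Lemma~\ref{sderivativesPw} (with $s=0$) is your Riemann-sum norm asymptotic, and the duality step is identical. The only cosmetic difference is that the paper normalizes the three Gaussian weights to satisfy $\alpha+\beta+\gamma=1$ while you take them all equal; this merely changes the constant $c_0$. Your explicit invocation of the Lebesgue-point hypothesis at the diagonal is in fact slightly sharper than the paper's Lemma~\ref{PQR}, which assumes continuity at lattice points (the paper notes in a remark that this can be weakened to Lebesgue points, as you do).
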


\begin{lemma}[commutator estimates]\label{KatoPonceVegaQuasih} We have the following two estimates:

Quasi-periodic Kato-Ponce commutator estimate: Suppose $s_1\geq1$ and $2 \leq p<\infty$. Then \begin{equation}\label{commutatorsderivatives}\|[\langle \partial_x \rangle^{s_1},f]g\|_{L^p(\mathbb{T}^2)} \lesssim \|\langle \partial_x \rangle^{s_1}f\|_{L^p(\mathbb{T}^2)} \|g\|_{L^{\infty}(\mathbb{T}^2)} + \|\partial_xf\|_{L^{\infty}(\mathbb{T}^2)}\|\langle \partial_x \rangle^{s_1-1}g\|_{L^p(\mathbb{T}^2)}.\end{equation}

Quasi-periodic Kenig-Ponce-Vega commutator estimate: Suppose $0 \leq s_2 \leq 1$, and $p \in [1, \infty]$. Then \begin{equation}\label{KenigPonceVegaQuasi}\|[\langle \partial_y \rangle^{s_2},f]g\|_{L^p(\mathbb{T}^2)} \lesssim \|\langle \partial_y \rangle^{s_2}f\|_{L^p(\mathbb{T}^2)} \|g\|_{L^{\infty}(\mathbb{T}^2)}.\end{equation}
\end{lemma}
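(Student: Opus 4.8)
\textbf{Proof plan for the commutator estimates (Lemma \ref{KatoPonceVegaQuasih}).}

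The plan is to reduce both estimates to the Euclidean setting by transference, handling the two parts by different transference principles because of the presence of an $L^\infty$-endpoint in the second estimate.

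For the quasi-periodic Kato--Ponce estimate \eqref{commutatorsderivatives}, I would first observe that the bilinear operator $(f,g) \mapsto [\langle \partial_x \rangle^{s_1}, f]g$ on $\mathbb{T}^2$ is the periodization (in the sense of \eqref{periodization}) of the Euclidean bilinear operator $(f,g)\mapsto [\langle \partial_x\rangle^{s_1},f]g$ on $\mathbb{R}^2$, where now $\langle\partial_x\rangle^{s_1}$ is the Fourier multiplier on $\mathbb{R}^2$ with symbol $\langle \omega\cdot\xi\rangle^{s_1}$. Since everything only sees the $\omega$-direction, after a rotational change of variables this Euclidean operator is (a tensor product with the identity in the normal variable of) the classical one-dimensional commutator $[\langle D\rangle^{s_1},f]g$, for which the Kato--Ponce estimate on $\mathbb{R}$ — in the inhomogeneous form with $\|\langle D\rangle^{s_1}f\|_{L^p}\|g\|_{L^\infty}+\|f'\|_{L^\infty}\|\langle D\rangle^{s_1-1}g\|_{L^p}$ on the right — is classical (Kato--Ponce \cite{MR0951744}, or its refinements). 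Strictly, the commutator decomposes via Bony paraproduct decomposition into a Coifman--Meyer piece (symbol of the form $\langle\xi_1+\xi_2\rangle^{s_1}/\langle\xi_1\rangle^{s_1}$ times smooth cutoffs, a Coifman--Meyer symbol, contributing the second term after one derivative is extracted) plus a piece where the high frequency sits on $f$ (contributing the first term); each piece's Euclidean bound is known, and its symbol is smooth with lattice points as Lebesgue points, so Proposition \ref{transferenceparaproduct} transfers the bound to $\mathbb{T}^2$. The $2\le p<\infty$ restriction enters exactly where the Euclidean Kato--Ponce / Coifman--Meyer bounds require it (so that the exponents $1/p = 1/p + 1/\infty$ are admissible, i.e. $p>1$ for the multiplier part; the lower bound $p\ge 2$ is inherited from the form of the estimate being transferred).

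For the quasi-periodic Kenig--Ponce--Vega estimate \eqref{KenigPonceVegaQuasi}, the point is that it must hold down to $p=1$ and up to $p=\infty$, which is outside the Coifman--Meyer range, so Proposition \ref{transferenceparaproduct} as stated does not apply at the endpoints. Instead I would prove it by hand using the kernel representation of $\langle\partial_y\rangle^{s_2}$ for $0\le s_2\le 1$. The commutator $[\langle\partial_y\rangle^{s_2},f]g$ has an integral kernel in the normal variable coming from the difference $\langle\partial_y\rangle^{s_2}(fg) - f\langle\partial_y\rangle^{s_2}g$; writing $\langle\partial_y\rangle^{s_2} = 1 + (\langle\partial_y\rangle^{s_2}-1)$ and noting the part $1$ commutes, it reduces to the operator with symbol $\langle\omega^\perp\cdot n\rangle^{s_2}-1$, which for $0<s_2\le 1$ has a nonnegative, integrable convolution kernel $K(y)$ on $\mathbb{T}$ (in the $\omega^\perp$ variable) — this is where $s_2\le1$ is used, ensuring $\langle\cdot\rangle^{s_2}-1$ is the symbol of a nice kernel with $\int |K| < \infty$ bounded by a dimensional constant. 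Then the commutator becomes $\int K(y')\big(f(\cdot-y'\text{-shift})-f(\cdot)\big)\,g(\cdot - y'\text{-shift})\,dy'$ schematically, and by the mean value / fundamental theorem of calculus in the $\omega^\perp$-direction one writes $f(x-y')-f(x) = -\int_0^1 (\partial_y f)(x-ty')\cdot(\text{directional length of }y')\,dt$; pulling the $g$ out in $L^\infty$ and using $|y'|K(y') \in L^1$ gives the bound by $\|\partial_y f\|_{L^?}\|g\|_{L^\infty}$. To land on $\|\langle\partial_y\rangle^{s_2}f\|_{L^p}$ rather than $\|\partial_y f\|$ one should not differentiate fully but rather exploit that $K$ has the right homogeneity: more precisely one interpolates the two bounds obtained from $\langle\partial_y\rangle^{s_2}(fg)-f\langle\partial_y\rangle^{s_2}g$ written as $\int K_{s_2}(y')(f(x)-f(x-y'))g(x-y')dy'$ and estimates $\|f(x)-f(x-y')\|_{L^p_x}\lesssim \min(|y'|^{s_2}\|\langle\partial_y\rangle^{s_2}f\|_{L^p}, \ldots)$ via a standard Besov-type characterization of $\langle\partial_y\rangle^{s_2}$, then use $\int \min(|y'|^{s_2},1)|K_{s_2}(y')|\,dy' <\infty$; this argument is translation-based and hence works for all $p\in[1,\infty]$, including the endpoints, giving \eqref{KenigPonceVegaQuasi}. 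The transference from $\mathbb{R}$ to $\mathbb{T}$ here is painless because the kernel is already periodic (it is the periodization of the Euclidean Bessel-type kernel), or one simply works directly on the torus.

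The main obstacle I anticipate is the second estimate at the endpoints $p=1,\infty$: the clean route (Coifman--Meyer plus Proposition \ref{transferenceparaproduct}) is unavailable there, so one genuinely needs the kernel/translation argument and a careful bookkeeping of the $s_2$-homogeneity to produce $\|\langle\partial_y\rangle^{s_2}f\|_{L^p}\|g\|_{L^\infty}$ on the right with only one factor carrying regularity (no $\langle\partial_y\rangle^{s_2-1}g$ term, unlike Kato--Ponce — this cleaner form is special to $s_2\le1$ and is exactly the Kenig--Ponce--Vega phenomenon). Care is also needed because $\langle\partial_y\rangle^{s_2}$ is the \emph{inhomogeneous} derivative and $\langle\cdot\rangle^{s_2}-1$ behaves like $|\cdot|^{s_2}$ only at high frequency, so the kernel $K_{s_2}$ decays but is not exactly homogeneous; this is harmless but must be tracked.
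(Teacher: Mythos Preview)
Your plan has the transference roles essentially reversed compared with the paper, and this matters.

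For the Kenig--Ponce--Vega estimate \eqref{KenigPonceVegaQuasi}, Proposition~\ref{transferenceparaproduct} \emph{does} apply directly, including at the endpoints $p=1,\infty$: the hypothesis there is only that the symbol be bounded and that the Euclidean estimate hold for the given exponents, not that the exponents lie in any Coifman--Meyer range. After normalizing by $\langle\omega^\perp\cdot n_1\rangle^{-s_2}$, the bilinear symbol $[\langle\omega^\perp\cdot(n_1+n_2)\rangle^{s_2}-\langle\omega^\perp\cdot n_2\rangle^{s_2}]\langle\omega^\perp\cdot n_1\rangle^{-s_2}$ is bounded, and the Euclidean estimate on $\mathbb{R}$ (hence on $\mathbb{R}^2$ after a rotational dilation) holds for all $p\in[1,\infty]$ by the result cited as \cite[Theorem 1.1]{MR3914540}. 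This is exactly what the paper does, in three lines. Your kernel argument is unnecessary, and in fact contains an error: $\langle\partial_y\rangle^{s_2}-1$ for $s_2>0$ is \emph{not} given by convolution with an integrable kernel (it raises regularity); the relevant kernel representation is the hypersingular one for $|\partial_y|^{s_2}$, with kernel $\sim|y|^{-1-s_2}$, and one must work in principal value. The finite-difference strategy can be salvaged, but you would essentially be reproving the Euclidean KPV estimate on the torus from scratch.

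For the Kato--Ponce estimate \eqref{commutatorsderivatives}, the situation is the opposite: Proposition~\ref{transferenceparaproduct} is \emph{not} directly applicable to the commutator as a single bilinear operator (the symbol is unbounded). Your idea of first performing a Bony paraproduct decomposition and then transferring each Coifman--Meyer piece separately via Proposition~\ref{transferenceparaproduct} is correct and is explicitly acknowledged in the paper as a viable alternative route (see the remark following the proof). The paper instead takes a different path: it proves bespoke transference lemmas (Lemmas~\ref{PQR} and~\ref{sderivativesPw}) adapting De~Leeuw's Gaussian-averaging argument to the bilinear, unbounded-symbol setting, and then transfers the full Euclidean Kato--Ponce estimate in one step via duality. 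Your approach is more modular (piece-by-piece transference of known Coifman--Meyer bounds); the paper's buys a cleaner argument that avoids re-deriving the paraproduct decomposition and handles the unbounded symbol directly.
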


\begin{proof}[Proof of (\ref{KenigPonceVegaQuasi})]
    Estimate (\ref{KenigPonceVegaQuasi}) can be reformulated as $L^p(\mathbb{T}^2) \times L^{\infty}(\mathbb{T}^2) \rightarrow L^{p}(\mathbb{T}^2)$ boundedness of the bilinear multiplier operator with symbol $$[\langle \omega^{\perp} \cdot (n_1+n_2) \rangle^{s_2} - \langle \omega^{\perp} \cdot n_2 \rangle^{s_2}]\langle \omega^{\perp} \cdot n_1\rangle^{-s_2}.$$
    In this form, Proposition \ref{transferenceparaproduct} 
    is applicable. The corresponding estimate on the plane $\mathbb{R}^2$ follows from the classical Kenig-Ponce-Vega estimate on the line \cite[Theorem 1.1]{MR3914540} and a rotational dilation of the plane. 
\end{proof}

For estimate (\ref{commutatorsderivatives}), Proposition \ref{transferenceparaproduct} is not applicable. However, we can achieve transference by adapting the strategy of the proof of De Leeuw's theorem (Lemma \ref{deLeeuw}) in \cite[Section VII: Theorem 3.8]{MR304972}. 
We begin by proving two lemmas, the first one (Lemma \ref{PQR}) being in analogy with \cite[Section VII: Lemma 3.11]{MR304972}. We then conclude estimate (\ref{commutatorsderivatives}).

   \begin{lemma}\label{PQR}
        Let $P,Q$,$R$ be trigonometric polynomials. Suppose $\pi$ is a paraproduct whose symbol $m(\xi_1,\xi_2)$ is continuous at all points of the integer lattice $\mathbb{Z}^2$ and further grows at most exponentially $m(\xi_1,\xi_2)=O(\exp |(\xi_1,\xi_2)|).$ 
        Let $\tilde{\pi}$ be its periodization defined by (\ref{periodization}), and denote for $\delta>0$ the Gaussian $w_{\delta}(y) = e^{-\pi \delta |y|^2}$. Whenever $\alpha, \beta, \gamma >0$ with $\alpha+\beta+\gamma=1$ we have that \begin{equation}\label{PQRequation}\lim_{\varepsilon \rightarrow 0} \varepsilon^{} \int_{\mathbb{R}^2} \pi(Pw_{\alpha \varepsilon},Qw_{\beta \varepsilon})(x)\overline{R(x)}w_{\gamma \varepsilon}(x)dx = \int_{[-1/2, 1/2]^2} \tilde{\pi}(P,Q)(x)\overline{R(x)}dx.\end{equation}
    \end{lemma}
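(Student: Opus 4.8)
The statement is the key transference lemma for the Kato--Ponce commutator estimate, and it is modeled on Stein--Weiss's proof of De Leeuw's theorem. The plan is to reduce the identity to a finite computation by exploiting that $P,Q,R$ are trigonometric polynomials, so all Fourier expansions are finite sums and interchanging sums with the integral over $\mathbb{R}^2$ is unproblematic once the Gaussian factors are in place. Write $P=\sum_{j} a_j e^{2\pi i m_j \cdot x}$, $Q=\sum_k b_k e^{2\pi i n_k \cdot x}$, $R=\sum_\ell c_\ell e^{2\pi i p_\ell \cdot x}$ with finite index sets and $m_j,n_k,p_\ell \in \mathbb{Z}^2$. Then $P w_{\alpha\eps}$ has Fourier transform (on $\mathbb{R}^2$) equal to $\sum_j a_j \widehat{w_{\alpha\eps}}(\xi - m_j)$, where $\widehat{w_{\delta}}(\xi) = \delta^{-1} e^{-\pi|\xi|^2/\delta}$ is again a Gaussian, an approximate identity concentrating at the origin as $\eps \to 0$. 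Substituting these expansions into the left-hand side of \eqref{PQRequation} and using the definition \eqref{paraproduct} of $\pi$, the integral becomes a finite linear combination, indexed by $(j,k,\ell)$, of terms of the form
\begin{equation*}
a_j b_k \overline{c_\ell}\, \eps \int_{\mathbb{R}^2}\!\!\int_{\mathbb{R}^2}\!\!\int_{\mathbb{R}^2} m(\xi_1,\xi_2)\,\widehat{w_{\alpha\eps}}(\xi_1 - m_j)\,\widehat{w_{\beta\eps}}(\xi_2 - n_k)\, e^{2\pi i (\xi_1+\xi_2)\cdot x}\, \overline{R(x)}\, w_{\gamma\eps}(x)\, d\xi_1 d\xi_2 dx.
\end{equation*}

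\textbf{Key steps.} First I would carry out the $x$-integration: $\int_{\mathbb{R}^2} e^{2\pi i(\xi_1+\xi_2)\cdot x}\, e^{-2\pi i p_\ell \cdot x}\, w_{\gamma\eps}(x)\, dx = \widehat{w_{\gamma\eps}}(\xi_1+\xi_2 - p_\ell)$, another concentrating Gaussian. So each term is
\begin{equation*}
a_j b_k \overline{c_\ell}\, \eps \int_{\mathbb{R}^2}\!\!\int_{\mathbb{R}^2} m(\xi_1,\xi_2)\,\widehat{w_{\alpha\eps}}(\xi_1 - m_j)\,\widehat{w_{\beta\eps}}(\xi_2 - n_k)\,\widehat{w_{\gamma\eps}}(\xi_1+\xi_2 - p_\ell)\, d\xi_1 d\xi_2.
\end{equation*}
Now change variables $\xi_1 = m_j + \sqrt{\eps}\,u$, $\xi_2 = n_k + \sqrt{\eps}\,v$; the Jacobian contributes $\eps^2$. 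Each $\widehat{w_{\delta\eps}}$ at an argument of size $\sqrt{\eps}$ is $(\delta\eps)^{-1}$ times a fixed Gaussian in the rescaled variable, so the three Gaussian factors together produce a prefactor $\eps^{-3}(\alpha\beta\gamma)^{-1}$ times $g_\alpha(u) g_\beta(v) g_\gamma(u+v + \eps^{-1/2}(m_j+n_k-p_\ell))$ with $g_\delta$ a normalized Gaussian. The overall power of $\eps$ is $\eps \cdot \eps^2 \cdot \eps^{-3} = \eps^0 = 1$ --- this is exactly why the factor $\eps^{\phantom{1}}$ (i.e. $\eps^1$) appears in \eqref{PQRequation} and pins down the normalization. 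The remaining integral is
\begin{equation*}
\frac{a_j b_k \overline{c_\ell}}{\alpha\beta\gamma} \int\!\!\int m(m_j + \sqrt{\eps} u,\, n_k + \sqrt{\eps} v)\, g_\alpha(u)\, g_\beta(v)\, g_\gamma\!\big(u+v+\tfrac{m_j+n_k-p_\ell}{\sqrt{\eps}}\big)\, du\, dv.
\end{equation*}
As $\eps \to 0$: if $m_j + n_k \ne p_\ell$, the argument of $g_\gamma$ escapes to infinity and the term vanishes (dominated convergence, using the Gaussian decay of $g_\gamma$ and the at-most-exponential, hence locally bounded, growth of $m$ — the hypothesis $m(\xi)=O(\exp|\xi|)$ is used precisely here to dominate the integrand after the rescaling, since $m(m_j+\sqrt\eps u, \dots)$ grows at most like $e^{C\sqrt\eps(|u|+|v|)}$ which is crushed by the Gaussians). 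If $m_j + n_k = p_\ell$, then $m$ is continuous at the lattice point $(m_j,n_k)\in\mathbb{Z}^2$, so $m(m_j+\sqrt\eps u, n_k+\sqrt\eps v)\to m(m_j,n_k)$ pointwise, and the Gaussian integral $\int\!\int g_\alpha(u) g_\beta(v) g_\gamma(u+v)\, du\, dv$ evaluates (by a direct computation, or by noting $\widehat{w_{\alpha\eps}}*\widehat{w_{\beta\eps}}*\widehat{w_{\gamma\eps}}$ evaluated at $0$ is $\widehat{w_{\alpha\eps}w_{\beta\eps}w_{\gamma\eps}}(0)$... actually cleanest: $\int w_{\alpha\eps}w_{\beta\eps}w_{\gamma\eps} = \int e^{-\pi\eps|y|^2} = (\eps)^{-1}$ since $\alpha+\beta+\gamma=1$) to exactly $(\alpha\beta\gamma)$, cancelling the prefactor. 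Hence the limit of the full sum is $\sum_{(j,k,\ell):\, m_j+n_k=p_\ell} a_j b_k \overline{c_\ell}\, m(m_j, n_k)$, which is precisely $\int_{[-1/2,1/2]^2} \tilde\pi(P,Q)(x)\overline{R(x)}\, dx$ by orthogonality of characters on the torus and the definition \eqref{periodization} of $\tilde\pi$.

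\textbf{Main obstacle.} The bookkeeping of the $\eps$-powers and the normalizing constants of the Gaussians is the fiddly part, but it is routine. The genuine subtlety is justifying the interchange of limit and integral in the case $m_j+n_k=p_\ell$ and, more importantly, the vanishing in the case $m_j+n_k\ne p_\ell$: one must produce an $\eps$-uniform integrable dominant for $m(m_j+\sqrt\eps u, n_k+\sqrt\eps v)\, g_\alpha(u)g_\beta(v)g_\gamma(u+v+\eps^{-1/2}(m_j+n_k-p_\ell))$. Here the at-most-exponential growth hypothesis on $m$ is essential and sufficient: on the region $|u|,|v|\le \eps^{-1/4}$ the factor $m(\dots)$ is bounded by $e^{C\eps^{1/4}}\to$ something harmless, while outside that region the product of two Gaussians $g_\alpha g_\beta$ already beats $e^{C\sqrt\eps(|u|+|v|)}\le e^{C(|u|+|v|)}$; combining gives a fixed integrable dominant. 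Once domination is in hand, pointwise convergence of the integrand (continuity of $m$ at lattice points, escape to infinity of the $g_\gamma$-argument otherwise) finishes the argument via dominated convergence. I would also note that the integral over $\mathbb R^2$ on the left of \eqref{PQRequation} is absolutely convergent for each fixed $\eps>0$ by the same Gaussian-versus-exponential estimate, so all manipulations are legitimate.
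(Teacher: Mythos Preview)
Your proof is correct and follows essentially the same strategy as the paper: reduce by linearity to single characters, pass to the Fourier side so that the expression becomes an integral of $m(\xi_1,\xi_2)$ against a product of three concentrating Gaussians, and then analyze the diagonal ($m_j+n_k=p_\ell$) and off-diagonal cases separately. Your bookkeeping of the $\eps$-powers and the Gaussian normalization (including the slick evaluation $\int g_\alpha(u)g_\beta(v)g_\gamma(u+v)\,du\,dv=\alpha\beta\gamma$ via $\int w_{\alpha\eps}w_{\beta\eps}w_{\gamma\eps}=\eps^{-1}$, which is exactly where $\alpha+\beta+\gamma=1$ enters) is correct.

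The only organizational difference is that the paper works without the $\sqrt{\eps}$-rescaling: in the diagonal case it recognizes the kernel $(\eps^2\alpha\beta\gamma)^{-1}G_\eps(\xi_1-j,\xi_2-k)$ directly as a generalized Gauss--Weierstrass approximate identity (computing $\det A_{\alpha,\beta,\gamma}=(\alpha\beta\gamma)^{-2}$, which is the same content as your Gaussian integral), while in the off-diagonal case it completes the square in the combined quadratic form $Q(\xi_1,\xi_2)$ and extracts an explicit exponentially small prefactor $\exp(-\pi Q_{\min}/\eps)$ with $Q_{\min}=|j+k-l|^2$. Your rescaling-plus-DCT treatment is a bit more unified and avoids the explicit minimization; the paper's version is more quantitative in the off-diagonal case but amounts to the same computation. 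Your domination argument (Gaussians in $u,v$ beating the at-most-exponential growth $e^{C\sqrt{\eps}(|u|+|v|)}\le e^{C(|u|+|v|)}$, with $g_\gamma\le 1$) is the right way to justify DCT here and matches how the paper uses the exponential growth hypothesis.
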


\begin{proof}[Proof of Lemma \ref{PQR}]
    By linearity, it suffices to show the statement when $P,Q,R$ are characters; so assume $P=e^{2\pi i j \cdot x }, Q = e^{2\pi i k \cdot x}, R = e^{2\pi i l \cdot x}$. Observe that the right hand side in (\ref{PQRequation}) is 
\begin{equation}
\left\{
    \begin{array}{lr} 
      \ \ \ \ 0 & j + k \neq l \\
      m(j,k) & j+k =l
   \end{array}
\right\}.
\end{equation}

By Plancherel and the formula for the Fourier transform of a Gaussian, the expression inside the limit on the left hand side of (\ref{PQRequation}) is
  \begin{align}\label{expression1}\varepsilon^{} \int_{\mathbb{R}^2} \int_{\mathbb{R}^2} m(\xi_1, \xi_2) e^{-\pi|\xi_1 -m|^2/\alpha \varepsilon} (\alpha \varepsilon)^{-1} e^{- \pi |\xi_2-l|^2/ \varepsilon^2} (\beta\varepsilon)^{-1}  e^{-\pi|\xi_1 + \xi_2 -k|^2/\gamma \varepsilon}(\gamma \varepsilon)^{-1}d\xi_1 d\xi_2.
\end{align}
Heuristically, if the centers of the three Gaussians coincide in the sense that the planes $\xi_1=j, \xi_2=k, \xi_1+\xi_2=l$ intersect in a point, then the product of the three Gaussians together with the combined prefactor $(\varepsilon\alpha\beta\gamma)^{-1}$ acts as a Dirac delta as $\varepsilon \rightarrow 0$. Rigorously, if $j+k=l$, then (\ref{expression1}) can be expressed as
\begin{align}\label{expression2}
    \int_{\mathbb{R}^2} \int_{\mathbb{R}^2} m(\xi_1, \xi_2) \cdot (\varepsilon^2 \alpha \beta \gamma)^{-1} G_{\varepsilon}(\xi_1-j, \xi_2-k) \ d\xi_1 d\xi_2 \end{align}
where we denoted 
\begin{equation*}G_{\varepsilon}(\xi_1,\xi_2):=\exp(-(\pi/\varepsilon) \cdot  (\xi_1,\xi_2)A_{\alpha,\beta,\gamma}(\xi_1,\xi_2)^t)\end{equation*} with \begin{equation*}A_{\alpha, \beta, \gamma} :=   \begin{bmatrix}
    (1/\alpha +1/\gamma)Id_2       & (1/\gamma) Id_2 \\
    (1/{\gamma})Id_2       & (1/\gamma + 1/\beta)Id_2 \\
\end{bmatrix},\end{equation*}
and the kernel $(\varepsilon^2 \alpha \beta \gamma)^{-1} G_{\varepsilon}(\xi_1-j, \xi_2-k)$ is an approximate identity translated to the point $(j,k)$. In fact, it is a generalized Gauss-Weierstrass kernel due to $\det(A_{\alpha,\beta,\gamma})=(\alpha\beta\gamma)^{-2}$. The calculation of the determinant uses the hypothesis that $\alpha+\beta+\gamma=1$. 
The assumption that the symbol $m$ is continuous at $(j,k)$ and grows at most exponentially, provides enough regularity to guarantee that (\ref{expression2}), the integral of $m$ against the Gauss-Weierstrass kernel, converges to $m(j,k)$ as $\varepsilon \rightarrow 0$.  \\

Next, consider the case $j+k\neq l$. The kernel against which $m$ is integrated in (\ref{expression1}) is the four-dimensional Gaussian $$\tilde{K}_{\varepsilon}(\xi_1,\xi_2)=(\varepsilon^2\alpha\beta\gamma)^{-1}\exp(-({\pi}/{\varepsilon})Q(\xi_1,\xi_2))= \exp(-\pi Q_{\min} / \varepsilon) \cdot (\varepsilon^2\alpha\beta\gamma)^{-1} \cdot  G_{\varepsilon}(\xi_1-\xi_{1c},\xi_2-\xi_{2c}),$$
where we denoted the positive definite quadratic form $$Q(\xi_1,\xi_2):=|\xi_1-j|^2/\alpha + |\xi_2-k|^2/\beta + |\xi_1+\xi_2-l|^2/{\gamma},$$
its minumin value and the coordinates where the minumum is attained being
$$Q_{min}:=\frac{|j+k-l|^2}{\alpha+\beta+\gamma}, \quad \xi_{1c}=\frac{(\beta+\gamma)j+\alpha(l-k)}{\alpha+\beta+\gamma} \quad \text{and} \quad \xi_{2c}=\frac{(\alpha+\gamma)k+\beta(l-j)}{\alpha+\beta+\gamma}.$$
The expression inside the limit on the left hand side of \eqref{PQRequation} can thus be written as
\begin{align}\label{expression3}
\exp(-\pi Q_{\min} /\varepsilon) \cdot (\varepsilon^2 \alpha \beta\gamma)^{-1} \cdot \int_{\mathbb{R}^2} \int_{\mathbb{R}^2} m(\xi_1,\xi_2) G_{\varepsilon}(\xi_1-\xi_{1c}, \xi_2 - \xi_{2c}) d\xi_1 d\xi_2
\end{align}
and is bounded by
$$\exp(-\pi Q_{\min}/\varepsilon) \sup_{\xi_1,\xi_2} \big| m(\xi_1,\xi_2)G_{\varepsilon}(\xi_1-\xi_{1c}, \xi_2-\xi_{2c})^c \big| \cdot (\varepsilon^2 \alpha \beta\gamma)^{-1} \cdot \kern-0.1em  \int_{\mathbb{R}^4}  G_{\varepsilon}(\xi_1-\xi_{1c}, \xi_2 - \xi_{2c})^{1-c} d\xi_1 d\xi_2,$$
with $0<c<1$ a fixed constant. Due to the symbol $m(\xi_1,\xi_2)$ growing at most exponentially, the expression $ m(\xi_1,\xi_2)G_{\varepsilon}(\xi_1-\xi_{1c}, \xi_2-\xi_{2c})^c $ is bounded in $\xi_1,\xi_2$, and the bound is uniform as $\varepsilon \rightarrow 0$. The last factor is the integral of a Gaussian:
$$\int_{\mathbb{R}^2}\int_{\mathbb{R}^2} G_{\varepsilon}(\xi_1-\xi_{1c},\xi_2-\xi_{2c})^{1-c} d\xi_1 d\xi_2 = \pi^2 \det((1-c)(\pi/\varepsilon)A_{\alpha,\beta,\gamma})^{-1/2} = \varepsilon^2 (1-c)^{-2} \det(A_{\alpha,\beta,\gamma})^{-1/2}.$$
If $j+k \neq l$, then $Q_{\min}>0$. Therefore, 
$$\varepsilon^{} \int_{\mathbb{R}^2}\pi(Pw_{\alpha \varepsilon}, Qw_{\beta\varepsilon})\overline{Rw_{\gamma \varepsilon}} \lesssim \exp(-\pi Q_{\min}/\varepsilon) \rightarrow 0 \quad \text{as} \ \varepsilon \rightarrow 0$$
whenever $j+k \neq l$. Note that if $j+k=l$, then $Q_{\min}=0$, $\xi_{1c}=j, \xi_{2c}=k$, and equation (\ref{expression3}) reduces to expression (\ref{expression2}).
\end{proof}

\begin{remark}
    The assumptions on the symbol $m$ in Lemma \ref{PQR} can be weakened. It suffices to assume that every lattice point is a Lebesgue point and the growth bound can be weakened to $\log |m(\xi_1,\xi_2)| = O(|\xi|^2)$. 
\end{remark}

\begin{lemma}\label{sderivativesPw}
Let $P$ be a trigonometric polynomial. Let $2\leq p \leq \infty$, and suppose $s \geq 0$, $\alpha>0$. We have 
\begin{equation}\label{trigpolyconvergence}\varepsilon^{1/p} \cdot \|\langle \partial_x \rangle^s(Pw_{\alpha \varepsilon})\|_{L^p(\mathbb{R}^2)} \rightarrow (p\alpha)^{-1/p} \cdot \|\langle \partial_x \rangle^{s}P\|_{L^p(\mathbb{T}^2)} \quad \text{as} \ \  \varepsilon \rightarrow 0.\end{equation}
Here we interpret $(p\alpha)^{-1/p}=1$ if $p = \infty$. If $s=0$, then (\ref{trigpolyconvergence}) holds for all $1 \leq p \leq \infty$, $\alpha >0$. Analogous statements hold when the inhomogeneous derivatives $\langle \partial_x \rangle^s$ are replaced by homogeneous derivatives $|\partial_x|^s$ or ordinary derivatives $\partial_x^s, s \in \mathbb{N}$. 
\end{lemma}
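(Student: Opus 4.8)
The idea is that $w_{\alpha\varepsilon}$ slowly modulates the trigonometric polynomial $P$, so that $Pw_{\alpha\varepsilon}$ locally looks like $P$ on the scale of a period but is cut off at scale $\varepsilon^{-1/2}$ by the Gaussian envelope; the factor $\varepsilon^{1/p}$ precisely compensates the growth of the $L^p(\R^2)$-norm coming from that envelope. First I would reduce to the case $P = e^{2\pi i j\cdot x}$ a single character? --- no: unlike Lemma \ref{PQR}, linearity does not reduce $\|\cdot\|_{L^p}$-type statements to single characters, so instead I would work with $P$ a fixed (finite) trigonometric polynomial throughout. The key computation is to evaluate
\[
\varepsilon \int_{\R^2} \big| \langle \partial_x \rangle^s (P w_{\alpha\varepsilon})(x) \big|^p \, dx
\]
and show it converges to $(p\alpha)^{-1}\|\langle\partial_x\rangle^s P\|_{L^p(\T^2)}^p$ as $\varepsilon\to0$ (taking $p$-th roots then gives \eqref{trigpolyconvergence}; the case $p=\infty$ is handled separately, see below).

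The main step is to understand $\langle \partial_x\rangle^s(Pw_{\alpha\varepsilon})$. Writing $P(x)=\sum_{j}c_j e^{2\pi i j\cdot x}$ (finite sum), we have $\widehat{Pw_{\alpha\varepsilon}}(\xi)=\sum_j c_j \widehat{w_{\alpha\varepsilon}}(\xi-j)$, and $\widehat{w_{\alpha\varepsilon}}$ is a Gaussian of width $\sim\varepsilon^{1/2}$ concentrated at the origin. Since $\langle\partial_x\rangle^s$ is the Fourier multiplier with symbol $\langle\omega\cdot\xi\rangle^s$, which is smooth and slowly varying on the scale $\varepsilon^{1/2}\to 0$ near each lattice point $j$, one expects
\[
\langle\partial_x\rangle^s(Pw_{\alpha\varepsilon})(x) = \Big(\sum_j c_j \langle \omega\cdot j\rangle^s e^{2\pi i j\cdot x}\Big) w_{\alpha\varepsilon}(x) + o(1)\text{-type error} = \big(\langle\partial_x\rangle^s P\big)(x)\, w_{\alpha\varepsilon}(x) + E_\varepsilon(x),
\]
where $E_\varepsilon$ is an error coming from the variation of $\langle\omega\cdot\xi\rangle^s$ across the support of each Gaussian bump. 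I would make this rigorous by Taylor-expanding $\langle\omega\cdot\xi\rangle^s$ around $\xi=j$; the linear and higher terms, after inverse Fourier transform, produce factors of $\partial_x w_{\alpha\varepsilon}$, $\partial_x^2 w_{\alpha\varepsilon}$, etc., each carrying a gain of $\varepsilon^{1/2}$, so $\|E_\varepsilon\|_{L^p(\R^2)} = o(\varepsilon^{-1/p})$. (For $s$ not an integer one has to be slightly careful near $\omega\cdot\xi=0$ where $\langle\cdot\rangle^s$ is only Lipschitz if $s<1$ or loses smoothness; but $\langle\cdot\rangle^s$ is smooth everywhere on $\R$ since the bracket is $\langle t\rangle=(1+t^2)^{1/2}$, so this is a non-issue. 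The homogeneous case $|\partial_x|^s$ with its singularity at $\xi=0$ would require isolating the finitely many $j$ with $\omega\cdot j=0$ separately.) Granting this, the problem reduces to
\[
\varepsilon \int_{\R^2} \big| G(x)\, w_{\alpha\varepsilon}(x)\big|^p\, dx = \varepsilon \int_{\R^2} |G(x)|^p e^{-\pi p\alpha\varepsilon|x|^2}\, dx, \qquad G := \langle\partial_x\rangle^s P,
\]
with $G$ a fixed trigonometric polynomial (hence $\Z^2$-periodic, bounded, continuous).

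The final step is a Riemann-sum / equidistribution computation: $|G|^p$ is periodic and $e^{-\pi p\alpha\varepsilon|x|^2}$ is a slowly-varying weight of total mass $\varepsilon^{-1}(p\alpha)^{-1}$, so
\[
\varepsilon \int_{\R^2} |G(x)|^p e^{-\pi p\alpha\varepsilon|x|^2}\,dx \;\longrightarrow\; \Big(\int_{\R^2} \varepsilon\, e^{-\pi p\alpha\varepsilon|x|^2}\,dx\Big)\Big|_{\text{total}} \cdot \fint_{\T^2}|G|^p = (p\alpha)^{-1}\|G\|_{L^p(\T^2)}^p,
\]
which I would justify by splitting $\R^2$ into unit cells $n+[-1/2,1/2]^2$, using continuity of $G$ to replace $|G|^p$ by its average on each cell up to $o(1)$, and recognizing the resulting sum $\sum_n \varepsilon\int_{n+[-1/2,1/2]^2} e^{-\pi p\alpha\varepsilon|x|^2}dx$ as a Riemann sum converging to $(p\alpha)^{-1}$. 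For $p=\infty$ the statement is simpler: $\|G\, w_{\alpha\varepsilon}\|_{L^\infty(\R^2)}\to\|G\|_{L^\infty(\T^2)}$ since $w_{\alpha\varepsilon}\uparrow 1$ locally uniformly and $G$ is periodic, so its sup over $\R^2$ is attained (asymptotically) and equals its sup over $\T^2$; the prefactor is $1$ by the stated convention. The cases $|\partial_x|^s$ and $\partial_x^s$ are identical once the symbol-variation argument is redone with the appropriate (homogeneous, polynomial) symbol. The main obstacle is controlling the error $E_\varepsilon$ uniformly in the regime where the Gaussian envelope is spreading — i.e., making precise that the non-commutativity of $\langle\partial_x\rangle^s$ with multiplication by $w_{\alpha\varepsilon}$ costs only powers of $\varepsilon^{1/2}$ — but since $P$ is a finite trigonometric polynomial this is a finite sum of completely explicit Gaussian integrals and the bookkeeping, while tedious, is routine.
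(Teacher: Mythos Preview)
Your proposal is correct and follows the same overall strategy as the paper: approximate $\langle\partial_x\rangle^s(Pw_{\alpha\varepsilon})$ by $(\langle\partial_x\rangle^s P)\,w_{\alpha\varepsilon}$, show the discrepancy is negligible, then evaluate $\varepsilon^{1/p}\|(\langle\partial_x\rangle^s P)\,w_{\alpha\varepsilon}\|_{L^p(\R^2)}$ by the equidistribution computation you describe (the paper cites this last step as \cite[Section VII, Lemma~3.9]{MR304972} rather than reproving it).

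The one difference worth flagging is how the error term is controlled. You propose Taylor-expanding the symbol $\langle\omega\cdot\xi\rangle^s$ about each frequency $j$ and reading off physical-side factors $\partial_x^k w_{\alpha\varepsilon}$, each gaining a power of $\varepsilon^{1/2}$. The paper instead stays entirely on the Fourier side: writing
\[
\widehat{E_\varepsilon}(\xi)=\sum_m\big[\langle\omega\cdot\xi\rangle^s-\langle\omega\cdot m\rangle^s\big]\hat P(m)\,(\alpha\varepsilon)^{-1}e^{-\pi|\xi-m|^2/(\alpha\varepsilon)},
\]
it bounds $\|\widehat{E_\varepsilon}\|_{L^{p'}}$ directly via the pointwise observation $\sup_\xi\big|[\langle\omega\cdot\xi\rangle^s-\langle\omega\cdot m\rangle^s]\big|\,e^{-\pi|\xi-m|^2/(2\alpha\varepsilon)}\to 0$ (the Gaussian concentrates at $m$), and then applies Hausdorff--Young. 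This is shorter---no Taylor remainder to chase---and it makes the restriction $p\ge 2$ appear naturally, whereas in your sketch that restriction has no visible origin. Your Taylor route is workable but requires more care than you indicate: the remainder after a finite expansion is not literally a derivative of $w_{\alpha\varepsilon}$, and controlling its inverse Fourier transform in $L^p$ without ultimately falling back on Hausdorff--Young calls for an additional near/far splitting around each $j$ together with Gaussian-tail estimates for the far field.
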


\begin{proof}[Proof of Lemma \ref{sderivativesPw}]
    Take $N$ large enough so that the Fourier support of the trigonometric polynomial $P$ is contained in $([-N,N]\cap \mathbb{Z}) \times ([-N,N]\cap \mathbb{Z})$. The Fourier transform of $Pw_{\varepsilon/3}$ is a linear combination of shifted Gaussians $\sum_{m \in [-N,N]^2} \hat{P}(m)(\alpha \varepsilon)^{-1}e^{-\pi|\xi-m|^2/(\alpha \varepsilon)}$, resembling an increasingly sharp spike around each $m$ in $\supp\hat{P}$ as $\varepsilon$ goes to zero. This suggests approximating
    $$\mathcal{F}_{x \rightarrow \xi}(\langle \partial_x \rangle^s(Pw_{\alpha \varepsilon}))=\sum_{m \in [-N,N]^2} \langle \xi \rangle^s \hat{P}(n) (\alpha \varepsilon)^{-1}e^{-\pi|\xi-m|^2/(\alpha \varepsilon)}$$ by $$\mathcal{F}_{x \rightarrow \xi}((\langle \partial_x \rangle^sP)w_{\alpha \varepsilon})= \sum_{m \in [-N,N]^2}\langle m \rangle^s \hat{P}(m)(\alpha \varepsilon)^{-1}e^{-\pi|\xi-m|^2/(\alpha \varepsilon)}.$$
    Indeed, the error \begin{align*}E(\varepsilon):&=\varepsilon^{1/p}\mathcal{F}_{x \rightarrow \xi}(\langle \partial_x \rangle^s(Pw_{\alpha \varepsilon})-(\langle \partial_x \rangle^sP)w_{\alpha \varepsilon}) \\ & = \varepsilon^{1/p} \sum_{m \in [-N,N]^2}[\langle \xi \rangle^s - \langle m \rangle^s]\hat{P}(m)(\alpha \varepsilon)^{-1}e^{-\pi|\xi-m|^2/(\alpha \varepsilon)}\end{align*} is small in the sense that $\|E(\varepsilon)\|_{L^{p'}(\mathbb{R})}=o_{\varepsilon \rightarrow 0}(1)$ for any $1 \leq p' < \infty$, 
    so that Hausdorff-Young inequality gives $$\varepsilon^{1/p} \cdot \|\langle \partial_x \rangle^s(Pw_{\alpha \varepsilon}) - (\langle \partial_x \rangle^sP)w_{\alpha \varepsilon}\|_{L^p(\mathbb{R}^2)} \lesssim \|E(\varepsilon)\|_{L^{p'}(\mathbb{R}^2)} \rightarrow 0 \quad \text{as} \quad \varepsilon \rightarrow 0,$$ for $2 \leq p \leq \infty$.
    Verifying the assertion about the error term is a simple calculation:
    \begin{align*}
        \|E(\varepsilon)\|_{L^{p'}} \leq  \sum_{m \in [-N,N]^2} \Big[  \sup_{\xi \in \mathbb{R}^2} \big[|\langle \xi \rangle^s - \langle m \rangle^s|^{}e^{- \pi |\xi-m|^2/(2\alpha \varepsilon)} \big] \hat{P}(m)^{} \Big] \\ \cdot \Big[  \varepsilon^{p'/p}  \int_{\mathbb{R}^2} (\alpha \varepsilon)^{-p'} e^{-p' \pi |\xi-m|^2/(2\alpha \varepsilon)} d\xi \Big]^{1/p'} 
    \end{align*}
    and the latter factor involving the integral evaluates to a constant $(2/p')^{1/p'}\alpha^{1/p' -1}$ independent of $\varepsilon$, while the first factor involving the supremum goes to zero as $\varepsilon \rightarrow 0$.
    Hence, we have reduced the lemma to showing 
    \begin{equation*}
        \varepsilon^{1/p}\cdot\|(\langle \partial_x \rangle^sP)w_{\alpha \varepsilon}\|_{L^p(\mathbb{R}^2)} \rightarrow (p\alpha)^{-1/p} \cdot \|\langle \partial_x \rangle^sP\|_{L^p(\mathbb{T}^2)} \quad \text{as} \quad \varepsilon \rightarrow 0.
    \end{equation*}
    But $\langle \partial_x \rangle^sP$ is just another trigonometric polynomial and in particular continuous, so that \cite[Lemma 3.9]{MR304972} gives
\begin{align}\label{calculationsimilarStein}
    \begin{split}
        \varepsilon \|(\langle \partial_x \rangle^sP)w_{\alpha \varepsilon} \|_{L^p}^p & = (p\alpha)^{-1} \Big[ (p\alpha\varepsilon) \int_{\mathbb{R}^2}|\langle \partial_x \rangle^sP(x)|^p e^{- \pi(p\alpha \varepsilon)|x|^2} dx \Big] \\  \rightarrow & (p\alpha)^{-1} \int_{[0,1]^2}|\langle \partial_x \rangle^sP(x)|^pdx =\big[ (p\alpha)^{-1/p} \|\langle \partial_x \rangle^sP\|_{L^p(\mathbb{T}^2)}\big]^p \quad \text{as} \ \varepsilon \rightarrow 0.
        \end{split}
    \end{align}
    The proof for homogeneous derivatives or ordinary derivatives is verbatim the same upon replacing $\langle \xi \rangle^s$, $\langle m \rangle^s$ by $|\xi|^s$, $|m|^s$, or $\xi^s$, $m^s$, $s \in \mathbb{N}$ respectively. For $s=0$, the statement for more general $p \in [1,\infty]$ follows directly as in (\ref{calculationsimilarStein}).
\end{proof}

\begin{proof}[Proof of (\ref{commutatorsderivatives})]
We have the following Kato-Ponce commutator estimate for directional derivatives on the plane:
\begin{equation}\label{directionalKatoPonce}\|[\langle \partial_{\omega} \rangle^s,f]g]\|_{L^p(\mathbb{R}^2)} \lesssim \|\langle \partial_{\omega}\rangle^sf\|_{L^p(\mathbb{R}^2)} \|g\|_{L^{\infty}(\mathbb{R}^2)} + \|\partial_{\omega}f\|_{L^{\infty}(\mathbb{R}^2)} \|\langle \partial_{\omega}\rangle^{s-1}g\|_{L^p(\mathbb{R}^2)}.\end{equation}
This is obtained by the same argument as for (\ref{KenigPonceVegaQuasi}), using the Kato-Ponce commutator estimate on the line \cite{MR0951744} and a rotational dilation of the plane. 
Using the previous two Lemmas and a duality argument we can transfer this to the periodic setting.
Lemma \ref{PQR} and H\"older tell us that
\begin{equation*}
    \big\langle \tilde{\pi}(P,Q),R \big\rangle_{\mathbb{T}^2} = \lim_{\varepsilon \rightarrow 0} \varepsilon\int_{\mathbb{R}^2} \pi(Pw_{\varepsilon/3},Qw_{\varepsilon/3})\overline{Rw_{\varepsilon/3}} \leq \lim_{\varepsilon \rightarrow 0} \varepsilon^{} \|\pi(Pw_{\varepsilon/3},Qw_{\varepsilon/3})\|_{L^p(\mathbb{R}^2)} \|Rw_{\varepsilon/3}\|_{L^{p'}(\mathbb{R}^2)},
\end{equation*}
where we denoted the paraproduct $\pi(f,g)=[\langle\partial_{\omega}\rangle^s,f]g$ on the plane and its periodization $\tilde{\pi}(f,g)=[\langle \partial_x \rangle^s,f]g$.
Now estimate \eqref{directionalKatoPonce} bounds the expression inside the limit on the right hand side by
\begin{align*}
   &\varepsilon ^{1/p'}\|Rw_{\varepsilon/3}\|_{L^{p'}(\mathbb{R}^2)} \\ & \cdot  \Big[ \varepsilon^{1/p}\|\langle \partial_x \rangle^s(Pw_{\varepsilon/3})\|_{L^p(\mathbb{R}^2)} \cdot \|Qw_{\varepsilon/3}\|_{L^{\infty}(\mathbb{R}^2)} + \|\partial_x(Pw_{\varepsilon/3})\|_{L^{\infty}(\mathbb{R}^2)} \cdot \varepsilon^{1/p}\|\langle \partial_x \rangle^{s-1}(Qw_{\varepsilon/3})\|_{L^p(\mathbb{R}^2)} \Big]. 
\end{align*}
By Lemma \ref{sderivativesPw}, this converges up to a constant factor to 
\begin{equation*}
    \|R\|_{L^{p'}(\mathbb{T}^{2})} \cdot \big[\|\langle \partial_x \rangle^sP \|_{L^p(\mathbb{T}^2)} \|Q\|_{L^{\infty}(\mathbb{T}^2)} + \|\partial_xP\|_{L^{\infty}(\mathbb{T}^2)} \|\langle \partial_x \rangle^{s-1}Q\|_{L^p(\mathbb{T}^2)} \big]
\end{equation*}
as $\varepsilon \rightarrow 0$. We conclude by taking the supremum over all trigonometric polynomials $R$ with $\|R\|_{L^{p'}(\mathbb{T}^2)}=1$ and using duality.
\end{proof}

We have proven estimate (\ref{commutatorsderivatives} for trigonometric polynomials. By density arguments, the estimate is valid on larger classes of functions. In any case, we will apply this estimate to approximate solutions $u_n, u_m$ in Section \ref{wellposednessanisotropicspace}, which are, in fact, trigonometric polynomials.

\begin{remark}
    The transference principle in Proposition \ref{transferenceparaproduct}
    can also be obtained from Lemma \ref{PQR} and \ref{sderivativesPw} (with $s=0$) by a duality argument similar to the proof of estimate (\ref{commutatorsderivatives}).
    To obtain the endpoint case of Proposition \ref{transferenceparaproduct} for operators of type $L^p(\mathbb{R}^2) \times L^{\infty}(\mathbb{R}^2) \rightarrow L^p(\mathbb{R}^2)$, $1<p<\infty$, one further needs a variant of Lemma \ref{PQR} with $\beta=\varepsilon$. That is, \begin{equation*}\lim_{\varepsilon \rightarrow 0} \varepsilon^{} \int_{\mathbb{R}^2} \pi(Pw_{\alpha \varepsilon},Qw_{\varepsilon^2})(x)\overline{R(x)}w_{\gamma \varepsilon}(x)dx = \int_{[-1/2, 1/2]^2} \tilde{\pi}(P,Q)(x)\overline{R(x)}dx,\end{equation*}
    whenever $\alpha, \gamma >0$ with $\alpha+\gamma=1$. This can be shown analogously to the proof of Lemma \ref{PQR}.
\end{remark}

\begin{remark}
    The Kato-Ponce commutator estimate on the real line is proved in \cite{MR0951744} by using Bony's paraproduct decomposition and applying the Coifman-Meyer theorem to each part. One could also prove (\ref{commutatorsderivatives}) by performing a similar paraproduct decomposition with respect to the tangential frequency $\omega.\xi$ and then applying the bilinear transference theorem \cite[Theorem 3]{MR1808390} to the individual parts.
    There is a similar discussion in \cite{MR4840270}
    about the fractional product rule.

    Another approach is to work directly on the torus, to perform an appropriate paraproduct decomposition with respect to the tangential frequency $\omega \cdot n$ and to try establishing a version of the Coifman-Meyer theorem applicable in that setting. While there is a version of the Coifman-Meyer theorem on the torus, see \cite[Problem 3.4]{schlag2012classical}, it is also necessary to address any complications caused by the fact that the paraproduct decomposition is with respect to the projection $\omega \cdot n$.
    The proof presented here benefits from the fact that rotational dilations do not cause any difficulties in the Euclidean setting.
\end{remark}

In Sections \ref{wellposednessanisotropicspace}-\ref{sectionfinisihingtheproof} we will frequently use the following version of Gronwall's inequality:
\begin{equation}\label{Gronwall}u(t) \leq \alpha(t) + \int_0^t \beta(s)u(s)ds \implies u(t) \leq \alpha(t) \exp \big(\int_0^t \beta(s) ds \big),\end{equation}
where $\alpha, \beta$ and $u$ are continuous real-valued functions defined on an interval $[0,T]$ with $\beta$ non-negative and $\alpha$ non-decreasing. 
In Section \ref{sectionaprioriproposition}, we will make use of Hille's Theorem asserting that the application of closed linear operators commutes with the Bochner integral provided the expressions involved are integrable.

\begin{proposition}\label{Hillestheorem}[Hille's Theorem \cite{MR89373, MR4802867}] Suppose $(X,\Sigma,\mu)$ is a measure space, $T:B \rightarrow B'$ is a closed linear operator between Banach spaces $B$ and $B'$, and both $f:X \rightarrow B$ and $Tf:X \rightarrow B'$ are Bochner integrable. Then $$\int_E Tf = T \int_E f$$ for all measurable $E \in \Sigma$. 
\end{proposition}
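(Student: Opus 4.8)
The plan is to exploit the closedness of $T$ by passing to its graph, which converts the statement into two elementary and well-known properties of the Bochner integral. Equip $B\oplus B'$ with the norm $\|(b,b')\|:=\|b\|_{B}+\|b'\|_{B'}$, making it a Banach space, and let $G(T):=\{(b,Tb):b\in D(T)\}$ be the graph of $T$, where $D(T)\subseteq B$ is the domain (for $Tf$ to be defined we must of course be reading the hypothesis as $f(x)\in D(T)$ for $\mu$-a.e.\ $x$). Since $T$ is closed, $G(T)$ is a closed linear subspace of $B\oplus B'$, hence itself a Banach space under the restricted norm.

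First I would record the two facts about the Bochner integral that drive the argument. (i) If $S\colon Z\to Z'$ is a bounded linear operator and $h\colon X\to Z$ is Bochner integrable, then $S\circ h$ is Bochner integrable and $\int_E Sh = S\int_E h$ for every $E\in\Sigma$; this is immediate for simple functions and passes to the limit using continuity of $S$ together with dominated convergence. (ii) If $h\colon X\to Z$ is Bochner integrable and $h(x)$ lies in a closed subspace $Y\subseteq Z$ for $\mu$-a.e.\ $x$, then $\int_E h\in Y$. Fact (ii) follows from Hahn--Banach: if $\int_E h\notin Y$ then there is $\phi\in Z^{*}$ with $\phi|_{Y}=0$ and $\phi\bigl(\int_E h\bigr)\neq 0$; but the special case of (i) for functionals gives $\phi\bigl(\int_E h\bigr)=\int_E \phi\circ h\,d\mu=0$ since $\phi\circ h$ vanishes a.e., a contradiction.

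Now define $g\colon X\to B\oplus B'$ by $g(x):=(f(x),Tf(x))$. A function into a product of Banach spaces is strongly measurable iff both coordinates are, so $g$ is strongly measurable; and $\int_X\|g\|\,d\mu = \int_X\|f\|_{B}\,d\mu + \int_X\|Tf\|_{B'}\,d\mu<\infty$ by hypothesis, so $g$ is Bochner integrable into $B\oplus B'$. Moreover $g(x)\in G(T)$ whenever $f(x)\in D(T)$, hence a.e. Applying (ii) with $Y=G(T)$ yields $\int_E g\in G(T)$; applying (i) with $S$ each of the two coordinate projections of $B\oplus B'$ yields $\int_E g = \bigl(\int_E f,\ \int_E Tf\bigr)$. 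Therefore $\bigl(\int_E f,\ \int_E Tf\bigr)\in G(T)$, which by definition of the graph says precisely that $\int_E f\in D(T)$ and $T\int_E f = \int_E Tf$, as claimed.

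I do not expect a genuine obstacle here: the content is entirely bookkeeping — confirming that $g=(f,Tf)$ is strongly measurable and Bochner integrable into $B\oplus B'$ (both standard via the Pettis measurability theorem once $f$ and $Tf$ are strongly measurable) and that the a.e.\ reading of the hypotheses is in force. The one point worth emphasizing is \emph{why} the graph detour is needed: the naive route of approximating $f$ by simple functions $f_n$ and letting $n\to\infty$ fails because $T$ is not continuous, so $Tf_n$ need not converge; passing to $G(T)$, where $T$ acts as (the restriction of) a bounded projection, is exactly what repairs this.
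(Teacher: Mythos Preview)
Your proof is correct and is the standard graph-closure argument for Hille's theorem. The paper does not supply its own proof of this proposition: it is stated with citations to the literature and used as a black box, so there is nothing to compare against beyond noting that your argument is the classical one found in the cited references.
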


\section{wellposedness in anisotropic Sobolev spaces and in $X_{\omega}^{s_1,s_2}$}\label{wellposednessanisotropicspace}

In this section, we show that the Benjamin--Ono equation is wellposed in the Sobolev-type space $H^{s_1,s_2}_{\omega}$ of quasi-periodic functions, as well as in the related space $X=X_{\omega}^{s_1,s_2}$, for $\frac{1}{2}<s_2<1$ and $s_1$ sufficiently large. In other words, we prove Theorems \ref{wellposednessinX} and \ref{wellposednessinHs1s2} stated in the introduction. \\

  The proof of Theorem \ref{wellposednessinHs1s2} proceeds by Galerkin approximation and energy method arguments:
  First, a family of finite-dimensional approximations to the problem can be solved by classical ODE theory. The regularization that we use is a restriction to a finite number of frequencies. This has the property that the $L^2$ norm is still a conserved quantity and coercive, thus yielding global solutions to these approximate equations. This is done in Section \ref{regularization}. Second, we establish an a-priori bound on the $H^{s_1,s_2}_{\omega}$-norm on a uniform time interval independent of the regularization parameter. This is done in Section \ref{uniformaprioriestimate}. Third, we show that the sequence of approximate solutions is Cauchy in $L^{\infty}([0,T],H^{s_1,s_2}_{\omega})$. This step crucially relies on separate initial data-regularization; see section \ref{sectionCauchybound}. Fourth, the limit is shown to be a solution to the Benjamin--Ono equation in Section \ref{sectionsolution}. Fifth, uniqueness is proven by a Gronwall argument in Section \ref{sectionuniqueness}. Sixth, continuity of the data-to-solution map is established by compactness arguments in Section \ref{sectioncontinuitydatatosolution}. \\

Theorem \ref{wellposednessinX} is a slight variation of Theorem \ref{wellposednessinHs1s2} and its proof essentially only requires additional Cauchy bounds on the $\|\langle \partial_y \rangle^{s_2} \partial_x^{-1}u\|_{L^2}$-part of the norm defining the space $X_{\omega}^{s_1,s_2}$, see Definition \ref{definitionX}.
This is done in Section \ref{sectionwellposednessinX} using the dynamical equation for $F=\partial_x^{-1}u$ and a Gronwall argument.

As an offshoot of the proof of Theorem \ref{wellposednessinHs1s2}, we obtain the following bound controlling the growth of the $H^{s_1,s_2}_{\omega}$-norm. It is a consequence of the a-priori estimate on solutions to the regularized equation in Section \ref{uniformaprioriestimate}.
 \begin{proposition}\label{Hnormgrowth}
        The solution provided by the $H^{s_1,s_2}_{\omega}$ wellposedness theory (Theorem \ref{wellposednessinHs1s2}) satisfies the estimate $$\|u(t)\|_{H^{s_1,s_2}_{\omega}} \leq \|u_0\|_{H^{s_1,s_2}_{\omega}} \exp \big( c  \int_0^t \|u(s)\|_{L^{\infty}} + \|\partial_xu(s)\|_{L^{\infty}} ds \big).$$ Here $c$ is some fixed absolute constant.
    \end{proposition}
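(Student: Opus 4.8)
The plan is to derive this growth bound directly from the a-priori estimate used in the proof of Theorem \ref{wellposednessinHs1s2}, and in particular from the energy estimates in Section \ref{uniformaprioriestimate}. The strategy mirrors the familiar Gronwall-type control of Sobolev norms for dispersive equations on the line or circle (cf.\ \eqref{symmetricSobolevnormgrowthcontrol}), except that the anisotropic nature of the norm and the degenerate dispersion force us to be careful about which directional derivatives appear in the commutator terms.

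First I would work with the regularized equation from Section \ref{regularization}, whose solutions $u_n$ are trigonometric polynomials and hence all manipulations below are rigorously justified; the final bound then passes to the limit since $u_n \to u$ in $C([0,T],H^{s_1,s_2}_\omega)$ and the right-hand side is continuous under this convergence. The core computation is to differentiate $\tfrac{1}{2}\|\langle \partial_x \rangle^{s_1} u_n\|_{L^2}^2$ and $\tfrac{1}{2}\|\langle \partial_y \rangle^{s_2} u_n\|_{L^2}^2$ in time. For the first quantity, applying $\langle \partial_x \rangle^{s_1}$ to the equation, pairing with $\langle \partial_x \rangle^{s_1} u_n$ in $L^2$, and using anti-selfadjointness of the (truncated) dispersive operator $H\partial_{xx}$ leaves only the nonlinear contribution $\langle \langle \partial_x\rangle^{s_1}\partial_x(u_n^2), \langle \partial_x\rangle^{s_1}u_n\rangle$. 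The worst term here is the one where all derivatives fall on a single factor; this is precisely what the quasi-periodic Kato--Ponce commutator estimate \eqref{commutatorsderivatives} is designed to control, bounding the dangerous piece by $\bigl(\|\partial_x u_n\|_{L^\infty} + \|u_n\|_{L^\infty}\bigr)\|\langle\partial_x\rangle^{s_1}u_n\|_{L^2}^2$ after an integration by parts to move one derivative off the top-order factor. For the $\langle\partial_y\rangle^{s_2}$ piece, the same pairing reduces matters to a commutator $[\langle\partial_y\rangle^{s_2}, u_n]\partial_x u_n$, and here the Kenig--Ponce--Vega estimate \eqref{KenigPonceVegaQuasi} (valid for $0\le s_2\le 1$, which is exactly our hypothesis) gives the bound $\|\langle\partial_y\rangle^{s_2}u_n\|_{L^2}\|\partial_x u_n\|_{L^\infty}$ — note it is $\|\partial_x u_n\|_{L^\infty}$, \emph{not} $\|\partial_y u_n\|_{L^\infty}$, that appears, which is the whole point of using the normal-derivative commutator estimate in its sharp form.

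Combining the two differential inequalities and using $\|u\|_{H^{s_1,s_2}_\omega}^2 \sim \|\langle\partial_x\rangle^{s_1}u\|_{L^2}^2 + \|\langle\partial_y\rangle^{s_2}u\|_{L^2}^2$, one obtains
\[
\frac{d}{dt}\|u_n(t)\|_{H^{s_1,s_2}_\omega}^2 \lesssim \bigl(\|u_n(t)\|_{L^\infty} + \|\partial_x u_n(t)\|_{L^\infty}\bigr)\,\|u_n(t)\|_{H^{s_1,s_2}_\omega}^2,
\]
and an application of Gronwall's inequality \eqref{Gronwall} (in its differential, or equivalently integral, form) to $\|u_n(t)\|_{H^{s_1,s_2}_\omega}^2$ yields the claimed exponential bound with the factor of $2$ absorbed into the constant $c$. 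Passing $n \to \infty$ finishes the proof, since $\|u_n(0)\|_{H^{s_1,s_2}_\omega} \to \|u_0\|_{H^{s_1,s_2}_\omega}$ and both $\|u_n\|_{L^\infty}$, $\|\partial_x u_n\|_{L^\infty}$ converge to their limiting counterparts locally uniformly in time (using the embedding \eqref{anisotropicembedding} and the crucial estimate \eqref{crucialequation} together with $C([0,T],H^{s_1,s_2}_\omega)$-convergence, since $s_1$ is large and $1/2<s_2<1$).

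The main obstacle — and really the only subtle point — is ensuring that the energy estimate genuinely closes using only the tangential quantity $\|\partial_x u\|_{L^\infty}$ (plus the harmless $\|u\|_{L^\infty}$), with no appearance of $\|\partial_y u\|_{L^\infty}$ or of joint derivatives in $L^\infty$. This is exactly the difficulty flagged in the introduction: controlling normal derivatives is hard, so the commutator estimates must be deployed so that the top-order normal derivative $\langle\partial_y\rangle^{s_2}u_n$ always stays inside an $L^2$ norm. The restriction $s_2 < 1$ guarantees $s_2 \le 1$ so that \eqref{KenigPonceVegaQuasi} applies, and the requirement that $s_1$ be large enough (beyond $3s_2/(2s_2-1)$) ensures \eqref{crucialequation} is available to make the right-hand side finite; this is why the proposition is stated for solutions provided by Theorem \ref{wellposednessinHs1s2}, where these parameter constraints are already in force. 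Everything else is the routine bookkeeping of an energy estimate plus Gronwall.
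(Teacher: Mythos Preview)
Your proposal is correct and follows essentially the same route as the paper: the proof in Section~\ref{sectionnormgrowthcontrol} consists precisely of taking the limit $n\to\infty$ in Lemma~\ref{normgrowthregularized}, which in turn records the energy calculations from the proof of Proposition~\ref{Hshbound} (the very computations you sketch, using \eqref{commutatorsderivatives} for the $\langle\partial_x\rangle^{s_1}$ piece and \eqref{KenigPonceVegaQuasi} for the $\langle\partial_y\rangle^{s_2}$ piece). One minor remark: the paper's energy estimate in fact yields the slightly sharper bound with only $\|\partial_x u\|_{L^\infty}$ in the exponent (see \eqref{normgrowthprelim} and Lemma~\ref{Hshnormgrowth}); the additional $\|u\|_{L^\infty}$ term in the statement of Proposition~\ref{Hnormgrowth} is harmless but not actually needed for the $H^{s_1,s_2}_\omega$ part---it enters only when controlling the $\|\langle\partial_y\rangle^{s_2}\partial_x^{-1}u\|_{L^2}$ contribution to the $X$-norm in Proposition~\ref{normgrowthX}.
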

A similar argument using Gronwald's inequality controls the growth of the other part of the $X$-norm. We thus establish a bound controlling the growth of the $X$-norm as stated in Proposition  \ref{normgrowthX} in the introduction. Both Propositions \ref{Hnormgrowth} and \ref{normgrowthX} are proven in Section \ref{sectionnormgrowthcontrol}. \\

\subsection{Regularized equation}\label{regularization}
In this section, we construct global smooth solutions for a family of regularizations of the Benjamin--Ono equation. \\

We consider the regularization \begin{equation}\label{regularizedBO}u_t =- P_nH\partial_{xx}u + P_n \partial_x(P_nu P_nu)\end{equation}
where $P_n$ is projection to Fourier modes $k$ with $|\omega.k|, | \omega^{\perp} . k|\leq n$, see \eqref{generalprojection}.
On the Fourier side, this becomes a system of finitely many coupled ODEs, which locally has smooth solutions by the Picard-Lindel\"of theorem. In fact, the Cauchy–Kovalevskaya theorem is applicable and the solutions are even analytic. \\

Analogous to the actual Benjamin--Ono equation, the regularized equation conserves the $L^2$-norm:  
\begin{equation}\label{L2conservation}
    \frac{d}{dt} \|u(t)\|_{L^2}^2 = \int_{\mathbb{T}^2} -u(t) P_nH\partial_{xx}u(t) + u(t)P_n[\partial_x(P_nu(t)P_nu(t))] dt = 0.
\end{equation}
The contribution of the first term in the integral vanishes by anti-selfadjointness of $-P_nH\partial_{xx}$. The vanishing of the contribution of the second summand in the integral can be seen by moving the projection $P_n$ onto the first factor by Plancherel and then using integration by parts and product rule. 
By Plancherel, the $l^2$-norm of the finite number of Fourier modes thus serves as a coercive constraint for the system of ODEs, allowing us to construct global smooth solutions. \\

None of this is surprising; in fact, the dynamics of the regularized equation (\ref{regularizedBO}) is generated by the Hamiltonian \begin{equation}\label{Hamiltonian}H[u]:= \int_{\mathbb{T}^2} -\frac{1}{2} (P_nu) H\partial_x(P_nu) + \frac{1}{3}(P_nu)^3\end{equation} with respect to the Poisson structure 
$$\{F(u),G(u)\}:=\langle \partial_xD_uF(u),  D_uG(u)\rangle_{L^2(\mathbb{T}^2)}$$
on the subspace of $C^{\infty}(\mathbb{T}^2)$ consisting of real-valued functions with Fourier support contained in $$I:=\{k \in \mathbb{Z}^2: |\omega.k|, |\omega^{\perp}.k|\leq n\}.$$
Here $D_uF$ is the functional derivative characterized by $$F(u+\phi)=F(u)+\langle D_uF(u),\phi \rangle_{L^2\mathbb{T}^2} + o(\|\phi\|_{L^2\mathbb{T}^2})$$ for all $\phi$ in the phase space just described. \\

The Hamiltonian (\ref{Hamiltonian}) is a truncated version of the Hamiltonian for the ordinary Benjamin--Ono equation, differing only in the presence of the Fourier projections $P_n$. The phase space is restricted and becomes finite-dimensional. 
Equation (\ref{regularizedBO}) now reads as $\partial_tu = \{H,u\}.$ 
The vanishing Poisson bracket $$ \Big\{ \int_{\mathbb{T}^2}u^2,H\Big\}=0$$ exhibits the $L^2$-norm as a conserved quantity; note that vanishing of the Poisson bracket amounts to the same calculation as (\ref{L2conservation}).
A finite-dimensional Hamiltonian system with a coercive conserved quantity automatically has global smooth solutions. \\

We can describe the phase space in frequency domain concretely as $$\mathcal{M}=\{\hat{u}=[\hat{u}(k)]_{k \in I}: \hat{u}(-k)=\overline{\hat{u}(k)} \in \mathbb{C} \} \cong_{\mathbb{R}} \mathbb{C}^{\frac{|I|-1}{2}} \oplus \mathbb{R} \cong_{\mathbb{R}} \mathbb{R}^{|I|}.$$
The Hamiltonian at a state $(\hat{u}(k))_{k \in I}$ can by Plancherel be written as 
$$H[(\hat{u}(k))_{k \in I}]= -\frac{1}{2} \sum_{i,j \in I: i+j=0} \hat{u}(i) \text{sgn}(\omega.j) (\omega.j)\hat{u}(k) + \frac{1}{3} \sum_{i,j,l\in I: \ i+j+l=0} \hat{u}(i)\hat{u}(j)\hat{u}(l),$$
and the Poisson bracket of two functions $F,G \in C^{\infty}(\mathcal{M})$ is given by $$\{F,G\}= \sum_{k \in I} \overline{\partial_{\hat{u}(k)}F(\hat{u}) i(\omega \cdot k)} \partial_{\hat{u}(k)}G(\hat{u}).$$
Hamilton's equation $\partial_t \hat{u} = \{H,\hat{u}\}$ is the mentioned system of coupled ODEs associated with (\ref{regularizedBO}).
\\

Throughout Sections \ref{uniformaprioriestimate}-\ref{sectionnormgrowthcontrol}, we let $u_n$ be the global smooth solution to equation (\ref{regularizedBO}) with regularized initial data \begin{equation}\label{initialdata}u_0^{\delta(n)}= P_{\delta(n)^{1/s_1}, \delta(n)^{1/s_2}}u_0, \end{equation} the parameter $\delta(n) \leq n$ an increasing function of $n$ to be specified later. Here $P_{\delta(n)^{1/s_1}, \delta(n)^{1/s_2}}$ is the projection to frequencies with $|\omega.k| \leq  \delta(n)^{1/s_1}$ and $|\omega^{\perp}.k|\leq  \delta(n)^{1/s_2}$, see (\ref{generalprojection}).

\subsection{Uniform a-priori estimate}\label{uniformaprioriestimate}
We show that the solutions $u_n$ to the regularized equation \eqref{regularizedBO} satisfy an a-priori estimate on the $H^{s_1,s_2}_{\omega}$-norm, uniformly in the regularization parameter $n$.

\begin{proposition}[$H_{\omega}^{s_1,s_2}$ bound uniform in regularization parameter]\label{Hshbound}
    Suppose $1/2 < s_2 \leq 1$ and $s_1 > \frac{3s_2}{2s_2-1}$. There exists a time $T:=T(\|u_0\|_{H^{s_1,s_2}_{\omega}})$ such that the solutions $u_n$ satisfy the bound $$\|u_n(t)\|_{L^{\infty}([0,T],H_{\omega}^{s_1,s_2})} \leq 2 \|u_0\|_{H^{s_1,s_2}_{\omega}}$$
    uniformly in $n$. Further, $T$ can be taken uniformly for initial data from a compact set of initial data in $H^{s_1,s_2}_{\omega}$.
\end{proposition}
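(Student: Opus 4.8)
The plan is to run the standard energy method for the regularized equation~\eqref{regularizedBO}, mimicking the argument that would work on the line, but with all fractional derivatives replaced by the directional operators $\langle\partial_x\rangle^{s_1}$ and $\langle\partial_y\rangle^{s_2}$ and with the two commutator estimates of Lemma~\ref{KatoPonceVegaQuasih} standing in for the usual Kato--Ponce estimate. First I would fix $n$ and write $u=u_n$, and compute $\frac{d}{dt}\|\langle\partial_x\rangle^{s_1}u\|_{L^2}^2$ and $\frac{d}{dt}\|\langle\partial_y\rangle^{s_2}u\|_{L^2}^2$ separately. Since $u$ is a trigonometric polynomial these differentiations are justified; the linear term $-P_nH\partial_{xx}u$ contributes nothing because $P_nH\partial_{xx}$ is anti-selfadjoint and commutes with the Fourier-localized derivative operators, so only the nonlinear term $P_n\partial_x(P_nuP_nu)$ survives. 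Moving the outer $P_n$ onto the first factor (by Plancherel, as in~\eqref{L2conservation}) and integrating by parts, the main task is to bound $\bigl|\langle\langle\partial_x\rangle^{s_1}u,\langle\partial_x\rangle^{s_1}\partial_x(u^2)\rangle\bigr|$ and the analogous $\partial_y$ expression, modulo harmless $P_n$-truncations that can be absorbed since $P_n$ is bounded on $L^2$.

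For the tangential part I would write $\langle\partial_x\rangle^{s_1}(u\,\partial_x u)=u\,\langle\partial_x\rangle^{s_1}\partial_x u+[\langle\partial_x\rangle^{s_1},u]\partial_x u$; the first piece, after pairing with $\langle\partial_x\rangle^{s_1}u$ and integrating by parts in $x$, produces $-\tfrac12\int(\partial_x u)(\langle\partial_x\rangle^{s_1}u)^2$, bounded by $\|\partial_x u\|_{L^\infty}\|u\|_{H^{s_1,0}}^2$, and the commutator piece is controlled by the quasi-periodic Kato--Ponce estimate~\eqref{commutatorsderivatives} with $p=2$, which after Cauchy--Schwarz and a further integration by parts again yields a bound of the form $(\|\partial_x u\|_{L^\infty}+\|u\|_{L^\infty})\|u\|_{H^{s_1,s_2}_\omega}^2$ (here I use $\langle\partial_x\rangle^{s_1}u\in L^2$ and estimate $\|\langle\partial_x\rangle^{s_1-1}\partial_x u\|_{L^2}\lesssim\|u\|_{H^{s_1,0}}$). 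For the normal part the key point is that the derivative in the nonlinearity is $\partial_x$, not $\partial_y$, so there is no ``derivative on the top frequency'' term: writing $\langle\partial_y\rangle^{s_2}(u\,\partial_x u)=u\,\langle\partial_y\rangle^{s_2}\partial_x u+[\langle\partial_y\rangle^{s_2},u]\partial_x u$, the first term pairs with $\langle\partial_y\rangle^{s_2}u$ and after integrating by parts in $x$ gives $-\tfrac12\int(\partial_x u)(\langle\partial_y\rangle^{s_2}u)^2\lesssim\|\partial_x u\|_{L^\infty}\|u\|_{H^{0,s_2}}^2$, while the commutator is handled by the Kenig--Ponce--Vega estimate~\eqref{KenigPonceVegaQuasi} with $p=2$, giving $\|\langle\partial_y\rangle^{s_2}u\|_{L^2}\|\partial_x u\|_{L^\infty}\cdot\|\langle\partial_y\rangle^{s_2}u\|_{L^2}$. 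Throughout, the quantity $\|\partial_x u\|_{L^\infty}$ is itself dominated by $\|u\|_{H^{s_1,s_2}_\omega}$ via the crucial estimate~\eqref{crucialequation}, which is exactly where the hypotheses $s_2>\tfrac12$ and $s_1>\tfrac{3s_2}{2s_2-1}$ enter; similarly $\|u\|_{L^\infty}\lesssim\|u\|_{H^{s_1,s_2}_\omega}$ by~\eqref{anisotropicembedding}.

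Adding the two estimates gives a closed differential inequality $\frac{d}{dt}\|u_n(t)\|_{H^{s_1,s_2}_\omega}^2\le C\|u_n(t)\|_{H^{s_1,s_2}_\omega}^3$ with $C$ absolute (in particular independent of $n$, since all the constants above come from $n$-independent multiplier and commutator bounds). Integrating this scalar ODE, the solution to $\frac{d}{dt}y=Cy^{3/2}$ starting from $y(0)=\|u_0^{\delta(n)}\|_{H^{s_1,s_2}_\omega}^2\le\|u_0\|_{H^{s_1,s_2}_\omega}^2$ stays below $4\|u_0\|_{H^{s_1,s_2}_\omega}^2$ (i.e. $\|u_n(t)\|_{H^{s_1,s_2}_\omega}\le 2\|u_0\|_{H^{s_1,s_2}_\omega}$) for all $t\le T$ with $T$ depending only on $\|u_0\|_{H^{s_1,s_2}_\omega}$ and on the absolute constant $C$ (a continuity/bootstrap argument converts the differential inequality into this bound, using that $t\mapsto\|u_n(t)\|_{H^{s_1,s_2}_\omega}$ is continuous and equals $\|u_0^{\delta(n)}\|_{H^{s_1,s_2}_\omega}$ at $t=0$). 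Since $T$ is monotone in $\|u_0\|_{H^{s_1,s_2}_\omega}$ and a compact subset of $H^{s_1,s_2}_\omega$ is bounded in norm, one can take $T$ uniform over such a set, giving the last sentence. The main obstacle I anticipate is the careful bookkeeping in the tangential commutator term: one must integrate by parts to trade the extra $\partial_x$ and verify that the resulting expression is genuinely controlled by the right-hand side of~\eqref{commutatorsderivatives} with no loss — i.e., that the ``top-order'' term really is of the schematic form $\int(\partial_x u)(\langle\partial_x\rangle^{s_1}u)^2$ and not something worse — together with making sure every constant is traced to be $n$-independent. The $\partial_y$ estimate is comparatively easy precisely because the nonlinearity carries only a $\partial_x$.
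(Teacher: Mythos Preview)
Your proposal is correct and follows essentially the same route as the paper: compute $\frac{d}{dt}\|\langle\partial_x\rangle^{s_1}u_n\|_{L^2}^2$ and $\frac{d}{dt}\|\langle\partial_y\rangle^{s_2}u_n\|_{L^2}^2$ via the commutator decomposition, use~\eqref{commutatorsderivatives} and~\eqref{KenigPonceVegaQuasi} together with integration by parts to bound each by $\|\partial_x u_n\|_{L^\infty}\|u_n\|_{H^{s_1,s_2}_\omega}^2$, invoke~\eqref{crucialequation}, and close by a continuity/bootstrap argument on the resulting scalar inequality. The only cosmetic difference is that the paper writes the outcome as a Gronwall exponential $\|u_n(t)\|_{H^{s_1,s_2}_\omega}\le\|u_0\|_{H^{s_1,s_2}_\omega}\exp(c\int_0^t\|\partial_x u_n\|_{L^\infty})$ before bootstrapping, whereas you integrate $\dot y\le Cy^{3/2}$ directly; these are equivalent.
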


\begin{proof}
    The key observation is that $$\|\partial_x u\|_{L^{\infty}} \lesssim \|u\|_{H^{s_1,s_2}_{\omega}},$$ see Lemma \ref{anisotropicproperties}. 
    We calculate that
    \begin{align*}\label{xsderivative}
        \frac{d}{dt}\|\langle \partial_x \rangle^{s_1}u_n\|_{L^2}^2 & =\int_{\mathbb{T}^2} -\langle \partial_x \rangle^{s_1} u_n H\partial_{xx} \langle \partial_x \rangle^{s_1} u_n + \frac{1}{2}\int_{\mathbb{T}^2} \langle \partial_x \rangle^{s_1}u_n \partial_x \langle \partial_x \rangle^{s_1} u_n^2 \\ & = \int_{\mathbb{T}^2} \langle \partial_x \rangle^{s_1}u_n \big([\langle \partial_x \rangle^{s_1}, u_n]\partial_x u_n + u_n \partial_x\langle \partial_x \rangle^{s_1} u_n\big)  \\ 
        & \lesssim \|\langle \partial_x \rangle^{s_1} u_n\|_{L^2} \|[\langle \partial_x \rangle^{s_1},u_n] \partial_x u_n\|_{L^2} + \Big|\frac{1}{2} \int_{\mathbb{T}^2} \langle \partial_x \rangle^{s_1} u_n \langle \partial_x \rangle^{s_1} u_n \partial_x u_n \Big| \\ &
        \lesssim \|\langle \partial_x \rangle^{s_1} u_n\|_{L^2}^2 \|\partial_x u_n\|_{L^{\infty}}.
    \end{align*}
    In the first line, we used equation (\ref{regularizedBO}) and the fact that the projections $P_n$ in the nonlinear term of equation (\ref{regularizedBO}) can be dropped by an application of Plancherel. In the second line, we used anti-selfadjointness of $-H\partial_{xx}$. In the third line, we used integration by parts and H\"older, and in the fourth line we used the quasi-periodic Kato-Ponce commutator estimate for tangential derivatives (\ref{commutatorsderivatives}).

    Analogously, using the quasi-periodic Kenig-Ponce-Vega commutator estimate for normal derivatives \eqref{KenigPonceVegaQuasi}, 
    we have 
    \begin{equation}\label{yhderivative}
    \begin{split}
        \frac{d}{dt} \|\langle \partial_y \rangle^{s_2} u_n\|_{L^2}^2 &= \int_{\mathbb{T}^2} \langle \partial_y \rangle^{s_2} u_n [\langle \partial_y \rangle^{s_2} , u_n]\partial_x u_n + \int_{\mathbb{T}^2} \langle \partial_y \rangle^{s_2} u_n \cdot u_n \langle \partial_y \rangle^{s_2} \partial_x u_n \\ &\leq \|\langle \partial_y \rangle^{s_2}u_n\|_{L^2}\|[\langle \partial_y \rangle^{s_2}, u_n]\partial_xu_n\|_{L^2} + \frac{1}{2}\Big|\int_{\mathbb{T}^2} \langle \partial_y \rangle^{s_2} u_n \langle \partial_y \rangle^{s_2} u_n \partial_x u_n \Big| \\ & \lesssim \|\langle \partial_y \rangle^{s_2} u_n\|_{L^2}^2 \|\partial_x u_n\|_{L^{\infty}}.
    \end{split}
    \end{equation}
    Gronwall's inequality \eqref{Gronwall} gives that
    \begin{align}\label{normgrowthprelim}
\|u_n(t)\|_{H^{s_1,s_2}_{\omega}} & \leq \|u_0^{\delta(n)}\|_{H^{s_1,s_2}_{\omega}} \exp \Big( c \int_0^t \|\partial_x u_n(s)\|_{L^{\infty}} ds \Big)  \leq \|u_0\|_{H^{s_1,s_2}_{\omega}} \exp \big( ct\|u_n\|_{L^{\infty}([0,t],H^{s_1,s_2}_{\omega})} \big),
    \end{align}
    where we used the key observation $\|\partial_xu_n \|_{L^{\infty}} \lesssim \|u\|_{H^{s_1,s_2}_{\omega}}$ in the last inequality.
    Upon taking $T$ such that $\exp(T \cdot 4c\|u_0\|_{H^{s_1,s_2}_{\omega}}) =2$, we can use the above estimate to bootstrap with $$\Omega_1 := \{t\in[0,T]: \sup_{s \in [0,t]} \|u(s)\|_{H^{s_1,s_2}_{\omega}} \leq 4 \|u_0\|_{H^{s_1,s_2}_{\omega}}\}$$ and $$\Omega_2 := \{ t \in [0,T]: \sup_{s \in [0,t]} \|u(s)\|_{H^{s_1,s_2}_{\omega}} \leq 2 \|u_0\|_{H^{s_1,s_2}_{\omega}} \}$$
    as bootstrap hypothesis and conclusion, respectively. This yields $\Omega_2 =[0,T]$. The choice of $T$ was independent of $n$. 
    Given a compact set $\mathcal{K} \subseteq H^{s_1,s_2}_{\omega}$ of initial data, we take $T$ such that \begin{equation*}\exp(T \cdot 4c \cdot \sup_{u_0 \in \mathcal{K}}\|u_0\|_{H^{s_1,s_2}_{\omega}})=2. \qedhere \end{equation*}
\end{proof}

Observe that the above calculations yielding estimate (\ref{normgrowthprelim}) are valid for any $s_1\geq0$, $1\geq s_2\geq0$. This will be used in the next subsection and we record the following statement.
\begin{lemma}[Control on norm growth, regularized equation]\label{normgrowthregularized}
    Suppose $1\geq s_2>1/2$ and $s_1>\frac{3s_2}{2s_2-1}$. Suppose we are given $\tilde{s}_1\geq 0$ and $1 \geq \tilde{s}_2 \geq 0$. Then the following estimate holds: $$\|u_n(t)\|_{H^{\tilde{s}_1.\tilde{s}_2}_{\omega}} \lesssim \|u_0\|_{H^{\tilde{s}_1,\tilde{s}_2}_{\omega}} \exp \big( \int_0^t \|\partial_x u_n(s)\|_{L^{\infty}} ds \big) \lesssim \|u_0\|_{H^{\tilde{s}_1,\tilde{s}_2}_{\omega}} \exp \big(\int_0^t \|u_n(s)\|_{H^{s_1,s_2}_{\omega}}ds\big).$$
\end{lemma}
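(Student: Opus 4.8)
The plan is to note that Lemma~\ref{normgrowthregularized} is not a new computation but a direct reading of the estimates already obtained in the proof of Proposition~\ref{Hshbound}. Indeed, in that proof the only structural facts used to bound $\frac{d}{dt}\|\langle\partial_x\rangle^{s_1}u_n\|_{L^2}^2$ and $\frac{d}{dt}\|\langle\partial_y\rangle^{s_2}u_n\|_{L^2}^2$ were: (i) anti-selfadjointness of $-H\partial_{xx}$, which kills the linear term; (ii) integration by parts plus H\"older on the ``diagonal'' piece of the nonlinearity, producing a factor $\|\partial_xu_n\|_{L^\infty}$; and (iii) the commutator estimates \eqref{commutatorsderivatives} and \eqref{KenigPonceVegaQuasi} for the remaining piece. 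None of (i)--(iii) requires $s_1,s_2$ to lie in the range of Proposition~\ref{Hshbound}: the Kato--Ponce estimate \eqref{commutatorsderivatives} holds for all $s_1\ge 1$, the Kenig--Ponce--Vega estimate \eqref{KenigPonceVegaQuasi} for all $0\le s_2\le 1$, and the $s=0$ case is just the $L^2$-conservation \eqref{L2conservation}. So I would first record that for every $\tilde s_1\ge 0$ and every $\tilde s_2\in[0,1]$ one has, by exactly the same four-line computation as in Proposition~\ref{Hshbound} (treating $\tilde s_1\in[0,1)$ and the $L^2$ piece separately, where no commutator is even needed),
\begin{equation*}
\frac{d}{dt}\|\langle\partial_x\rangle^{\tilde s_1}u_n\|_{L^2}^2+\frac{d}{dt}\|\langle\partial_y\rangle^{\tilde s_2}u_n\|_{L^2}^2\lesssim \big(\|\langle\partial_x\rangle^{\tilde s_1}u_n\|_{L^2}^2+\|\langle\partial_y\rangle^{\tilde s_2}u_n\|_{L^2}^2\big)\,\|\partial_xu_n\|_{L^\infty}.
\end{equation*}
For the case $\tilde s_1\in(1,\infty)$ one invokes \eqref{commutatorsderivatives} verbatim; for $\tilde s_1\in[0,1]$ one can either invoke \eqref{commutatorsderivatives} with $s_1=1$ after noting $\langle\partial_x\rangle^{\tilde s_1}$ is then bounded relative to $\langle\partial_x\rangle$, or more cleanly just estimate $\int \langle\partial_x\rangle^{\tilde s_1}u_n\,\langle\partial_x\rangle^{\tilde s_1}(u_n\partial_xu_n)$ directly, using that the bilinear symbol $\langle\omega\cdot(n_1+n_2)\rangle^{\tilde s_1}\langle\omega\cdot n_1\rangle^{-\tilde s_1}$ is a Coifman--Meyer-type symbol in the tangential variable when $\tilde s_1\le 1$; either way the outcome is the displayed differential inequality.

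Second, I would apply Gronwall's inequality \eqref{Gronwall} to $u(t):=\|u_n(t)\|_{H^{\tilde s_1,\tilde s_2}_\omega}^2$ (or, since the right-hand side is a product, pass through $\frac{d}{dt}\|u_n\|_{H^{\tilde s_1,\tilde s_2}_\omega}\lesssim \|u_n\|_{H^{\tilde s_1,\tilde s_2}_\omega}\|\partial_xu_n\|_{L^\infty}$ after dividing by $\|u_n\|_{H^{\tilde s_1,\tilde s_2}_\omega}$), with $\beta(s)=c\|\partial_xu_n(s)\|_{L^\infty}$, noting $\|u_n(0)\|_{H^{\tilde s_1,\tilde s_2}_\omega}=\|u_0^{\delta(n)}\|_{H^{\tilde s_1,\tilde s_2}_\omega}\le\|u_0\|_{H^{\tilde s_1,\tilde s_2}_\omega}$ since $u_0^{\delta(n)}$ is a Fourier truncation of $u_0$. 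This gives the first inequality
\begin{equation*}
\|u_n(t)\|_{H^{\tilde s_1,\tilde s_2}_\omega}\lesssim\|u_0\|_{H^{\tilde s_1,\tilde s_2}_\omega}\exp\Big(\int_0^t\|\partial_xu_n(s)\|_{L^\infty}\,ds\Big).
\end{equation*}
The second inequality then follows simply by inserting the crucial estimate \eqref{crucialequation} from Lemma~\ref{anisotropicproperties}, namely $\|\partial_xu_n(s)\|_{L^\infty}\lesssim\|u_n(s)\|_{H^{s_1,s_2}_\omega}$, valid precisely under the standing hypotheses $s_2>1/2$, $s_1>\tfrac{3s_2}{2s_2-1}$ imposed in the lemma.

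The main obstacle, such as it is, is purely bookkeeping: making sure the commutator and product estimates cover the full advertised range $\tilde s_1\ge 0$, $\tilde s_2\in[0,1]$ and not just the range in which Proposition~\ref{Hshbound} was stated. The delicate endpoint is $\tilde s_1\le 1$, where \eqref{commutatorsderivatives} is stated only for $s_1\ge 1$; but there the commutator $[\langle\partial_x\rangle^{\tilde s_1},u_n]\partial_xu_n$ is harmless because no derivative count is lost, and it can be bounded by $\|\langle\partial_x\rangle^{\tilde s_1}u_n\|_{L^2}\|\partial_xu_n\|_{L^\infty}$ using the tangential Coifman--Meyer theory already available via Proposition~\ref{transferenceparaproduct} (or simply by splitting into the $s_1=1$ case and a bounded multiplier). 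Likewise the $\tilde s_1=0$ case is nothing but \eqref{L2conservation}. Since the statement asks only for an estimate uniform in $n$ and not for sharp constants, these cases pose no real difficulty, and the proof is complete once the differential inequality above is in hand.
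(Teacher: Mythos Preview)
Your proposal is correct and follows essentially the same approach as the paper: the paper simply remarks that ``the above calculations yielding estimate~\eqref{normgrowthprelim} are valid for any $s_1\ge 0$, $1\ge s_2\ge 0$'' and records the lemma without further argument. You reproduce this observation, apply Gronwall, and invoke \eqref{crucialequation} for the second inequality --- exactly as the paper does implicitly.

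The only difference is that you are more careful than the paper about the range $\tilde s_1\in[0,1)$, where the Kato--Ponce estimate \eqref{commutatorsderivatives} is not stated. The paper glosses over this, while you sketch a resolution via Coifman--Meyer/transference. A cleaner fix is to note that the Kenig--Ponce--Vega estimate \eqref{KenigPonceVegaQuasi} has an identical tangential analogue (same transference proof with $\omega^\perp$ replaced by $\omega$), which directly handles $\tilde s_1\in[0,1]$; but either route closes the gap, and this is a minor bookkeeping point rather than a substantive difference in strategy.
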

When $\tilde{s}_1 \geq s_1$ and $\tilde{s}_2 \geq s_2$ this can be interpreted as a blow-up alternative for the $H^{s_1,s_2}_{\omega}$ and $H^{\tilde{s}_1,\tilde{s}_2}_{\omega}$ norms. Of course, we already know that the regularized solutions exist globally. But in the limit as $n\rightarrow \infty$ this does give a blow-up alternative for the actual Benjamin--Ono equation (once we have established that the approximate solutions $u_n$ converge to the solutions given by the wellposedness theories in $H^{s_1,s_2}_{\omega}$ and $H^{\tilde{s}_1,\tilde{s}_2}_{\omega}$).
\begin{lemma}[Blow-up alternative]
    Given initial data $u_0 \in H^{\tilde{s}_1,\tilde{s}_2}_{\omega} \subseteq H^{s_1,s_2}_{\omega}$, the $C([0,T_1],H^{\tilde{s}_1,\tilde{s}_2}_{\omega})$ solution to the Benjamin--Ono equation exist on the same time interval as the $C([0,T_2],H^{s_1,s_2}_{\omega})$ solution, that is, $T_1=T_2$.
\end{lemma}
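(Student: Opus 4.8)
The plan is a continuation argument by contradiction, resting on the norm-growth bound of Lemma~\ref{normgrowthregularized} together with the convergence of the Galerkin approximants $u_n$ to the genuine solutions (established in Sections~\ref{sectionCauchybound}--\ref{sectionsolution}). First I note that one inequality is automatic: since $\tilde{s}_1\geq s_1$ and $\tilde{s}_2\geq s_2$ give a continuous embedding $H^{\tilde{s}_1,\tilde{s}_2}_{\omega}\hookrightarrow H^{s_1,s_2}_{\omega}$, the maximal $C([0,T_1],H^{\tilde{s}_1,\tilde{s}_2}_{\omega})$-solution is in particular a $C([0,T_1],H^{s_1,s_2}_{\omega})$-solution with the same data $u_0$, so by uniqueness in $H^{s_1,s_2}_{\omega}$ it coincides with the restriction of the maximal $H^{s_1,s_2}_{\omega}$-solution; hence $T_1\leq T_2$. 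I then assume $T_1<T_2$ and seek a contradiction, writing $u$ for the common solution.

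The heart of the argument is an a-priori bound on $\|u(t)\|_{H^{\tilde{s}_1,\tilde{s}_2}_{\omega}}$ on $[0,T_1)$, obtained by passing Lemma~\ref{normgrowthregularized} to the limit $n\to\infty$. For every $\tau<T_2$ one has $u_n\to u$ in $C([0,\tau],H^{s_1,s_2}_{\omega})$, and since $\|\partial_x v\|_{L^\infty}\lesssim\|v\|_{H^{s_1,s_2}_{\omega}}$ by \eqref{crucialequation}, it follows that $\int_0^t\|\partial_x u_n(s)\|_{L^\infty}\,ds\to\int_0^t\|\partial_x u(s)\|_{L^\infty}\,ds$; combining this with the lower semicontinuity of the $H^{\tilde{s}_1,\tilde{s}_2}_{\omega}$-norm under Fourier-coefficientwise convergence (Fatou's lemma over $\mathbb{Z}^2$), the estimate of Lemma~\ref{normgrowthregularized} survives in the limit:
\[
\|u(t)\|_{H^{\tilde{s}_1,\tilde{s}_2}_{\omega}}\;\lesssim\;\|u_0\|_{H^{\tilde{s}_1,\tilde{s}_2}_{\omega}}\exp\Big(\int_0^t\|\partial_x u(s)\|_{L^\infty}\,ds\Big)\;\lesssim\;\|u_0\|_{H^{\tilde{s}_1,\tilde{s}_2}_{\omega}}\exp\Big(\int_0^t\|u(s)\|_{H^{s_1,s_2}_{\omega}}\,ds\Big)
\]
for all $t\in[0,T_1)$. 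Since $T_1<T_2$, the interval $[0,T_1]$ is a compact subset of the lifespan of the $H^{s_1,s_2}_{\omega}$-solution, so $C_0:=\sup_{t\in[0,T_1]}\|u(t)\|_{H^{s_1,s_2}_{\omega}}<\infty$, whence $\sup_{t\in[0,T_1)}\|u(t)\|_{H^{\tilde{s}_1,\tilde{s}_2}_{\omega}}\lesssim\|u_0\|_{H^{\tilde{s}_1,\tilde{s}_2}_{\omega}}e^{T_1C_0}=:R<\infty$.

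With this bound in hand I run the standard continuation step. Let $\tau=\tau(R)>0$ be the existence time supplied by local wellposedness in $H^{\tilde{s}_1,\tilde{s}_2}_{\omega}$ (Theorem~\ref{wellposednessinHs1s2}) for data of norm at most $R$. Choosing $t_\ast\in[0,T_1)$ with $t_\ast+\tau>T_1$ (possible since $\tau>0$) and solving the Benjamin--Ono equation with data $u(t_\ast)\in B_{H^{\tilde{s}_1,\tilde{s}_2}_{\omega}}(0,R)$ yields a $C([t_\ast,t_\ast+\tau],H^{\tilde{s}_1,\tilde{s}_2}_{\omega})$-solution, which by uniqueness in $H^{s_1,s_2}_{\omega}$ agrees with $u$ on $[t_\ast,T_1)$ and hence extends $u$ to a solution on $[0,t_\ast+\tau]\supsetneq[0,T_1)$. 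This contradicts the maximality of $T_1$, so $T_1=T_2$.

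I expect the main obstacle to be the limiting passage in the second step: Lemma~\ref{normgrowthregularized} is proved for the regularized (trigonometric-polynomial) solutions, and one must transfer it to the genuine solution on all of $[0,T_1)$, not merely on the short interval furnished by Proposition~\ref{Hshbound}. I would handle this by partitioning $[0,T_1]$ into finitely many short subintervals, re-running the Galerkin convergence on each, and chaining the estimates via continuity of $u$ in $H^{s_1,s_2}_{\omega}$; the uniform-in-$n$ $H^{\tilde{s}_1,\tilde{s}_2}_{\omega}$-bound of Lemma~\ref{normgrowthregularized} then identifies the weak limit of $u_n(t)$ in $H^{\tilde{s}_1,\tilde{s}_2}_{\omega}$ with $u(t)$. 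The remaining ingredients---the embedding $H^{\tilde{s}_1,\tilde{s}_2}_{\omega}\hookrightarrow H^{s_1,s_2}_{\omega}$, uniqueness, and the fact (contained in Theorem~\ref{wellposednessinHs1s2}) that the local existence time depends only on the size of the data---are routine.
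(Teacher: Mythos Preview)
Your proof is correct and follows essentially the same approach the paper sketches: the paper does not give a detailed argument but simply remarks that Lemma~\ref{normgrowthregularized}, passed to the limit $n\to\infty$ once convergence of the Galerkin approximants is established, yields the blow-up alternative. Your write-up supplies the standard details (the continuation argument by contradiction, the uniform bound on $[0,T_1)$, the re-launch at $t_\ast$), and your use of Fatou/lower semicontinuity to transfer the $H^{\tilde s_1,\tilde s_2}_\omega$-bound to the limit is a minor variant of the paper's suggestion to establish convergence of $u_n$ in both $H^{s_1,s_2}_\omega$ and $H^{\tilde s_1,\tilde s_2}_\omega$ directly; the limiting estimate you need is in fact recorded later in the paper as Lemma~\ref{Hshnormgrowth}.
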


\begin{remark}\label{remarksymmetricSobolevnormgrowthcontrol}
    Estimate (\ref{symmetricSobolevnormgrowthcontrol}) in the introduction controlling the growth of ordinary Sobolev norms for sufficiently regular solutions to the Benjamin--Ono equation follows by the same reasoning as in this section, but with the Kato-Ponce commutator estimate for joint inhomogeneous derivatives \begin{equation}\label{KatoPoncetorus}\|[\langle \nabla \rangle^s,f]g\|_{L^2(\mathbb{T}^2)} \lesssim \|\langle \nabla \rangle^s f\|_{L^2(\mathbb{T}^2)}\|g\|_{L^{\infty}(\mathbb{T}^2)}+\|\langle \nabla \rangle f\|_{L^{\infty}(\mathbb{T}^2)}\|\langle \nabla \rangle^{s-1}g\|_{L^2(\mathbb{T}^2)}\end{equation} (applied to $f=u$ and $g = \partial_xu$) instead of the commutator estimates for tangential and normal inhomogeneous derivatives in Lemma \ref{KatoPonceVegaQuasih}. 
    The above commutator estimate \eqref{KatoPoncetorus} can be obtained from the classical Kato-Ponce commutator estimate on the Euclidean plane \cite{MR0951744} by the same transference arguments as in the proof of estimate (\ref{commutatorsderivatives}) using Lemmas \ref{PQR} and \ref{sderivativesPw}.
\end{remark}

\subsection{Cauchy bound}\label{sectionCauchybound}
In this section, we show that the sequence of solutions $u_n$ to the regularized equation \eqref{regularizedBO} with initial data \eqref{initialdata} is Cauchy in $L^{\infty}([0,T],H^{s_1,s_2}_{\omega})$.

\begin{proposition}\label{HshCauchy}
    The sequence $(u_n)_{n \in \mathbb{N}}$ is uniformly Cauchy in $L^{\infty}([0,T],H^{s_1,s_2}_{\omega})$ for initial data from a compact set $\mathcal{K} \subseteq H^{s_1,s_2}_{\omega}$. That is, $$\|u_n-u_m\|_{L^{\infty}([0,T],H^{s_1,s_2}_{\omega})} = o_{m \rightarrow \infty}(1)$$ uniformly for $u_0 \in \mathcal{K}$.
\end{proposition}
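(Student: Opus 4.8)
The plan is to run a Bona--Smith-type argument. Fix $n\geq m$ and set $v=v_{n,m}:=u_n-u_m$. Subtracting the regularized equations \eqref{regularizedBO} for $u_n$ and $u_m$ and using $P_nu_n=u_n$, $P_mu_m=u_m$, one gets
\[
\partial_t v=-P_m H\partial_{xx}v+P_m\partial_x(vw)+E_{n,m},\qquad w:=u_n+u_m,
\]
where $E_{n,m}$ collects the lower-order terms produced by the mismatch of the two truncation levels (the difference $(P_n-P_m)H\partial_{xx}u_n$, the difference $(P_n-P_m)\partial_x(u_n^2)$, and the errors from commuting $P_m$ past the remaining expressions). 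Each of these is supported at frequencies $\gtrsim m$; combining the uniform bound $\|u_n\|_{L^\infty([0,T],H^{s_1,s_2}_{\omega})}\lesssim\|u_0\|_{H^{s_1,s_2}_{\omega}}$ of Proposition~\ref{Hshbound}, the algebra property \eqref{algebraproperty}, and (for the higher norms of $u_n$) Lemma~\ref{normgrowthregularized}, one checks that $\|E_{n,m}(t)\|_{L^2}$, $\|\langle\partial_x\rangle^{s_1}E_{n,m}(t)\|_{L^2}$ and $\|\langle\partial_y\rangle^{s_2}E_{n,m}(t)\|_{L^2}$ all tend to zero as $m\to\infty$, provided $\delta(n)$ is chosen to grow sufficiently slowly in $n$.

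The first step is an $L^2$ Cauchy bound. Pairing the equation for $v$ with $v$, using anti-self-adjointness of $-P_mH\partial_{xx}$, integrating by parts in the nonlinear term (so that it becomes $\tfrac12\int\partial_x w\,v^2$), and invoking $\|\partial_x w\|_{L^\infty}\lesssim\|w\|_{H^{s_1,s_2}_{\omega}}\lesssim 1$ from \eqref{crucialequation} and Proposition~\ref{Hshbound}, one arrives at $\tfrac{d}{dt}\|v\|_{L^2}^2\lesssim\|v\|_{L^2}^2+\|v\|_{L^2}\|E_{n,m}\|_{L^2}$, whence Gronwall \eqref{Gronwall} gives $\|v\|_{L^\infty([0,T],L^2)}\lesssim_T\|v(0)\|_{L^2}+\sup_t\|E_{n,m}(t)\|_{L^2}$. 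Since $v(0)=(P_{\delta(n)^{1/s_1},\delta(n)^{1/s_2}}-P_{\delta(m)^{1/s_1},\delta(m)^{1/s_2}})u_0$ (notation as in \eqref{generalprojection}) is a frequency band between the scales $\delta(m)$ and $\delta(n)$, one has $\|v(0)\|_{L^2}\lesssim\delta(m)^{-1}\|u_0\|_{H^{s_1,s_2}_{\omega}}$, which tends to zero uniformly for $u_0$ in the bounded (hence, a fortiori, compact) set $\mathcal{K}$.

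The second step is the energy estimate at the level of $H^{s_1,s_2}_{\omega}$. Applying $\langle\partial_x\rangle^{s_1}$ (resp.\ $\langle\partial_y\rangle^{s_2}$) to the equation for $v$, pairing with $\langle\partial_x\rangle^{s_1}v$ (resp.\ $\langle\partial_y\rangle^{s_2}v$), and proceeding exactly as in the proof of Proposition~\ref{Hshbound} with the quasi-periodic Kato--Ponce commutator estimate \eqref{commutatorsderivatives} and the Kenig--Ponce--Vega estimate \eqref{KenigPonceVegaQuasi}, every resulting term has coefficient $\lesssim\|w\|_{H^{s_1,s_2}_{\omega}}\lesssim1$ except for the top-order paraproduct piece in which the full fractional derivative lands on the high-frequency factor of $w$; this piece is dominated by $\|v\|_{H^{s_1,s_2}_{\omega}}\,\|v\|_{L^\infty}\,\|w\|_{H^{s_1+1,s_2}_{\omega}}$. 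Here $\|w\|_{H^{s_1+1,s_2}_{\omega}}$ is controlled on $[0,T]$ by Lemma~\ref{normgrowthregularized} (with $(\tilde s_1,\tilde s_2)=(s_1+1,s_2)$) by $\lesssim\|u_0^{\delta(n)}\|_{H^{s_1+1,s_2}_{\omega}}\lesssim\delta(n)^{1/s_1}\|u_0\|_{H^{s_1,s_2}_{\omega}}$, using that $u_0^{\delta(n)}$ has tangential frequencies $\leq\delta(n)^{1/s_1}$; and $\|v\|_{L^\infty}$ is bounded via the anisotropic Sobolev embedding \eqref{anisotropicembedding} and the interpolation inequality \eqref{interpolationestimate} by $\|v\|_{H^{s_1,s_2}_{\omega}}^{\theta}\|v\|_{L^2}^{1-\theta}$ for some $\theta<1$ (available because $s_2>1/2$ and $s_1$ is large), hence, by the uniform bound and Step~1, by $\lesssim\|v(0)\|_{L^2}^{1-\theta}$. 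Feeding these into Gronwall \eqref{Gronwall},
\[
\|v\|_{L^\infty([0,T],H^{s_1,s_2}_{\omega})}\lesssim_T\|v(0)\|_{H^{s_1,s_2}_{\omega}}+\delta(n)^{1/s_1}\|v(0)\|_{L^2}^{1-\theta}+\sup_t\big(\|\langle\partial_x\rangle^{s_1}E_{n,m}\|_{L^2}+\|\langle\partial_y\rangle^{s_2}E_{n,m}\|_{L^2}\big).
\]
The data difference $\|v(0)\|_{H^{s_1,s_2}_{\omega}}=\|(P_{\delta(n)}-P_{\delta(m)})u_0\|_{H^{s_1,s_2}_{\omega}}$ tends to zero uniformly over $\mathcal{K}$, because a family of Fourier projections converging strongly to the identity converges uniformly on compact subsets; the error terms vanish as before; and the middle term is driven to zero by the choice of $\delta$ --- this is precisely where the separate initial-data regularization enters.

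The step I expect to be the crux is making that middle term genuinely small: one must balance three scales --- the equation-truncation level $n$, the data-truncation level $\delta(n)$, and the comparison index $m$ --- so that the smallness gained from $\|v(0)\|_{L^2}$ (a negative power of $\delta(m)$, sharpened by the uniform modulus of continuity of the projections on the compact set $\mathcal{K}$, and also exploiting that $v(0)$ is a frequency band between $\delta(m)$ and $\delta(n)$ so that $\|u_0^{\delta(n)}\|_{H^{s_1+1,s_2}_{\omega}}$ large forces $\|v(0)\|_{L^2}$ small) genuinely beats the loss $\delta(n)^{1/s_1}$ incurred by the extra tangential derivative on $u_n$, while keeping $\delta(n)\leq n$ so that Proposition~\ref{Hshbound} and Lemma~\ref{normgrowthregularized} stay available on a common time interval. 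Getting the arithmetic of these exponents to close --- together with the routine but lengthy bookkeeping needed to isolate exactly the paraproduct term above out of $\langle\partial_x\rangle^{s_1}\partial_x(vw)$, to verify that every other commutator contribution has a coefficient bounded uniformly in $n$, and to control $E_{n,m}$ in $H^{s_1,s_2}_{\omega}$ --- is the technical heart of the proposition.
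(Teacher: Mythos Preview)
Your overall strategy---Bona--Smith with separate data regularization, an $L^2$ Cauchy bound first, then an energy estimate at the $H^{s_1,s_2}_\omega$ level using the commutator estimates~\eqref{commutatorsderivatives},~\eqref{KenigPonceVegaQuasi}---is exactly what the paper does. But the gap you flag as ``the crux'' is real, and your suggested resolutions do not close it.

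The governing principle is that every quantity carrying \emph{growth} must grow in $\delta(m)$, never in $\delta(n)$: otherwise, for fixed $m$ and $n\to\infty$, the growth is unbounded while the available decay (in $m$) stays fixed, and the estimate cannot be uniform in $n>m$. Two places in your write-up violate this. First, your decomposition puts $(P_n-P_m)\partial_x(u_n^2)$ into $E_{n,m}$. Estimating this in $\langle\partial_x\rangle^{s_1}L^2$ requires norms of $u_n$ at regularity $\sim 2s_1$, which via Lemma~\ref{normgrowthregularized} grow like a power of $\delta(n)$. The paper instead writes $P_n(u_n^2)-P_m(u_m^2)=(P_n-P_m)u_m^2+P_n\bigl(vw\bigr)$, so the projection-mismatch error involves $u_m^2$ only; the resulting growth $\delta(m)^{\text{power}}$ is beaten by the $m^{-\mu}$ gain from the frequency gap once $\delta(m)$ is chosen to grow slowly in $m$. (The main term then carries $P_n$, not $P_m$; when paired with $\langle\partial_x\rangle^{2s_1}v$ one moves $P_n$ onto $v$ by Plancherel and drops it, since $v$ has frequencies $\le n$.) Second, your ``middle term'' has $\|w\|_{H^{s_1+1,s_2}_\omega}\lesssim\delta(n)^{1/s_1}$; neither the frequency-band heuristic nor a three-scale balance can rescue this. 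The fix is to write $w=2u_m+v$: the $2u_m$ contribution gives growth only $\delta(m)^{1/s_1}$, which is beaten by $\|v\|_{L^\infty}\lesssim\|v\|_{L^2}^{1-\theta}$ from Step~1; the remaining piece $\int\langle\partial_x\rangle^{s_1}v\cdot v\cdot\langle\partial_x\rangle^{s_1}\partial_x v$ is handled by integration by parts to produce $\|\partial_x v\|_{L^\infty}\|\langle\partial_x\rangle^{s_1}v\|_{L^2}^2$, and $\|\partial_x v\|_{L^\infty}=o_{m\to\infty}(1)$ (by interpolation between $L^2$ and $H^{s_1,s_2}_\omega$) is absorbed into the Gronwall coefficient. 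The $\langle\partial_y\rangle^{s_2}$ estimate is handled analogously.

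A minor point: your linear error $(P_n-P_m)H\partial_{xx}u_n$ is spurious. Since $P_nu_n=u_n$ and $P_mu_m=u_m$, the linear term in $\partial_t v$ is exactly $-H\partial_{xx}v$, and its contribution vanishes by anti-self-adjointness; there is no linear mismatch to estimate.
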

The key step in showing Proposition \ref{HshCauchy} is the following estimate:
\begin{equation}\label{keyHsCauchy}
    \frac{d}{dt} \|(u_n-u_m)(t)\|_{H^{s_1,s_2}_{\omega}} \lesssim \|(u_n-u_m)(t)\|_{H^{s_1,s_2}_{\omega}} [\|\partial_xu_n(t)\|_{L^{\infty}} + \|\partial_xu_m(t)\|_{L^{\infty}}] + o_{m \rightarrow \infty}(1).
\end{equation}
Then, Gronwall's inequality bounds \begin{equation*}
    \|(u_n-u_m)(t)\|_{H^{s_1,s_2}_{\omega}} \leq [\|(u_n-u_m)(0)\|_{H^{s_1,s_2}_{\omega}}+t \cdot o_{m \rightarrow \infty}(1) ] \exp \Big( c \int_0^t\|\partial_xu_n(s)\|_{L^{\infty}} + \|\partial_xu_m(s)\|_{L^{\infty}} ds \Big).
\end{equation*}
Using the a-priori bound in Proposition \ref{HshCauchy} together with estimate \eqref{crucialequation} in Lemma \ref{anisotropicproperties}, the latter factor involving the exponential is uniformly bounded in $n,m \in \mathbb{N}$ and $u_0 \in \mathcal{K}$, and the overall expression goes to zero uniformly in $t\in[0,T]$ and $u_0 \in \mathcal{K}$ as $n>m \rightarrow \infty$. \\

The proof of the key estimate (\ref{keyHsCauchy}) requires first establishing quantitative Cauchyness of the sequence of approximate solutions in the space $L^{\infty}([0,T],L^2)$, which is the content of the following proposition.

\begin{proposition}\label{L2Cauchy}
    Let $0<q<s_2$ and $s_1$ sufficiently large depending on $q$ and $s_2$. 
    The sequence $\{u_n\}_{n \in \mathbb{N}}$ is Cauchy in $L^{\infty}([0,T],L^2)$ with  $$\|u_n-u_m\|_{L^{\infty}([0,T],L^2)} \lesssim [m^{-q}\cdot T + \delta(m)^{-1} \cdot o_{m \rightarrow \infty}(1)],
     \quad n>m.$$
    The implicit constant can be chosen uniformly for all initial data from a given compact set $\mathcal{K} \subseteq H^{s_1,s_2}_{\omega}$.
\end{proposition}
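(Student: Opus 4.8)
The plan is to derive a differential inequality for $\frac{d}{dt}\|u_n-u_m\|_{L^2}^2$ and then close by Gronwall. Write $v := u_n - u_m$ for $n>m$. Subtracting the two regularized equations~\eqref{regularizedBO}, we get
\begin{equation*}
v_t = -P_nH\partial_{xx}u_n + P_mH\partial_{xx}u_m + P_n\partial_x(P_nu_n P_nu_n) - P_m\partial_x(P_mu_m P_mu_m).
\end{equation*}
The linear terms I would regroup as $-P_mH\partial_{xx}v - (P_n-P_m)H\partial_{xx}u_n$; the first piece is anti-selfadjoint (up to the harmless projection) and contributes nothing after pairing with $v$ in $L^2$, while the second is an error supported on frequencies $|\omega\cdot k|$ or $|\omega^\perp\cdot k|$ of size $\gtrsim m$, so $\|(P_n-P_m)H\partial_{xx}u_n\|_{L^2} \lesssim m^{-q}\|u_n\|_{H^{q+2,\,\ast}_\omega}$ for an appropriate second index; by Lemma~\ref{anisotropicproperties} (boundedness of derivative operators, \eqref{boundedness1}--\eqref{boundedness2}) and the uniform a-priori bound in Proposition~\ref{Hshbound} — valid since $s_1$ is taken large depending on $q$ — this is $\lesssim m^{-q}$. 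For the nonlinear terms I would similarly split off the low-frequency main part: after moving projections around by Plancherel and writing $P_nu_nP_nu_n - P_mu_mP_mu_m = (P_nu_n+P_mu_m)(P_n-P_m)u_n + P_mu_m\,P_m v$ (schematically), the genuinely quadratic-in-$v$ term is handled by integration by parts and the product rule exactly as in~\eqref{L2conservation}/\eqref{yhderivative}, producing $\lesssim \|v\|_{L^2}^2(\|\partial_xu_n\|_{L^\infty}+\|\partial_xu_m\|_{L^\infty})$, which by~\eqref{crucialequation} and Proposition~\ref{Hshbound} is $\lesssim \|v\|_{L^2}^2$; the remaining terms carry a frequency gap of size $\gtrsim m$ and are again $\lesssim m^{-q}$ in $L^2$ after paying $q$ derivatives and invoking the uniform $H^{s_1,s_2}_\omega$ bound.

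Assembling these estimates yields
\begin{equation*}
\frac{d}{dt}\|v(t)\|_{L^2} \lesssim \|v(t)\|_{L^2} + m^{-q},
\end{equation*}
and Gronwall's inequality~\eqref{Gronwall} on $[0,T]$ gives
\begin{equation*}
\|v(t)\|_{L^2} \lesssim e^{cT}\bigl(\|v(0)\|_{L^2} + m^{-q}T\bigr).
\end{equation*}
It remains to estimate the initial-data term. Here the separate regularization~\eqref{initialdata} is crucial: $v(0) = (P_{\delta(n)^{1/s_1},\delta(n)^{1/s_2}} - P_{\delta(m)^{1/s_1},\delta(m)^{1/s_2}})u_0$, which is supported on frequencies $k$ with $|\omega\cdot k| \geq \delta(m)^{1/s_1}$ or $|\omega^\perp\cdot k| \geq \delta(m)^{1/s_2}$, so $\|v(0)\|_{L^2} \leq \delta(m)^{-1}\|u_0\|_{H^{s_1,s_2}_\omega}$ directly from Plancherel and the definition of the anisotropic norm. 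Since $\|u_0\|_{H^{s_1,s_2}_\omega}$ is bounded uniformly over the compact set $\mathcal{K}$, this is $\delta(m)^{-1}\cdot O_{\mathcal{K}}(1)$; to upgrade the $O(1)$ to $o_{m\to\infty}(1)$ one uses that on a compact subset of $H^{s_1,s_2}_\omega$ the high-frequency tails vanish \emph{uniformly} (equicontinuity of the Fourier tails on compact sets), i.e.\ $\sup_{u_0\in\mathcal{K}}\|(1-P_R)u_0\|_{H^{s_1,s_2}_\omega}\to 0$ as $R\to\infty$, which is a standard compactness fact; combined with $\delta(m)\to\infty$ this gives $\|v(0)\|_{L^2}\lesssim \delta(m)^{-1}o_{m\to\infty}(1)$ uniformly in $\mathcal{K}$. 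Putting the two contributions together reproduces the claimed bound $\|u_n-u_m\|_{L^\infty([0,T],L^2)}\lesssim m^{-q}T + \delta(m)^{-1}o_{m\to\infty}(1)$.

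The main obstacle I anticipate is bookkeeping the frequency-gap error terms in the nonlinearity: one must verify that paying $q<s_2$ derivatives on a factor living at frequency $\gtrsim m$ (in \emph{either} the tangential or the normal direction) is indeed controlled by the uniform $H^{s_1,s_2}_\omega$ bound, which forces the smallness-of-$s_1$ hypothesis to be quantified — specifically $s_1$ must be large enough (depending on $q$ and $s_2$) that the derivative-redistribution inequalities of Lemma~\ref{anisotropicproperties}, \eqref{boundedness1}--\eqful{boundedness2}, together with the algebra property~\eqref{algebraproperty}, absorb the two extra tangential derivatives from $\partial_{xx}$ in the linear error and the one derivative from $\partial_x$ in the nonlinear error while leaving $q$ derivatives to spare for the frequency gain. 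Everything else is a routine adaptation of the $L^2$-conservation computation~\eqref{L2conservation} together with Gronwall; the uniformity over $\mathcal{K}$ is inherited from the uniform existence time in Proposition~\ref{Hshbound} and the uniform Fourier-tail decay on compact sets.
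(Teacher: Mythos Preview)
Your plan is correct and follows essentially the same route as the paper. Two small corrections: (i) the linear error term $(P_n-P_m)H\partial_{xx}u_n$ is unnecessary---since the regularized flow preserves Fourier support, $P_nu_n=u_n$ and $P_mu_m=u_m$ for all time, so the linear parts combine to exactly $-H\partial_{xx}v$ and vanish by anti-selfadjointness with no residual; (ii) for the nonlinearity the clean decomposition (the one the paper uses) is
\[
P_n(u_n^2)-P_m(u_m^2)=(P_n-P_m)u_m^2 + P_n\bigl[(u_n-u_m)(u_n+u_m)\bigr],
\]
giving precisely one frequency-gap term (estimated via $\langle\nabla\rangle^q\partial_x:H^{s_1,s_2}_\omega\to L^2$ and the algebra property) and one difference-of-squares term handled by integration by parts---your schematic splitting is not algebraically correct as written but is aiming at this same structure. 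The Gronwall closure and the initial-data estimate via equicontinuity of Fourier tails on the compact set $\mathcal{K}$ match the paper exactly.
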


\begin{proof}
By anti-selfadjointness of $-H\partial_{xx}$ we have
\begin{equation}\label{derivativeL2}\frac{d}{dt} \|(u_n-u_m)\|^2_{L^2} = \int_{\mathbb{T}^2} [u_n-u_m]\partial_x[P_n(u_n^2) - P_m(u_m^2)].\end{equation}
    We write the contribution from the nonlinearity as a difference of squares, while picking up another term whose frequency support is bounded away from zero: $$\partial_x(P_n(u_n^2)-P_m(u_m^2)) = \partial_x((P_n - P_m)u_m^2) + P_n\partial_x[(u_n-u_m)(u_n+u_m)].$$
The contribution from the difference of squares can be estimated as \begin{equation}\label{firstterm}\int_{\mathbb{T}^2} (u_n-u_m) P_n\partial_x[(u_n-u_m)(u_n+u_m)] \lesssim \|u_n-u_m\|_{L^2}^2 [\|\partial_xu_n\|_{L^{\infty}} + \|\partial_x u_m\|_{L^{\infty}}].\end{equation} 
Here, we used Plancherel to move the projection $P_n$ to the first factor, where it can be dropped, followed by the product rule and integration by parts.
Next, we estimate the contribution from the first summand. Using that the frequency support is bounded away from zero we gain decay $m^{-q}$ at the cost of paying extra derivatives $\langle \nabla \rangle^q$.
\begin{align}\label{secondterm2}\begin{split}
    \int_{\mathbb{T}^2} (u_n-u_m) \partial_x((P_n-P_m)u_m^2) &  \leq   \|u_n-u_m\|_{L^2} \|\langle \nabla \rangle^q\partial_x  (P_n-P_m)u_m^2\|_{L^2} \cdot m^{-q} 
    \\ & \lesssim m^{-q} \|u_n-u_m\|_{L^2} \| u_m\|_{H^{s_1,s_2}_{\omega}}^2 \lesssim m^{-q} \|u_n-u_m\|_{L^2}.
    \end{split}
\end{align}
In the second line, we use boundedness of $\langle \nabla \rangle^q \partial_x: H^{s_1,s_2}_{\omega} \rightarrow L^2$ due to $$\xi_1\langle|\xi|\rangle^q \lesssim \langle \xi_1 \rangle^{1+q} + \langle \xi_1 \rangle \langle \xi_2 \rangle^q \lesssim \langle \xi_1 \rangle^{\max(1+q,\frac{1}{1-q/s_2})} + \langle \xi_2 \rangle^{s_2} \lesssim \langle \xi_1\rangle^{s_1} + \langle \xi_2 \rangle^{s_2}$$ which follows from Young's inequality provided that $0<q<s_2, \max(\frac{1}{1-q/s_2}, 1+q) < s_1$. We also used the algebra property of $H^{s_1,s_2}_{\omega}$ and the a-priori bound $\|u_m\|_{H^{s_1,s_2}_{\omega}} \lesssim \|u_0\|_{H^{s_1,s_2}_{\omega}} \lesssim 1$ in Lemma \ref{Hshbound}.
Adding \eqref{firstterm} and \eqref{secondterm2} and combining with \eqref{derivativeL2} gives
$$\frac{d}{dt} \|(u_n-u_m)(t)\|_{L^2} \lesssim \|(u_n -u_m)(t)\|_{L^2}[\|\partial_xu_n\|_{L^{\infty}} + \|\partial_xu_m\|_{L^{\infty}}] + m^{-q}.$$
Upon integrating in time and applying Gronwall's inequality \eqref{Gronwall}, this yields 
$$\|u_n-u_m\|_{L^{\infty}([0,T],L^2)} \lesssim [cTm^{-q} + \|u_0^{\delta(n)} - u_0^{\delta(m)}\|_{L^2}] \exp \Big(c \int_0^t \|\partial_xu_n(s)\|_{L^{\infty}} + \|\partial_xu_m(s)\|_{L^{\infty}}ds \Big).$$
Finally, we have by initial data regularization (\ref{initialdata}) that $$\|u_0^{\delta(n)} - u_0^{\delta(m)}\|_{L^2} \leq \delta(m)^{-1}\|u_0^{\delta(n)} - u_0^{\delta(m)}\|_{H^{s_1,s_2}_{\omega}} = \delta(m)^{-1} \cdot o_{m \rightarrow \infty}(1).$$ 
In the last line we used that $\|u_0^{\delta(n)} - u_0^{\delta(m)}\|_{H^{s_1,s_2}_{\omega}} = o_{m \rightarrow \infty}(1)$ uniformly for all $u_0 \in \mathcal{K}$ due to equicontinuity of the compact set $\mathcal{K} \subseteq H^{s_1,s_2}_{\omega}$ of initial data (compare Riesz-Kolmogorov).
Together with estimate (\ref{crucialequation}) and the a-priori estimate in Lemma \ref{Hshbound} this completes the proof.
\end{proof}

We return to the key estimate \eqref{keyHsCauchy} and break it into two pieces:
\begin{equation}\label{growthxderivatives}
    \frac{d}{dt} \|\langle \partial_x \rangle^{s_1}(u_n-u_m)\|_{L^2} \lesssim \|\langle \partial_x \rangle^{s_1}(u_n-u_m)\|_{L^2} [\|\partial_xu_n\|_{L^{\infty}} + \|\partial_xu_m\|_{L^{\infty}}] + o_{m \rightarrow \infty}(1).
\end{equation}
and
\begin{equation}\label{growthyderivatives}
    \frac{d}{dt} \|\langle \partial_y \rangle^{s_2}(u_n-u_m)\|_{L^2} \lesssim \|\langle \partial_y \rangle^{s_2}(u_n-u_m)\|_{L^2} [\|\partial_xu_n\|_{L^{\infty}} + \|\partial_xu_m\|_{L^{\infty}}] + o_{m \rightarrow \infty}(1).
\end{equation}
The remainder of this section is concerned with proving these two estimates.
\begin{proof}[Proof of estimate \eqref{growthyderivatives}]
By anti-selfadjointness of $-H\partial_{xx}$ we have
\begin{equation}\label{yderivatives}\frac{d}{dt}\|\langle \partial_y \rangle^{s_2}(u_n-u_m)\|_{L^2}^2 = \int_{\mathbb{T}^2} \langle \partial_y \rangle^{s_2}[u_n-u_m] \langle \partial_y \rangle^{s_2} \partial_x [P_n(u_n^2) - P_m(u_m^2)]. \end{equation}
We decompose the difference of the nonlinearities as the sum of two terms, one of which is a difference of squares and the other one has frequency support bounded away from zero:
\begin{align*}
         \langle \partial_y \rangle^{s_2} \partial_x (P_n(u_n^2) - P_m(u_m^2)) = & \underbrace{\langle \partial_y \rangle^{s_2} \partial_x ((P_n - P_m)u_m^2)}_{=:I} + \underbrace{P_n\langle \partial_y \rangle^{s_2} \partial_x ((u_n-u_m)(u_n+u_m))}_{=:I\kern-1.5pt I} .
\end{align*}
When considering the contribution of $I\kern-1.5pt I$ to \eqref{yderivatives}, the projection $P_n$ can be dropped by Plancherel, and we decompose this term further as 
\begin{equation}\label{II2}
\begin{split}
    I\kern-1.5pt I = \underbrace{[\langle \partial_y \rangle^{s_2}, u_n-u_m] \partial_x (u_n+u_m)}_{=:I\kern-1.5pt I_1} + \underbrace{(u_n-u_m) \langle \partial_y \rangle^{s_2} \partial_x(u_n+u_m) }_{=:I\kern-1.5pt I_2} \\ + \underbrace{[\langle \partial_y \rangle^{s_2}, (u_n+u_m)] \partial_x (u_n-u_m)}_{=:I\kern-1.5pt I_3} + \underbrace{(u_n+u_m) \langle \partial_y \rangle^{s_2} \partial_x(u_n-u_m)}_{=:I\kern-1.5pt I_4}.
    \end{split}
\end{equation}

The rearrangement in \eqref{II2} is strategic: When all the derivatives $\langle \partial_y \rangle^{s_2}$ fall on one factor, the resulting terms $I\kern-1.5pt I_2$ and $I\kern-1.5pt I_4$ can be estimated by using integration by parts and by exploiting separate initial data regularization, respectively.
The difference between these terms and $I\kern-1.5pt I$ gives rise to commutator terms  $I\kern-1.5pt I_1$ and $I\kern-1.5pt I_3$, which are amenable to be handled by the Kenig-Ponce-Vega commutator estimate (\ref{KenigPonceVegaQuasi}). We estimate the contribution to (\ref{yderivatives}) of each of these terms: \\

\item First, the contribution by $I\kern-1.5pt I_4:$ We estimate using integration by parts that $$\int_{\mathbb{T}^2} \langle \partial_y \rangle^{s_2}(u_n-u_m) \cdot (u_n+u_m) \langle \partial_y \rangle^{s_2} \partial_x (u_n-u_m) \lesssim \|\langle \partial_y \rangle^{s_2}(u_n-u_m)\|_{L^2}^2 [\|\partial_xu_n\|_{L^{\infty}}+\|\partial_xu_m\|_{L^{\infty}}].$$ \\

\item Second, the contribution by $I\kern-1.5pt I_3:$ By quasi-periodic Kenig-Ponce-Vega commutator estimate (\ref{KenigPonceVegaQuasi}) we have that
\begin{align*}
    & \int_{\mathbb{T}^2} \langle \partial_y \rangle^{s_2}(u_n-u_m) [\langle \partial_y \rangle^{s_2}, u_n+u_m] \partial_x(u_n - u_m) \\ \lesssim & \|\langle \partial_y \rangle^{s_2}(u_n-u_m)\|_{L^2} \|\langle \partial_y \rangle^{s_2}(u_n+u_m)\|_{L^2} \|\partial_x(u_n-u_m)\|_{L^{\infty}}
\end{align*}
The second factor is bounded for initial data $u_0$ from a given compact set by Lemma \ref{normgrowthregularized}: 
\begin{align*}\|\langle \partial_y \rangle^{s_2} u_m(t)\|_{L^2} \lesssim \| \langle \partial_y \rangle^{s_2}u_0 \|_{L^2} \exp\Big(\int_0^t \|\partial_x u_m(s)\|_{L^{\infty}}ds \Big) \lesssim 1.\end{align*} We gain decay from the last factor by using anisotropic Sobolev embedding \eqref{anisotropicembedding} and interpolating between boundedness in $H^{s_1,s_2}_{\omega}$ (Lemma \ref{Hshbound}) and Cauchy decay for the $L^{\infty}([0,T],L^2)$-norm (Lemma \ref{L2Cauchy}):
\begin{equation}\label{partialxinLinfinity}\|\partial_x(u_n-u_m)\|_{L^{\infty}} \lesssim \|u_n-u_m\|_{H^{s_1-\varepsilon, s_2-\varepsilon}_{\omega}} \lesssim \|u_n-u_m\|_{H^{s_1,s_2}_{\omega}}^{p(\varepsilon)} \|u_n-u_m\|_{L^2}^{p(\varepsilon)'} = o_{m \rightarrow \infty}(1).\end{equation} Here $\varepsilon>0$ is some small constant so that estimate (\ref{crucialequation}) holds and $p(\varepsilon), p(\varepsilon)'$ is a pair of H\"older conjugate exponents appropriate for above interpolation, see \eqref{interpolationestimate} in Lemma \ref{anisotropicproperties}. \\

\item Third, the contribution by $I\kern-1.5pt I_1:$ Applying the quasi-periodic Kenig-Ponce-Vega commutator estimate \eqref{KenigPonceVegaQuasi} yields $$\int_{\mathbb{T}^2} \langle \partial_y \rangle^{s_2}(u_n-u_m)[\langle \partial_y \rangle^{s_2}, u_n-u_m]\partial_x(u_n+u_m) \lesssim \|\langle \partial_y \rangle^{s_2}(u_n-u_m)\|_{L^2}^2 (\|\partial_x u_n\|_{L^{\infty}}+\|\partial_x u_m\|_{L^{\infty}}).$$ \\

\item Fourth, the contribution by $I\kern-1.5pt I_2:$ We begin estimating by H\"older that \begin{align*} & \int_{\mathbb{T}^2} \langle \partial_y \rangle^{s_2}(u_n-u_m)\cdot (u_n-u_m) \langle \partial_y \rangle^{s_2} \partial_x (u_n+u_m) \\ & \lesssim \|\langle \partial_y \rangle^{s_2} (u_n-u_m)\|_{L^2}\|u_n-u_m\|_{L^{\infty}} \|\langle \partial_y \rangle^{s_2} \partial_x (u_n+u_m)\|_{L^2}.\end{align*}
We control the last factor at the cost of a growth factor in terms of the initial data regularization: Given any $\varepsilon>0$ we have boundedness $\langle \partial_y \rangle^{s_2} \partial_x: H^{s_1,s_2(1+\varepsilon)}_{\omega} \rightarrow L^2$ provided $s_1$ is sufficiently large depending on $s_2$ and $\varepsilon$. This amounts to the estimate $\langle \xi_2 \rangle^{s_2} \xi_1 \lesssim \langle \xi_1 \rangle^{s_1} + \langle \xi_2 \rangle^{s_2 (1+ \varepsilon)}$ valid due to Young's inequality. By Lemma \ref{normgrowthregularized} and initial data regularization (\ref{initialdata}) we bound \begin{equation*}
    \|\langle \partial_y \rangle^{s_2} \partial_x u_m\|_{L^2} \lesssim \|u_m\|_{H^{(s_1,s_2(1+\varepsilon))}_{\omega}} \lesssim \|u_0^{\delta(m)}\|_{H^{(s_1,s_2(1+\varepsilon))}_{\omega}} \lesssim (1+\delta(m))^{\varepsilon} \|u_0\|_{H^{(s_1,s_2)}_{\omega}}.
\end{equation*}
An analogous estimate holds for $\|\langle \partial_y \rangle^{s_2}\partial_x u_n\|_{L^2}$. The idea is that we can get enough decay from the second factor $\|u_n-u_m\|_{L^{\infty}}$ to outweigh this growth in $\delta(m)$. By anisotropic Sobolev embedding \eqref{anisotropicembedding},  interpolation \eqref{interpolationestimate}, $H^{s_1,s_2}_{\omega}$-bound on regularized solutions (Lemma \ref{Hshbound}), and invoking $L^{\infty}([0,T],L^2)$ Cauchy decay (Lemma \ref{L2Cauchy}), we have
\begin{align}\label{Linfinitydecay}\begin{split}\|u_n-u_m\|_{L^{\infty}} & \lesssim \|u_n-u_m\|_{H^{(s_1/2, 1/4+s_2/2)}_{\omega}} \\ & \lesssim \|u_n-u_m\|_{L^2}^{\theta} \|u_n-u_m\|_{H^{s_1,s_2}_{\omega}}^{1-\theta} \lesssim [Tm^{-q} + \delta(m)^{-1} \cdot o_{m \rightarrow \infty}(1)]^{\theta}.\end{split}\end{align} Here $\theta \in (0,1)$ is an appropriate parameter dictated by interpolation \eqref{interpolationestimate}.
We can take $\theta$ to be non-decreasing in $s_1$. Given $\theta$ choose $0<\varepsilon < \theta$ and $s_1$ sufficiently large depending on $\varepsilon.$ Provided \begin{equation}\label{deltabound1}
    \delta(m) \lesssim m^{q\theta/\varepsilon} \cdot o_{m \rightarrow \infty}(1)
\end{equation}
we have that $[Tm^{-q}+\delta(m)^{-1} o _{m \rightarrow \infty}(1)]^{\theta} (1+\delta(m))^{\varepsilon} = o_{m \rightarrow \infty}(1)$ and thus $$\int_{\mathbb{T}^2} \langle \partial_y \rangle^{s_2}(u_n-u_m)\cdot (u_n-u_m) \langle \partial_y \rangle^{s_2} \partial_x (u_n+u_m) \lesssim \|\langle \partial_y \rangle^{s_2} (u_n-u_m)\|_{L^2} \cdot o_{m \rightarrow \infty}(1).$$
\\

\item Fifth, the contribution by $I$: Due to the frequency support of $I$ being bounded away from zero, we can gain decay $m^{-\mu}$ at the cost of paying extra derivatives $\langle \nabla \rangle^{\mu}$. We estimate
\begin{align*}
    & \quad \int_{\mathbb{T}^2} \langle \partial_y \rangle^{s_2} (u_n-u_m) \langle \partial_y \rangle^{s_2} \partial_x ((P_n - P_m)u_m^2) \\ & \lesssim \|\langle \partial_y \rangle^{s_2} (u_n-u_m)\|_{L^2} \|\langle \nabla \rangle^{\mu} \langle \partial_y \rangle^{s_2} \partial_x ((P_n-P_m)u_m^2)\|_{L^2} \cdot m^{-\mu} \\ & \lesssim \|\langle \partial_y \rangle^{s_2} (u_n-u_m)\|_{L^2} \|u_m\|_{H^{s,1}_{\omega}}^2 \cdot m^{-\mu}
\end{align*}
In the second line, we used boundedness of $\langle \nabla \rangle^{\mu} \langle \partial_y \rangle^{s_2} \partial_x:H^{s_1,1}_{\omega} \rightarrow L^2$, which amounts to \begin{align*}\langle |\xi|\rangle^{\mu} \langle \xi_2 \rangle^{s_2} \xi_1 & \lesssim \langle \xi_1 \rangle^{1+\mu} \langle \xi_2 \rangle^{s_2} \kern-0.3em + \kern-0.2em \langle \xi_1 \rangle \langle \xi_2 \rangle^{s_2+\mu} \kern-0.2em \lesssim \langle \xi_1 \rangle^{\max((1+\mu)/(1-s_2),1/(1-s_2-\mu))} \kern-0.3em + \kern-0.2em \langle \xi_2 \rangle^1 \lesssim \langle \xi_1\rangle^{s_1} \kern-0.3em + \kern-0.2em \langle \xi_2 \rangle^1\end{align*} due to Young's inequality provided \begin{equation*}
    s_1>\max((1+\mu)/(1-s_2),1/(1-s_2-\mu)).
\end{equation*} Further, we used the algebra property of $H^{(s_1,1)}_{\omega}$. Next, by norm growth control (Lemma \ref{normgrowthregularized}) and initial data regularization \eqref{initialdata} \begin{align*}\|u_m(t)\|_{H^{s_1,1}_{\omega}} & \lesssim \|u_0^{\delta(m)}\|_{H^{s_1,1}_{\omega}} \exp \big(\int_0^t \|\partial_x u_m(s)\|_{L^{\infty}} ds \big) \\ & \lesssim (1+\delta(m)^{(1-s_2)/s_2})\|u_0\|_{H^{s_1,s_2}_{\omega}} \lesssim (1+\delta(m)^{(1-s_2)/s_2}).\end{align*}
Thus, once we impose the bound \begin{equation}\label{deltabound2}
    \delta(m) \lesssim m^{s_2\mu /2(1-s_2)} \cdot o_{m \rightarrow \infty}(1)
\end{equation}
on the initial data regularization, we have that 
$$\int_{\mathbb{T}^2} \langle \partial_y \rangle^{s_2} (u_n-u_m) \langle \partial_y \rangle^{s_2} \partial_x ((P_n - P_m)u_m^2) \lesssim \|\langle \partial_y \rangle^{s_2} (u_n-u_m)\|_{L^2} \cdot o_{m \rightarrow \infty}(1).$$
Taking $\delta(m)$ to be an increasing function in $m$ satisfying (\ref{deltabound1}) and (\ref{deltabound2}) completes the proof.
\end{proof}

\begin{remark}The fifth step bounding the contribution by $I$ is the reason why we imposed the condition $s_2<1$ in the hypothesis of Theorem \ref{wellposednessinHs1s2}: we still want to have norm growth control (Lemma \ref{normgrowthregularized}) after applying extra derivatives $\langle \nabla \rangle^{\eta}$.

The point here is that we may gain decay $m^{-\eta}$ in terms of the regularization parameter at the cost of some additional joint derivatives $\langle \nabla \rangle^{\eta}$. Using norm growth control (Lemma \ref{normgrowthregularized}) we can push these additional derivatives on the initial data, where we may eliminate them again at the cost of growth in terms of the initial data regularization $\delta(m)$. 
In order to be able to do this, we need the norm growth control at slightly higher regularity than the regularity of the original space; thus, we impose $s_2<1$.

The purpose of separate initial data regularization is that the decay in terms of the regularization parameter of the equation outweighs the growth in terms of the regularization of the initial data.
\end{remark}

We turn to estimate \eqref{growthxderivatives}. The proof is mostly analogous to the proof of estimate \eqref{growthyderivatives}.

\begin{proof}[Proof of estimate \eqref{growthxderivatives}]
Again, we have by anti-selfadjointness of $-H\partial_{xx}$ that
\begin{equation}\label{eq101}\frac{d}{dt} \|\langle \partial_x \rangle^{s_1} (u_n-u_m)\|^2_{L^2} = \int_{\mathbb{T}^2} \langle \partial_x \rangle^{s_1} (u_n-u_m)\langle \partial_x \rangle^{s_1} \partial_x(P_n(u_n^2) - P_m(u_m^2))\end{equation}
  and we write the contribution from the nonlinearity as a difference of squares, while picking up another term whose frequency support is bounded away from zero: 
$$\langle \partial_x \rangle^{s_1}\partial_x(P_n(u_n^2)-P_m(u_m^2)) = \langle \partial_x \rangle^{s_1}\partial_x((P_n - P_m)u_m^2) + \langle \partial_x \rangle^{s_1}P_n\partial_x[(u_n-u_m)(u_n+u_m)].$$
    When considering the contribution of the latter term to (\ref{eq101}) we can drop the projection $P_n$ by Plancherel and using Leibniz rule and commutators we write 
    \begin{align*}\langle \partial_x \rangle^{s_1} \partial_x [(u_n - u_m)(u_n+u_m)] & =  [\langle \partial_x \rangle^{s_1}, u_n-u_m]\partial_x(u_n+u_m) + (u_n-u_m)\langle \partial_x \rangle^{s_1} \partial_x(u_n+u_m) \\ & + [\langle \partial_x \rangle^{s_1}, u_n+u_m]\partial_x(u_n-u_m) +  (u_n+u_m)\langle \partial_x \rangle^{s_1} \partial_x(u_n-u_m).\end{align*}
    This leaves us with five terms which we estimate now. \\
    
 First, we estimate the contribution of the term whose frequency support is bounded away from zero. This condition on the frequency support allows us to gain decay in terms of the regularization parameter $m^{-\mu}$ at the cost of paying extra derivatives $\langle \nabla \rangle^{\mu}$. We estimate
    \begin{align*}
    \int_{\mathbb{T}^2} \langle \partial_x \rangle^{s_1} (u_n-u_m) \langle \partial_x \rangle^{s_1} \partial_x ((P_m-P_n)u_m^2) & \lesssim \|\langle \partial_x \rangle^{s_1}(u_n-u_m)\|_{L^2} \|\langle \nabla \rangle^{\mu} \langle \partial_x \rangle^{s_1+1} u_m^2\| \cdot m^{-\mu} \\ & \lesssim \|\langle \partial_x \rangle^{s_1}(u_n-u_m)\|_{L^2} \|u_m\|_{H^{2s_1,s_2}_{\omega}}^2 \cdot m^{-\mu} \\ 
   & \lesssim \|\langle \partial_x \rangle^s(u_n-u_m)\|_{L^2} \cdot m^{-\mu}\delta(m)^2.
\end{align*}
In the second line, we used boundedness of $\langle \nabla \rangle^{\mu} \langle \partial_x \rangle^{s_1+1}: H^{2s_1,s_2}_{\omega} \rightarrow L^2$ which amounts to $$\langle |\xi|\rangle^{\mu} \langle \xi_1 \rangle^{s_1+1} \leq \langle \xi_1 \rangle^{s_1+1+\mu} + \langle \xi_1 \rangle^{s_1+1} \langle \xi_2 \rangle^{\mu} \lesssim \langle \xi_1 \rangle^{\max(s_1+1+\mu, (s_1+1)/(1-\mu/s_2))} + \langle \xi_2 \rangle^{s_2} \lesssim \langle \xi_1 \rangle^{2s_1} + \langle \xi_2 \rangle^{s_2}$$ due to Young's inequality, provided $\mu$ is sufficiently small. Also, the algebra property of $H^{2s_s,s_2}_{\omega}$ is used. In the third line we used norm control for $H^{2s_1,s_2}_{\omega}$ and initial data regularization \eqref{initialdata}: $$\|u_m(t)\|_{H^{2s_1,s_2}_{\omega}} \lesssim \|u_0^{\delta(m)}\|_{H^{2s_1,s_2}_{\omega}} \exp (\int_0^t \|\partial_x u_m(s) \|_{L^{\infty}} ds) \lesssim \delta(m) \|u_0\|_{H^{s_1,s_2}_{\omega}} \lesssim \delta(m).$$
Thus, it suffices to impose the growth condition \begin{equation}\label{delta3}
    \delta(m) \lesssim m^{\mu/2} \cdot o_{m \rightarrow \infty}(1)
\end{equation}
on the initial data regularization parameter. \\

\item Second, we estimate by quasi-periodic Kato-Ponce commutator estimate \eqref{commutatorsderivatives} \begin{align*}
    \int_{\mathbb{T}^2} \langle \partial_x \rangle^{s_1} (u_n-u_m) [\langle \partial_x \rangle^{s_1}: u_n-u_m]\partial_x (u_n+u_m) \\ \lesssim \|\langle \partial_x \rangle^{s_1}(u_n-u_m)\|_{L^2} \cdot  \big[ & \|\langle \partial_x \rangle^{s_1} (u_n+u_m)\|_{L^2} \|\partial_x(u_n-u_m)\|_{L^{\infty}} \\ & + (\|\partial_x u_m\|_{L^{\infty}}+\|\partial_xu_n\|_{L^{\infty}}) \|\langle \partial_x \rangle^{s_1} (u_n-u_m)\|_{L^2} \big].
\end{align*}
As in \eqref{partialxinLinfinity} we have
$$\|\partial_x(u_n-u_m)\|_{L^{\infty}} = o_{m \rightarrow \infty}(1),$$
and by Proposition \ref{uniformaprioriestimate} we have 
$$\|\langle \partial_x \rangle^{s_1}u_n(t)\|_{L^2}+\|\langle \partial_x \rangle^{s_1} u_m(t)\|_{L^2} \lesssim \|u_0\|_{H^{s_1,s_2}_{\omega}} \lesssim 1$$ uniformly for initial data from a compact set $\mathcal{K} \subseteq H^{s_1,s_2}_{\omega}$. \\

\item
Third, we bound the contribution of the second to last term
\begin{align*}
    \int_{\mathbb{T}^2} \langle \partial_x \rangle^{s_1} (u_n-u_m) (u_n-u_m)\langle \partial_x \rangle^{s_1} \partial_x (u_n+u_m) \\ \ \ \ \lesssim \|\langle \partial_x \rangle^{s_1} (u_n-u_m)\|_{L^2} \|u_n-u_m\|_{L^{\infty}} [\|\langle \partial_x \rangle^{s_1+1}u_n\|_{L^2} + \|\langle \partial_x \rangle^{s_1+1}u_m\|_{L^2}].
\end{align*}
Using norm growth control (Lemma \ref{normgrowthregularized}) and initial data regularization \eqref{initialdata} we estimate $$\|\langle \partial_x \rangle^{s_1+1}u_m\|_{L^2}  \lesssim  \|\langle \partial_x \rangle^{s_1+1}u_0^{\delta(m)}\|_{L^2} \exp (\int_0^t \|\partial_xu_n(s)\|_{L^{\infty}} ds)  \lesssim \delta(m)^{1/s_1}\|u_0\|_{H^{s_1,s_2}_{\omega}},$$
and analogously
$\|\langle \partial_x \rangle^{s_1+1}u_n\|_{L^2}   \lesssim \delta(n)^{1/s_1}\|u_0\|_{H^{s_1,s_2}_{\omega}}.$
We can get enough decay out of the term $\|u_n-u_m\|_{L^{\infty}}$ to dominate the growth factor $\delta(m)^{1/s_1}$. As in \eqref{Linfinitydecay}, we have 
\begin{align*}\|u_n -u_m\|_{L^{\infty}} & \lesssim [Tm^{-q} + \delta(m)^{-1} o_{m \rightarrow \infty}(1)]^{\theta}\end{align*}
for a suitable $\theta \in (0,1)$ and $0<q<s_2$. Thus, 
\begin{align*}\delta(m)^{1/s_1}\|u_n -u_m\|_{L^{\infty}} & 
\lesssim \delta(m)^{1/s_1} m^{-q\theta}+ \delta(m)^{1/s_1-\theta}o_{m\rightarrow \infty}(1) = o_{m \rightarrow \infty}(1),\end{align*}
provided $s_1$ is sufficiently large and $\delta(m)$ is an increasing function of $m$ satisfying
\begin{equation}\label{delta4} \delta(m) \lesssim m^{s_1q\theta} \cdot o_{m \rightarrow \infty}(1).\end{equation}
\\

\item Fourth, the contribution by $[\langle \partial_x \rangle^{s_1}, u_n+u_m]\partial_x(u_n-u_m)$ can be estimated similarly to the second point. By quasi-periodic Kato-Ponce commutator estimate \eqref{commutatorsderivatives},
\begin{align*}
   \int_{\mathbb{T}^2} \langle \partial_x \rangle^{s_1} (u_n-u_m) [\langle \partial_x \rangle^{s_1}: u_n-u_m]\partial_x (u_n+u_m)  \\ \lesssim   \|\langle \partial_x \rangle^{s_1}(u_n-u_m)\|_{L^2}  \big[ & \|\langle \partial_x \rangle^{s_1} (u_n+u_m)\|_{L^2} \|\partial_x(u_n-u_m)\|_{L^{\infty}} \\ & \  +  (\|\partial_x (u_n)\|_{L^{\infty}} + \|\partial_x (u_m)\|_{L^{\infty}}) \|\langle \partial_x \rangle^{s_1} (u_n-u_m)\|_{L^2} \big]. 
\end{align*}
As in \eqref{partialxinLinfinity} we have
$\|\partial_x(u_n-u_m)\|_{L^{\infty}} = o_{m \rightarrow \infty}(1),$
and by Proposition \ref{uniformaprioriestimate}
$$\|\langle \partial_x \rangle^{s_1}u_n(t)\|_{L^2}+\|\langle \partial_x \rangle^{s_1} u_m(t)\|_{L^2} \lesssim \|u_0\|_{H^{s_1,s_2}_{\omega}} \lesssim 1.$$ 
\\

\item Fifth, the contribution by the last summand can be bounded using integration by parts: 
\begin{align*}\int_{\mathbb{T}^2} \langle \partial_x \rangle^{s_1}(u_n-u_m) (u_n+u_m) \langle \partial_x \rangle^{s_1} \partial_x(u_n-u_m) \lesssim \|\langle \partial_x \rangle^{s_1} (u_n-u_m)\|_{L^2}^2 \|\partial_x(u_n+u_m)\|_{L^{\infty}}.\end{align*}

Take $\delta(m)$ an increasing function of $m$ satisfying estimates \eqref{delta3} and \eqref{delta4}.
Adding the above estimates and dividing by $\|\langle \partial_x \rangle^s(u_n-u_m)\|_{L^2}$ completes the proof of estimate \eqref{growthxderivatives}.
\end{proof}

\subsection{Solution to Benjamin--Ono equation}\label{sectionsolution}

We verify that the limit of the solutions $u_n$ to~the regularized equation, $u:= H^{s_1,s_2}_{\omega}\text{-}\lim_{n \rightarrow \infty} u_n$, is a classical solution to the Benjamin--Ono equation.

\begin{proof} For a function $u \in C([0,T],H^{s_1,s_2}_{\omega})$, being a classical solution is equivalent to requiring that the associated integral equation \begin{equation}\label{solvesintegralequation}
    u(t)=u_0 + \int_0^t -H\partial_{xx}u(s) + \partial_x(u(s))^2 ds
\end{equation}
holds pointwise in space and time, by Lemma \ref{anisotropicproperties}.
    We want to pass to the limit $n\rightarrow \infty$ in \begin{equation}\label{solvesregularizedBO}u_n(t)=u_0^{\delta(n)} + \int_0^t -H\partial_{xx}u_n(s) + P_n\partial_x(u_n(s))^2 ds.\end{equation}
    Convergence of the initial data term is clear. The linear term can easily be handled using convergence $u_n \rightarrow u$ in $C([0,T],H^{s_1,s_2}_{\omega})$, anisotropic Sobolev embedding \eqref{anisotropicembedding}, and boundedness property \eqref{boundedness1} (with $k=2$) in Lemma \ref{anisotropicproperties}.
    This yields $$\int_0^t -H\partial_{xx}u_n(s)ds \rightarrow \int_0^t -H \partial_{xx}u(s)ds \quad \text{pointwise on } [0,T]\times \mathbb{R}.$$
    Next, we consider the nonlinear term and write 
    \begin{align*} & \int_0^t P_n\partial_x(u_n(s))^2ds - \int_0^t \partial_x(u(s)^2)ds  = \int_0^t \partial_x P_n (u_n(s)^2 - u(s)^2)ds  - (1 - P_n)\partial_x\int_0^t u(s)^2 ds. \end{align*}
    We estimate the first term using anisotropic Sobolev embedding, the algebra property of $H^{s_1,s_2}_{\omega}$, and boundedness property \eqref{boundedness1} (with $k=1$) in Lemma \ref{anisotropicproperties},
    yielding \begin{align*} & \big\|\partial_x \big(\int_0^t P_n(u_n(s)^2 - u(s)^2) ds \big)\big\|_{C([0,T],C(\mathbb{T}^2))} \\ \lesssim & \ \big\|P_n \int_0^t(u_n(s)-u(s))(u_n(s)+u(s))ds\big\|_{C([0,T],H^{s_1,s_2}_{\omega})} \\ \lesssim &\ T \|u_n-u\|_{C([0,T],H^{s_1,s_2}_{\omega})} [\|u_n\|_{C([0,T],H^{s_1,s_2}_{\omega})} + \|u\|_{C([0,T],H^{s_1,s_2}_{\omega})}].\end{align*}
    Due to the a-priori bound in Lemma \ref{Hshbound}, the factor $\|u_n\|_{C([0,T],H^{s_1,s_2}_{\omega})} + \|u\|_{C([0,T],H^{s_1,s_2}_{\omega})}$ is bounded uniformly in $n$, so that the overall expression goes to zero as $n \rightarrow \infty$. 
    To estimate the second term, we use again anisotropic Sobolev embedding and boundedness property \eqref{boundedness1} in Lemma \ref{anisotropicproperties}, 
    giving
    \begin{equation}\label{secondterm}\big\|(1-P_n) \partial_x\int_0^t u(s)^2 ds \big\|_{C([0,T] \times \mathbb{T}^2)} \lesssim \|(1-P_n) \int_0^t u(s)^2 ds\|_{C([0,T],H^{s_1,s_2}_{\omega})}.\end{equation}
By the algebra property of $H^{s_1,s_2}_{\omega}$ the function $t \mapsto G(t):= \int_0^t u(s)^2 ds$ is continuous and the image $$\big\{G(t)=\int_0^t u(s)^2 ds \ | \ t\in[0,T]\big\}$$ thus compact and equicontinuous in $H^{s_1,s_2}_{\omega}$. In particular, we have  equitightness on the Fourier side (cf. Riesz-Kolmogorov), that is, the tails on the Fourier side become small uniformly in $t$:
    $$\sup_{t \in [0,T]} \sum_{|n|>r} |\hat{G}(t,n)|^2[ \langle \omega \cdot n \rangle^{2s_1} + \langle \omega^{\perp} \cdot n \rangle^{2s_2}] \rightarrow 0 \text{ as } r \rightarrow \infty.$$
    Thus, the expression in (\ref{secondterm}) goes to zero as $n \rightarrow \infty$. Taking $n \rightarrow \infty$ in (\ref{solvesregularizedBO}) thus yields (\ref{solvesintegralequation}) pointwise in space and time.
\end{proof}

\subsection{Uniqueness}\label{sectionuniqueness} We show that the solution is unique in $C([0,T],H^{s_1,s_2}_{\omega})$.
Suppose that $u, v \in C([0,T],H^{s_1,s_2}_{\omega})$ are solutions to the Benjamin--Ono equation. Then $w=u-v$ satisfies the equation $$w_t = -Hw_{xx} + \partial_x(w(u+v))$$
so that using integration by parts $$\frac{d}{dt} \|w\|^2_{L^2} \leq \frac{1}{2} \|w\|_{L^2}^2 [\|\partial_xu\|_{L^{\infty}} + \|\partial_xv\|_{L^{\infty}}].$$
Gronwall's inequality \eqref{Gronwall} and estimate (\ref{crucialequation}) yields $$\|w\|_{C([0,T],L^2)} \lesssim \|w(0)\|_{L^2} \exp \big(\frac{1}{2}T[\|u\|_{L^{\infty}([0,T],H^{s_1,s_2}_{\omega})} + \|v\|_{L^{\infty}([0,T],H^{s_1,s_2}_{\omega})}]\big) = 0.$$
This gives equality almost everywhere on every time slice and by continuity of $u(t),v(t) \in H^{s_1,s_2}_{\omega} \subseteq C(\mathbb{T}^2)$ (Lemma \ref{anisotropicproperties}) thus equality pointwise.

\subsection{Continuity of data-to-solution map}\label{sectioncontinuitydatatosolution}
We show continuity of the data-to-solution map as a mapping $H^{s_1,s_2}_{\omega} \rightarrow C([0,T],H^{s_1,s_2}_{\omega})$.

Suppose $u_0^k \rightarrow u_0$ in $H^{s_1,s_2}_{\omega}$. Denote by $u_n^k,u_n$ the emanating solutions to the regularized Benjamin Ono equation (\ref{regularizedBO}) with regularized intial data (\ref{initialdata}), and by $u^k,u$ the emanating solutions to the actual Benjamin--Ono equation that we just constructed. By triangle inequality,
\begin{equation}\label{continuitydatatosolution}
    \|u - u^k\|_{L^{\infty}H^{s_1,s_2}_{\omega}} \leq \|u-u_n\|_{L^{\infty}H^{s_1,s_2}_{\omega}} + \|u_n-u^k_n\|_{L^{\infty}H^{s_1,s_2}_{\omega}} +\|u^k_n - u^k\|_{L^{\infty}H^{s_1,s_2}_{\omega}}.
\end{equation}
Note that $\|u^k_n - u^k\|_{L^{\infty}H^{s_1,s_2}_{\omega}} \rightarrow 0$ as $n \rightarrow \infty$ uniformly in $k$, as the $L^{\infty}([0,T],H^{s_1,s_2}_{\omega})$ Cauchyness in Lemma \ref{HshCauchy} is uniform for initial data from compact set. By taking $n$ large, we can thus make the first and third term in (\ref{continuitydatatosolution}) arbitrarily small uniformly in $k$. Now, for fixed $n$, the second term $\|u_n - u_n^k\|_{L^{\infty}H^{s_1,s_2}_{\omega}}$ goes to zero as $k \rightarrow \infty$ due to continuity of the data to solution map for the regularized Benjamin--Ono equation. This completes the proof that $$\|u-u^k\|_{L^{\infty}([0,T],H^{s_1,s_2}_{\omega})} \rightarrow 0 \quad \text{as } k \rightarrow \infty.$$ 

Subsections \ref{regularization}-\ref{sectioncontinuitydatatosolution} together establish Theorem \ref{wellposednessinHs1s2}. The same arguments apply verbatim to prove Theorem \ref{KdVandNLS} upon replacing the linear term $H\partial_{x}^2$ by $\partial_x^3$ or $-i\partial_x^2$, respectively.

\subsection{Wellposedness in $X$}\label{sectionwellposednessinX}
In this section, we prove Theorem \ref{wellposednessinX}. 
It suffices to show that the sequence $\{u_n\}_{n \in \mathbb{N}}$ of emanating solutions is Cauchy with respect to the contribution of $\|\langle \partial_y \rangle^q\partial_x^{-1}u\|_{L^2}$ to the $X$-norm, so that the limit $u=\lim_{n \rightarrow \infty} u_n$ is in fact in $C([0,T],X)$. By section \ref{sectionsolution}, this is still a solution in the sense that the integral equation (\ref{solvesintegralequation}) holds pointwise in space and time, and uniqueness of solutions is clear as $C([0,T],X) \hookrightarrow C([0,T],H^{s_1,s_2}_{\omega})$. The proof of continuity of the data-to-solution map in section \ref{sectioncontinuitydatatosolution} adapts without difficulty, changing the $H^{s_1,s_2}_{\omega}$-norm to $X$-norm at appropriate places.

\begin{lemma}
    We have $$\frac{d}{dt} \|\langle \partial_y \rangle^{s_2} \partial_x^{-1}(u_n-u_m)\|_{L^2} = o_{m \rightarrow \infty}(1).$$
    Thus, $$\|\langle \partial_y\rangle^{s_2} \partial_x^{-1}(u_n-u_m)\|_{L^2} \leq \|\langle \partial_y\rangle^{s_2} \partial_x^{-1}(u_0^{\delta(n)}-u_0^{\delta(m)})\|_{L^2} +t \cdot o_{m \rightarrow \infty}(1)$$
    goes to zero uniformly in $t\in[0,T]$ and uniformly for initial data $u_0$ from a given compact set $\mathcal{K} \subseteq X$ as $m \rightarrow \infty$. Hence, the function $u:= \lim_{n \rightarrow \infty} u_n$ is in $C([0,T],X)$.
\end{lemma}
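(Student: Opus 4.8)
The plan is to run essentially the same Gronwall scheme used for estimates \eqref{growthxderivatives} and \eqref{growthyderivatives}, but now applied to the function $\langle \partial_y \rangle^{s_2} \partial_x^{-1}(u_n - u_m)$. First I would apply $\partial_x^{-1}$ to the regularized equation \eqref{regularizedBO} to obtain the dynamical equation for $F_n := \partial_x^{-1} u_n$; since the nonlinearity $P_n \partial_x(P_n u_n P_n u_n)$ already carries a factor $\partial_x$, applying $\partial_x^{-1}$ simply produces $P_n(u_n^2)$ (after dropping the harmless projections by Plancherel as before), so $F_n$ satisfies $\partial_t F_n = -H \partial_{xx} F_n + P_n(u_n^2)$. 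Subtracting the equations for $F_n$ and $F_m$ and using anti-selfadjointness of $-H\partial_{xx}$ gives
\begin{equation*}
\frac{d}{dt} \|\langle \partial_y \rangle^{s_2} \partial_x^{-1}(u_n-u_m)\|_{L^2}^2 = 2\int_{\mathbb{T}^2} \langle \partial_y \rangle^{s_2}\partial_x^{-1}(u_n-u_m) \cdot \langle \partial_y \rangle^{s_2}\big(P_n(u_n^2) - P_m(u_m^2)\big).
\end{equation*}

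Next I would decompose the difference of nonlinearities exactly as in Section \ref{sectionCauchybound}: write $P_n(u_n^2) - P_m(u_m^2) = (P_n - P_m)u_m^2 + P_n\big((u_n-u_m)(u_n+u_m)\big)$. The first piece has Fourier support bounded away from zero, so one gains a factor $m^{-\mu}$ at the cost of $\langle \nabla \rangle^{\mu}$ derivatives, which are absorbed using the algebra property of $H^{s_1,s_2}_\omega$ (Lemma \ref{anisotropicproperties}), the uniform $H^{s_1,s_2}_\omega$-bound (Lemma \ref{Hshbound}), and Lemma \ref{normgrowthregularized} to push extra derivatives onto the initial data at the price of a power of $\delta(m)$ — this contributes a new lower bound on the admissible growth rate of $\delta(m)$, entirely analogous to \eqref{deltabound2}/\eqref{delta3}. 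For the second piece, since there is no derivative on the nonlinearity here, I do not even need a commutator estimate: by Cauchy–Schwarz it is bounded by $\|\langle \partial_y\rangle^{s_2}\partial_x^{-1}(u_n-u_m)\|_{L^2}$ times $\|\langle \partial_y \rangle^{s_2}\big((u_n-u_m)(u_n+u_m)\big)\|_{L^2}$, and the latter is controlled via the algebra property by $\|u_n-u_m\|_{H^{s_1,s_2}_\omega}\big(\|u_n\|_{L^\infty}+\|u_m\|_{L^\infty}\big) + \|u_n-u_m\|_{L^\infty}\big(\|u_n\|_{H^{s_1,s_2}_\omega} + \|u_m\|_{H^{s_1,s_2}_\omega}\big)$. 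Using \eqref{Linfinitydecay} (so $\|u_n-u_m\|_{L^\infty} = o_{m\to\infty}(1)$) and the a-priori bound, this whole factor is $o_{m\to\infty}(1)$. Dividing by $\|\langle \partial_y\rangle^{s_2}\partial_x^{-1}(u_n-u_m)\|_{L^2}$ then yields $\frac{d}{dt}\|\langle \partial_y\rangle^{s_2}\partial_x^{-1}(u_n-u_m)\|_{L^2} = o_{m\to\infty}(1)$ as claimed. Integrating in time and using the triangle inequality gives the stated bound, and the initial-data term $\|\langle \partial_y\rangle^{s_2}\partial_x^{-1}(u_0^{\delta(n)} - u_0^{\delta(m)})\|_{L^2}$ tends to zero uniformly over the compact set $\mathcal{K} \subseteq X$ by equicontinuity (Riesz–Kolmogorov) applied in the $X$-norm. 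Combined with the Cauchyness of $(u_n)$ in $L^\infty([0,T],H^{s_1,s_2}_\omega)$ from Proposition \ref{HshCauchy}, the sequence is Cauchy in $L^\infty([0,T],X)$, so the limit $u$ lies in $C([0,T],X)$.

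The only genuinely new bookkeeping is verifying that the growth condition on $\delta(m)$ needed here is compatible with the conditions \eqref{deltabound1}, \eqref{deltabound2}, \eqref{delta3}, \eqref{delta4} already imposed in Section \ref{sectionCauchybound}; since all of these merely require $\delta(m)$ to grow no faster than some fixed positive power of $m$ (times an $o_{m\to\infty}(1)$ factor), one can take $\delta(m)$ to be a single increasing function satisfying the minimum of all these power-law constraints, and the main obstacle — if any — is just checking that the relevant boundedness $\langle \nabla \rangle^\mu \langle \partial_y\rangle^{s_2} : H^{s_1,1}_\omega \to L^2$ (note: $\partial_x^{-1}$ removes the extra tangential derivative compared to the computation for $I$ in the proof of \eqref{growthyderivatives}, so the symbol estimate is actually easier here) holds for $s_1$ large enough depending on $s_2$, which follows from Young's inequality exactly as before. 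No new analytic input beyond Lemmas \ref{anisotropicproperties}, \ref{KatoPonceVegaQuasih}, \ref{normgrowthregularized}, \ref{Hshbound}, \ref{L2Cauchy} and Proposition \ref{HshCauchy} is required.
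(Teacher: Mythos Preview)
Your proposal is correct and follows essentially the same decomposition and estimates as the paper's proof. One small correction worth noting: applying $\partial_x^{-1}$ to $P_n\partial_x(u_n^2)$ does not simply return $P_n(u_n^2)$, since $\partial_x^{-1}\partial_x g = g - \intavg g$; the paper carries the resulting zero-mode term $\widehat{u_n^2}(0)-\widehat{u_m^2}(0)$ explicitly and estimates it via the $L^2$-Cauchy property (Lemma~\ref{L2Cauchy}). In fact this term contributes nothing to the inner product because $\langle\partial_y\rangle^{s_2}\partial_x^{-1}(u_n-u_m)$ has zero mean, so your omission is harmless here---but the reason you gave for dropping it is incorrect.
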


\begin{proof}
    By anti-selfadjointness of $-H\partial_{xx}$ we have $$\frac{d}{dt} \|\langle \partial_y \rangle^{s_2} \partial_x^{-1}(u_n-u_m)\|_{L^2} = \int_{\mathbb{T}^2} \langle \partial_y \rangle^{s_2} \partial_x^{-1}(u_n-u_m) \langle \partial_y \rangle^{s_2}[P_n(u_n^2) - P_m(u_m^2) -(\widehat{u_n^2}(0) - \widehat{u_m^2}(0))].$$ We pick up the term  $\widehat{u_n^2}(0) - \widehat{u_m^2}(0)$ due to the identity $\partial_x^{-1} \circ \partial_x g = \partial_x \circ \partial_x^{-1}g = g  - \intavg g$. Analogously to the proofs in section \ref{sectionCauchybound}, we rearrange
    $$\langle\partial_y\rangle^{s_2} [P_n(u_n^2) - P_m(u_m^2)] = \langle \partial_y \rangle^{s_2}(P_n-P_m)u_m^2 + \langle \partial_y \rangle^{s_2}P_n((u_n-u_m)(u_n+u_m)).$$
    We estimate: \\
    
    First, $$\int_{\mathbb{T}^2} \langle \partial_y \rangle^{s_2} \partial_x^{-1}(u_n-u_m) \cdot ((\widehat{u_n^2-u_m^2})(0))\lesssim \|\langle \partial_y \rangle^{s_2} \partial_x^{-1}(u_n-u_m)\|_{L^2} \big|(\widehat{u_n^2-u_m^2})(0) \big|$$
        Now $$\big|(\widehat{u_n^2-u_m^2})(0) \big| = \int_{\mathbb{T}^2}u_n^2 -u_m^2 \lesssim \|u_n-u_m\|_{L^2}[\|u_n\|_{L^2}+\|u_m\|_{L^2}] = o_{m\rightarrow \infty}(1),$$
        where we used Cauchy property in $C([0,T],L^2)$ (Lemma \ref{L2Cauchy}) and the uniform bound (Lemma \ref{Hshbound}) in the last line. \\

     Second, \begin{align*}
            \int_{\mathbb{T}^2} \langle \partial_y \rangle^{s_2} \partial_x^{-1}(u_n-u_m) \langle \partial_y \rangle^{s_2}[(P_n-P_m)u_m^2] & \lesssim \|\langle \partial_y \rangle^{s_2} \partial_x^{-1}(u_n-u_m)\|_{L^2} \|\langle \partial_y \rangle^{s_2} (P_n-P_m) u_m^2\|_{L^2}. 
        \end{align*}
        Now \begin{align*}\|\langle \partial_y \rangle^{s_2} (P_n-P_m) u_m^2\|_{L^2} & \lesssim \|\langle \nabla \rangle^{\mu}\langle \partial_y \rangle^{s_2} u_m^2\|_{L^2} \cdot m^{-\mu}  \\ &\lesssim \|u_m\|_{H^{s_1,s_2+\mu}_{\omega}} \|u_m\|_{L^{\infty}} \cdot m^{-\mu}  \\ & \lesssim m^{-\mu}\delta(m)^{\mu /s_2} \|u_0^{\delta(m)}\|_{H^{s_1,s_2}_{\omega}} \|u_m\|_{H^{s_1,s_2}_{\omega}}  = o_{m\rightarrow \infty}(1)\end{align*} provided that \begin{equation}\delta(m) \lesssim m^{s_2} \cdot o_{m\rightarrow \infty}(1).\end{equation}
        In the second line we used boundedness of $\langle \nabla \rangle^{\eta} \langle \partial_y \rangle^{s_2}:H^{s_1,s_2+\mu}_{\omega}\rightarrow L^2$ due to $$\langle |\xi| \rangle^{\mu}\langle \xi_2 \rangle^{s_2} \leq \langle \xi_1 \rangle^{\mu } \langle \xi_2 \rangle^{s_2} + \langle \xi_2 \rangle^{\mu+s_2}\lesssim \langle \xi_1 \rangle^{\mu +s_2} + \langle \xi_2 \rangle^{\mu +s_2} \lesssim \langle \xi_1 \rangle^{s_1} + \langle \xi_2 \rangle^{\mu +s_2}$$ by Young's inequality provided $\mu +s_2 \leq s_1$, and the algebra property \eqref{algebraproperty}. Further, we used control on norm growth (Lemma \ref{normgrowthregularized}), initial data regularization \eqref{initialdata}, anisotropic Sobolev embedding (Lemma \ref{anisotropicproperties}), and uniform $H^{s_1,s_2}_{\omega}$ bound (Lemma \ref{Hshbound}). \\

 Third, \begin{align*} & \int_{\mathbb{T}^2} \langle \partial_y \rangle^{s_2} \partial_x^{-1}(u_n - u_m) \langle \partial_y \rangle^{s_2}P_n((u_n-u_m)(u_n+u_m)) \\ &  \lesssim \  \|\langle \partial_y \rangle^{s_2} \partial_x^{-1}(u_n - u_m)\|_{L^2}  \|\langle \partial_y \rangle^{s_2}P_n((u_n-u_m)(u_n+u_m))\|_{L^2}.\end{align*}
        By fractional product rule, the second factor is bounded by 
        \begin{align*}
            & \|\langle \partial_y \rangle^{s_2}(u_n-u_m)\|_{L^2} (\|u_n\|_{L^{\infty}}+\|u_m\|_{L^{\infty}}) + \|u_n-u_m\|_{L^{\infty}}(\|\langle \partial_y \rangle^{s_2}u_n\|_{L^2}+\|\langle \partial_y \rangle^{s_2}u_m\|_{L^2})
            \end{align*}
        which itself is bounded by   
        \begin{align*} \|u_n-u_m\|_{H^{s_1,s_2}_{\omega}}(\|u_n\|_{H^{s_1,s_2}_{\omega}}+\|u_m\|_{H^{s_1,s_2}_{\omega}}) = o_{m\rightarrow \infty}(1)
        \end{align*}
        due to anisotropic Sobolev embedding (Lemma \ref{anisotropicproperties}), $L^{\infty}([0,T],H^{s_1,s_2}_{\omega})$ Cauchy bound (Lemma \ref{HshCauchy}) and uniform $H^{s_1,s_2}_{\omega}$ bound (Lemma \ref{Hshbound}).
\end{proof}

\subsection{Norm growth control}\label{sectionnormgrowthcontrol} 
Here, we prove an estimate controlling the growth of anisotropic Sobolev norms of solutions to the Benjamin--Ono equation (Lemma \ref{Hshnormgrowth}). We prove Proposition \ref{normgrowthX}.

Taking the limit $n\rightarrow \infty$ in Lemma \ref{normgrowthregularized} about norm growth control for the regularized equation yields the following:

\begin{lemma}\label{Hshnormgrowth}
    Suppose $s_1,s_2$ satisfy the hypothesis for the local wellposedness theory in $H^{s_1,s_2}_{\omega}$ and suppose further that $\tilde{s_1} \geq 0, 1 \geq \tilde{s_2} \geq 0$ are given. The solution to the Benjamin--Ono equation given by the local wellposedness theory in $H^{s_1,s_2}_{\omega}$ obeys the following growth bound:
    \begin{equation}
        \|u(t)\|_{H^{\tilde{s}_1,\tilde{s}_2}_{\omega}} \leq \|u_0\|_{H^{\tilde{s}_1,\tilde{s}_2}_{\omega}} \exp \big( \int_0^t \|\partial_x u(s)\|_{L^{\infty}} ds \big).
    \end{equation}
\end{lemma}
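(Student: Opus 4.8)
The plan is to obtain the bound for the regularized solutions $u_n$ with a constant \emph{independent of $n$} and then pass to the limit. This is already essentially recorded in Lemma \ref{normgrowthregularized}, so the main content of the proof is to upgrade that statement from the approximate solutions to the genuine solution, together with tracking that the constant can be taken to be exactly $1$ (not merely $\lesssim 1$).

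\emph{Step 1: uniform bound for the regularized flow.} Recall from Section \ref{uniformaprioriestimate} that for any $\tilde s_1 \geq 0$ and $0 \leq \tilde s_2 \leq 1$ the computation leading to \eqref{normgrowthprelim}---using anti-selfadjointness of $-H\partial_{xx}$, integration by parts in the nonlinear term to move $\partial_x$ onto a low-regularity factor, and the commutator estimates \eqref{commutatorsderivatives} and \eqref{KenigPonceVegaQuasi}---gives, after Gronwall's inequality \eqref{Gronwall},
\begin{equation*}
\|u_n(t)\|_{H^{\tilde s_1,\tilde s_2}_\omega} \leq \|u_0^{\delta(n)}\|_{H^{\tilde s_1,\tilde s_2}_\omega} \exp\Big( c\int_0^t \|\partial_x u_n(s)\|_{L^\infty}\,ds\Big),
\end{equation*}
with $c$ an absolute constant; a cleaner bookkeeping of the integration by parts shows one may take $c=1$ after absorbing the factor $\tfrac12$ from the nonlinearity, which is the form asserted in the Lemma. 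The point is that this estimate never uses the projections $P_n$ (they are dropped by Plancherel in the nonlinear term) nor any property of $s_2$ beyond $s_2\leq 1$, so it is valid uniformly in $n$.

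\emph{Step 2: passing to the limit.} By Theorem \ref{wellposednessinHs1s2} (whose proof occupies Sections \ref{regularization}--\ref{sectioncontinuitydatatosolution}), the regularized solutions $u_n$ converge to the genuine solution $u$ in $C([0,T],H^{s_1,s_2}_\omega)$, and by the blow-up alternative (Lemma following Lemma \ref{normgrowthregularized}) the $C([0,T_1],H^{\tilde s_1,\tilde s_2}_\omega)$ solution exists on the same interval; in particular $u_n \to u$ also in $C([0,T],H^{\tilde s_1,\tilde s_2}_\omega)$ on the relevant interval (up to passing to a subsequence and using uniqueness, or by running the same Cauchy argument of Section \ref{sectionCauchybound} at regularity $(\tilde s_1,\tilde s_2)$). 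Then $\|u_n(t)\|_{H^{\tilde s_1,\tilde s_2}_\omega} \to \|u(t)\|_{H^{\tilde s_1,\tilde s_2}_\omega}$ for each fixed $t$, and $\|u_0^{\delta(n)}\|_{H^{\tilde s_1,\tilde s_2}_\omega} \to \|u_0\|_{H^{\tilde s_1,\tilde s_2}_\omega}$ since the initial-data regularization \eqref{initialdata} is just a Fourier truncation. Finally, by anisotropic Sobolev embedding \eqref{anisotropicembedding} and the boundedness property \eqref{boundedness1} (with $k=1$), $\partial_x u_n \to \partial_x u$ in $C([0,T],L^\infty)$, so $\int_0^t \|\partial_x u_n(s)\|_{L^\infty}\,ds \to \int_0^t \|\partial_x u(s)\|_{L^\infty}\,ds$. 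Taking $n\to\infty$ in the displayed inequality of Step 1 yields the claimed bound for $u$.

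\emph{Main obstacle.} The only subtlety is the interchange of the limit $n\to\infty$ with the norm at regularity $(\tilde s_1,\tilde s_2)$: a priori the convergence $u_n\to u$ is guaranteed in $C([0,T],H^{s_1,s_2}_\omega)$, and one wants convergence (or at least lower semicontinuity of the norm) at the possibly higher regularity $\tilde s_1\geq s_1$. This is handled either by invoking the blow-up alternative to rerun the entire approximation scheme of Section \ref{wellposednessanisotropicspace} at the regularity level $(\tilde s_1,\tilde s_2)$ on the common existence interval, or---more economically---by noting that the uniform bound of Step 1 gives $\sup_n \|u_n(t)\|_{H^{\tilde s_1,\tilde s_2}_\omega} < \infty$, hence weak-$*$ compactness lets one identify the weak-$*$ limit with $u(t)$ and apply lower semicontinuity of the $H^{\tilde s_1,\tilde s_2}_\omega$-norm; either way the right-hand side is controlled since $\|u_0^{\delta(n)}\|_{H^{\tilde s_1,\tilde s_2}_\omega} \leq \|u_0\|_{H^{\tilde s_1,\tilde s_2}_\omega}$ and the exponential converges.
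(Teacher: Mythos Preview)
Your approach is exactly the paper's: the proof in the paper is the single sentence ``Taking the limit $n\to\infty$ in Lemma \ref{normgrowthregularized}\ldots'', and you have spelled out that limit passage carefully, including the (correct) observation that $\partial_x u_n \to \partial_x u$ in $C([0,T],L^\infty)$ via \eqref{crucialequation} and that weak-$*$ compactness plus lower semicontinuity handles the case $\tilde s_1 > s_1$.

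One small correction: your claim that ``cleaner bookkeeping'' yields $c=1$ is not right. The constant $c$ in the Gronwall exponent comes from the implicit constants in the commutator estimates \eqref{commutatorsderivatives} and \eqref{KenigPonceVegaQuasi}, not merely from the integration-by-parts step, and those cannot be made equal to $1$. The statement of the lemma in the paper omits the constant, but this is a typographical inconsistency: the immediately following Proposition \ref{Hnormgrowth} (obtained by specializing $\tilde s_1=s_1$, $\tilde s_2=s_2$) does carry an absolute constant $c$ in the exponent, as does the companion Proposition \ref{normgrowthX}. So you should retain the constant and not attempt to argue it away.
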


Taking $\tilde{s}_1=s_1$ and $\tilde{s}_2=s_2$ gives Proposition \ref{Hnormgrowth}. We will also need to have control over the growth of the contribution by $\|\langle \partial_y \rangle^q \partial_x^{-1}u\|_{L^2}$ to the $X$-norm.

\begin{lemma}\label{Fyqnormgrowth}
    Suppose $s_1,s_2$ satisfy the hypothesis for the local wellposedness theory in $H^{s_1,s_2}_{\omega}$ and $0 \leq q \leq s_2$. The solution to the Benjamin--Ono equation given by the wellposedness theory in $X$ satisfies the following growth bound:
   \begin{align}
   \begin{split}
\|\langle \partial_y \rangle^q \partial_x^{-1}u(t)\|_{L^2} + \|\langle \partial_y \rangle^q u(t)\|_{L^2} \leq \ & [\|\langle \partial_y \rangle^q \partial_x^{-1}u_0\|_{L^2} + \|\langle \partial_y \rangle^q u_0\|_{L^2}] \\  \cdot & \exp \bigg(\int_0^t \|\partial_xu(s)\|_{L^{\infty}} + \|u(s)\|_{L^{\infty}} ds \bigg).
\end{split}
   \end{align}
\end{lemma}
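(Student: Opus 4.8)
The plan is to run the same energy/Gronwall scheme as in Lemma \ref{Hshnormgrowth} and Proposition \ref{Hnormgrowth}, but applied jointly to the two quantities $\|\langle \partial_y \rangle^q \partial_x^{-1}u\|_{L^2}$ and $\|\langle \partial_y \rangle^q u\|_{L^2}$, using the dynamical equation for $F=\partial_x^{-1}u$ obtained by applying $\partial_x^{-1}$ to the Benjamin--Ono equation. Concretely, $F$ solves $F_t + H F_{xx} = \tfrac12 \partial_x (\partial_x^{-1}(u^2)) = \tfrac12 (u^2 - \intavg u^2)$ (using the identity $\partial_x \partial_x^{-1} g = g - \intavg g$ invoked in Section \ref{sectionwellposednessinX}), while $u$ itself solves $u_t + Hu_{xx} = \partial_x(u^2)$. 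First I would write the energy identities
\begin{align*}
\frac{d}{dt}\|\langle \partial_y \rangle^q u\|_{L^2}^2 &= 2\Re \int_{\mathbb{T}^2} \langle \partial_y \rangle^q u \cdot \langle \partial_y \rangle^q \partial_x(u^2),\\
\frac{d}{dt}\|\langle \partial_y \rangle^q \partial_x^{-1}u\|_{L^2}^2 &= 2\Re \int_{\mathbb{T}^2} \langle \partial_y \rangle^q \partial_x^{-1} u \cdot \langle \partial_y \rangle^q \big(u^2 - \intavg u^2\big),
\end{align*}
the contribution of the linear term $-H\partial_{xx}$ vanishing in each case by anti-selfadjointness (note $\langle \partial_y \rangle^q$ commutes with $H\partial_{xx}$ and with $\partial_x^{-1}$ since all are Fourier multipliers). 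Since this lemma concerns the actual solution given by the wellposedness theory in $X$, and that solution is the $X$-limit of trigonometric-polynomial regularized solutions $u_n$, I would in fact run the argument first at the level of the regularized equation (\ref{regularizedBO}) — where all manipulations are justified because $u_n$ is a finite trigonometric polynomial — obtaining the bound uniformly in $n$, and then pass to the limit using $C([0,T],X)$-convergence (exactly as Lemma \ref{Hshnormgrowth} is deduced from Lemma \ref{normgrowthregularized}).

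The two nonlinear contributions are then estimated by the same device used throughout Section \ref{wellposednessanisotropicspace}: split $\langle \partial_y \rangle^q \partial_x(u^2)$ via Leibniz and the quasi-periodic Kenig--Ponce--Vega commutator estimate (\ref{KenigPonceVegaQuasi}) into a commutator term $[\langle \partial_y \rangle^q, u]\partial_x u$, controlled by $\|\langle \partial_y \rangle^q u\|_{L^2}\|\partial_x u\|_{L^\infty}$, plus the ``all derivatives on one factor'' term $u\,\langle \partial_y \rangle^q \partial_x u$, where the extra $\partial_x$ is integrated by parts against $\langle \partial_y \rangle^q u$ to move it onto $u$, again producing $\|\langle \partial_y \rangle^q u\|_{L^2}^2 \|\partial_x u\|_{L^\infty}$. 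For the $\partial_x^{-1}$ equation the nonlinearity $u^2 - \intavg u^2$ carries no derivative, so I would bound $\langle \partial_y \rangle^q(u^2)$ by the fractional product rule (i.e. the algebra-type estimate underlying \eqref{algebraproperty}, or directly Kenig--Ponce--Vega) by $\|\langle \partial_y \rangle^q u\|_{L^2}\|u\|_{L^\infty}$, and the mean-correction term $\intavg u^2$ contributes only to the $n=0$ Fourier mode of $u^2$, which is killed by $\langle \partial_y \rangle^q$ (up to a harmless constant) — in any case it is bounded by $\|u\|_{L^2}\|u\|_{L^\infty}\lesssim \|\langle\partial_y\rangle^q u\|_{L^2}\|u\|_{L^\infty}$. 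Collecting, with $E(t) := \|\langle \partial_y \rangle^q \partial_x^{-1}u(t)\|_{L^2} + \|\langle \partial_y \rangle^q u(t)\|_{L^2}$, I obtain
\begin{equation*}
\frac{d}{dt} E(t) \lesssim E(t)\,\big(\|\partial_x u(t)\|_{L^\infty} + \|u(t)\|_{L^\infty}\big),
\end{equation*}
after dividing the squared identities by $E(t)$ (or the individual norms) as is done for (\ref{normgrowthprelim}); absorbing the implicit constant into the exponent and applying Gronwall's inequality (\ref{Gronwall}) gives exactly the claimed bound.

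The main obstacle — really the only delicate point — is bookkeeping the mean-zero correction: one must make sure that applying $\partial_x^{-1}$ to the Benjamin--Ono nonlinearity produces $\tfrac12(u^2 - \intavg u^2)$ and not something ill-defined, and that the projection $P_n$ in the regularized equation does not spoil the integration-by-parts cancellations (it does not, since $P_n$ can be moved onto one factor by Plancherel, exactly as in \eqref{firstterm} and the proof of Lemma \ref{Hshbound}). One should also note that $\langle\partial_y\rangle^q$ is well defined on $\partial_x^{-1}u$ precisely because the $X$-norm already controls $\|\langle\partial_y\rangle^{s_2}\partial_x^{-1}u\|_{L^2}$ and $0\le q\le s_2$, so no extra regularity is needed. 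Everything else is a verbatim repetition of the energy-method estimates already carried out in this section.
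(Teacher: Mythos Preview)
Your proposal is correct and follows essentially the same route as the paper: work at the level of the regularized solutions $u_n$, use anti-selfadjointness of $-H\partial_{xx}$, handle $\langle\partial_y\rangle^q u_n$ via the Kenig--Ponce--Vega commutator plus integration by parts, handle $\langle\partial_y\rangle^q\partial_x^{-1}u_n$ via the $F$-equation with the mean-correction $\intavg u_n^2$, add, Gronwall, and pass to the limit. One small imprecision: the constant $\intavg u^2$ is \emph{not} killed by $\langle\partial_y\rangle^q$ (the symbol is $1$ at $n=0$), but your fallback bound $|\intavg u^2|=\|u\|_{L^2}^2\le \|u\|_{L^2}\|u\|_{L^\infty}\le \|\langle\partial_y\rangle^q u\|_{L^2}\|u\|_{L^\infty}$ is exactly what the paper uses, so the argument goes through.
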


\begin{proof}[Proof of Lemma \ref{Fyqnormgrowth}]
    Denote $\|u_n\|_{\tilde{X}}:=\|\langle \partial_y \rangle^q \partial_x^{-1}u_n\|_{L^2} + \|\langle \partial_y \rangle^q u_n\|_{L^2}$. Analogous to equation (\ref{yhderivative}), we have
    \begin{equation*}
        \frac{d}{dt} \|\langle \partial_y \rangle^q u_n\|_{L^2}^2 \lesssim \|\langle \partial_y \rangle^q u_n\|_{L^2}^2 \|\partial_x u_n\|_{L^{\infty}} \leq \|u_n\|_{\tilde{X}}\|\partial_x u_n\|_{L^{\infty}}.
    \end{equation*}
  We calculate further
    \begin{align*}
        \frac{d}{dt}\|\langle \partial_y \rangle^q \partial_x^{-1}u_n\|_{L^2}^2 & = \int_{\mathbb{T}^2} \langle \partial_y \rangle^q \partial_x^{-1}u_n \langle \partial_y \rangle \big(u_n^2 - \intavg u_n^2 \big) \\ & \leq \|\langle \partial_y \rangle^q \partial_x^{-1} u_n\|_{L^2} \bigg[ \|\langle \partial_y \rangle^q u_n\|_{L^2} \|u_n\|_{L^{\infty}} + \bigg| \intavg u_n^2 \bigg| \bigg] \\ & \leq \|\langle \partial_y \rangle^q \partial_x^{-1} u_n\|_{L^2} \|\langle \partial_y \rangle^q u_n\|_{L^2} \|u_n\|_{L^{\infty}} \leq  \|u_n\|_{\tilde{X}}^2 \|u_n\|_{L^{\infty}}.
    \end{align*}
    The result follows by adding these two inequalities, invoking Gronwall, and then taking $n\rightarrow \infty$.
\end{proof}

Combining Lemma \ref{Hshnormgrowth} and Lemma \ref{Fyqnormgrowth} yields Proposition \ref{normgrowthX}.

\section{a-priori bound}\label{sectionaprioriproposition}
The entirety of this section is dedicated to the proof of Theorem \ref{aprioriformal}. 
For initial data satisfying $\|u_0\|_Y \leq \rho_0$, this a-priori estimate, together with Proposition \ref{normgrowthX}, will allow us to iterate the local wellposedness theory in $X$ on the time interval $[0,1]$.
A heuristic outline of the proof of Theorem \ref{aprioriformal} was given in the introduction of the paper.  \\

The proof proceeds by a bootstrap argument. The bootstrap hypothesis and conclusion each~split into two parts: the first part $\RomanI$ \eqref{I} consists of estimates on the gauge transformed function $w$, the second part $\RomanII$ \eqref{II} consists of estimates on the derivative $F_{xx}=u_x$ of the solution to Benjamin--Ono. 
In Section \ref{setup} we formally setup the bootstrap argument. In Section \ref{initialzebootstrap} we initialize the bootstrap.
In Sections \ref{partIbootstrap1}-\ref{paraproductestimatessection} we show that the bootstrap conclusion holds for part $\RomanI$. The main tool here is Schr\"odinger type equations for the gauge transformed function $w$ and derivatives thereof (equations (\ref{wequationsecondoccurence}), (\ref{wxequation})),
which are utilized using Strichartz estimates, see Section \ref{partIbootstrap1}.
This leaves us with various terms to be estimated, which is done in Sections \ref{partIbootstrap3} and \ref{sectionenergyestimatepartI}. 
In Section \ref{paraproductestimatessection}, we prove two paraproduct estimates required for doing so.
In Sections \ref{partIIbootstrap} and \ref{sectionerrorterms}, we show that the bootstrap conclusion holds for part $\RomanII$. The main tool here is equations (\ref{Fxequation}) and (\ref{Fxxequation}), obtained by differentiating the defining equation (\ref{gaugesecondappearance}) of the gauge transform and rearranging, thus producing an expression for $P_{+hi}(F_{xx})$  in terms of derivatives of the gauge transform $w$ and some error terms.

\subsection{Setup of the bootstrap argument}\label{setup} 
In this section, we set up the bootstrap argument that will give the desired bound on the quantity $\|\partial_xu\|_{L^4([0,T],L^{\infty}(\mathbb{T}^2))}$ appearing in the conclusion of Theorem \ref{aprioriformal}. Control over the other quantity in Theorem \ref{aprioriformal}, $\|u\|_{L^4([0,T],L^{\infty}(\mathbb{T}^2))}$, then follows by energy arguments. 
\\

First, note that it suffices to prove Theorem \ref{aprioriformal} for solutions $u \in C([0,T],X_{\omega}^{s_1,s_2})$ to the Benjamin--Ono equation \textit{with zero mean}, 
due to the Galilean symmetry $v(t,x)=u(t,x-ta)-a$ of the equation.
Eliminating the zero Fourier mode can never increase the $Y$-norm. 

Given a mean-zero solution $u \in C([0,T],X_{\omega}^{s_1,s_2})$ to the Benjamin--Ono equation, define the function $F:[0,T]\times \mathbb{T}^2 \rightarrow \mathbb{R}$ by
\begin{equation}\label{definitionF}\hat{F}(t,n) = 
      \frac{1}{i(\omega\cdot n)} \hat{u}(t,n) 1_{n \neq 0}(n) + 
      t \cdot \|u_0\|^2_{L^2(\mathbb{T}^2)}  1_{n=0}(n),\end{equation}
and define \begin{equation}\label{gaugesecondappearance}w:=P_{+hi}(e^{-iF}).\end{equation} 
Note that $F$ and $w$ are in $C([0,T],H^{s_1,s_2}_{\omega})$. The statement about $F$ is due to $$\|u\|_{X_{\omega}^{s_1,s_2}} \sim \|\partial_x^{-1}u\|_{H_{\omega}^{s_1,s_2}} + \|u\|_{H_{\omega}^{s_1,s_2}},$$ and the statemant about $w$ follows by the algebra property of $H^{s_1,s_2}_{\omega}$ allowing us to exponentiate. 
The function $F$ is real-valued, as $u$ is real-valued and the symbol $m$ satisfies $m(-n) = \overline{m(n)}$. 
Using the assumption that $u_0$ has vanishing mean, and conservation of the mean $\hat{u}(t,0)=\int_{\mathbb{T}^2}u(t)$, the function $F$ is a primitive of $u$ with respect to tangential differentiation $\partial_x$. That is, $F_x=u$. \\

Fix $\eta>0$ small enough so that $\sigma - \eta > \frac{7}{8}$ and $p<\infty$ large enough so that 
Sobolev embedding $$H^{\eta,p}(\mathbb{T}^2) \hookrightarrow L^{\infty}(\mathbb{T}^2)$$ 
holds. 
Provided $\|u_0\|_Y \leq \rho$, we will show that
$$\|\langle \nabla \rangle^{\eta}F_{xx}\|_{L^4([0,T],L^p(\mathbb{T}^2))} \lesssim \rho,$$ from which the result follows.
Indeed, it is then clear that $\|\partial_xu\|_{L^4([0,T],L^{\infty}(\mathbb{T}^2))}\lesssim \rho$ from Sobolev embedding, and the other part of the conclusion of Theorem \ref{aprioriformal} follows by Lemma \ref{Hshnormgrowth} and the embedding \eqref{anisotropicembedding}: \begin{equation*}\|u\|_{L^4([0,T],L^{\infty}(\mathbb{T}^2))} \lesssim T^{1/4}\|u\|_{L^{\infty}([0,T],H_{\omega}^{1+\sigma,\sigma})} \lesssim \|u_0\|_{H^{1+\sigma,\sigma}_{\omega}}\exp \big(\int_0^T \|\partial_xu(s)\|_{L^{\infty}} ds \big)\lesssim \|u_0\|_Y \lesssim \rho.\end{equation*} 
The reason for working with the space $H^{\eta,p}(\mathbb{T}^2)$ instead of $L^{\infty}(\mathbb{T}^2)$ is that the Littlewood-Paley projections are bounded on the former space. This will be important in Section \ref{sectionerrorterms}, see Remark \ref{whyLp}. \\

The proof proceeds via a bootstrap argument; denoting $\varepsilon = \sqrt{\rho}$, and denoting for $0<T' \leq T$ 
\begin{equation}\label{I}\RomanI(T'):= \|\langle \nabla \rangle^{\eta}w\|_{S([0,T'])} + \|\langle \nabla \rangle^{\eta}w_x\|_{S([0,T'])} + \|\langle \nabla \rangle^{\eta}w_{xx}\|_{S([0,T'])}\end{equation} and \begin{equation}\label{II}\RomanII(T'):= \|\langle \nabla \rangle^{\eta} F_{xx}\|_{L^{4}([0,T'],L^p)} + \|\langle \nabla \rangle^{r+\eta} F_{xx}\|_{L^{4/3}([0,T'],L^2)},\end{equation}
the bootstrap hypothesis is
\begin{equation}\label{H}
    \RomanI (T') \lesssim \varepsilon \ \text{and} \ \RomanII(T')\lesssim \varepsilon
\end{equation}
and the bootstrap conclusion is 
\begin{equation}\label{C}
    \RomanI(T') \lesssim \varepsilon^2 \ \text{and} \ \RomanII(T') \lesssim \varepsilon^2.
\end{equation}
Here $r$ is such that $7/8 < r < \sigma - \eta$. The functions $F$ and $w$ being in $C([0,T],H^{s_1,s_2}_{\omega})$ provides enough regularity so that the expressions $\RomanI$ and $\RomanII$ are well-defined, see Lemma \ref{anisotropicproperties} (cf. Lemma \ref{initializebootstraplemma}).
\\

Denote
\begin{equation*}
    \Omega_1 := \{0<T' \leq T \ \text{such that \eqref{H} holds for norms defined on time slab} \ [0,T']\}
\end{equation*}
and
\begin{equation*}
    \Omega_2 := \{0<T' \leq T \ \text{such that \eqref{C} holds for norms defined on time slab} \ [0,T']\}.
\end{equation*}
The continuity of $u$ as map $[0,T] \rightarrow X$ suffices to verify that $\Omega_2$ is right-closed, $\Omega_1$ is non-empty, and that if $T' \in \Omega_2$ then $\Omega_1$ contains an open neighbourhood of $T'$. This is done in Section \ref{initialzebootstrap}. Afterwards, we turn to the main task, which is to show that $\eqref{H} \implies \eqref{C}$, so that the bootstrap argument closes. It then follows that $\Omega_2$ is the entire interval $(0,T]$, completing the proof.

\subsection{Initialzing the bootstrap}\label{initialzebootstrap}
We use continuity of $u$ as a map $[0,T] \rightarrow X$ to initialize the bootstrap argument.

\begin{lemma}\label{initializebootstraplemma}
    Suppose $u:[0,T] \rightarrow X$ is continuous. Then $$\Phi:T' \mapsto \RomanI(T')+\RomanII(T')$$ is a continuous and non-decreasing function $(0,T] \rightarrow \mathbb{R}$. Further,
    $$\lim_{T' \rightarrow 0^+}\RomanI(T')+\RomanII(T')=\|\langle \nabla \rangle^{\eta}\langle \partial_x \rangle^2 w(0)\|_{L^2} \lesssim \varepsilon^2.$$
      In particular,
      \begin{itemize}
      \setlength{\itemindent}{+.2in}
        \item $\Omega_2$ is right-closed
        \item $\Omega_2$ is contained in the interior of $\Omega_1$
        \item $\Omega_1$ is non-empty.
      \end{itemize}
\end{lemma}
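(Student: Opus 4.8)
\textbf{Proof plan for Lemma \ref{initializebootstraplemma}.}

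The plan is to treat the three assertions about $\Omega_1$ and $\Omega_2$ as formal consequences of the two analytic facts stated in the lemma: that $\Phi = \RomanI + \RomanII$ is continuous and non-decreasing on $(0,T]$, and that $\Phi(T') \to \|\langle \nabla \rangle^{\eta}\langle \partial_x \rangle^2 w(0)\|_{L^2} \lesssim \varepsilon^2$ as $T' \to 0^+$. So I would first establish these two facts and then read off the three bullet points.

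First I would prove monotonicity: each summand in $\RomanI(T')$ and $\RomanII(T')$ is a mixed spacetime norm over the slab $[0,T'] \times \mathbb{T}^2$, i.e.\ a norm of the restriction of a fixed function to $[0,T']$, so enlarging $T'$ can only enlarge each norm; hence $\Phi$ is non-decreasing. For continuity, I would use that $u:[0,T]\to X$ is continuous, so $u \in C([0,T],X)$; by the discussion following \eqref{definitionF}–\eqref{gaugesecondappearance}, $F$ and $w$ lie in $C([0,T],H^{s_1,s_2}_\omega)$, and since $s_1$ is large and $\frac12<s_2<1$, repeated application of the boundedness properties \eqref{boundedness1}, \eqref{boundedness2} and the anisotropic embedding \eqref{anisotropicembedding} in Lemma \ref{anisotropicproperties} shows that $\langle\nabla\rangle^\eta w$, $\langle\nabla\rangle^\eta w_x$, $\langle\nabla\rangle^\eta w_{xx}$ are continuous into $L^2 \cap L^\infty$ and $\langle\nabla\rangle^\eta F_{xx}$, $\langle\nabla\rangle^{r+\eta}F_{xx}$ are continuous into $L^p$, $L^2$ respectively (here one uses $7/8 < r < \sigma-\eta$ and that $s_1$ is large enough to absorb all the derivatives); in particular all the relevant norms $\|\cdot\|_{L^q_x}$ are bounded and continuous in $t$ on $[0,T]$, so each $t\mapsto \|\cdot(t)\|_{L^q_x}$ is in $L^\infty_t \cap C_t$, and consequently $T'\mapsto \|\cdot\|_{L^{p_t}([0,T'],L^{q}_x)}$ and $T'\mapsto \sup_{[0,T']}\|\cdot(t)\|_{L^2_x}$ are continuous in $T'$. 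Summing finitely many continuous functions gives continuity of $\Phi$. For the limit as $T'\to0^+$: all the $L^{p_t}$-in-time pieces with finite time exponent vanish as $T'\to0$ by dominated convergence (or simply $\|g\|_{L^{p_t}([0,T'])}\le (T')^{1/p_t}\|g\|_{L^\infty_t}\to0$), so the only surviving contribution is $\lim_{T'\to0}\|\langle\nabla\rangle^\eta w\|_{L^\infty([0,T'],L^2)} = \|\langle\nabla\rangle^\eta w(0)\|_{L^2}$, together with the analogous $w_x$, $w_{xx}$ terms, which combine to $\|\langle\nabla\rangle^\eta\langle\partial_x\rangle^2 w(0)\|_{L^2}$ after rewriting $\|\langle\nabla\rangle^\eta w\|_{L^2}^2 + \|\langle\nabla\rangle^\eta w_x\|_{L^2}^2 + \|\langle\nabla\rangle^\eta w_{xx}\|_{L^2}^2 \sim \|\langle\nabla\rangle^\eta\langle\partial_x\rangle^2 w(0)\|_{L^2}^2$ via Plancherel and the subadditivity of $\langle\cdot\rangle^{2s}$; the final bound $\lesssim \varepsilon^2$ is the content of the initialization estimate proved from $\|u_0\|_Y \le \rho = \varepsilon^2$ (this is the step where the $Y$-norm enters — bounding $\|\langle\nabla\rangle^\eta\langle\partial_x\rangle^2 P_{+hi}(e^{-iF(0)})\|_{L^2}$ by $\|u_0\|_Y$ using $F(0) = \partial_x^{-1}u_0$, the algebra property, and that $\langle\partial_x\rangle^2$ acting on $P_{+hi}(e^{-iF})$ pulls down two factors of $F_{xx}=u_x$ plus lower-order terms controlled by $Y$).

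From these two facts the three bullets follow immediately. Right-closedness of $\Omega_2$: if $T_k \in \Omega_2$ with $T_k \uparrow T^\ast$, then $\Phi(T_k) \lesssim \varepsilon^2$ for all $k$ and, by continuity, $\Phi(T^\ast) = \lim_k \Phi(T_k) \lesssim \varepsilon^2$, so $T^\ast \in \Omega_2$ (strictly speaking one must check that the implicit constants in \eqref{C} are fixed and that $\le$ passes to the limit, which is automatic). $\Omega_2 \subseteq \mathrm{int}\,\Omega_1$: if $T' \in \Omega_2$ then $\Phi(T') \lesssim \varepsilon^2 \le \varepsilon$ (for $\varepsilon$ small), and by continuity of $\Phi$ there is a neighbourhood of $T'$ on which $\Phi \lesssim \varepsilon$, i.e.\ \eqref{H} holds there, so that neighbourhood lies in $\Omega_1$. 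Non-emptiness of $\Omega_1$: by the limit statement, $\Phi(T') \to \|\langle\nabla\rangle^\eta\langle\partial_x\rangle^2 w(0)\|_{L^2} \lesssim \varepsilon^2 \le \varepsilon$ as $T'\to0^+$, so $\Phi(T') \lesssim \varepsilon$ for all sufficiently small $T'$, hence such $T'$ belong to $\Omega_1$.

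The main obstacle is not any of the formal deductions — those are routine — but the verification that $w$, $w_x$, $w_{xx}$, $F_{xx}$ and $\langle\nabla\rangle^{r+\eta}F_{xx}$ genuinely have the claimed continuity/boundedness in the spacetime norms appearing in \eqref{I}–\eqref{II}, using only $u \in C([0,T],X)$. This is exactly the point made in the paragraph around \eqref{I}: one must track how many tangential and normal derivatives land on $w = P_{+hi}(e^{-iF})$ and check, via the boundedness maps \eqref{boundedness1}–\eqref{boundedness2} and the algebra property of $H^{s_1,s_2}_\omega$, that $s_1$ being "sufficiently large depending on $s_2$" leaves enough room; the slightly delicate case is $\langle\nabla\rangle^{r+\eta}F_{xx}$, which costs $2 + r + \eta < 2 + \sigma$ tangential-type derivatives on $F$, i.e.\ roughly $1+\sigma$ on $u = F_x$, and one checks this is comfortably below $s_1$. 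Everything else (dominated convergence for the finite-time-exponent pieces, Plancherel rewriting at $t=0$, the formal lattice arguments for $\Omega_1,\Omega_2$) is bookkeeping.
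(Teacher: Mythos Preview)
Your proposal is correct and follows essentially the same approach as the paper: both arguments deduce continuity and monotonicity of $\Phi$ from the fact that $F,w \in C([0,T],H^{s_1,s_2}_\omega)$ together with the boundedness properties \eqref{boundedness1}, \eqref{boundedness2} and anisotropic Sobolev embedding \eqref{anisotropicembedding}, compute the right limit by noting that only the $L^\infty_t L^2_x$ pieces survive, defer the $O(\varepsilon^2)$ bound on $\|\langle\nabla\rangle^\eta\langle\partial_x\rangle^2 w(0)\|_{L^2}$ to the separate initial-data estimate (Lemma~\ref{initialdatawbound} in the paper), and then read off the three bullet points as formal consequences. Your write-up is in places more explicit than the paper's (e.g.\ the dominated-convergence argument for the finite-exponent pieces and the Plancherel equivalence at $t=0$), but the structure and ingredients are the same.
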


It thus suffices to show that the bootstrap hypothesis $\eqref{H}$ implies the bootstrap conclusion $\eqref{C}$ in order to conclude that $\Omega_2=(0,T]$ by connectedness.

\begin{proof} 
The functions $\langle \nabla \rangle^{\eta}\partial_x^jw, \ j=0,1,2$ and $\langle \nabla \rangle^{\eta}F_{xx}$ are all $C([0,T],L^{\infty}(\mathbb{T}^2))$. This follows from the fact that $F$ and $w$ belong to $C([0,T],H^{s_1,s_2}_{\omega})$, by boundedness properties of the derivative operators $\langle \nabla \rangle^{\eta}\partial_x^j, \ j=0,1,2: H^{s_1,s_2}_{\omega} \rightarrow H_{\omega}^{s_1',s_2'}$ for suitable $s_1',s_2'$, and anisotropic Sobolev embedding, see \eqref{anisotropicembedding}, \eqref{boundedness1}, \eqref{boundedness2} in Lemma \ref{anisotropicproperties}.

Now the statement about $\Phi$ being continuous and non-decreasing is clear. The right-limit $\lim_{T' \rightarrow 0^+}\RomanI(T')+\RomanII(T')=\|\langle \nabla \rangle^{\eta}\langle \partial_x \rangle^2 w(0)\|_{L^2}$ is now clear as well. This quantity is shown to be $O(\varepsilon^2)$ in Lemma \ref{initialdatawbound} below. The first two bullet points follow from $\Phi$ being continuous and non-decreasing. 
The stated right limit gives the third bullet point.
\end{proof}

\subsection{Part I of the bootstrap: equation for the gauge transformed function}\label{partIbootstrap1}
In this~\mbox{section}, we use a Schr\"odinger-type equation for the gauge transformed function $w$ and Strichartz estimates to establish a bound on part $\RomanI$ \eqref{I} of the bootstrap conclusion. This is the content of Lemma \ref{Strichartzappliedtow}. \\

We disregard for the moment questions about the regularity of the functions involved and calculate formally. Suppose $u$ is a mean-zero solution to the Benjamin--Ono equation \begin{equation}\tag{BO} u_t + Hu_{xx} = \partial_x(u^2). \end{equation} Applying the multiplier operator $\partial_x^{-1}$ with symbol $m(n):=\frac{1}{i \langle n, \omega \rangle} 1_{\{n \neq 0\}}$ to the equation and using conservation of the quantity $\int_{\mathbb{T}^2}u^2 = \|u\|_{L^2}^2$ under the Benjamin--Ono flow, we get that the function $F$ defined by (\ref{definitionF}) satisfies the equation
\begin{equation*}\label{dynamicalequationF}F_t + HF_{xx}=F_x^2 = u^2.\end{equation*}

 Calculating formally as in \cite{MR2052470} gives the following equation:
\begin{align}\label{wequationsecondoccurence}
    w_t + Hw_{xx}  =w_t - iw_{xx} \nonumber 
    & = P_{+hi}((-iF_t +iF_x^2 -F_{xx})e^{-iF}) \nonumber \\
    & = P_{+hi}((iHF_{xx} - F_{xx})e^{-iF})\\
    & = P_{+hi}((-2P_-F_{xx})e^{-iF}) \nonumber \\ &
    = -2\underbrace{P_{+hi}(P_-(F_{xx})w)}_{=:A} -2\underbrace{P_{+hi}(P_-(F_{xx})P_{lo}(e^{-iF}))}_{=:B}. \nonumber
\end{align}
We obtain Schr\"odinger-type equations for $\langle \nabla \rangle^{\eta}\partial_x^jw$, $j=0,1,2$ by applying the operators $\langle \nabla \rangle^{\eta}\partial_x^j$:
\begin{equation}\label{wxequation}
    (\partial_t - i\partial_{xx})\langle \nabla \rangle^{\eta}\partial_x^jw = -2\langle \nabla \rangle^{\eta}\partial_x^jA -2 \langle \nabla \rangle^{\eta}\partial_x^jB.
\end{equation}
\\

Recall the quasi-periodic
Strichartz estimate (\ref{Strichartz1}) from the introduction, which incurs a derivative loss of at least a quarter.

\begin{lemma}\label{Strichartzlemma}
Let $\omega \in \mathbb{R}^2$ be a frequency vector with incommensurable entries. We have \begin{equation}\label{linearStrichartzestimate}\|e^{it\partial_{xx}}q_0\|_{L_t^4([0,T'],L_{}^4(\mathbb{T}^2))} \lesssim (T')^{1/8} \|\langle \nabla \rangle^{\kappa}q_0\|_{L^2_{}(\mathbb{T}^2)} \quad \text{for} \ \kappa>\frac{1}{4}, \ T' \in (0,1].\end{equation}
\end{lemma}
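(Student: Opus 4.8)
The plan is to obtain the periodic Strichartz estimate \eqref{linearStrichartzestimate} by transference from a corresponding estimate on the Euclidean plane, exactly as the commutator estimates in Lemma \ref{KatoPonceVegaQuasih} were obtained. First I would recall that the estimate \eqref{Strichartz1} was established in \cite{MR4918730} by decoupling, so strictly speaking the statement is already available in the literature; the role of Lemma \ref{Strichartzlemma} here is just to record it in the precise form needed and for a general incommensurable frequency vector $\omega$. The cleanest self-contained route is the following. Consider the free Schr\"odinger evolution on $\mathbb{R}^2$ associated with the \emph{degenerate} symbol $(\omega\cdot\xi)^2$, i.e. $e^{it\partial_\omega^2}$ acting on functions on the plane, where $\partial_\omega$ denotes the directional derivative in the $\omega$-direction. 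After a rotational dilation of the plane sending $\omega/|\omega|$ to the first coordinate axis, this becomes $e^{it|\omega|^2\partial_{x_1}^2}$, a Schr\"odinger evolution in the first variable only, tensored with the identity in the second variable. A (local in time) $L^4_{t,x}$ Strichartz-type estimate for this degenerate propagator on $\mathbb{R}^2$, with a derivative loss of $\kappa>1/4$, follows from combining the one-dimensional Strichartz estimate $\|e^{it\partial_{x_1}^2}\|_{L^6_tL^6_{x_1}}\lesssim\|\cdot\|_{L^2_{x_1}}$ (or the relevant mixed-norm variant) with Bernstein/Sobolev embedding in the transverse variable $x_2$ to absorb the loss; alternatively one cites directly the periodic $L^4$ decoupling estimate of \cite{MR4918730} and transfers.

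Second I would set up the transference. The propagator $e^{it\partial_{xx}}$ on $\mathbb{T}^2$ is the multiplier with symbol $e^{-it(\omega\cdot n)^2}$, so for fixed $t$ it is the periodization, in the sense of Proposition \ref{deLeeuw}, of the plane multiplier $\xi\mapsto e^{-it(\omega\cdot\xi)^2}$. Since we want a spacetime estimate rather than a fixed-time one, the right tool is the spacetime transference machinery already developed in Section \ref{preliminariessection}: one works with Gaussian cutoffs $w_{\delta\varepsilon}$ as in Lemmas \ref{PQR} and \ref{sderivativesPw}, applies the planar Strichartz estimate to $e^{it|\omega|^2\partial_{x_1}^2}(q_0 w_{\varepsilon})$, and passes to the limit $\varepsilon\to0$, using that on the time axis the estimate is local (the factor $(T')^{1/8}$ comes from H\"older in time on $[0,T']$ together with the scaling of the planar estimate). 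The derivative $\langle\nabla\rangle^\kappa q_0$ on the right is handled by Lemma \ref{sderivativesPw} with $s=\kappa$, which is exactly why that lemma was proved for general inhomogeneous derivatives. This yields \eqref{linearStrichartzestimate} for trigonometric polynomials $q_0$, and a density argument extends it to all of $\langle\nabla\rangle^{-\kappa}L^2(\mathbb{T}^2)$.

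The main obstacle I anticipate is not the transference mechanics, which are routine given Section \ref{preliminariessection}, but rather pinning down the \emph{local-in-time} planar estimate with the sharp loss $\kappa>1/4$ and the sharp power $(T')^{1/8}$. On the full line $t\in\mathbb{R}$ the degenerate Schr\"odinger propagator $e^{it\partial_{x_1}^2}\otimes\mathrm{Id}_{x_2}$ does \emph{not} obey a global $L^4_{t,x}(\mathbb{R}^2)$ estimate with any finite loss, because there is no dispersion in the $x_2$-direction; one must genuinely exploit the compactness of the time interval and of the $x_2$-torus (equivalently, one really is doing a periodic estimate in $x_2$ and $t$), which is precisely the regime covered by the decoupling result \eqref{Strichartz1}. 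So the honest thing is to invoke \eqref{Strichartz1} from \cite{MR4918730} as a black box and transfer from the torus version directly (using Proposition \ref{transferenceparaproduct}-style arguments is not even needed, since \eqref{Strichartz1} is already stated on $\mathbb{T}^2$); the only content of Lemma \ref{Strichartzlemma} beyond \eqref{Strichartz1} is the bookkeeping that an arbitrary incommensurable $\omega$ can be handled, which is a rotational-dilation change of variables on the torus's covering space combined with De Leeuw-type periodization. I would therefore present the proof as: reduce to the case $|\omega|$ normalized by scaling the time variable (absorbing constants into the implicit constant), then quote \eqref{Strichartz1}; if one insists on a genuinely new proof, fall back on the Gaussian-cutoff transference of Lemmas \ref{PQR}–\ref{sderivativesPw} applied to whichever planar estimate one is willing to take for granted.
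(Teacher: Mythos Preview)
The paper does not give an independent proof of this lemma at all: it simply cites \cite[Theorem 1.1]{MR4918730}, noting that the estimate was established there via $l^2$-decoupling. You eventually arrive at the same conclusion (``the honest thing is to invoke \eqref{Strichartz1} from \cite{MR4918730} as a black box''), but only after proposing a transference-from-the-plane strategy that you yourself correctly identify as unworkable: the degenerate propagator $e^{it\partial_{x_1}^2}\otimes\mathrm{Id}_{x_2}$ has no dispersion in $x_2$, so there is no planar $L^4_{t,x}$ estimate to transfer from, and the machinery of Lemmas \ref{PQR}--\ref{sderivativesPw} does not help here. Your remark that one must ``genuinely exploit the compactness of the time interval and of the $x_2$-torus'' is exactly right, and this is the content of the decoupling argument in \cite{MR4918730}, not something that can be recovered by the transference tools of Section \ref{preliminariessection}. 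The proposal should simply be: cite \cite{MR4918730}.
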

This was shown in \cite[Theorem 1.1]{MR4918730} via $l^2$-decoupling. For further discussion of Strichartz estimates in the quasi-periodic setting, we refer the reader to that paper. 
This Strichartz estimate yields bounds on sufficiently regular solutions of the Schr\"odinger-type equation \begin{equation}\label{nonlinearSchroedingertype}
    q_t - iq_{xx} = N, \quad q(0)=q_0
\end{equation}
by the usual argument employing the $TT^*$-method with adjoint identity \begin{equation}\label{adjointidentity}\langle e^{is\partial_{xx}}q_0, N \rangle_{[0,T']\times \mathbb{T}^2} = \Big\langle q_0, \int_0^{T'} e^{-is\partial_{xx}}N(s)ds \Big\rangle_{\mathbb{T}^2},\end{equation}
using the Christ-Kiselev lemma, and the Duhamel formulation of the equation \begin{equation}\label{Duhamelq}q(t)= e^{it\partial_{xx}}q_0 + \int_0^t e^{i(t-s)\partial_{xx}}N(s)ds.\end{equation}

Indeed, by duality\footnote{In our setting where we put the non-homogeneity in the $L_t^1([0,T],L^2)$-norm, this also follows simply by triangle inequality for the integral.}, using the adjoint identity \eqref{adjointidentity}, and the fact the propagator $e^{is\partial_{xx}}$ acts by isometry on square-integrable functions, we have
\begin{align}\label{dualStrichartz}\Big\|\int_0^{T'} e^{-is\partial_{xx}}N(s)ds\Big\|_{L^2(\mathbb{T}^2)} 
&= \sup_{\|q_0\|_{L^2}=1} \big\langle e^{is\partial_{xx}}q_0, N\rangle_{[0,T']\times \mathbb{T}^2} \leq \|N\|_{L^1([0,T'],L^2(\mathbb{T}^2))}. \end{align}
Invoking the Strichartz estimate in Lemma \ref{Strichartzlemma} and combining with Sobolev embedding $H^{h,4}(\mathbb{T}^2) \hookrightarrow L^{\infty}(\mathbb{T}^2)$ for $h>1/2$, we thus get the inhomogeneous Strichartz estimate
\begin{equation*}
    \Big\| \int_0^{T'} e^{i(t-s)\partial_{xx}}N(s)ds \Big\|_{L^4_t([0,T'],L^{\infty}(\mathbb{T}^2))} \lesssim (T')^{1/8} \|\langle \nabla \rangle^rN\|_{L^1([0,T'],L^2(\mathbb{T}^2))} \quad \text{for} \quad r=h+\kappa>3/4.
\end{equation*}
By the arguments proving the Christ-Kiselev lemma \cite{MR1809116}, this yields the retarded Strichartz estimate
\begin{equation*}
    \Big\| \int_0^t e^{i(t-s)\partial_{xx}}N(s)ds \Big\|_{L^4_t([0,T'],L^{\infty}(\mathbb{T}^2))} \lesssim (T')^{1/8} \|\langle \nabla \rangle^rN\|_{L^1([0,T'],L^2(\mathbb{T}^2))} \ \text{for} \ r>3/4.
\end{equation*}
Plugging all this into the Duhamel formula \ref{Duhamelq}, we obtain the following estimate on solutions to equation \eqref{nonlinearSchroedingertype}:
\begin{equation}\label{Schroedingertypeequationestimate}\|q\|_{L^4([0,T'],L^{\infty}(\mathbb{T}^2))} \leq (T')^{1/8}\|\langle \nabla \rangle^{r} q_0\|_{L^2(\mathbb{T}^2)} + (T')^{1/8}\|\langle \nabla \rangle^{r} N\|_{L^1([0,T'],L^2(\mathbb{T}^2))} \quad \text{for} \ r>3/4.\end{equation}
Further,
\begin{equation}\label{Schroedingertypeequationestimate2}\|q\|_{L^{\infty}([0,T'],L^{2}(\mathbb{T}^2))} \leq \| q_0\|_{L^2(\mathbb{T}^2)} + \| N\|_{L^1([0,T'],L^2(\mathbb{T}^2))} \end{equation}
directly from Duhamel formula \eqref{Duhamelq}, triangle inequality and $L^2$-isometry of the Schr\"odinger propagator. \\

In Lemmas \ref{enoughregularity} and \ref{retardedStrichartz}
we show that the wellposedness theory in $X$ provides enough regularity to justify these calculations when $q= \langle \nabla \rangle^{\eta}\partial_x^jw$ and $N=-2\langle \nabla \rangle^{\eta}\partial_x^jA-2\langle \nabla \rangle^{\eta}\partial_x^jB$, $j=0,1,2$. Applying (\ref{Schroedingertypeequationestimate}) and \eqref{Schroedingertypeequationestimate2}
to the functions $q= \langle \nabla \rangle^{\eta}\partial_x^jw$, $j=0,1,2$ satisfying equation (\ref{wxequation}), yields Lemma \ref{Strichartzappliedtow} below. We will use this estimate in Sections \ref{partIbootstrap3} and \ref{sectionenergyestimatepartI} 
to get bounds on part $\RomanI$ of the bootstrap conclusion \eqref{C}.
\begin{lemma}\label{Strichartzappliedtow} Suppose $0<T \leq 1$ and $u:[0,T] \rightarrow X$ is a solution to the Benjamin-Ono equation with zero mean. Let $w$ and $A,B$ be defined as in \eqref{gaugesecondappearance} and \eqref{wequationsecondoccurence}, respectively. Let $j=0,1,2$ and $r>3/4$. For $T' \in (0,T]$ we have
\begin{equation}\label{estimateafterapplyingStrichartz}
    \|\langle \nabla \rangle^{\eta}\partial_x^jw\|_{S([0,T'])} \lesssim  \|\langle \nabla \rangle^{\eta+r}\partial_x^jw(0)\|_{L^2} + \|\langle \nabla \rangle^{\eta+r}\partial_x^jA\|_{L^1([0,T'],L^2)}+\|\langle \nabla \rangle^{\eta+r}\partial_x^jB\|_{L^1([0,T'],L^2)}.
\end{equation}
\end{lemma}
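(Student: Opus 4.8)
The plan is to derive the estimate \eqref{estimateafterapplyingStrichartz} by applying the two ingredients \eqref{Schroedingertypeequationestimate} and \eqref{Schroedingertypeequationestimate2}, established immediately above, to the functions $q = \langle \nabla \rangle^{\eta}\partial_x^jw$ for $j=0,1,2$, which satisfy the Schr\"odinger-type equation \eqref{wxequation} with nonlinearity $N = -2\langle \nabla \rangle^{\eta}\partial_x^jA - 2\langle \nabla \rangle^{\eta}\partial_x^jB$. Before doing so, one must confirm that the regularity afforded by $u \in C([0,T],X)$ — equivalently $F, w \in C([0,T],H^{s_1,s_2}_{\omega})$ — is enough to legitimize the formal manipulations: namely that $q$ is a genuine (e.g. mild) solution of \eqref{wxequation}, that the Duhamel formula \eqref{Duhamelq} holds, and that the $TT^*$/Christ--Kiselev argument applies. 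This is exactly the content of the forthcoming Lemmas \ref{enoughregularity} and \ref{retardedStrichartz}, which I would invoke; the key point is that the boundedness properties \eqref{boundedness1}, \eqref{boundedness2} of the derivative operators on anisotropic Sobolev spaces, together with the algebra property \eqref{algebraproperty} (used to differentiate the product $P_-(F_{xx})w$ and the exponential $e^{-iF}$), keep all the terms in a fixed space $C([0,T], H^{s_1',s_2'}_{\omega})$ that embeds into $C([0,T],L^2)$, so Hille's Theorem (Proposition \ref{Hillestheorem}) justifies commuting the closed operators $\langle \nabla\rangle^{\eta}\partial_x^j$ and $e^{-is\partial_{xx}}$ past the Bochner integral.

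Granting that, the proof is essentially a one-line substitution. Fix $j \in \{0,1,2\}$ and $r > 3/4$. Applying \eqref{Schroedingertypeequationestimate} with $q_0 = \langle \nabla \rangle^{\eta}\partial_x^jw(0)$ and $N = -2\langle \nabla \rangle^{\eta}\partial_x^jA - 2\langle \nabla \rangle^{\eta}\partial_x^jB$ gives
\begin{equation*}
\|\langle \nabla\rangle^{\eta}\partial_x^jw\|_{L^4([0,T'],L^{\infty})} \lesssim (T')^{1/8}\|\langle\nabla\rangle^{\eta+r}\partial_x^jw(0)\|_{L^2} + (T')^{1/8}\big(\|\langle\nabla\rangle^{\eta+r}\partial_x^jA\|_{L^1_tL^2} + \|\langle\nabla\rangle^{\eta+r}\partial_x^jB\|_{L^1_tL^2}\big),
\end{equation*}
where we used that $\langle\nabla\rangle^r$ commutes with $\langle\nabla\rangle^{\eta}\partial_x^j$ as Fourier multipliers and pulled the $\langle\nabla\rangle^r$ through. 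Likewise \eqref{Schroedingertypeequationestimate2} gives
\begin{equation*}
\|\langle\nabla\rangle^{\eta}\partial_x^jw\|_{L^{\infty}([0,T'],L^2)} \leq \|\langle\nabla\rangle^{\eta}\partial_x^jw(0)\|_{L^2} + 2\|\langle\nabla\rangle^{\eta}\partial_x^jA\|_{L^1_tL^2} + 2\|\langle\nabla\rangle^{\eta}\partial_x^jB\|_{L^1_tL^2}.
\end{equation*}
Adding the two displays, using $0 < T' \leq 1$ so that $(T')^{1/8} \leq 1$, and bounding $\langle\nabla\rangle^{\eta} \lesssim \langle\nabla\rangle^{\eta+r}$ in each term (since $r > 0$), the right-hand side is controlled by $\|\langle\nabla\rangle^{\eta+r}\partial_x^jw(0)\|_{L^2} + \|\langle\nabla\rangle^{\eta+r}\partial_x^jA\|_{L^1_tL^2} + \|\langle\nabla\rangle^{\eta+r}\partial_x^jB\|_{L^1_tL^2}$. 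Recalling the definition of the Strichartz norm $\|\cdot\|_{S([0,T'])} = \|\cdot\|_{L^4_tL^{\infty}} + \|\cdot\|_{L^{\infty}_tL^2}$ yields exactly \eqref{estimateafterapplyingStrichartz}.

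The only genuine obstacle is the regularity bookkeeping — verifying that $\langle\nabla\rangle^{\eta}\partial_x^jw$ actually solves \eqref{wxequation} in a sense strong enough to apply the Duhamel-based estimates, and that the nonlinear terms $A, B$ lie in $L^1([0,T],L^2)$ after the derivatives are applied — but this is deferred to Lemmas \ref{enoughregularity} and \ref{retardedStrichartz} and follows from the mapping properties in Lemma \ref{anisotropicproperties}. Everything else is a direct consequence of the abstract Strichartz inequalities \eqref{Schroedingertypeequationestimate}--\eqref{Schroedingertypeequationestimate2} derived just above, and requires no new idea.
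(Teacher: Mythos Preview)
Your proposal is correct and follows essentially the same route as the paper: both invoke the Duhamel formula of Lemma \ref{enoughregularity} and the retarded Strichartz estimate of Lemma \ref{retardedStrichartz} (packaged as \eqref{Schroedingertypeequationestimate}--\eqref{Schroedingertypeequationestimate2}) applied to $q=\langle\nabla\rangle^{\eta}\partial_x^jw$, then combine the $L^4_tL^\infty$ and $L^\infty_tL^2$ bounds using $(T')^{1/8}\le 1$. Your explicit remark that $\langle\nabla\rangle^{\eta}\lesssim\langle\nabla\rangle^{\eta+r}$ absorbs the discrepancy between the two displays is the only cosmetic addition.
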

\hfill

We now verify that the local wellposedness theory in $X$ does provide enough regularity to justify above calculations. 
The next lemma asserts that $w$ satisfies the Duhamel formulation of equation (\ref{wequationsecondoccurence}) and derivatives thereof, equation (\ref{wxequation}).

\begin{lemma}\label{enoughregularity}
    Suppose $u \in C([0,T],X_{\omega}^{s_1,s_2})$ is a solution to the Benjamin--Ono equation with vanishing mean. Then the function $w$ defined by \eqref{definitionF} and \eqref{gaugesecondappearance} is in $C([0,T],H^{s_1,s_2}_{\omega})$ and satisfies
    \begin{equation}\label{wDuhamel}
        \langle \nabla \rangle^{\eta}\partial_x^jw(t)=\langle \nabla \rangle^{\eta}\partial_x^je^{it\partial_{xx}}w(0) + \int_0^t e^{i(t-s)\partial_{xx}}\langle \nabla \rangle^{\eta}\partial_x^j(A(s)+B(s))ds
    \end{equation}
pointwise in space and time for $j=0,1,2$. 
\end{lemma}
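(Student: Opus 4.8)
The plan is to upgrade the regularity information on $u$, run the formal computation of \eqref{wequationsecondoccurence} in a sufficiently low-regularity space where the chain rule is legitimate, and then convert the resulting Schr\"odinger-type identity into the Duhamel form \eqref{wDuhamel}.

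\emph{Regularity of $F$ and $w$.} From $\|u\|_{X_\omega^{s_1,s_2}}\sim\|u\|_{H^{s_1,s_2}_\omega}+\|\partial_x^{-1}u\|_{H^{s_1,s_2}_\omega}$ and formula \eqref{definitionF}, $F\in C([0,T],H^{s_1,s_2}_\omega)$; since (after rescaling the norm) $H^{s_1,s_2}_\omega$ is a Banach algebra by \eqref{algebraproperty}, the exponential series converges, so $e^{-iF}\in C([0,T],H^{s_1,s_2}_\omega)$, and since $P_{+hi}$ is a Fourier multiplier bounded on $L^2$ and commuting with $\langle\partial_x\rangle^{s_1},\langle\partial_y\rangle^{s_2}$, we get $w\in C([0,T],H^{s_1,s_2}_\omega)$. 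Because $u$ is a classical solution, the integral equation \eqref{solvesintegralequation} holds pointwise, its integrand lies in $C([0,T],H^{s_1-2,\tilde s_2}_\omega)$ by \eqref{algebraproperty}, \eqref{boundedness1}, \eqref{boundedness2} in Lemma \ref{anisotropicproperties}, so $u\in C^1([0,T],H^{s_1-2,\tilde s_2}_\omega)$ with $u_t=-Hu_{xx}+\partial_x(u^2)$. Applying $\partial_x^{-1}$ on the nonzero Fourier modes and using conservation of $\int_{\mathbb{T}^2}u^2$ to treat the zero mode (exactly as in the formal derivation of the $F$-equation), one obtains, for $s_1$ large, that $F\in C^1([0,T],H^{s_1-1,\hat s_2}_\omega)$ with $F_t=-HF_{xx}+u^2$ and $F_{xx}=u_x$, $F_x=u$.

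\emph{The equation for $w$.} Since $H^{s_1-1,\hat s_2}_\omega$ is a Banach algebra for $s_1$ large and $G\mapsto e^{-iG}$ is analytic on it, the composition $t\mapsto e^{-iF(t)}$ is $C^1$ into $H^{s_1-1,\hat s_2}_\omega$ with $\partial_t e^{-iF}=-iF_t\,e^{-iF}$; likewise $\partial_x e^{-iF}=-iu\,e^{-iF}$ and $\partial_{xx}e^{-iF}=(-iu_x-u^2)e^{-iF}$, which we identify first for trigonometric polynomials $F$ and then by density. Substituting $F_t=-HF_{xx}+u^2$ and using $iH-1=-2P_--P_0$ together with $P_0u_x=0$ (mean-zero solution), the formal computation leading to \eqref{wequationsecondoccurence} becomes rigorous, and $P_{+hi}\bigl(P_-(F_{xx})P_{-hi}(e^{-iF})\bigr)=0$ for frequency reasons, so, after applying $P_{+hi}$ (bounded on all the anisotropic spaces involved),
\begin{equation*}(\partial_t-i\partial_{xx})w=-2A-2B\qquad\text{in }C([0,T],H^{s_1-2,\check s_2}_\omega),\end{equation*}
with $A,B$ as in \eqref{wequationsecondoccurence}, which also lie in $C([0,T],H^{s_1-1,\cdot}_\omega)\subseteq C([0,T],L^2)$.

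\emph{Duhamel formula and its derivatives.} The propagator $e^{it\partial_{xx}}$ is a strongly continuous unitary group on $L^2(\mathbb{T}^2)$ with generator $i\partial_{xx}$, and $w(t)$ lies in its domain (as $w_{xx}\in L^2$); differentiating $v(t):=e^{-it\partial_{xx}}w(t)$ gives $v'(t)=e^{-it\partial_{xx}}(-2A(t)-2B(t))\in C([0,T],L^2)$, and integrating and applying $e^{it\partial_{xx}}$ yields \eqref{wDuhamel} for $j=0$. For $j=1,2$ the Fourier multiplier $T_j:=\langle\nabla\rangle^\eta\partial_x^j$ commutes with $e^{it\partial_{xx}}$ and is a closed operator on $L^2$; for $s_1$ large, Lemma \ref{anisotropicproperties} gives $T_jw(0),T_jA,T_jB\in C([0,T],L^2)$, so Hille's theorem (Proposition \ref{Hillestheorem}) lets us pull $T_j$ through the Duhamel integral, which establishes \eqref{wDuhamel} for $j=0,1,2$.

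The main obstacle is the step carrying out the manipulations of \cite{MR2052470} \emph{without} a high-regularity global theory, i.e.\ with $u$ only a solution in the weak topology of $X$. It is overcome by the observation that $X$-regularity already forces $F\in C^1([0,T],H^{s_1-1,\hat s_2}_\omega)\cap C([0,T],H^{s_1,s_2}_\omega)$, so that the chain rule for $e^{-iF}$ and all the nonlinear identities can be closed within the anisotropic Sobolev scale using only the algebra property \eqref{algebraproperty} and the embedding \eqref{anisotropicembedding}; no derivative beyond those available is needed, which is precisely why the hypothesis ``$s_1$ sufficiently large'' appears.
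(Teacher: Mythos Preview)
Your proof is correct and follows essentially the same route as the paper: establish $F,w\in C([0,T],H^{s_1,s_2}_\omega)$ via the $X$-norm and the algebra property, derive the $F$-equation from the $u$-integral equation, compute $(\partial_t-i\partial_{xx})e^{-iF}=-2P_-(F_{xx})e^{-iF}$ by chain rule in an anisotropic Sobolev algebra, convert to Duhamel form via the integrating factor $e^{-it\partial_{xx}}$, and finally pull $\langle\nabla\rangle^\eta\partial_x^j$ through the Bochner integral by Hille's theorem. The only cosmetic differences are that the paper phrases the integrating-factor step as an explicit Fourier-side verification of the $L^2$-Fr\'echet derivative (rather than invoking the domain of the unitary group's generator), and it makes explicit at the end that both sides of \eqref{wDuhamel} are continuous in space and time so that the $L^2$-equality upgrades to pointwise equality --- a point you should add, since the lemma asserts pointwise equality.
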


\begin{proof}
We begin by noting the following statement describing the regularity of the nonlinearity $A(s)+B(s)$: for $j=0,1,2$, we have that
\begin{equation}\label{regularityproperty}
    \langle \nabla \rangle^{\eta}\partial_x^j(A+B)(s) \in C_s([0,T], H^{s_1',s_2'}_{\omega}) \subseteq C([0,T], L^{\infty}(\mathbb{T}^2)), \end{equation}
where
\begin{equation}\label{sprimes} s_1' =(s_1-3) - \eta \frac{s_1-1}{s_2}, \quad s_2'=\frac{s_1-3}{s_1-1}s_2-\eta.
\end{equation}
Indeed, $u, F \in C([0,T],H^{s_1,s_2}_{\omega})$ by definition of $X$-norm, and the algebra property of $H_{\omega}^{s_1,s_2}$ enables exponentiation, so $w=P_{+hi}(e^{-iF})$ and $P_{lo}(e^{-iF})$ are $C([0,T],H^{s_1,s_2}_{\omega}) \subseteq C([0,T],H^{s_1-1,\frac{s_1-1}{s_1}s_2}_{\omega})$. 
Using the algebra property and boundedness property \eqref{boundedness1} in Lemma \ref{anisotropicproperties} gives $$A,B\in C([0,T],H^{s_1-1,\frac{s_1-1}{s_1}s_2}).$$
Now \eqref{regularityproperty} follows by invoking boundedness properties \eqref{boundedness1} and \eqref{boundedness2} in Lemma \ref{anisotropicproperties}. \\

We turn to verifying Duhamel's formula \eqref{wDuhamel}. By assumption $u$ is a continuous mapping $[0,T] \rightarrow X$ and satisfies 
\begin{equation}
    u(t) = u_0 + \int_0^t -H\partial_{xx}u(s) + \partial_x[u(s)^2] ds
\end{equation}
pointwise.
In particular, it holds in $L^2(\mathbb{T}^2)$.
Observe that the mean-zero antiderivative operator $\partial_x^{-1}$ is a closed linear operator $D(\partial_x^{-1}) \subseteq L^2 \rightarrow L^2$.
Applying Hille's Theorem (Proposition \ref{Hillestheorem}) yields that $F$ satisfies
\begin{multline*}F(t) - t \|u_0\|_{L^2}^2 =\partial_x^{-1}u(t)=\partial_x^{-1}u_0 + \int_0^t -H\partial_{xx}F(s) +u(s)^2 - \|u_0\|_{L^2}^2  ds \\ = F_0 + \int_0^t -H\partial_{xx}F(s) +[F_x(s)]^2 ds - t\|u_0\|_{L^2}^2\end{multline*}
with equality in $L^2$. Equality also holds pointwise in space and time as both sides of the equations are $C(\mathbb{T}^2)$-functions. 
The integrand $H\partial_{xx}F(s) + u(s)^2$ is $C([0,T],C(\mathbb{T}^2))$ by anisotropic Sobolev embedding so that $F$ is once continuously differentiable and satisfies $$F_t+HF_{xx} = F_x^2.$$
Hence,
\begin{align*}\label{Ftequation}
    \frac{d}{dt}e^{-iF(s)} -i\partial_{xx}e^{-iF(s)} = [-iF_t +i F_x^2 - F_{xx}]e^{-iF(s)}  = [-I+iH]F_{xx}e^{-iF} = -2P_-F_{xx}e^{-iF} 
\end{align*}
classically by chain rule. This also holds in the $L^2(\mathbb{T}^2)$-Frechet sense, as can be verified by integrating in time and using that the integrand $i\partial_{xx}e^{-iF(s)}-2P_-F_{xx}e^{-iF}$ is $C([0,T],C(\mathbb{T}^2))$.
We want to apply the backwards Schr\"odinger propagator $e^{-is\partial_{xx}}$ 
as `integrating factor' --- in order to do so, we note that 
\begin{equation}\label{integratingfactor}\frac{d}{dt}(e^{-it\partial_{xx}}e^{-iF}) = e^{-it\partial_{xx}}\big(\frac{d}{dt}e^{-iF} -i \partial_{xx}e^{-iF}\big)\end{equation}
in the $L^2(\mathbb{T}^2)$-Frechet sense. (One way to see this is by finding the $l_n^2(\mathbb{Z}^2)$-Frechet derivative of $e^{-it (\omega \cdot n)^2}\mathcal{F}({e^{iF(t)}})(n)$ using mean value theorem and dominated convergence theorem.) Thus,
 \begin{equation}\frac{d}{dt}(e^{-it\partial_{xx}}e^{-iF}) = e^{-is\partial_{xx}}[-2P_-F_{xx}e^{-iF}]\end{equation} in the $L^2(\mathbb{T}^2)$-Frechet sense. Integrating in time $s\in[0,t]$ and applying the forward propagator $e^{it\partial_{xx}}$ then yields
\begin{equation}\label{Duhameltypee-iF}
    e^{-iF} = e^{it\partial_{xx}}e^{-iF} + \int_0^t e^{(t-s)i\partial_{xx}}[-2P_{+hi}(P_-(F_{xx}(s))e^{-iF(s)})]ds
\end{equation}
with equality in $L^2(\mathbb{T}^2)$. We conclude \eqref{wDuhamel} with equality in $L^2(\mathbb{T}^2)$ by applying the operators $P_{+hi}\langle \nabla \rangle^{\eta} \partial_x^j$, $j=0,1,2$, which commute with the Bochner integral by Hille's Theorem (Proposition \ref{Hillestheorem}) applicable due to \eqref{regularityproperty}. 
Due to the frequency projection in the paraproduct $P_{+hi}(P_-(F_{xx}(s))e^{-iF(s)})$ the negative frequencies of $e^{-iF}$ can be discarded giving the expression $A(s)+B(s)$. Finally, both the left hand side and right hand side in equation \eqref{wDuhamel} are in fact continuous in space and time by \eqref{regularityproperty} and Lemma \ref{anisotropicproperties}, and thus equality holds pointwise.
\end{proof}

The Duhamel formulation \eqref{wDuhamel} of the equation for $w$ is exactly what we need to utilize Strichartz estimates. 

\begin{lemma}[retarded Strichartz estimates]\label{retardedStrichartz}
    For $j=0,1,2$, $\kappa>1/4$, $h>1/2$ and $0<T' \leq T$ we have
\begin{equation}\label{retardedStrichartzestimate}\Big\|\int_0^t e^{i(t-s)\partial_{xx}}\langle \nabla \rangle^{\eta} \partial_x^j(A(s)+B(s))ds \Big\|_{S_t([0,T'])} \lesssim (T')^{1/8} \|\langle \nabla \rangle^{\kappa+h+\eta}\partial_x^j(A+B)\|_{L^1_t([0,T'], L^2)}.\end{equation}
\end{lemma}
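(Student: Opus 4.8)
The plan is to establish the retarded Strichartz estimate \eqref{retardedStrichartzestimate} by first proving the corresponding inhomogeneous (non-retarded) estimate and then invoking the Christ--Kiselev lemma to upgrade it. Concretely, write $N := \langle \nabla \rangle^{\eta}\partial_x^j(A+B)$, which lies in $L^1_t([0,T'],L^2)$ (indeed in $C([0,T'],L^{\infty})$) by the regularity statement \eqref{regularityproperty} established in the proof of Lemma \ref{enoughregularity}, so all the expressions below are finite and the manipulations are legitimate. The first step is to control the \emph{non-retarded} operator $N \mapsto \int_0^{T'} e^{i(t-s)\partial_{xx}} N(s)\,ds$. For the $L^{\infty}_t L^2_x$ component of the Strichartz norm this is immediate: by Minkowski's inequality and the $L^2$-isometry of $e^{i(t-s)\partial_{xx}}$ one gets $\bigl\| \int_0^{T'} e^{i(t-s)\partial_{xx}} N(s)\,ds\bigr\|_{L^{\infty}_t L^2_x} \leq \|N\|_{L^1_t L^2_x}$, which is even better than claimed (no derivative loss, no $(T')^{1/8}$). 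For the $L^4_t L^{\infty}_x$ component, factor $e^{i(t-s)\partial_{xx}} = e^{it\partial_{xx}} e^{-is\partial_{xx}}$, pull $e^{it\partial_{xx}}$ outside the $s$-integral, apply the linear Strichartz estimate of Lemma \ref{Strichartzlemma} in the $t$-variable (which costs $(T')^{1/8}\langle \nabla \rangle^{\kappa}$, $\kappa > 1/4$), combine with the Sobolev embedding $H^{h,4}(\mathbb{T}^2)\hookrightarrow L^{\infty}(\mathbb{T}^2)$ for $h>1/2$ (costing $\langle \nabla \rangle^{h}$), and finally bound $\bigl\|\int_0^{T'} e^{-is\partial_{xx}}\langle \nabla \rangle^{\kappa+h} N(s)\,ds\bigr\|_{L^2_x} \leq \|\langle \nabla \rangle^{\kappa+h} N\|_{L^1_t L^2_x}$ by the dual Strichartz bound \eqref{dualStrichartz} (again Minkowski plus $L^2$-isometry). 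This yields the inhomogeneous estimate with the full derivative budget $\kappa + h + \eta$ landing on $A+B$.

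The second step is the passage from the non-retarded integral $\int_0^{T'}$ to the retarded one $\int_0^{t}$, i.e. from the operator with kernel $e^{i(t-s)\partial_{xx}}$ on all of $[0,T']^2$ to the one supported on $\{s<t\}$. This is exactly the situation handled by the Christ--Kiselev lemma \cite{MR1809116}: the non-retarded operator is bounded from $L^1_t([0,T'],L^2)$ into $L^4_t([0,T'],L^{\infty})$ (and into $L^{\infty}_t L^2_x$), and since $1 < 4$ (resp. $1 < \infty$), the lemma produces the same bound for the retarded operator, with a constant depending only on the exponents. Applying this to each of the two target norms separately and adding gives the bound for the full Strichartz norm $\|\cdot\|_{S([0,T'])} = \|\cdot\|_{L^{\infty}_t L^2_x} + \|\cdot\|_{L^4_t L^{\infty}_x}$. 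Since $\kappa$ can be taken arbitrarily close to $1/4$ and $h$ arbitrarily close to $1/2$, the combined loss $\kappa + h$ can be taken to be any $r > 3/4$, matching the form \eqref{Schroedingertypeequationestimate} quoted in the text; here we simply keep $\kappa + h + \eta$ explicit as in the statement.

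The only genuine subtlety — and the step I would be most careful about — is verifying the hypotheses of the Christ--Kiselev lemma in the torus setting, namely that the linear and dual Strichartz estimates of Lemma \ref{Strichartzlemma} hold uniformly enough to run the lemma's dyadic decomposition in the time variable; but this is routine because the Christ--Kiselev argument only uses the time exponents $1$ and $4$ together with the boundedness of the full (non-retarded) operator, and is insensitive to the spatial structure. A minor bookkeeping point is that the linear Strichartz estimate of Lemma \ref{Strichartzlemma} is stated for $T' \in (0,1]$, which is consistent with our hypothesis $0 < T' \leq T \leq 1$, and that the time-localization is what supplies the favorable power $(T')^{1/8}$; no $\langle \nabla \rangle$ is lost on the $L^{\infty}_t L^2_x$ piece, so the stated estimate with the single weight $\langle \nabla \rangle^{\kappa+h+\eta}$ on the right-hand side holds for the full Strichartz norm. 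Everything else — commuting $\langle \nabla \rangle^{\eta}\partial_x^j$ past the Duhamel integral, pointwise versus $L^2$ identities — has already been dealt with in Lemma \ref{enoughregularity} and need not be revisited.
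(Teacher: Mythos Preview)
Your proposal is correct and follows essentially the same approach as the paper: establish the inhomogeneous estimate by factoring $e^{i(t-s)\partial_{xx}} = e^{it\partial_{xx}}e^{-is\partial_{xx}}$, applying the linear Strichartz estimate of Lemma \ref{Strichartzlemma} together with Sobolev embedding $H^{h,4}\hookrightarrow L^{\infty}$ and the dual bound \eqref{dualStrichartz}, then upgrade to the retarded integral via Christ--Kiselev. The paper differs only in that it spells out the Christ--Kiselev step in more detail (adapting the argument of \cite{MR1809116} to Banach-space valued functions via finite dyadic filtrations of $[0,T']$ and a limiting argument using the regularity \eqref{regularityproperty}), and explicitly invokes Hille's Theorem to justify commuting the propagator and $\langle\nabla\rangle^{\kappa+h}$ with the Bochner integral; you correctly flag exactly these points as the places requiring care.
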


\begin{proof}
The arguments used in the proof of the Christ-Kiselev lemma \cite{MR1809116} give the following reduction:
    Provided \begin{equation}\label{inhomogeneousStrichartzestimate}\Big\|\int_0^{T'} e^{i(t-s)\partial_{xx}} \tilde{G}(s)ds\Big\|_{L^4_t([0,T'],L^{\infty})} \lesssim (T')^{1/8} \|\langle \nabla \rangle^{\kappa+h}\tilde{G}(t)\|_{L^1_t([0,T'],L^2)}\end{equation} holds for any $\tilde{G}(s)=\langle \nabla \rangle^{\eta}\partial_x^j(A+B)(s)1_{B}(s)$ with $B\subseteq [0,T']$ measurable, then \eqref{retardedStrichartzestimate} also holds. \\ 
    
While the statement of the Christ-Kiselev lemma in \cite{MR1809116} is for real-valued $L^p$ functions and discrete filtrations, it is mentioned that it also holds for Banach space valued functions and continuous filtration. 
We provide details as needed for the above reduction. 
Denoting
\begin{equation*}RG(t):= \int_0^{T'} e^{i(t-s)\partial_{xx}} \langle \nabla \rangle^{-\kappa-h}G(s)ds\end{equation*}
we have by assumption that $$\|RG\|_{L^4([0,T'],L^{\infty})} \lesssim (T')^{1/8}\|G\|_{L^1([0,T'],L^2)}$$
with implicit constant uniform for all $G(s)=\langle \nabla \rangle^{\kappa+h+\eta}\partial_x^j(A+B)(s)1_{B}(s)$ with $B\subseteq [0,T']$ measurable. 
For a dyadic number $2^q$ consider the finite filtration $\{[0,m2^{-q}T']:1 \leq m \leq 2^q\}$ of the interval $[0,T']$. Define the associated maximal function $$R_q^*G(t):= \sup_{0 \leq m \leq 2^q} \|R(G \cdot 1_{[0,m2^{-q}]})(t)\|_{L^{\infty}(\mathbb{T}^2)}.$$
The proof of the Christ-Kiselev lemma in \cite{MR1809116} adapted to Banach space valued functions and finite filtrations shows that $$\|R_q^*G\|_{L^4_t([0,T'])} \lesssim (T')^{1/8} \|G\|_{L^1_t([0,T'],L^2)}$$
with implicit constant independent of $q$. 
Denoting $T_q(t):=m2^{-q}T'$ for the unique $m$
such that $t \in [(m-1)2^{-q}T',m2^{-q}T')$, we therefore have that $$\|\int_0^{T_q(t)} e^{i(t-s)\partial_{xx}}\langle \nabla \rangle^{-\kappa -h}G(s)ds\|_{L^4_t([0,T'],L^{\infty})} \leq \|R_q^*G(t)\|_{L^4_t([0,T'])} \lesssim (T')^{1/8} \|G\|_{L^1_t([0,T'],L^2)}.$$
Now \begin{align*}
    \Big\| \int_0^{T_q(t)} e^{i(t-s)\partial_{xx}}\langle \nabla \rangle^{-\kappa-h}G(s)ds - \int_0^{t} e^{i(t-s)\partial_{xx}}\langle \nabla \rangle^{-\kappa-h}G(s)ds \Big\|_{L^4_t([0,T'],L^{\infty}(\mathbb{T}^2))}  \\\leq 
    \||T_q(t)-t| \cdot {\sup_{s} \|\langle \nabla \rangle^{-\kappa-h}G(s)\|_{H^{s'_1,s'_2}_{\omega}}}\|_{L^4_t([0,T'])} \lesssim_u 2^{-q} \rightarrow 0
\end{align*}
as $q \rightarrow \infty$, where $s'_1,s'_2$ are as in \eqref{sprimes}. \\

We turn to verifying the inhomogeneous Strichartz estimate \eqref{inhomogeneousStrichartzestimate}. Let $\kappa >1/4$ and $h>1/2$, $B \subseteq [0,T']$ measurable and denote $\tilde{G}(s)=\langle \nabla \rangle^{\eta}\partial_x^j(A+B)(s)1_{B}(s)$.
We calculate
\begin{align*}
     \Big\|\int_0^T e^{i(t-s)\partial_{xx}}\tilde{G}(s)ds\Big\|_{L^4_t([0,T'], L^{\infty}(\mathbb{T}^2))}  & = \Big\| e^{it\partial_{xx}}\int_0^{T'} e^{-is \partial_{xx}}\tilde{G}(s)ds\Big\|_{L_t^4([0,T'],L^{\infty}(\mathbb{T}^2))} \\ 
    & \lesssim (T')^{1/8} \Big\| \langle \nabla \rangle^{h+\kappa} \int_0^{T'} e^{-is \partial_{xx}}\tilde{G}(s)ds\Big\|_{L^2(\mathbb{T}^2)} 
     \\ & \lesssim (T')^{1/8} \|\langle \nabla \rangle^{\kappa+h}\tilde{G}\|_{L_t^1([0,T'],L^2(\mathbb{T}^2))}.
\end{align*}
Commuting the propagator with the integral in the first line is justified by Hille's Theorem (Proposition \ref{Hillestheorem}); the integrand $e^{-is\partial_{xx}}\tilde{G}(s)=e^{-is\partial_{xx}}\langle \nabla \rangle^{\eta}\partial_x^j(A+B)(s)1_B(s)$ is a $H^{s_1',s_2'}(\mathbb{T}^2)$-valued Bochner-integrable function by \eqref{regularityproperty}, and for each fixed time $t$ the propagator is a bounded linear operator on $H^{s_1',s_2'}_{\omega}(\mathbb{T}^2)$ which embeds into $L^{\infty}(\mathbb{T}^2)$. The second line uses Sobolev embedding $H^{h,4}(\mathbb{T}^2) \hookrightarrow L^{\infty}(\mathbb{T}^2)$ 
and the Strichartz estimate in Lemma \ref{Strichartzlemma}. 
The third line can be obtained by commuting the operator $\langle \nabla \rangle^{\kappa +h}$ inside the integral, which is again justified by Hille's Theorem, and then using the dual inhomogeneous Strichartz estimate \eqref{dualStrichartz} which in this case reduces to the triangle inequality for the Bochner integral.
\end{proof}

Combining the Duhamel formulation (\ref{wDuhamel}) of equation (\ref{wxequation}), the linear Strichartz estimate (\ref{linearStrichartzestimate}) together with Sobolev embedding $H^{h,4}(\mathbb{T}^2) \hookrightarrow L^{\infty}(\mathbb{T}^2), \ h>1/2$, and the retarded Strichartz estimate (\ref{retardedStrichartzestimate}) proves estimate \eqref{estimateafterapplyingStrichartz} for the $L^4([0,T'],L^{\infty})$-part of the Strichartz norm. Estimate \eqref{estimateafterapplyingStrichartz} for the $L^{\infty}([0,T'],L^2)$-part of the Strichartz norm is clear from \eqref{wDuhamel} and \eqref{regularityproperty}. This completes the proof of  Lemma \ref{Strichartzappliedtow}.

\subsection{Part I of the bootstrap: estimates on the gauge transformed function}\label{partIbootstrap3}

We prove part $\RomanI$ of the bootstrap conclusion \eqref{C} by controlling the three terms on the right hand side of estimate \eqref{estimateafterapplyingStrichartz} in Lemma \ref{Strichartzappliedtow}. 

Most terms appearing when expanding the right-hand side of \eqref{estimateafterapplyingStrichartz} can be controlled using the bootstrap hypothesis, except for one term, which will be controlled in the next section by a Gronwall argument, see Lemma \ref{wenergyestimate}. We use two paraproduct estimates \eqref{paraproductone} and \eqref{paraproducttwo}, the proof of which is the content of Section \ref{paraproductestimatessection}.
Throughout this section, we use the notation $\|\cdot\|_{L^pL^q}$ as an abbreviation for mixed Lebesgue spacetime norms $\|\cdot\|_{L^p([0,T'],L^q(\mathbb{T}^2))}$, always with respect to the time interval $[0,T']$. \\

We begin by bounding the terms arising from the error term $B$.

\begin{lemma}\label{BestimateforI} Assume the bootstrap hypothesis holds. For $j=0,1,2$ and $7/8<r<\sigma-\eta$, we have
    $$\|\langle \nabla \rangle^{r+\eta} \partial_x^j B\|_{L^1L^2} \lesssim \varepsilon^2.$$
\end{lemma}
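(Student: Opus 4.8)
The statement concerns the error term
$$B = P_{+hi}(P_-(F_{xx})\,P_{lo}(e^{-iF})),$$
and the claim is that $\|\langle\nabla\rangle^{r+\eta}\partial_x^j B\|_{L^1 L^2}\lesssim\varepsilon^2$ for $j=0,1,2$ and $7/8<r<\sigma-\eta$, under the bootstrap hypothesis \eqref{H}. Let me sketch how I would prove this.

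\medskip

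\textbf{Plan of attack.} The key structural feature is that $B$ is a \emph{low-frequency} object with respect to the tangential direction $\omega\cdot n$. The factor $P_-(F_{xx})$ contains a negative tangential frequency, while $P_{lo}(e^{-iF})$ lives on tangential frequencies $|\omega\cdot n|\lesssim 1$; for the outer projection $P_{+hi}$ to produce a nonzero output, the positive tangential frequency must come from cancellation, which forces $|\omega\cdot n|$ of the surviving piece of $P_-(F_{xx})$ to be comparable to the tangential frequency of $P_{lo}(e^{-iF})$ — that is, $O(1)$. Hence in the product defining $B$, the first factor is effectively restricted to tangential frequencies $|\omega\cdot n|\sim 1$, so $\partial_x$ and $\partial_x^{-1}$ act boundedly there, and $\partial_x^j$ for $j=0,1,2$ costs nothing in the tangential direction. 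The only genuine derivatives to track are the normal ones hidden inside $\langle\nabla\rangle^{r+\eta}$.

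\medskip

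\textbf{Key steps.} First I would make the frequency-support observation precise: writing the product out on the Fourier side, $\widehat{B}(n)$ is supported on $\{|\omega\cdot n|\gtrsim 1\}\cap(\text{tangential-frequency of first factor}+\text{tangential-frequency of second factor})$, and since the second factor has $|\omega\cdot n_2|\lesssim 1$ and $|\omega\cdot n|\gtrsim 1$ on the support of $P_{+hi}$, the first factor must have tangential frequency $\gtrsim 1$ too, hence (by the sign constraint from $P_-$) $|\omega\cdot n_1|\sim |\omega\cdot n|\sim 1$ essentially; more simply, $B=P_{+hi}B$ and $P_{+hi}$ composed with a paraproduct whose inputs are $P_-$ and $P_{lo}$ gives tangential frequencies in a bounded annulus, so we may freely insert a tangential-frequency cutoff $P_{q}$ (a $P_{q,\infty}$-type projector, cf. \eqref{generalprojection}) with $q\sim 1$ at no cost. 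Second, on this tangential-frequency-localized region, $\langle\nabla\rangle^{r+\eta}$ is comparable to $\langle\partial_y\rangle^{r+\eta}$, and $\partial_x^j$ is bounded for $j=0,1,2$; so it suffices to bound $\|\langle\partial_y\rangle^{r+\eta}(P_-(F_{xx})P_{lo}(e^{-iF}))\|_{L^1 L^2}$ with the understanding that everything is tangentially localized. Third, I would apply the quasi-periodic Kenig-Ponce-Vega-type product rule / the algebra-type estimate for $\langle\partial_y\rangle^{s_2}$ (Lemma \ref{anisotropicproperties}, \eqref{algebraproperty}, together with Lemma \ref{KatoPonceVegaQuasih} \eqref{KenigPonceVegaQuasi} for the fractional Leibniz distribution in the normal direction) to distribute $\langle\partial_y\rangle^{r+\eta}$ across the product, bounding
$$\|\langle\partial_y\rangle^{r+\eta}(P_-(F_{xx})P_{lo}(e^{-iF}))\|_{L^2}\lesssim \|\langle\partial_y\rangle^{r+\eta}P_-(F_{xx})\|_{L^2}\|P_{lo}(e^{-iF})\|_{L^\infty}+\|P_-(F_{xx})\|_{L^\infty}\|\langle\partial_y\rangle^{r+\eta}P_{lo}(e^{-iF})\|_{L^2}.$$
Fourth, I would estimate each piece: $\|P_{lo}(e^{-iF})\|_{L^\infty}\lesssim 1$ and $\|\langle\partial_y\rangle^{r+\eta}P_{lo}(e^{-iF})\|_{L^2}\lesssim \|e^{-iF}\|_{H^{s_1,s_2}_\omega}\lesssim 1$ since $F\in C([0,T],H^{s_1,s_2}_\omega)$ (from $u\in C([0,T],X)$) and $H^{s_1,s_2}_\omega$ is an algebra allowing exponentiation, using $r+\eta<\sigma\le s_2$-type bounds; meanwhile $P_-(F_{xx})$, after relating $P_-(F_{xx})=\overline{P_+(F_{xx})}$ and using that $F_{xx}=u_x$, is controlled by part $\RomanII$ of the bootstrap hypothesis: $\|\langle\partial_y\rangle^{r+\eta}P_-(F_{xx})\|_{L^2}\lesssim\|\langle\nabla\rangle^{r+\eta}F_{xx}\|_{L^2}$ on the tangentially-localized region, and $\|P_-(F_{xx})\|_{L^\infty}\lesssim\|\langle\nabla\rangle^\eta F_{xx}\|_{L^p}$ via the Sobolev embedding $H^{\eta,p}\hookrightarrow L^\infty$. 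Finally I would integrate in time: the factor carrying the $F_{xx}$-data goes into $L^{4/3}_t$ or $L^4_t$ as in \eqref{II}, the complementary factor (being $O(1)$ uniformly in time, hence in every $L^q_t$) absorbs the remaining exponent since $1/1=1/(4/3)+1/4$, and Hölder in time yields $\|\langle\nabla\rangle^{r+\eta}\partial_x^j B\|_{L^1 L^2}\lesssim \|\langle\nabla\rangle^{r+\eta}F_{xx}\|_{L^{4/3}L^2}\cdot 1 + \|\langle\nabla\rangle^\eta F_{xx}\|_{L^4 L^p}\cdot 1\lesssim\varepsilon+\varepsilon$. To upgrade $\varepsilon$ to $\varepsilon^2$, I would note that $\|e^{-iF}-1\|$ (or the genuinely relevant norm of the $O(1)$ factor) is in fact $O(\varepsilon)$: since $F$ has small $Y$-norm, $\|F\|$ in the relevant space is $\lesssim\varepsilon^2$-to-$\varepsilon$, so writing $P_{lo}(e^{-iF})=P_{lo}(1)+P_{lo}(e^{-iF}-1)$ and using that $P_{lo}(1)$ has only the zero tangential frequency mode (which is killed after the tangential localization forces nonzero frequency, or handled separately), the effective size of the second factor is $O(\varepsilon)$, giving the extra power.

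\medskip

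\textbf{Main obstacle.} The delicate point is the bookkeeping that turns $\varepsilon$ into $\varepsilon^2$: one must verify that the "harmless" factor is genuinely small, not merely bounded. This hinges on the fact that the constant term $e^{0}=1$ in $P_{lo}(e^{-iF})$ does not contribute to $B$ — because of the tangential-frequency geometry $P_{+hi}(P_-(F_{xx})\cdot\text{const})$ vanishes (a constant is purely zero-frequency, and $P_-(F_{xx})$ has strictly negative tangential frequency, so their product stays in negative tangential frequencies and is annihilated by $P_{+hi}$). Hence only $P_{lo}(e^{-iF}-1)$ matters, and $\|e^{-iF}-1\|_{H^{s_1,s_2}_\omega}\lesssim\|F\|_{H^{s_1,s_2}_\omega}\lesssim\varepsilon$ (using $\|F\|_{L^\infty}\lesssim\|F\|_{H^{s_1,s_2}_\omega}$ small and Taylor-expanding $e^{-iF}-1=-iF\int_0^1 e^{-i\tau F}d\tau$, then the algebra property), which supplies the missing factor of $\varepsilon$. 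A secondary technical nuisance is justifying that $\langle\nabla\rangle^{r+\eta}$ may be replaced by $\langle\partial_y\rangle^{r+\eta}$ on the support of $B$; this is routine given the tangential localization, via the boundedness properties of Littlewood-Paley projections (De Leeuw transference, Lemma \ref{deLeeuw}) and the elementary symbol comparison $\langle|\xi|\rangle^{r+\eta}\sim\langle\xi_2\rangle^{r+\eta}$ when $|\xi_1|\lesssim 1$.
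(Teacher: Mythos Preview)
Your tangential-frequency observation is the same key insight the paper uses, and your overall strategy is sound, but the route you take to reach $\varepsilon^2$ is more circuitous than necessary and contains a bookkeeping slip.

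The paper's argument is shorter: after replacing $P_-(F_{xx})$ by $P_-P_{lo}(F_{xx})$ (your localization), it puts this factor directly into $L^\infty_t L^2$ rather than the bootstrap-hypothesis norms. The point is that on tangentially low frequencies $\partial_x$ is harmless, so $\|\langle\nabla\rangle^{r+\eta}P_-P_{lo}(F_{xx})\|_{L^\infty L^2}\lesssim\|\langle\nabla\rangle^{r+\eta}F_x\|_{L^\infty L^2}=\|\langle\nabla\rangle^{r+\eta}u\|_{L^\infty L^2}$, and the latter is $O(\varepsilon^2)$ by the energy estimate \eqref{eq5} (norm-growth control plus $\|u_0\|_Y\le\varepsilon^2$). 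The complementary factor $\|P_{lo}(e^{-iF})\|_{L^\infty L^\infty}=1$ then suffices. For the term where the derivatives land on $e^{-iF}$, the paper uses the fractional chain rule (Lemma \ref{chainrule}) to get $\||\nabla|^{r+\eta}P_{lo}(e^{-iF})\|_{L^{4/3}L^2}\lesssim\||\nabla|^{r+\eta}F\|_{L^\infty L^2}\lesssim\varepsilon^2$ from \eqref{eq16}, paired with $\|P_-(F_{xx})\|_{L^4L^\infty}=O(\varepsilon)$ from the bootstrap hypothesis. No constant-subtraction is needed.

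Your constant-subtraction trick is valid (indeed $P_{+hi}(P_-(F_{xx})\cdot 1)=0$), but the step ``$\|F\|_{H^{s_1,s_2}_\omega}\lesssim\varepsilon$'' is wrong: the $X$-norm of $u$ is not assumed small, only the $Y$-norm is. What \emph{is} small is $\|F\|_{L^\infty H^{1+\sigma,\sigma}_\omega}\lesssim\varepsilon^2$ via \eqref{eq16}, and since $r+\eta<\sigma$ this lower-regularity control is all you actually need for $\|\langle\partial_y\rangle^{r+\eta}(e^{-iF}-1)\|_{L^2}$. With that correction your argument closes, but the paper's version avoids the detour entirely by exploiting the derivative gain from $P_{lo}$ on the $F_{xx}$ factor.
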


\begin{proof}
Observe that the term $B$ is low frequency with respect to tangential derivatives, so that it suffices to prove the lemma for $j=0$. Due to the frequency projections we may further replace $P_-(F_{xx})$ by $P_-P_{lo}(F_{xx})$, that is $B=P_{+hi}(P_-P_{lo}(F_{xx})P_{lo}(e^{iF}))$. Using $L^2$ boundedness of $P_{+hi}$ and fractional chain rule, we estimate that 
    \begin{align*}
        \|\langle \nabla \rangle^{r+\eta}B\|_{L^1L^2} & \lesssim  \|\langle\nabla\rangle^{r+\eta}P_-P_{lo}(F_{xx})\|_{L^{\infty}L^2} \|P_{lo}(e^{-iF})\|_{L^{\infty}L^{\infty}} \\ &+ \|P_-P_{lo}(F_{xx})\|_{L^{\infty}L^2} \|P_{lo}(e^{-iF})\|_{L^{1}L^{\infty}} \\ & + \|P_-(F_{xx})\|_{L^4L^{\infty}}\| |\nabla|^{r+\eta}P_{lo}(e^{-iF})\|_{L^{4/3}L^2}  \lesssim \varepsilon^2.
    \end{align*}
The factors $\|P_{lo}(e^{-iF})\|_{L^{\infty}L^{\infty}}$ and $\|P_{lo}(e^{-iF})\|_{L^{\infty}L^{\infty}}$ are just $1$. The terms $\|P_-P_{lo}(F_{xx})\|_{L^{\infty}L^2}$ and $\|\langle \nabla \rangle^{r+\eta}P_-P_{lo}(F_{xx})\|_{L^{\infty}L^2}$ are $O(\varepsilon^2)$ by estimate \eqref{eq5} and $L^2$-boundedness of $\partial_x$ on low frequency functions together, thus bounding the first and second summand by $O(\varepsilon^2)$. To estimate the third summand, use fractional chain rule (Lemma \ref{chainrule}) and estimate \eqref{eq16} giving $$\||\nabla|^{r+\eta}P_{lo}(e^{-iF})\|_{L^{4/3}L^2} \lesssim \||\nabla|^{r+\eta}F\|_{L^{\infty}L^2}\|e^{-iF}\|_{L^{4/3}L^{\infty}} = \||\nabla|^{r+\eta}F\|_{L^{\infty}L^2} \lesssim \varepsilon^2,$$ while by bootstrap hypothesis $\|P_-(F_{xx})\|_{L^4L^{\infty}} = O(\varepsilon)$. Estimates \eqref{eq16} and \eqref{eq5} are obtained in Section \ref{partIIbootstrap} only assuming the bootstrap hypothesis, by norm growth control (Lemmas \ref{Hshnormgrowth} and \ref{Fyqnormgrowth}) and smallness of initial data $\|u_0\|_Y \lesssim \varepsilon^2$.
\end{proof}

Next, we estimate the terms arising from $A$.

\begin{lemma}\label{estimatesforA} Assume the bootstrap hypothesis holds. For $j=0,1,2$ and $7/8<r<\sigma-\eta$, we have
    $$\|\langle \nabla \rangle^{r+\eta} \partial_x^jA\|_{L^1L^2} \lesssim \varepsilon^2,$$
provided $\|\langle \nabla \rangle^{r+\eta}\partial_x^jw\|_{L^{\infty}L^2} \lesssim \varepsilon^2$ holds.
\end{lemma}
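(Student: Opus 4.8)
The plan is to expand $A = P_{+hi}(P_-(F_{xx})w)$ via a paraproduct (Littlewood--Paley) decomposition with respect to the tangential frequency and to distribute the operator $\langle\nabla\rangle^{r+\eta}\partial_x^j$ across the two factors, using the two paraproduct estimates (Lemmas \ref{paraproductone} and \ref{paraproducttwo}). The guiding principle, already spelled out in the introduction, is: we want the derivatives $\partial_x^j$ to fall on the high-frequency factor, while $\langle\nabla\rangle^{r+\eta}$ is allowed to fall on either factor — whichever factor absorbs the $\langle\nabla\rangle^{r+\eta}$ goes into an $L^\infty_t L^2$-type norm (so it is controlled either by the bootstrap hypothesis on $\RomanII$, which gives $\|\langle\nabla\rangle^{r+\eta}P_-(F_{xx})\|_{L^\infty L^2} = \|\langle\nabla\rangle^{r+\eta}F_{xx}\|_{L^\infty L^2}\lesssim\varepsilon^2$, or by the hypothesis $\|\langle\nabla\rangle^{r+\eta}\partial_x^jw\|_{L^\infty L^2}\lesssim\varepsilon^2$ supplied in the statement), and the remaining factor goes into an $L^1_tL^\infty_x$- or $L^{4/3}_tL^\infty_x$-type norm, controlled by the bootstrap hypothesis on $\RomanI$ (for $w$ and its tangential derivatives) or on $\RomanII$ (for $P_-(F_{xx})=\overline{P_+(F_{xx})}$) via Hölder in time.

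Concretely, I would split $A$ into the usual three paraproduct pieces with respect to the $\omega\cdot n$-frequency: (i) the piece where $P_-(F_{xx})$ is at low tangential frequency and $w$ is at high tangential frequency; (ii) the piece where $w$ is at low tangential frequency and $P_-(F_{xx})$ at high tangential frequency; (iii) the diagonal/comparable-frequency piece. Since $P_-(F_{xx})$ has negative tangential frequency and $w=P_{+hi}(\cdots)$ has positive high tangential frequency, and $A$ is projected by $P_{+hi}$ to positive high tangential frequency, the frequency geometry is constrained: the output frequency $\sim$ the larger of the two input frequencies, and in piece (ii) the $F_{xx}$-factor must be high. In piece (i) the derivatives $\partial_x^j$ and (most of) $\langle\nabla\rangle^{r+\eta}$ naturally fall on $w$; we apply Lemma \ref{paraproductone} (the analogue of \cite[Lemma 3.2]{MR2052470}) to move the derivatives onto the high-frequency factor $w$, then Hölder in time and space: $\|\langle\nabla\rangle^{r+\eta}\partial_x^jw\|_{L^\infty L^2}\lesssim\varepsilon^2$ by hypothesis times $\|P_-(F_{xx})\|_{L^1 L^\infty}\lesssim T^{3/4}\|P_-(F_{xx})\|_{L^4 L^\infty}\lesssim\varepsilon$ from the $\RomanII$-bootstrap hypothesis, so this piece is $O(\varepsilon^3)=O(\varepsilon^2)$ (after possibly first recording the cheap bound $\lesssim\varepsilon^2$). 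Pieces (ii) and (iii) are where the high-frequency factor is $P_-(F_{xx})$, so the derivatives $\partial_x^j$ want to fall there; here we need the more delicate Lemma \ref{paraproducttwo}, which allows the high-frequency factor to be placed in $L^\infty_x$. After applying it we end up with, schematically, $\|\langle\nabla\rangle^{r+\eta}\partial_x^j P_-(F_{xx})\|$ in a spacetime norm that one can estimate by $\|\langle\nabla\rangle^{r+\eta}F_{xx}\|_{L^\infty L^2}\lesssim\varepsilon^2$ (using $j\le 2$ and that $\langle\nabla\rangle^{r+\eta}\partial_x^j$ loses at most the available regularity — this is where one checks $r+\eta+2$ worth of tangential derivatives is affordable given $\sigma>7/8$ and the precise choice of $r$ with $7/8<r<\sigma-\eta$, cf. the second part of $\RomanII$ which carries $\langle\nabla\rangle^{r+\eta}F_{xx}$ in $L^{4/3}L^2$), paired with $\|w\|$ (or a low-frequency derivative of $w$) in $L^1_tL^\infty_x$, controlled by the $\RomanI$-bootstrap hypothesis and Hölder in time.

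The main obstacle, as the introduction already flags, is piece (ii)/(iii): putting the high tangential-frequency factor $P_-(F_{xx})=\overline{P_+(F_{xx})}$, on which we want the derivatives $\partial_x^j$ to land, into $L^\infty_x$ rather than $L^2_x$. This is exactly what Lemma \ref{paraproducttwo} is built for, so once that lemma is granted the estimate is a bookkeeping exercise in distributing $\langle\nabla\rangle^{r+\eta}\partial_x^j$, applying Hölder in time with the exponents dictated by $\RomanI$ (which lives in $S=L^\infty L^2\cap L^4 L^\infty$) and $\RomanII$ (which has an $L^4 L^p$-part and an $L^{4/3}L^2$-part with the extra $\langle\nabla\rangle^r$), and reading off $O(\varepsilon^2)$ from $\varepsilon\cdot\varepsilon = \varepsilon^2$ (or $\varepsilon^2\cdot\varepsilon$, then bounded by $\varepsilon^2$ since $\varepsilon<1$). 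One subtlety to handle with care is that the bound is required for \emph{each} $j=0,1,2$, and the hypothesis $\|\langle\nabla\rangle^{r+\eta}\partial_x^jw\|_{L^\infty L^2}\lesssim\varepsilon^2$ is imposed matching the same $j$; the low-frequency-derivative factor that lands in $L^1_tL^\infty_x$ must then be a $\partial_x^k w$ with $k\le j$, which is fine since low tangential frequency derivatives are harmless and controlled by $\RomanI$. A second point to verify is that the error caused by replacing $P_-(F_{xx})$ by $P_-P_{lo}(F_{xx})$ in the low-high piece (as was done in the proof of Lemma \ref{BestimateforI}) is not needed here — in piece (i) the low-frequency $F_{xx}$-factor already carries whatever $\langle\nabla\rangle^{r+\eta}$ we assign to it at a bounded cost. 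Assuming the two paraproduct lemmas, no genuinely new difficulty arises; the estimate is a systematic case analysis in the frequency-interaction geometry combined with Hölder in time.
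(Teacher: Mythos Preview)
Your overall plan is in the right spirit, but there is a genuine gap in your treatment of pieces (ii)/(iii). You propose to let the tangential derivatives $\partial_x^j$ fall on $P_-(F_{xx})$ there and then estimate $\|\langle\nabla\rangle^{r+\eta}\partial_x^j P_-(F_{xx})\|$. This fails on two counts. First, the frequency geometry of $P_{+hi}(P_-(\cdot)\cdot)$ forces $|\omega\cdot\xi_2|>|\omega\cdot\xi_1|$ (negative plus positive landing in positive), so $w$ is \emph{always} the tangentially high factor; your ``piece (ii)'' where $P_-(F_{xx})$ dominates does not occur. Second, and more fatally, $\langle\nabla\rangle^{r+\eta}\partial_x^j F_{xx}=\langle\nabla\rangle^{r+\eta}\partial_x^{j+1}u$ would, for $j=2$, demand three tangential derivatives of $u$ together with $r+\eta$ joint derivatives; nothing in the bootstrap (which only supplies $\|\langle\nabla\rangle^{r+\eta}F_{xx}\|_{L^{4/3}L^2}\lesssim\varepsilon$, not $L^\infty L^2$ and certainly not with extra $\partial_x^j$) controls this. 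The mis-step is a misreading of Lemma~\ref{paraproducttwo}: both paraproduct lemmas move $|\partial_x|^s$ onto the positive-frequency factor $g$; they differ only in whether $P_-(f)$ is placed in $L^\infty$ (Lemma~\ref{paraproductone}) or in $L^2$ (Lemma~\ref{paraproducttwo}).

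The paper's argument is also simpler than a three-piece Littlewood--Paley decomposition. Since the target norm is $L^2$ in space, one first distributes $\langle\nabla\rangle^{r+\eta}$ by the elementary symbol bound $\langle\xi_1+\xi_2\rangle^{r+\eta}\lesssim\langle\xi_1\rangle^{r+\eta}+\langle\xi_2\rangle^{r+\eta}$ and Plancherel, obtaining the two terms $\partial_x^j P_{+hi}\big([\langle\nabla\rangle^{r+\eta}P_-(F_{xx})]\,w\big)$ and $\partial_x^j P_{+hi}\big(P_-(F_{xx})\,\langle\nabla\rangle^{r+\eta}w\big)$. To the first one applies Lemma~\ref{paraproducttwo} (derivatives $\partial_x^j$ onto $w$, the factor $\langle\nabla\rangle^{r+\eta}P_-(F_{xx})$ into $L^2$) and then H\"older in time $L^{4/3}\cdot L^4$, yielding $\|\langle\nabla\rangle^{r+\eta}P_-(F_{xx})\|_{L^{4/3}L^2}\,\|\partial_x^j w\|_{L^4L^\infty}\lesssim\varepsilon\cdot\varepsilon$. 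To the second one applies Lemma~\ref{paraproductone} (again $\partial_x^j$ onto $w$, now $P_-(F_{xx})$ into $L^\infty$) and H\"older $L^4\cdot L^\infty$, yielding $\|P_-(F_{xx})\|_{L^4L^\infty}\,\|\langle\nabla\rangle^{r+\eta}\partial_x^j w\|_{L^\infty L^2}\lesssim\varepsilon\cdot\varepsilon^2$, the last factor being exactly the extra hypothesis supplied in the statement. No explicit paraproduct decomposition is needed.
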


\begin{proof}
By Plancherel, using $\langle \xi_1 + \xi_2 \rangle^{r+\eta} \lesssim \langle \xi_1 \rangle^{r+\eta} + \langle \xi_2 \rangle^{r+\eta}$ we have that
    \begin{equation}\label{eq19}
        \|\langle \nabla \rangle^{r+\eta}\partial_{x}^jA\|_{L^1L^2} \lesssim \|\partial_{x}^jP_{+hi}([\langle \nabla \rangle^{r+\eta}P_-(F_{xx})]w)\|_{L^1L^2} + \|\partial_{x}^jP_{+hi}(P_-(F_{xx})\langle \nabla \rangle^{r+\eta}w)\|_{L^1L^2}.
    \end{equation}
We further estimate both terms on the right hand side by using the paraproduct estimates in Lemmas \ref{paraproductone} and \ref{paraproducttwo}, allowing us to move the derivatives $\partial_x^j$ on the second factor $w$, and using H\"older in time:
\begin{equation}\label{eq20}
\|\partial_{x}^j P_{+hi}([\langle \nabla \rangle^{r+\eta}P_-(F_{xx})]w)\|_{L^1L^2} \lesssim \|\langle \nabla \rangle^{r+\eta} P_-(F_{xx})\|_{L^{4/3}L^2} \|\partial_x^jw\|_{L^4L^{\infty}} \lesssim \varepsilon^2
\end{equation}
and
\begin{equation}\label{eq21}
\|\partial_{x}^jP_{+hi}(P_-(F_{xx})\langle \nabla \rangle^{r+\eta}w)\|_{L^1L^2} \lesssim \| P_-(F_{xx})\|_{L^{4}L^{\infty}} \|\langle \nabla \rangle^{r+\eta}\partial_x^jw\|_{L^{\infty}L^{2}} \lesssim \varepsilon^2.
\end{equation}
The terms $\|P_-(F_{xx})\|_{L^4L^{\infty}}$ and $\|\langle \nabla \rangle^{r+\eta}P_-(F_{xx})\|_{L^{4/3}L^2}$ and $\|\partial_x^jw\|_{L^4L^{\infty}}$ are all $O(\varepsilon)$ by the bootstrap hypothesis, while the term $\|\langle \nabla \rangle^{r+\eta}\partial_x^jw\|_{L^{\infty}L^2}$ is  $O(\varepsilon^2)$ by assumption. 
\end{proof}

That the assumption $\|\langle \nabla \rangle^{r+\eta}\partial_x^jw\|_{L^{\infty}L^2} \lesssim \varepsilon^2$ is indeed satisfied will be shown in Lemma \ref{wenergyestimate} in the next section via energy arguments using equation \eqref{wxequation} for $\langle \nabla \rangle^{\eta}\partial_x^jw$. 

\begin{remark}\label{whyparaproducts}
    The paraproduct estimate in Lemma  \ref{paraproducttwo} is of crucial importance in estimate \eqref{eq20}. We want to put the factor $\langle \nabla \rangle^{r+\eta}P_-(F_{xx})$ which features the highest order derivatives into $L^2$, while the other factor goes into $L^{\infty}$. For terms measured in $L^2$ we can hope to gain control via Gronwall arguments, as we indeed do in the next section for $\|\langle \nabla \rangle^{r+\eta}w_{xx}\|_{L^{\infty}L^2}$, which then in turn is related to $\|\langle \nabla \rangle^{r+\eta}P_-(F_{xx})\|_{L^{4/3}L^2}$ in equation (\ref{PHIFx432}) in Section \ref{partIIbootstrap}.
    In the setting on the real line in \cite{MR2052470}, the Strichartz estimate does not incur a derivative loss and it suffices to have the paraproduct \cite[Lemma 3.2]{MR2052470} corresponding in this paper to Lemma \ref{paraproductone}, which is utilized in estimate \eqref{eq21}.
\end{remark}

Finally, we bound the term arising from the linear evolution of the initial data.

\begin{lemma}\label{initialdatawbound} For $j=0,1,2$ and $7/8<r<\sigma -\eta$, we have
    $$\|\langle \nabla \rangle^{r+\eta} \partial_x^jw(0)\|_{L^2} \lesssim \varepsilon^2.$$
\end{lemma}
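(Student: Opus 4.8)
The plan is to bound $\|\langle \nabla \rangle^{r+\eta} \partial_x^j w(0)\|_{L^2}$ purely in terms of the initial data $u_0$, exploiting that $w(0) = P_{+hi}(e^{-iF(0)})$ where $F(0) = \partial_x^{-1} u_0$. Since $r + \eta < \sigma$, it suffices to show $\|\langle \nabla \rangle^{\sigma} \partial_x^j P_{+hi}(e^{-iF(0)})\|_{L^2} \lesssim \varepsilon^2 = \rho$ whenever $\|u_0\|_Y \leq \rho$. Because $P_{+hi}\partial_x^j$ is, on the range of $P_{+hi}$, comparable to $\langle \partial_x \rangle^j$ composed with bounded operators, and $j \le 2$, the key quantity to estimate is $\|\langle \nabla \rangle^{\sigma} \langle \partial_x \rangle^2 P_{+hi}(e^{-iF(0)})\|_{L^2}$. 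The essential point is that $e^{-iF(0)} = 1 + (e^{-iF(0)} - 1)$, and the constant $1$ is killed by $P_{+hi}$ (it is purely at frequency zero), so we only need to control $\langle \nabla \rangle^{\sigma} \langle \partial_x \rangle^2 (e^{-iF(0)} - 1)$ in $L^2$.

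First I would write $g := e^{-iF(0)} - 1$ and observe that $\langle \partial_x \rangle^2 g = \langle \partial_x \rangle^2 (e^{-iF(0)} - 1)$; since differentiating $e^{-iF(0)}$ produces factors of $F_x(0) = u_0$ and $F_{xx}(0) = \partial_x u_0$, the chain rule gives $\partial_x^2 e^{-iF(0)} = (-i\partial_x u_0 - (u_0)^2) e^{-iF(0)}$. Then the task reduces to estimating $\|\langle \nabla \rangle^{\sigma} \langle \partial_x \rangle \big( (\partial_x u_0 + u_0^2) e^{-iF(0)} \big)\|_{L^2}$ (absorbing lower-order pieces similarly). Here I would use: (i) the algebra property of $H^{s_1,s_2}_\omega$ — but more precisely the algebra property of $H^{\sigma+1, \sigma}_\omega$ or the relevant space, since $\langle \nabla \rangle^\sigma \langle \partial_x \rangle$ applied to a product needs a Leibniz/paraproduct bound — together with (ii) the fact that $F(0) = \partial_x^{-1} u_0 \in H^{?}$ is controlled by $\|u_0\|_Y$ (this is exactly why the $\|\langle \nabla \rangle^{\sigma} \partial_x^{-1} u_0\|_{L^2}$ term appears in the $Y$-norm), so $e^{-iF(0)}$ can be exponentiated and lies in an algebra with a norm controlled by $\exp(c\|u_0\|_Y)$. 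Combining, $\|g\|_{\text{alg}} \lesssim e^{c\rho} - 1 \lesssim \rho$ for $\rho < \rho_0$ small, and $\|\partial_x u_0\|_{\text{alg}} \lesssim \|\langle\nabla\rangle^\sigma \langle \partial_x\rangle u_0\|_{L^2} \lesssim \rho$, $\|u_0^2\|_{\text{alg}} \lesssim \rho^2$, all from the two summands of $\|u_0\|_Y$.

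The main technical obstacle is matching function spaces: $\langle \nabla \rangle^{\sigma} \langle \partial_x \rangle^2$ is a joint-plus-tangential derivative of total order roughly $\sigma + 2$, whereas the $Y$-norm only controls $\langle \nabla \rangle^{\sigma} \langle \partial_x \rangle u_0$ (one tangential derivative) and $\langle \nabla \rangle^{\sigma} \partial_x^{-1} u_0$. The resolution is that one of the two derivatives $\partial_x$ in $\partial_x^2 w(0)$ is "spent" turning $e^{-iF(0)}$ into $u_0 \cdot e^{-iF(0)}$ (so it does not cost regularity on $u_0$ beyond what $Y$ provides), and the second $\partial_x$ lands as $\langle\nabla\rangle^\sigma\langle\partial_x\rangle$ on $u_0$ — exactly the first term of the $Y$-norm — or hits $e^{-iF(0)}$ producing another $u_0$ factor with no derivative. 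So I would organize the proof around carefully tracking, via the chain rule, which factor each of the two $\partial_x$'s and the $\langle\nabla\rangle^\sigma$ falls on, then for the worst term estimate $\|(\langle\nabla\rangle^\sigma\langle\partial_x\rangle u_0) \cdot (\text{product of }e^{-iF(0)}\text{ and bounded-order factors})\|_{L^2} \lesssim \|\langle\nabla\rangle^\sigma\langle\partial_x\rangle u_0\|_{L^2} \cdot \|e^{-iF(0)}\|_{L^\infty} \cdot (\dots)$ by Hölder, using anisotropic Sobolev embedding (Lemma~\ref{anisotropicproperties}) and the algebra property to handle the remaining low-order products; the $\langle\nabla\rangle^\sigma$ pieces that would need to be distributed across a product are handled by the algebra property of the appropriate anisotropic space, with $\|F(0)\|$ in that space controlled by $\|\langle\nabla\rangle^\sigma\partial_x^{-1}u_0\|_{L^2} + \|\langle\nabla\rangle^\sigma\langle\partial_x\rangle u_0\|_{L^2} = \|u_0\|_Y$. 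Finally one notes the whole bound is $\lesssim \rho + \rho^2 \lesssim \rho = \varepsilon^2$ once $\rho < \rho_0$.
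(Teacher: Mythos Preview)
Your overall structure is right and matches the paper: reduce to $j=2$ via boundedness of $\partial_x^{-1}$ on the range of $P_{+hi}$, compute $w_{xx}(0)=P_{+hi}\big((-iF_{xx}-F_x^2)e^{-iF(0)}\big)$ by the chain rule, then distribute $\langle\nabla\rangle^{r+\eta}$ across the product. The term where all derivatives land on $-iF_{xx}-F_x^2$ is indeed controlled by $\|u_0\|_Y$ exactly as you say.

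The gap is in the cross term, where $\langle\nabla\rangle^{r+\eta}$ lands on $e^{-iF(0)}$ and $F_{xx}(0)=\partial_x u_0$ carries no derivatives. Your appeal to ``the algebra property'' and ``anisotropic Sobolev embedding'' amounts to an $L^2\times L^\infty$ H\"older splitting, which would require $\|\partial_x u_0\|_{L^\infty}<\infty$. But the $Y$-norm only gives $\partial_x u_0\in H^\sigma(\mathbb{T}^2)$, and $H^\sigma(\mathbb{T}^2)\not\hookrightarrow L^\infty(\mathbb{T}^2)$ when $\sigma<1$. Nor can you fall back on the anisotropic algebra $H^{1+\sigma,\sigma}_\omega$, since $F_{xx}=\partial_x u_0$ does not lie in that space (one tangential derivative too many). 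Your preliminary move of replacing $r+\eta$ by $\sigma$ makes this worse: it burns the slack $\sigma-(r+\eta)>0$ that is needed to close the estimate.

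The paper resolves this by applying the fractional product rule with an $L^q\times L^p$ splitting ($\tfrac1p+\tfrac1q=\tfrac12$) and using Gagliardo--Nirenberg on each factor: $\|F_{xx}(0)\|_{L^q}\lesssim\|\langle\nabla\rangle^\sigma\partial_x u_0\|_{L^2}$ and $\|\langle\nabla\rangle^{r+\eta}e^{-iF(0)}\|_{L^p}\lesssim 1+\|\langle\nabla\rangle^\sigma F(0)\|_{L^2}$. Such a pair $(p,q)$ exists precisely when $\sigma+(\sigma-r-\eta)>1$; taking $r$ just above $3/4$ and $\eta$ small, this becomes $\sigma>7/8$. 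This is exactly the origin of the threshold in Definition~\ref{Yspace}, and it is the one nontrivial idea your sketch is missing.
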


\begin{proof} 
The statement for $j=0,1$ follows from the statement for $j=2$ due to the frequency projection $P_{+hi}$ involved in the definition of the function $w$ (equation (\ref{gaugesecondappearance})) and $L^2$-boundedness of $\partial_x^{-1}$ on functions with frequency support bounded away from zero. \\

We turn to the case $j=2$: By fractional chain rule
   \begin{multline*} 
       \|\langle \nabla \rangle^{r+\eta} w_{xx}(0)\|_{L^2} = \|\langle \nabla \rangle^{r+\eta}P_{+hi}((-iF_{xx} -F_x^2)e^{-iF(0)})\|_{L^2} \\ \lesssim \|\langle \nabla \rangle^{r+\eta}(-iF_{xx}-F_x^2)\|_{L^2} \|e^{-iF}\|_{L^{\infty}} + \|-iF_{xx} - F_x^2\|_{L^q} \|\langle \nabla \rangle^{r+\eta} e^{-iF}\|_{L^p}.
   \end{multline*}
   Here $\frac{1}{p} + \frac{1}{q} = \frac{1}{2}$. The first term can be bounded
   \begin{align*} \|\langle \nabla \rangle^{r+\eta}(-iF_{xx}-F_x^2)\|_{L^2} & \lesssim \|\langle \nabla \rangle^{r+\eta} u_x(0)\|_{L^2} + \|u^2(0)\|_{H^{1+\sigma, \sigma}} \\ & \lesssim \|\langle \nabla \rangle^{\sigma} \partial_x u(0)\|_{L^2} + \|u(0)\|_{H^{1+\sigma,\sigma}}^2  \lesssim \|u(0)\|_Y + \|u(0)\|_Y^2 \lesssim \varepsilon^2.\end{align*}
As $\sigma-\eta/2>\sigma - \eta>7/8$ we have $\frac{\sigma}{2}+\frac{\sigma-\eta-r}{2}>\frac{1}{2}$ for small enough $r>3/4$ and can find $p,q$ with $\frac{1}{p}+\frac{1}{q}=\frac{1}{2}$ such that $$\frac{1}{q}=\frac{1}{2}-\theta \frac{\sigma}{2} \quad \textit{and} \quad \frac{1}{p}=\frac{1}{2} - \theta \frac{\sigma-\eta-r}{2} \quad \textit{for some} \ 0\leq \theta\leq 1.$$
    Thus, we can estimate by Gagliardo-Nirenberg
     $$\|F_{xx}(0)\|_{L^q} \lesssim \|\langle \nabla \rangle^{\sigma}u_x(0)\|_{L^2} \lesssim \|u(0)\|_Y \lesssim \varepsilon^2.$$
     Further, we estimate by fractional chain rule and Gagliardo-Nirenberg that \begin{align*}\|\langle\nabla \rangle^{r+\eta}e^{-iF(0)}\|_{L^p} & \lesssim \|e^{-iF(0)}\|_{L^p} + \||\nabla|^{r+\eta}e^{-iF(0)}\|_{L^p} \\ & \lesssim \|e^{-iF(0)}\|_{L^{\infty}}[1+\||\nabla|^{r+\eta}F(0)\|_{L^p}] \lesssim  1+\|\langle \nabla \rangle^{\sigma}F(0)\|_{L^2} \lesssim 1+\|u(0)\|_Y \lesssim 1.\end{align*}
     From anisotropic Sobolev embedding \eqref{anisotropicembedding},
     $$\|F_x^2(0)\|_{L^q} \lesssim \|F_x^2(0)\|_{L^{\infty}} \lesssim \|u(0)\|_{L^{\infty}}^2 \lesssim \|u(0)\|_{H^{1+\sigma,\sigma}}^2 \lesssim \|u(0)\|_Y^2 \lesssim  \varepsilon^2.$$
    Putting the previous three estimates together we obtain
    $$\|-iF_{xx}-F_x^2\|_{L^q} \|\langle \nabla \rangle^{r+\eta}e^{-iF}\|_{L^p} \lesssim \varepsilon^2,$$
    thus completing the proof of the lemma for $j=2$.
    \end{proof}

\begin{remark} This lemma dictates the threshold $\sigma>7/8$ in the Definition \ref{Yspace} of the $Y$-norm. \end{remark} 

 Putting Lemmas \ref{Strichartzappliedtow}, \ref{BestimateforI}, \ref{estimatesforA}, \ref{initialdatawbound} together, and invoking Lemma \ref{wenergyestimate} below yields part I of the bootstrap conclusion:

 \begin{lemma}
     Suppose the bootstrap hypothesis holds. Then $$\RomanI=\sum_{j=0,1,2}\|\langle \nabla \rangle^{\eta}\partial_x^jw\|_{S([0,T'])} \lesssim \varepsilon^2.$$
 \end{lemma}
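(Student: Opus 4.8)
The plan is to combine the estimates already assembled in this subsection into a single bootstrap step for part $\RomanI$. Recall from Lemma \ref{Strichartzappliedtow} that for each $j=0,1,2$ and each $r$ with $3/4<r<\sigma-\eta$ we have
\begin{equation*}
\|\langle \nabla \rangle^{\eta}\partial_x^jw\|_{S([0,T'])} \lesssim \|\langle \nabla \rangle^{\eta+r}\partial_x^jw(0)\|_{L^2} + \|\langle \nabla \rangle^{\eta+r}\partial_x^jA\|_{L^1L^2} + \|\langle \nabla \rangle^{\eta+r}\partial_x^jB\|_{L^1L^2},
\end{equation*}
with the time interval being $[0,T']$. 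Fixing $r$ with $7/8<r<\sigma-\eta$, the plan is to bound the three terms on the right-hand side one at a time and then sum over $j$.

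First I would dispose of the contributions that do not require the energy estimate. The initial-data term $\|\langle \nabla \rangle^{\eta+r}\partial_x^jw(0)\|_{L^2}$ is $O(\varepsilon^2)$ by Lemma \ref{initialdatawbound}, using here that $\|u_0\|_Y\leq\rho=\varepsilon^2$. The error term $\|\langle \nabla \rangle^{\eta+r}\partial_x^jB\|_{L^1L^2}$ is $O(\varepsilon^2)$ by Lemma \ref{BestimateforI}, which only uses the bootstrap hypothesis. It remains to control the term coming from $A$. By Lemma \ref{estimatesforA}, we have $\|\langle \nabla \rangle^{\eta+r}\partial_x^jA\|_{L^1L^2}\lesssim\varepsilon^2$ \emph{provided} the auxiliary bound $\|\langle \nabla \rangle^{r+\eta}\partial_x^jw\|_{L^{\infty}L^2}\lesssim\varepsilon^2$ holds for $j=0,1,2$.

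The one missing ingredient is therefore precisely this auxiliary $L^{\infty}L^2$ bound, which is what Lemma \ref{wenergyestimate} (proved in the next section) supplies: assuming the bootstrap hypothesis, one has $\|\langle \nabla \rangle^{r+\eta}\partial_x^jw\|_{L^{\infty}([0,T'],L^2)}\lesssim\varepsilon^2$ for $j=0,1,2$. Granting this, feeding it back into Lemma \ref{estimatesforA} gives $\|\langle \nabla \rangle^{\eta+r}\partial_x^jA\|_{L^1L^2}\lesssim\varepsilon^2$, and then Lemma \ref{Strichartzappliedtow} yields $\|\langle \nabla \rangle^{\eta}\partial_x^jw\|_{S([0,T'])}\lesssim\varepsilon^2$ for each $j=0,1,2$. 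Summing over $j=0,1,2$ gives $\RomanI(T')=\sum_{j=0,1,2}\|\langle \nabla \rangle^{\eta}\partial_x^jw\|_{S([0,T'])}\lesssim\varepsilon^2$, which is exactly part $\RomanI$ of the bootstrap conclusion \eqref{C}.

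There is no real obstacle remaining at this stage of the argument: the proof is purely an act of bookkeeping, assembling Lemmas \ref{Strichartzappliedtow}, \ref{BestimateforI}, \ref{estimatesforA}, \ref{initialdatawbound}, and \ref{wenergyestimate}. The genuine work — the paraproduct estimates used inside Lemma \ref{estimatesforA}, and especially the Gronwall/energy argument giving Lemma \ref{wenergyestimate} — is deferred to Sections \ref{sectionenergyestimatepartI} and \ref{paraproductestimatessection}. One should be mildly careful only about the logical ordering: Lemma \ref{wenergyestimate} and the estimates \eqref{eq16}, \eqref{eq5} invoked in Lemma \ref{BestimateforI} are themselves established under the bootstrap hypothesis and do not secretly rely on part $\RomanI$ of the bootstrap conclusion, so there is no circularity; this is why the energy estimate for $w$ is stated and proved independently rather than as a consequence of the Strichartz bound.
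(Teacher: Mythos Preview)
Your proposal is correct and matches the paper's approach exactly: the paper's proof of this lemma is just the one-line statement ``Putting Lemmas \ref{Strichartzappliedtow}, \ref{BestimateforI}, \ref{estimatesforA}, \ref{initialdatawbound} together, and invoking Lemma \ref{wenergyestimate} below yields part I of the bootstrap conclusion.'' You have spelled out precisely this assembly, with the added observation about non-circularity that the paper leaves implicit.
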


\subsection{Energy estimate on derivatives of the gauge transformed function}\label{sectionenergyestimatepartI}

In this section, we prove the estimate $\|\langle \nabla \rangle^{r+\eta} \partial_x^jw\|_{L^2} \lesssim \varepsilon^2$ that was used in Lemma \ref{estimatesforA}. The proof is based on a Gronwall argument for the quantity $\|\langle \nabla \rangle^{r+\eta} \partial_x^jw\|_{L^2}$.  We demonstrate that the regularity provided by the wellposedness theory in $X$ is enough to justify the calculations.

\begin{lemma}\label{wenergyestimate}
Assume the bootstrap hypothesis holds. For $j=0,1,2$ and $7/8<r<\sigma-\eta$, we have $$\|\langle \nabla \rangle^{r+\eta}\partial_x^jw\|_{L^{\infty}L^2} \lesssim \varepsilon^2.$$
\end{lemma}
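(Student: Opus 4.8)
The plan is to run a Gronwall argument for the quantity $\|\langle \nabla \rangle^{r+\eta}\partial_x^jw(t)\|_{L^2}^2$ directly from the Schr\"odinger-type equation \eqref{wxequation}. Writing $q:=\langle \nabla \rangle^{r+\eta}\partial_x^jw$ and using anti-selfadjointness of $i\partial_{xx}$, differentiating in time gives
\begin{equation*}
\frac{d}{dt}\|q(t)\|_{L^2}^2 = -4\,\Re\!\int_{\mathbb{T}^2} \overline{q}\,\langle \nabla \rangle^{r+\eta}\partial_x^j A \;-\;4\,\Re\!\int_{\mathbb{T}^2} \overline{q}\,\langle \nabla \rangle^{r+\eta}\partial_x^j B,
\end{equation*}
so that by Cauchy--Schwarz in space, then integrating in time,
\begin{equation*}
\|q(t)\|_{L^2}^2 \lesssim \|q(0)\|_{L^2}^2 + \int_0^t \|q(s)\|_{L^2}\,\big(\|\langle \nabla \rangle^{r+\eta}\partial_x^j A(s)\|_{L^2} + \|\langle \nabla \rangle^{r+\eta}\partial_x^j B(s)\|_{L^2}\big)\,ds.
\end{equation*}
The term $\|q(0)\|_{L^2}=\|\langle \nabla \rangle^{r+\eta}\partial_x^j w(0)\|_{L^2}$ is $O(\varepsilon^2)$ by Lemma \ref{initialdatawbound}. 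The $B$-contribution is harmless: $\|\langle \nabla \rangle^{r+\eta}\partial_x^j B\|_{L^1L^2}\lesssim \varepsilon^2$ by Lemma \ref{BestimateforI}, which relies only on the bootstrap hypothesis.

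The main point is the $A$-contribution, which I would decompose exactly as in \eqref{eq19}, writing $\langle\xi_1+\xi_2\rangle^{r+\eta}\lesssim \langle\xi_1\rangle^{r+\eta}+\langle\xi_2\rangle^{r+\eta}$ to split $\langle \nabla \rangle^{r+\eta}\partial_x^j A$ into the piece with $\langle \nabla \rangle^{r+\eta}$ on the low-frequency factor $P_-(F_{xx})$ and the piece with $\langle \nabla \rangle^{r+\eta}$ on the high-frequency factor $w$. For the first piece, I apply the paraproduct estimate of Lemma \ref{paraproducttwo} to move $\partial_x^j$ onto $w$ and put the high-derivative factor into $L^2$, obtaining $\lesssim \|\langle \nabla \rangle^{r+\eta}P_-(F_{xx})\|_{L^{?}L^2}\|\partial_x^j w\|_{L^{?}L^{\infty}}$. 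Here, crucially, when the $\langle \nabla \rangle^{r+\eta}$ derivatives land on the $P_-(F_{xx})$ factor I do \emph{not} want to spend another power of $\varepsilon$ from that factor — instead I should only place $\partial_x^j w$ into $L^1_tL^{\infty}_x$ (controlled by the bootstrap hypothesis, $O(\varepsilon)$) and $\langle \nabla \rangle^{r+\eta}P_-(F_{xx})$ into $L^\infty_tL^2_x$. But $\|\langle \nabla \rangle^{r+\eta}P_-(F_{xx})\|_{L^\infty_tL^2}$ is exactly what we are \emph{not} allowed to assume is small (it is only $O(\varepsilon)$, being part $\RomanII$ of the bootstrap, and the estimate \eqref{PHIFx432} relating it to $\|\langle \nabla \rangle^{r+\eta}w_{xx}\|_{L^\infty L^2}$ only gives $L^{4/3}_t$, not $L^\infty_t$). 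So instead the correct distribution is: bound the first piece by $\|\langle \nabla \rangle^{r+\eta}P_-(F_{xx})\|_{L^{4/3}_tL^2}\,\|\partial_x^j w\|_{L^4_tL^{\infty}}\lesssim \varepsilon\cdot\varepsilon$ using the bootstrap hypothesis for \emph{both} factors. For the second piece, Lemma \ref{paraproductone} moves $\partial_x^j$ onto $\langle \nabla \rangle^{r+\eta}w$ and we bound by $\|P_-(F_{xx})\|_{L^1_tL^{\infty}}\,\|\langle \nabla \rangle^{r+\eta}\partial_x^j w\|_{L^\infty_tL^2}$; since $\|P_-(F_{xx})\|_{L^4_tL^\infty}\lesssim\varepsilon$ by the hypothesis and $T\le1$, this is $\lesssim \varepsilon\,\|q\|_{L^\infty_tL^2}$. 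Thus overall
\begin{equation*}
\|q(t)\|_{L^2}^2 \lesssim \varepsilon^4 + \varepsilon^2 \int_0^t \|q(s)\|_{L^2}\,ds + \varepsilon \int_0^t \|q(s)\|_{L^2}^2\,ds,
\end{equation*}
and Gronwall's inequality \eqref{Gronwall} applied to $u(t)=\|q(t)\|_{L^2}^2$ (after absorbing the middle term into $\varepsilon^4 + \varepsilon^2\sup_{s\le t}\|q(s)\|_{L^2}^2$ via $ab\le\tfrac12 a^2+\tfrac12 b^2$ and noting $\varepsilon$ is small) yields $\sup_{t\le T'}\|q(t)\|_{L^2}^2\lesssim \varepsilon^4$, i.e. $\|\langle\nabla\rangle^{r+\eta}\partial_x^j w\|_{L^\infty L^2}\lesssim\varepsilon^2$.

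The main obstacle is the bookkeeping of which $L^p_t$ norms to assign to each factor so that only the bootstrap hypothesis (not the conclusion, and not an estimate one hasn't yet proved) is invoked — in particular steering clear of needing $\|\langle\nabla\rangle^{r+\eta}P_-(F_{xx})\|_{L^\infty_tL^2}$. A secondary technical point, which I would dispatch by the same reasoning as in Lemmas \ref{enoughregularity} and \ref{retardedStrichartz}, is justifying that the calculation $\frac{d}{dt}\|q(t)\|_{L^2}^2 = -4\Re\langle q, \langle\nabla\rangle^{r+\eta}\partial_x^j(A+B)\rangle$ is legitimate: this follows because $u\in C([0,T],X)$ forces $F,w\in C([0,T],H^{s_1,s_2}_\omega)$, hence (by \eqref{regularityproperty} and the boundedness properties \eqref{boundedness1}, \eqref{boundedness2} of Lemma \ref{anisotropicproperties}) $q=\langle\nabla\rangle^{r+\eta}\partial_x^j w$ and the nonlinearity $\langle\nabla\rangle^{r+\eta}\partial_x^j(A+B)$ lie in $C([0,T],L^2)$ with $q$ satisfying the Duhamel formula \eqref{wDuhamel}, from which $t\mapsto\|q(t)\|_{L^2}^2$ is $C^1$ with the stated derivative. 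All the remaining pieces — the fractional Leibniz rules, $L^2$-boundedness of the Littlewood--Paley projections and of $\partial_x^{-1}$ on frequency-localized functions, and the estimates \eqref{eq5}, \eqref{eq16} feeding Lemma \ref{BestimateforI} — are available from earlier in the paper.
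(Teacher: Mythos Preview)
Your argument is correct and matches the paper's approach: energy identity via anti-selfadjointness of $i\partial_{xx}$, split the $A$-contribution using \eqref{eq19} and the paraproduct Lemmas \ref{paraproductone}--\ref{paraproducttwo}, feed the piece with $\langle\nabla\rangle^{r+\eta}$ on $P_-(F_{xx})$ into the forcing via the bootstrap hypothesis, and close with Gronwall on the remaining piece. The paper's presentation differs only cosmetically---it divides through by $\|q\|_{L^2}$ first to obtain a linear integral inequality and thereby avoids your absorption step; one small slip in your displayed inequality is that the Gronwall weight should be $\|P_-F_{xx}(s)\|_{L^\infty}$ rather than a constant $\varepsilon$ (the $L^4_t$ bound from the hypothesis is not pointwise in $s$), but since $\int_0^{T'}\|P_-F_{xx}(s)\|_{L^\infty}\,ds\lesssim\varepsilon$ this does not affect the conclusion.
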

\noindent The following lemma asserts that the function $t \mapsto \langle \nabla \rangle^{r+\eta} \partial_x^j w(t)$ satisfies a Schr\"odinger-type equation in the sense of $L^2(\mathbb{T}^2)$-Frechet derivatives. This will suffice to carry out the Gronwall argument.

\begin{lemma}\label{regularityforgronwall}
Denote 
$$t \mapsto \phi(t):=\langle \nabla \rangle^{r+\eta} \partial_x^j w(t)$$
and $$t \mapsto \psi(t):= \langle \nabla \rangle^{r+\eta} \partial_x^{j}(-iw_{xx})(t) + \langle \nabla \rangle^{r+\eta} \partial_x^{j}A(t) + \langle \nabla \rangle^{r+\eta} \partial_x^{j}B(t).$$
These are both continuous mappings $[0,T] \rightarrow L^2(\mathbb{T}^2)$ and we have $$\partial_t\phi = \psi$$ in the sense of Frechet derivatives in $L^2(\mathbb{T}^2)$.
\end{lemma}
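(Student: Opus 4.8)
The plan is to establish Lemma \ref{regularityforgronwall} by carefully propagating the regularity information already obtained in Lemma \ref{enoughregularity}, where the Duhamel formula \eqref{wDuhamel} was shown to hold pointwise. Concretely, fix $j \in \{0,1,2\}$ and set $\phi(t) = \langle \nabla \rangle^{r+\eta}\partial_x^j w(t)$. From Lemma \ref{enoughregularity} we know $w \in C([0,T], H^{s_1,s_2}_\omega)$ and that the nonlinearity satisfies the regularity statement \eqref{regularityproperty}, i.e. $\langle \nabla \rangle^{\eta}\partial_x^j(A+B) \in C([0,T], H^{s_1',s_2'}_\omega)$ with $s_1', s_2'$ as in \eqref{sprimes}. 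Since $r < \sigma - \eta$ and $\sigma, \eta$ are small relative to the (large) parameters $s_1, s_2$ defining $X$, the additional derivatives $\langle \nabla \rangle^{r}$ are harmless: applying the boundedness properties \eqref{boundedness1}, \eqref{boundedness2} of Lemma \ref{anisotropicproperties} once more, one checks $\langle \nabla \rangle^{r+\eta}\partial_x^j w(t) \in C([0,T], H^{s_1'', s_2''}_\omega)$ and $\langle \nabla \rangle^{r+\eta}\partial_x^j(A+B)(t) \in C([0,T], H^{s_1'', s_2''}_\omega)$ for suitable $s_1'', s_2'' > 0$; in particular both $\phi$ and $\psi$ are continuous maps into $L^2(\mathbb{T}^2)$ (indeed into $C(\mathbb{T}^2)$ via \eqref{anisotropicembedding}). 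This gives the first assertion of the lemma and shows the right-hand side $\psi$ is Bochner-integrable on $[0,T]$ with values in $L^2$.

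Next I would turn the Duhamel identity into the differential statement. Starting from \eqref{wDuhamel},
\begin{equation*}
\phi(t) = e^{it\partial_{xx}}\phi(0) + \int_0^t e^{i(t-s)\partial_{xx}}\langle \nabla \rangle^{\eta}\partial_x^j(A(s)+B(s))\,ds,
\end{equation*}
which holds pointwise, hence in particular in $L^2(\mathbb{T}^2)$ for every $t$. I want to differentiate this in $t$ in the $L^2$-Frechet sense. The unitary group $e^{it\partial_{xx}}$ is strongly continuous on $L^2$, and for $L^2$-elements in the domain of $\partial_{xx}$ it is differentiable with derivative $i\partial_{xx}e^{it\partial_{xx}}(\cdot)$; since $\phi(0) = \langle \nabla \rangle^{r+\eta}\partial_x^j w(0) \in H^{s_1'',s_2''}_\omega \subseteq D(\partial_{xx})$, the linear term contributes $i\partial_{xx}e^{it\partial_{xx}}\phi(0)$. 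For the Duhamel integral, I would use the standard fact (provable directly by writing $[\phi_{\mathrm{Duh}}(t+h) - \phi_{\mathrm{Duh}}(t)]/h$, splitting the integral at $t$, and using strong continuity of the group together with continuity of the integrand $s \mapsto e^{i(t-s)\partial_{xx}}\langle \nabla \rangle^{\eta}\partial_x^j(A+B)(s)$ as an $L^2$-valued map) that the map $t \mapsto \int_0^t e^{i(t-s)\partial_{xx}}G(s)\,ds$, for $G \in C([0,T],L^2)$ with moreover $G(s) \in D(\partial_{xx})$ uniformly, is $L^2$-differentiable with derivative $G(t) + i\partial_{xx}\int_0^t e^{i(t-s)\partial_{xx}}G(s)\,ds$. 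Here $G(s) = \langle \nabla \rangle^{\eta}\partial_x^j(A+B)(s) \in H^{s_1',s_2'}_\omega$ lies in $D(\partial_{xx})$ with norm bounded uniformly in $s$ by \eqref{regularityproperty}, so the hypothesis is met. Collecting the two pieces,
\begin{equation*}
\partial_t \phi(t) = i\partial_{xx}\Big(e^{it\partial_{xx}}\phi(0) + \int_0^t e^{i(t-s)\partial_{xx}}\langle \nabla \rangle^{\eta}\partial_x^j(A+B)(s)\,ds\Big) + \langle \nabla \rangle^{\eta}\partial_x^j(A+B)(t),
\end{equation*}
and since the operator $i\partial_{xx}$ commutes with $\langle \nabla \rangle^{r+\eta}$ and the Bochner integral (Hille's Theorem, Proposition \ref{Hillestheorem}, using that $i\partial_{xx}$ applied to the integrand is again in $C([0,T],L^2)$ by \eqref{regularityproperty} with derivatives to spare), the bracketed expression is again $\phi(t)$, while the commutator identity $\partial_x^j H\partial_{xx} = -i\partial_x^j\partial_{xx}$ on $P_{+hi}$-localized functions identifies $i\partial_{xx}\phi = \langle \nabla \rangle^{r+\eta}\partial_x^j(iw_{xx})$ since $w$ is $P_{+hi}$-localized. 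Rearranging and inserting the sign gives exactly $\partial_t\phi = \psi$ with $\psi$ as stated.

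The main obstacle I anticipate is not conceptual but bookkeeping: one must be scrupulous that at every stage the functions to which $\partial_{xx}$, $\langle \nabla \rangle^{r+\eta}$ and $\partial_x^j$ are applied genuinely lie in the appropriate domains and that the resulting expressions remain in $C([0,T],L^2)$ (so that Hille's Theorem applies and the Duhamel-integral differentiation lemma applies). This is where the hypothesis $r + \eta < \sigma < $ (something controlled by $s_1, s_2$) is used: it guarantees $s_1'', s_2'' > 0$ in the chain of applications of \eqref{boundedness1}, \eqref{boundedness2}, so that $\langle \nabla \rangle^{r+\eta}\partial_x^j w(t)$ and $\langle \nabla \rangle^{r+\eta}\partial_x^j(A+B)(t)$ still land in a nontrivial anisotropic Sobolev space embedding into $C(\mathbb{T}^2)$. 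A secondary point to handle with care is the justification that $t \mapsto e^{it\partial_{xx}}\phi(0)$ is $L^2$-Frechet differentiable — this is just strong differentiability of the Schr\"odinger group on $D(\partial_{xx})$, which can be verified as in the parenthetical remark after \eqref{integratingfactor} in the proof of Lemma \ref{enoughregularity} by working on the Fourier side with the mean value theorem and dominated convergence. Once these regularity facts are in place, the differentiation step is routine and the proof of Lemma \ref{regularityforgronwall} is complete; Lemma \ref{wenergyestimate} then follows by pairing $\partial_t\phi = \psi$ with $\phi$ in $L^2$, using anti-self-adjointness of $i\partial_{xx}$ to kill the linear term, estimating the $A$- and $B$-contributions via Lemmas \ref{estimatesforA}, \ref{BestimateforI} and the bootstrap hypothesis, and invoking Gronwall's inequality \eqref{Gronwall}.
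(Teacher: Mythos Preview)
Your approach is correct in spirit and would work, but it is more roundabout than the paper's, and there is a recurring slip: in your Duhamel identity for $\phi$ and in your definition of $G$, you write $\langle \nabla \rangle^{\eta}\partial_x^j(A+B)$ where you need $\langle \nabla \rangle^{r+\eta}\partial_x^j(A+B)$. Equation \eqref{wDuhamel} is stated only with $\langle \nabla \rangle^{\eta}$; to obtain the Duhamel formula for $\phi = \langle \nabla \rangle^{r+\eta}\partial_x^j w$ you must apply an additional $\langle \nabla \rangle^r$ and commute it inside the propagator and the Bochner integral via Hille. With this correction your differentiation argument goes through. (Your aside about the ``commutator identity $\partial_x^j H\partial_{xx} = -i\partial_x^j\partial_{xx}$'' is also unnecessary: the equation for $w$ is already written as $w_t - iw_{xx} = \cdots$, so no conversion from $H$ is needed; one just uses that $\partial_{xx}$ commutes with $\langle \nabla \rangle^{r+\eta}\partial_x^j$.)

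The paper takes a shorter route. Rather than working with the Duhamel formula, which contains the propagator $e^{it\partial_{xx}}$, and then differentiating through the group, the paper observes that the proof of Lemma \ref{enoughregularity} already establishes the plain integral equation
\[
w(t) = w(0) + \int_0^t \bigl(i\partial_{xx}w(s) + A(s) + B(s)\bigr)\,ds
\]
in $L^2$. After checking that $\psi \in C([0,T],L^2)$, one applies the closed operator $\langle \nabla \rangle^{r+\eta}\partial_x^j$ to both sides via Hille's Theorem (Proposition \ref{Hillestheorem}) to obtain $\phi(t) = \phi(0) + \int_0^t \psi(s)\,ds$ directly, and $\partial_t\phi = \psi$ then follows from the fundamental theorem of calculus for Bochner integrals. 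This bypasses all discussion of strong differentiability of the Schr\"odinger group on $D(\partial_{xx})$ and the splitting argument for the Duhamel integral that your approach requires.
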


\begin{proof}
    The assertion that $\phi, \psi \in C([0,T],L^2(\mathbb{T}^2))$ follows from $w,F$ being in the algebra $H^{s_1+1,s_2}$ and Young's inequality. This also gives continuity into $L^2(\mathbb{T}^2)$ of $$t \mapsto -H\partial_{xx}w(s) + A(s) + B(s).$$ To prove the statement about Frechet derivatives, start by noting that the integral equation associated with \eqref{wequationsecondoccurence}, $$w(t) = w(0) + \int_0^t -i\partial_{xx}w(s) +A(s) + B(s) ds,$$ holds with equality in $L^2$, as a consequence of the proof of Lemma \ref{enoughregularity}. Continuity of $\psi$ in particular gives Bochner integrability so that Hille's theorem is applicable with the closed linear operator $\langle \nabla \rangle^{r+\eta} \partial_x^j$, yielding $$\phi(t)=\phi(0)+\int_0^t \psi(s) ds.$$
    From this, the statement about the Frechet derivative follows by the fundamental theorem of calculus for Bochner integrals.
\end{proof}
We turn to proving Lemma \ref{wenergyestimate}. By Lemma \ref{regularityforgronwall}, we can calculate \begin{align*}\frac{d}{dt} \|\langle \nabla \rangle^{r+\eta} \partial_x^j w(t)\|_{L^2}^2 = 2 \langle \phi(t), \partial_t \phi(t) \rangle  = 2\int_{\mathbb{T}^2} \phi(t) \psi(t). \end{align*}
Using anti-selfadjointness of $-i\partial_{xx}$ we bound this up to a fixed constant factor by
\begin{equation*}
\|\langle \nabla \rangle^{r+\eta} \partial_x^jw\|_{L^2}[\|\langle \nabla \rangle^{r+\eta} \partial_x^jA\|_{L^2} + \|\langle \nabla \rangle^{r+\eta} \partial_x^jB\|_{L^2}]. 
\end{equation*}
Dividing by $\|\langle \nabla \rangle^{r+\eta} \partial_x^jw\|_{L^2}$, using estimates \eqref{eq19}, \eqref{eq20}, \eqref{eq21} in the proof of Lemma \ref{estimatesforA}, and integrating in time gives
\begin{equation*}
 \|\langle \nabla \rangle^{r+\eta} \partial_x^jw(t)\|_{L^2} \leq \alpha(t) + \int_0^t \beta(s) \|\langle \nabla \rangle^{r+\eta} \partial_x^jw(s)\|_{L^2} ds 
\end{equation*}
where 
$$\beta(s):=\|P_-F_{xx}(s)\|_{L^{\infty}}$$
and
$$\alpha(t):=\|\langle \nabla \rangle^{r+\eta} \partial_x^jw(0)\|_{L^2} + \|\langle \nabla \rangle^{r+\eta}P_-(F_{xx})\|_{L^{4/3}([0,t],L^2)}\|w_{xx}\|_{L^4([0,t],L^{\infty})} + \|\langle \nabla \rangle^{r+\eta} \partial_x^jB\|_{L^1([0,t],L^2)}.$$
By Gronwall's inequality, we get
$$\|\langle \nabla \rangle^{r+\eta} \partial_x^jw(t)\|_{L^2} \leq \alpha(t) \exp \big( \int_0^t \beta(s) ds \big) \leq \varepsilon^2 \exp (\|P_-F_{xx}\|_{L^1L^{\infty}}) \lesssim \varepsilon^2,$$
where we used $\alpha(t) \lesssim \varepsilon^2$ and $ \exp (\|P_-F_{xx}\|_{L^1L^{\infty}})\lesssim 1$ due to the bootstrap hypothesis and Lemmas  \ref{BestimateforI} and \ref{initialdatawbound}.

\subsection{Paraproduct estimates}\label{paraproductestimatessection}
The estimates on the nonlinearity in the gauge transformed equation that we obtained in Section \ref{partIbootstrap3} depended crucially on two bounds on paraproduct terms. In this section we prove these estimates. While the first paraproduct estimate follows from 
Littlewood-Paley techniques, the second paraproduct estimate is technically more challenging, requiring us to employ the more robust machinery of paradifferential calculus.

\begin{lemma}[First paraproduct estimate]\label{paraproductone}
Suppose $s \geq 0$ and let $f, g$ be functions on the torus $\mathbb{T}^2$. We have that $$\|\partial_x^s P_{+hi}(P_-(f)g))\|_{L^2(\mathbb{T}^2)} \leq \|P_-(f)\|_{L^{\infty}(\mathbb{T}^2)}\|\partial_x^s g\|_{L^2(\mathbb{T}^2)}.$$
\end{lemma}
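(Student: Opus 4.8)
The plan is to exploit the rigid tangential frequency geometry forced by the two projections. On the Fourier side a frequency $n$ with nonzero coefficient in $P_{+hi}(P_-(f)g)$ must be written as $n=n_1+n_2$ with $\omega\cdot n_1<0$ (from $P_-f$) and $\omega\cdot n\ge \tfrac12$ (from $\psi_{+hi}$), whence $\omega\cdot n_2=\omega\cdot n+|\omega\cdot n_1|>\omega\cdot n>0$. Thus on the entire Fourier support of the product one has $0<\omega\cdot n<\omega\cdot n_2$, so the tangential derivative that lands on the output is always dominated by the tangential derivative of $g$. Two reductions follow at once. First, since $\omega\cdot n_2>0$ on every surviving term, $P_{+hi}(P_-(f)g)=P_{+hi}(P_-(f)\,P_+g)$, and $\|\partial_x^s P_+g\|_{L^2}\le\|\partial_x^s g\|_{L^2}$, so we may replace $g$ by $g_+:=P_+g$. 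Second, the case $s=0$ is immediate because $\psi_{+hi}$ is bounded by $1$: $\|P_{+hi}(P_-(f)g)\|_{L^2}\le\|P_-(f)g\|_{L^2}\le\|P_-(f)\|_{L^\infty}\|g\|_{L^2}$.

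For $s>0$ I would run a Littlewood--Paley argument in the tangential variable. Take a dyadic tangential partition of unity $\{R_k\}_{k\ge0}$, with $R_k$ localizing $\omega\cdot n\sim 2^k$ (and $R_0$ localizing $|\omega\cdot n|\lesssim1$), and similarly project $g_+=\sum_{j}Q_jg_+$ with $Q_j$ localizing the $g$-frequency to $\omega\cdot n_2\sim 2^j$; these are bounded on $L^2(\mathbb{T}^2)$ by the de Leeuw/transference statement already recorded in the paper. Applying $R_k$ to $\partial_x^s P_{+hi}(P_-(f)g_+)$ forces $\omega\cdot n\sim 2^k$, and the inequality $\omega\cdot n_2>\omega\cdot n$ then forces $\omega\cdot n_2\gtrsim 2^k$, so we may insert on $g_+$ the fattened projection $Q_{\gtrsim 2^k}:=\sum_{2^{j}\gtrsim 2^k}Q_j$; moreover on the range $\omega\cdot n\sim2^k$ we may write $R_k\partial_x^s=2^{ks}\widetilde R_k$ with $\|\widetilde R_k\|_{L^2\to L^2}\lesssim1$ uniformly in $k$. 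Hence
\[
\|R_k\partial_x^s P_{+hi}(P_-(f)g_+)\|_{L^2}\lesssim 2^{ks}\,\|P_-(f)\,Q_{\gtrsim 2^k}g_+\|_{L^2}\le 2^{ks}\,\|P_-(f)\|_{L^\infty}\,\|Q_{\gtrsim 2^k}g_+\|_{L^2}.
\]
The tangentially localized pieces $R_k[\,\cdot\,]$ have boundedly overlapping Fourier supports, so by almost orthogonality and then interchanging the sums,
\[
\|\partial_x^s P_{+hi}(P_-(f)g)\|_{L^2}^2\lesssim \|P_-(f)\|_{L^\infty}^2\sum_{n_2}|\widehat{g_+}(n_2)|^2\!\!\sum_{2^k\lesssim\omega\cdot n_2}\!\!2^{2ks}\lesssim \|P_-(f)\|_{L^\infty}^2\,\|\partial_x^s g\|_{L^2}^2,
\]
the inner geometric series being summable precisely because $s>0$ (with an $s$-dependent constant that is harmless for the values $s=0,1,2$ needed in Lemma~\ref{estimatesforA}; the clean constant $1$ in the statement is the $s=0$ case).

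The only genuinely delicate point is the bookkeeping created by the \emph{high-high-to-low} interaction: the output frequency $2^k$ may be arbitrarily smaller than the common size $2^j$ of the two input frequencies, so logarithmically many scales of $g$ feed a single output scale, and the estimate closes only thanks to the gain $2^{ks}$ coming from the derivative falling on the smaller output frequency. Everything else --- the uniform bound on $\widetilde R_k$, the almost orthogonality, the insertion of the fattened projection, the boundedness of the directional projections on $L^2(\mathbb{T}^2)$ --- is routine. This is the analogue of \cite[Lemma~3.2]{MR2052470}; the second paraproduct estimate (Lemma~\ref{paraproducttwo}), which puts the high-frequency factor in $L^\infty$ rather than $L^2$, is the one that will require paradifferential tools.
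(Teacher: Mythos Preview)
Your argument is correct and follows essentially the same route as the paper: both proofs exploit the key frequency constraint $\omega\cdot n_2>\omega\cdot n$ to insert a high-pass projection on $g$, then run a dyadic Littlewood--Paley decomposition in the tangential variable and close with the geometric series $\sum_{2^k\lesssim \omega\cdot n_2}2^{2ks}\lesssim(\omega\cdot n_2)^{2s}$. Your observation that the constant $1$ in the displayed inequality is only achieved at $s=0$ (the Littlewood--Paley argument for $s>0$ produces an $s$-dependent implicit constant) is also accurate and applies to the paper's own proof as well.
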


\begin{proof}
    This paraproduct estimate follows by adapting the proof of Lemma 3.2 in \cite{MR2052470} with the Littlewood-Paley operators replaced by Littlewood-Paley operators in the $\omega$-direction. 
    In the following, $P_n:=P_{2^n,\infty}-P_{2^{n-1},\infty}$ denotes 
    projection to Fourier modes $k$ with $2^{n-1}< |\omega \cdot k|\leq 2^n$, see \eqref{generalprojection}. For $s=0$ the statement is immediate by $L^2$-boundedness of $P_{+hi}$, so we now assume $s>0$.
Due to the projection to positive frequencies we have that 
$$P_nP_{+hi}(P_-(f)g) = P_nP_{+hi}(P_-(f)P_{\geq n-1}(g))$$ so that using $L^2$-boundedness of $P_nP_{+hi}$
$$\|P_nP_{+hi}(P_-(f)g)\|_{L_2} \leq \|P_-(f)\|_{L^{\infty}} \|P_{\geq n-1}g\|_{L^2}.$$ Multiplying by $2^{sn}$ and square summing in $n$ we further bound by Plancherel  
    $$\|\partial_x^s P_{+hi}(P_-(f)g))\|_{L_{x,y}^2} \leq \|P_-(f)\|_{L^{\infty}} \Big[ \sum_n 2^{2ns} \|P_{\geq n-1} g\|_{L^2}^2 \Big]^{\frac{1}{2}}.$$
Breaking up $\|P_{\geq n-1}g\|_{L^2_{x,y}}$ into Littlewood-Paley pieces, switching summation order, bounding geometric series and again using Plancherel, the last factor is bounded by 
\begin{equation*}\Bigg[ \sum_{j} \Big[ \sum_{n=1}^{j+1} 2^{2s(n-j)} \Big] 2^{2sj} \|P_{j}g\|_{L^2_{x,y}}^2 \Bigg]^{\frac{1}{2}}  \lesssim \|\partial_x^sg\|_{L^2_{x,y}}. \qedhere \end{equation*}
\end{proof}

\begin{lemma}[Second paraproduct estimate]\label{paraproducttwo}
    Suppose $s\geq 0$ and let $f, g$ be functions on the torus $\mathbb{T}^2$. We have that $$\||\partial_x|^s P_{+hi}(P_-(f)g))\|_{L^2(\mathbb{T}^2)} \leq \|P_-(f)\|_{L^2(\mathbb{T}^2)}\||\partial_x|^s g\|_{L^{\infty}(\mathbb{T}^2)}.$$
    In particular, if $s$ is an integer and $g$ has only positive frequency support with respect to the tangential direction, then this estimate holds with $|\partial_x|^s$ replaced by ordinary derivatives $\partial_x^s$.
\end{lemma}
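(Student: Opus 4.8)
The plan is to reduce the estimate to a statement about Fourier multipliers that can be handled by the Coifman--Meyer theorem and a transference argument, in the spirit of the commutator estimates proved earlier (Lemma \ref{KatoPonceVegaQuasih}). First I would observe that, writing $s\ge 0$, the operator $|\partial_x|^sP_{+hi}(P_-(f)\cdot)$ acting on $g$ is, after Fourier expansion, the bilinear multiplier operator with symbol
\[
m(\xi_1,\xi_2):=|\omega\cdot(\xi_1+\xi_2)|^s\,\psi_{+hi}(\omega\cdot(\xi_1+\xi_2))\,1_{\{\omega\cdot\xi_1<0\}}\,|\omega\cdot\xi_2|^{-s},
\]
where the $\xi_1$ slot is fed $f$ (or rather $P_-f$) and the $\xi_2$ slot is fed $|\partial_x|^sg$. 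The claimed bound then becomes $L^2\times L^\infty\to L^2$ boundedness of this bilinear operator. The key geometric point is that on the support of the symbol one has $\omega\cdot\xi_1<0$ while $\omega\cdot(\xi_1+\xi_2)\ge 1/2$, which forces $\omega\cdot\xi_2\ge 1/2$ and moreover $|\omega\cdot\xi_2|\gtrsim|\omega\cdot(\xi_1+\xi_2)|$; hence $|\omega\cdot(\xi_1+\xi_2)|^s|\omega\cdot\xi_2|^{-s}$ is bounded and, together with its derivatives, behaves like a Coifman--Meyer symbol in the tangential variable. The factor $1_{\{\omega\cdot\xi_1<0\}}$ is a sharp cutoff, so one must first smooth it: replace $P_-$ by $P_{-hi}+P_{lo}\cdot(\text{sharp cutoff on a compact set})$, or more cleanly note that since $\omega\cdot\xi_2\gtrsim 1$ on the support, we may freely insert a smooth cutoff $\chi(\omega\cdot\xi_1)$ that equals $1$ on $\{\omega\cdot\xi_1<0\}\cap\{$relevant region$\}$; the low tangential frequencies of $f$ contribute only a bounded, smooth piece. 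After this smoothing the symbol is a genuine (tangential) Coifman--Meyer symbol, constant in the normal frequency directions.

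Next I would invoke the transference principle (Proposition \ref{transferenceparaproduct}, \cite{MR1808390}) to pass from the torus to the plane: it suffices to prove the $L^2(\mathbb R^2)\times L^\infty(\mathbb R^2)\to L^2(\mathbb R^2)$ bound for the Euclidean bilinear multiplier with the same symbol, provided the symbol is $L^\infty$ and every lattice point is a Lebesgue point — which holds here after smoothing. On $\mathbb R^2$, after the rotational dilation aligning $\omega$ with a coordinate axis (exactly as in the proofs of \eqref{KenigPonceVegaQuasi} and \eqref{commutatorsderivatives}), the symbol depends only on the first coordinates $\omega\cdot\xi_1,\omega\cdot\xi_2$, so the operator is really a one-dimensional bilinear multiplier tensored with the identity in the normal direction. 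A one-dimensional Coifman--Meyer symbol yields an $L^{p_1}\times L^{p_2}\to L^p$ bound for $1<p_1,p_2\le\infty$; the endpoint $p_2=\infty,p_1=p=2$ is the statement we need. For the $L^p\times L^\infty\to L^p$ endpoint one either cites the known extension of Coifman--Meyer to this endpoint (it is standard that Coifman--Meyer symbols are bounded $L^p\times\mathrm{BMO}\to L^p$, hence in particular $L^p\times L^\infty\to L^p$), or, more elementarily, uses a Littlewood--Paley decomposition in the tangential frequency: write $g=\sum_k P_kg$, note $P_{+hi}(P_-(f)P_kg)$ has tangential frequency $\sim 2^k$ so its pieces are almost orthogonal in $k$, and estimate $\|P_kP_{+hi}(P_-(f)P_kg)\|_{L^2}\lesssim \|P_-f\|_{L^2}\|P_kg\|_{L^\infty}$ using Bernstein on the $L^2$ factor combined with the frequency localization — this is essentially the same bookkeeping as in the proof of Lemma \ref{paraproductone} but with the roles of $L^2$ and $L^\infty$ swapped, which forces the $|\partial_x|^s$ to land on the $L^\infty$ factor. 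Finally, the last sentence of the lemma: if $s\in\mathbb N$ and $g$ has only positive tangential frequency support, then on the support $\omega\cdot\xi_2>0$ so $|\omega\cdot\xi_2|^s=(\omega\cdot\xi_2)^s$ and $|\partial_x|^s=\partial_x^s$ as operators applied to such $g$; replacing $|\partial_x|^s$ by $\partial_x^s$ changes nothing and the estimate is identical.

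The main obstacle I anticipate is handling the endpoint $L^\infty$ in the second factor cleanly and rigorously: the Coifman--Meyer theorem as quoted in the excerpt (the Coifman--Meyer bound displayed after \eqref{CoifmanMeyer}) is stated for $1<p_1,p_2\le\infty$ with $1/p_1+1/p_2=1/p$ and $0<p<\infty$, which does cover $p_1=p=2,\ p_2=\infty$, but one must be a little careful that the sharp cutoff $1_{\{\omega\cdot\xi_1<0\}}$ genuinely gets absorbed into a smooth Coifman--Meyer symbol before applying the theorem and before applying the transference proposition (which requires the Euclidean symbol to be, at minimum, a bounded function with lattice points as Lebesgue points). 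The cleanest route is: (i) decompose $P_-=P_{-hi}+(P_--P_{-hi})$; for the $P_{-hi}$ part the product $P_{+hi}(P_{-hi}(f)\cdot g)$ is empty unless the $g$-frequency is positive high, and the symbol restricted there is smooth Coifman--Meyer; (ii) for the remaining part $P_-f-P_{-hi}f$ has tangential frequency in a bounded set, so $P_-(f)-P_{-hi}(f)$ is essentially a "low tangential frequency" function, $|\partial_x|^s$ acting on the product is harmless, and a crude estimate $\|P_{+hi}((P_--P_{-hi})(f)g)\|_{L^2}\lesssim\|P_-f\|_{L^2}\|g\|_{L^\infty}$ suffices (using that $P_+hi$ forces $g$ to have tangential frequency bounded below, so $\||\partial_x|^sg\|_{L^\infty}\gtrsim\|g\|_{L^\infty}$ on that piece, or simply absorbing the bounded tangential multiplier). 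Once the symbol is smoothed this way, the transference and Coifman--Meyer steps are routine and mirror the arguments already given for Lemma \ref{KatoPonceVegaQuasih}.
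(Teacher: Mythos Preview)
Your overall strategy --- rewrite as a bilinear multiplier, transfer to $\mathbb{R}$, and invoke Coifman--Meyer for the endpoint $L^2\times L^\infty\to L^2$ --- is the same as the paper's, but there is a genuine gap in the symbol analysis. After smoothing the sharp cutoff, the one-dimensional symbol
\[
m(\xi_1,\xi_2)=|\xi_1+\xi_2|^{s}\,\psi_{+hi}(\xi_1+\xi_2)\,\psi_{-hi}(\xi_1)\,|\xi_2|^{-s}
\]
is \emph{not} Coifman--Meyer. In the high-high region where $\xi_1+\xi_2\sim 1$ but $|\xi_1|\sim\xi_2\to\infty$, one has $\partial_{\xi_1}m\supset s(\xi_1+\xi_2)^{s-1}\xi_2^{-s}\sim\xi_2^{-s}$, whereas the Coifman--Meyer condition demands $\lesssim(|\xi_1|+|\xi_2|)^{-1}\sim\xi_2^{-1}$; this fails for $s<1$, and higher derivatives fail for every non-integer $s$. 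The paper flags exactly this obstruction (``its derivatives do not have enough decay in the strip $1/2\le\xi_1+\xi_2\le 2$ as $\xi_2\to\infty$''). Your decomposition $P_-=P_{-hi}+(P_--P_{-hi})$ does not help, since the bad region has $|\xi_1|$ large and hence sits inside the $P_{-hi}$ piece. Your Littlewood--Paley alternative also breaks: with $g=\sum_kP_kg$, the output $P_{+hi}(P_-(f)P_kg)$ has tangential frequency anywhere in $[1/2,2^{k+1}]$ (since $P_-f$ carries arbitrary negative frequency), so every $k\gtrsim\log_2 n$ contributes to output frequency $n$ and there is no almost-orthogonality; this is precisely the ``limitations of Littlewood--Paley techniques in $L^\infty$'' the paper mentions.

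The paper's remedy is not to smooth the symbol but to \emph{factor} the operator: since $P_{+hi}$ commutes with $|\partial_x|^s$ and both $P_{+hi}$, $P_-$ are $L^2$-bounded, one inserts a smooth angular cutoff $\pi_3$ with symbol $\phi(\theta)\psi(r)$ (polar coordinates) that equals $1$ on the support of the original symbol, writes $P_{+hi}(P_-(f)g)=P_{+hi}(\pi_3(P_-f,g))$, and reduces to $\||\partial_x|^s\pi_3(h,g)\|_{L^2}\lesssim\|h\|_{L^2}\||\partial_x|^sg\|_{L^\infty}$. The angular symbol \emph{is} Coifman--Meyer. Splitting $\pi_3=\pi_3'+\pi_3''$ into a high-high and a low-high piece, the low-high piece satisfies $|\xi_1+\xi_2|\sim|\xi_2|$ on its support, so after shifting $|\partial_x|^s$ to the second slot the resulting symbol is again Coifman--Meyer. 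For the high-high piece this shift does \emph{not} produce a Coifman--Meyer symbol; instead $|\partial_x|^s\pi_3'(h,g)=\pi^r(h,|\partial_x|^sg)$ for a \emph{residual} paraproduct in the sense of \cite{Taoparaproducts}, a strictly weaker class which nonetheless is bounded $L^2\times L^\infty\to L^2$. Recognising that the high-high interaction requires residual-paraproduct theory rather than Coifman--Meyer is the key step your proposal is missing.
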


The argument for Lemma \ref{paraproductone} cannot be employed here. 
When trying to place $\partial_x^sg$ in $L^{\infty}(\mathbb{T}^2)$, the reasoning breaks down due to limitations of Littlewood-Paley techniques in $L^{\infty}$. To overcome these limitations, we abandon the direct 
Littlewood-Paley approach and instead rely on paradifferential calculus. We establish the estimate in the Euclidean setting and then conclude by invoking the transference principle for bilinear multiplier operators (Proposition \ref{transferenceparaproduct}).

\begin{proof}[Proof of second paraproduct estimate]

First, observe that it suffices to prove the one-dimensional paraproduct estimate \begin{equation}\label{paraproductonline}\|\partial_x^sP_{+hi}(P_-(f)g)\|_{L^2(\mathbb{R})} \lesssim \|P_-(f)\|_{L^2(\mathbb{R})} \|\partial_x^sg\|_{L^{\infty}(\mathbb{R})}\end{equation} for functions $f$ and $g$ on the real line. Here, and throughout the remainder of the proof, $P_{+hi}$ is the Fourier multiplier operator on the real line with symbol $\psi_{+hi}(\xi)$, where $\xi \in \mathbb{R}$ is the frequency variable. (We emphasize that this operator is a one-dimensional version, distinct from the $\mathbb{T}^2$ multiplier operator defined in Section \ref{sectionnotation}.)
Indeed, the transference principle in Proposition \ref{transferenceparaproduct} is applicable to the related estimate 
\begin{equation}\label{paraproductonplane}\|\partial_{\omega}^sP_{+hi,\omega}(P_{-,\omega}(\tilde{f})\partial_{\omega}^{-s}\tilde{g})\|_{L^2(\mathbb{R}^2)} \lesssim \|\tilde{f}\|_{L^2(\mathbb{R}^2)} \|\tilde{g}\|_{L^{\infty}(\mathbb{R}^2)},\end{equation} which can be obtained from \eqref{paraproductonline} by a rotational dilation of the plane $\mathbb{R}^2$. In \eqref{paraproductonplane}, $P_{+hi,\omega}$ denotes the multiplier with symbol $\psi_{+}(\omega \cdot \xi)$, $\xi \in \mathbb{R}^2$ and $\partial_{\omega}^{-s}$ is the multiplier with symbol $\frac{1}{(\omega \cdot \xi)^s}1_{\xi \neq 0}(\xi)$. 
Note that $\partial_{\omega}^{-s}$ does not cause any issues on the left hand side as the paraproduct has frequency support bounded away from zero in the $\omega$-direction ($|\xi_1\cdot\omega|, |\xi_2 \cdot \omega| \gtrsim 1$ on the support of its symbol). 
\\

Now $P_{+hi}(P_-(f)g)$ is a paraproduct on $\mathbb{R}$ with symbol $m(\xi_1,\xi_2):=\psi_{+hi}(\xi_1+\xi_2)1_{(-\infty,0)}(\xi_1)$.
The support of the symbol is contained in the angular sector between the positive $\xi_2$-axis ($\xi_1=0$) and the portion of the line $\xi_1+\xi_2=0$ contained in the second quadrant of the $(\xi_1,\xi_2)$-plane. The symbol $m(\xi_1,\xi_2)$ is not smooth (it has a jump on the positive $\xi_2$-axis (at $\xi_1=0$)) and its derivatives do not have enough decay in the strip $1/2\leq\xi_1+\xi_2\leq 2$ as $\xi_2 \rightarrow \infty$ to be a Coifman-Meyer multiplier symbol. However, we can work around this in order to gain access to propositions about Coifman-Meyer paraproducts, which will allow us to move derivatives on the high-frequency factor in a low-high or high-high frequency interaction. \\

Denote the paraproducts $\pi_1(f,g):=P_{+hi}(f,g)$ and $\pi_2(f,g)=P_-(f)g$, and denote by $\#$ the binary operation on bilinear multiplier operators given by multiplying their symbols, which is commutative. We have that $P_{+hi}(P_-(f)g) = (\pi_1 \# \pi_2)(f,g).$
Define $\pi_3$ to be the paraproduct with symbol $$m_3(\xi_1,\xi_2):=\phi(\theta(\xi_1,\xi_2))\psi(r(\xi_1,\xi_2)) \quad \text{(with } \theta \text{ and } r \text{ standard polar coordinates)}$$ for functions $$\phi \in C^{\infty}(\mathbb{R}/2\pi\mathbb{Z},[0,1]) \ \text{with} \ \phi(\theta)=1 \ \text{for} \ \pi/2 \leq \theta \leq 3\pi/4 \ \text{and} \ \phi(\theta)=0 \ \text{for} \ -9\pi/8 \leq \theta \leq 3\pi/8,$$  and $$\psi \in C^{\infty}((0,\infty),[0,1]) \ \text{with} \ \psi(r)=1 \ \text{for} \ r \geq 1/4 \ \text{and} \ \psi(r)=0 \ \text{for} \ r \leq 1/8.$$ 
As the symbol of $\pi_3$ is constant $1$ on the support of the original paraproduct $\pi_1\#\pi_2$ we have that 
    $$\pi_1 \# \pi_2(f,g) = \pi_1 \# \pi_2 \# \pi_3 (f,g),$$
    and we may commute 
    $$P_{+hi}(P_-(f)g) = \pi_1 \# \pi_3 \# \pi_2 (f,g) = P_{+hi}(\pi_3(P_-(f),g)).$$
    By $L^2$-boundedness of the Littlewood-Paley projections $P_{+hi}$ and $P_-$, and as $P_{+hi}$ commutes with $\partial_x^s$ the problem reduces to the following estimate for the paraproduct $\pi_3$: $$\|\partial_x^s\pi_3(h, g)\|_{L^2_x} \lesssim \|h\|_{L_x^2}\|\partial_x^s g\|_{L_x^{\infty}}.$$
    \\
    
    But $\pi_3$ can be decomposed as a sum of a high-high and a low-high Coifman-Meyer paraproduct. We split $$\phi=\phi_1+\phi_2 \ \text{where} \ \phi_1(\theta)=\phi(\theta) \cdot \eta(\theta) \ \text{and} \ \phi_2(\theta)=\phi(\theta) \cdot (1-\eta(\theta))$$ for $$\eta \in C^{\infty}(\mathbb{R}/2\pi \mathbb{Z},[0,1]) \ \text{with} \ \eta(\theta)=1 \ \text{for} \ 3\pi/4\leq \theta \leq 11\pi/8 \ \text{and} \ \eta(\theta)=0 \ \text{and} \ -3\pi/8 \leq \theta \leq 5\pi/8.$$
    Then $$\pi_3 = \pi_3' + \pi_3''$$ where $\pi_3'$ and $\pi_3''$ are the paraproducts with symbol $m_3'(\xi_1,\xi_2)=\phi_1(\theta)\psi(r)$ and $m_3''(\xi_1,\xi_2)=\phi_2(\theta)\psi(r)$, respectively. Lemma \ref{claimiscoifmanmeyer} below shows that these both define Coifman-Meyer multiplier operators. Further, $\pi_3'$ is of type \textit{high-high} in that on the support of its symbol $|\xi_1|\sim|\xi_2|$. And $\pi_3''$ is of type \textit{low-high} in the sense that on the support of its symbol $|\xi_1|+|\xi_2|\sim|\xi_2|$. \\

     Now homogeneous Fourier multipliers commute with paraproducts in the following sense: \\
     
     $\bullet$ Given a homogeneous Fourier multiplier $D^s$ of order $s \geq 0$ and 
     a Coifman-Meyer paraproduct $\pi$ of type \textit{low-high}, we can write $$D^s \pi(f,g)=\pi'(f,|\partial_x|^sg)$$ for some other Coifman-Meyer paraproduct $\pi'$ of the same type, see \cite[Lemma 6.2]{Taoparaproducts}.
    In our case, we can easily verify directly that $$|\xi_1+\xi_2|^sm_3''(\xi_1,\xi_2)|\xi_2|^{-s}$$ is a Coifman-Meyer symbol. It is smooth as $m_3''$ is supported away from $\xi_2=0$. 
    By product rule,
\begin{equation*}
      \Big| \partial_{\xi_1}^{\alpha} \partial_{\xi_2}^{\beta} (|\xi_1+\xi_2|^s m_3''(\xi_1,\xi_2)|\xi_2|^{-s}) \Big|  \lesssim  \kern-1em\sum_{\substack{\alpha_1+\alpha_2+\alpha_3=\alpha  \\ \beta_1+\beta_2+\beta_3=\beta}}  \kern-0.2em\Big| \partial_{\xi_1}^{\alpha_1} \partial_{\xi_2}^{\beta_1}|\xi_1+\xi_2|^s \Big| \cdot \Big| \partial_{\xi_1}^{\alpha_2} \partial_{\xi_2}^{\beta_2}m_3''(\xi_1,\xi_2) \Big| \cdot \Big| \partial_{\xi_1}^{\alpha_3} \partial_{\xi_2}^{\beta_3}|\xi_2|^{-s} \Big|.
\end{equation*}
As $m_3''$ is of type \textit{low-high}, 
we have that $|\xi_1+\xi_2|\sim|\xi_2| \sim |\xi_1|+|\xi_2|$ on its support. We calculate that 
\begin{equation*}
    \Big| \partial_{\xi_1}^{\alpha_1} \partial_{\xi_2}^{\beta_1}|\xi_1+\xi_2|^s \Big| =|\xi_1+\xi_2|^{s-\alpha_1-\beta_1} \sim (|\xi_1|+|\xi_2|)^{s-\alpha_1-\beta_1},
\end{equation*}
and
\begin{equation*}|\partial_{\xi_1}^{\alpha_2} \partial_{\xi_2}^{\beta_2}m_3''(\xi_1,\xi_2)| \lesssim_{\alpha_2,\beta_2} (|\xi_1|+|\xi_2|)^{-\alpha_2-\beta_2} 
\end{equation*}
due to $m_3$ being Coifman-Meyer, and finally
\begin{equation}
    \Big| \partial_{\xi_1}^{\alpha_3} \partial_{\xi_2}^{\beta_3}|\xi_2|^{-s} \Big| = 
    \begin{Bmatrix}
    0 & \text{if} \  \alpha_3 > 0 \\
      |\xi_2|^{-s-\beta_3} & \text{if} \ \alpha_3=0 
    \end{Bmatrix}  \leq |\xi_2|^{-s-\beta_3-\alpha_3} \sim (|\xi_1|+|\xi_2|)^{-s-\alpha_3 - \beta_3}.
\end{equation}
Combining these three estimates yields
$\Big| \partial_{\xi_1}^{\alpha} \partial_{\xi_2}^{\beta} (|\xi_1+\xi_2|^s m_3(\xi_1,\xi_2)|\xi_2|^{-s}) \Big|  \lesssim (|\xi_1|+|\xi_2|)^{-\alpha-\beta}$ as desired.
Invoking the Coifman-Meyer theorem \cite{MR511821}\cite{CoifmanMeyerAlternative} we conclude
    \begin{equation}\label{lowhigh}
        \||\partial_x|^s \pi_3''(f,g)\|_{L^2} = \|\pi'(f,|\partial_x|^sg)\|_{L^2} \lesssim \|f\|_{L^2} \||\partial_x|^sg\|_{L^{\infty}}.
    \end{equation}

 $\bullet$ Given a homogeneous Fourier multiplier $D^s$ of order $s\geq 0$ and a Coifman-Meyer paraproduct $\pi$ of type \textit{high-high}, we can write
 $$D^s\pi(f,g)=\pi^r(f,|\partial_x|^sg)$$
for some \textit{high-high} residual paraproduct $\pi^r$, see \cite[Definition 5.1, Lemma 6.3]{Taoparaproducts}. 
Residual paraproducts define bounded bilinear maps $L^p(\mathbb{R}) \times L^q(\mathbb{R}) \rightarrow L^r(\mathbb{R})$ for $1\leq p,q,r \leq \infty$ with $\frac{1}{r}=\frac{1}{p}+\frac{1}{q}$ excluding the cases $(p,q)=(\infty,\infty),(\infty,1),(1,\infty)$, see \cite[Theorem 5.4]{Taoparaproducts}. Thus,
\begin{equation}\label{highhigh}
    \||\partial_x|^s\pi_3'(f,g)\|_{L^2} = \|\pi^r(f,|\partial_x|^sg)\|_{L^2} \lesssim \|f\|_{L^2}\||\partial_x|^sg\|_{L^{\infty}}.
\end{equation}

The proof concludes by adding (\ref{lowhigh}) and (\ref{highhigh}).
\end{proof}

\begin{lemma}\label{claimiscoifmanmeyer}
Let $\phi \in C^{\infty}(\mathbb{T})$ and $\psi \in C^{\infty}((0,\infty),[0,1])$ with $\psi(r)=1$ for $r \geq 1/4$ and $\psi(r)=0$ for $r\leq1/8$. The symbol $m(\xi_1,\xi_2):=\phi(\theta(\xi_1,\xi_2))\psi(r(\xi_1,\xi_2))$ defines a Coifman-Meyer multiplier operator, that is, it satisfies estimate (\ref{CoifmanMeyer}). 
\end{lemma}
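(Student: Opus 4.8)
The plan is to verify the Coifman--Meyer bound \eqref{CoifmanMeyer} directly by differentiating $m(\xi_1,\xi_2)=\phi(\theta)\psi(r)$, where $(r,\theta)$ are polar coordinates on $\mathbb{R}^2$, and carefully bookkeeping the homogeneity. First I would record the key scaling facts: the angle $\theta(\xi_1,\xi_2)$ is homogeneous of degree $0$, so each first-order derivative $\partial_{\xi_i}\theta$ is homogeneous of degree $-1$, and more generally $|\partial_{\xi_1}^{\alpha}\partial_{\xi_2}^{\beta}\theta| \lesssim_{\alpha,\beta} (|\xi_1|+|\xi_2|)^{-|\alpha|-|\beta|}$ away from the origin; likewise $r=|\xi|$ satisfies $|\partial_{\xi_1}^{\alpha}\partial_{\xi_2}^{\beta}r| \lesssim_{\alpha,\beta} (|\xi_1|+|\xi_2|)^{1-|\alpha|-|\beta|}$. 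These are standard and can be proved by induction on $|\alpha|+|\beta|$ together with the explicit formulas $\partial_{\xi_i}r = \xi_i/r$ and $\nabla\theta = (-\xi_2,\xi_1)/r^2$. The one subtlety is that both $\theta$ and $r$ (and their derivatives) are singular at the origin, but this is harmless because $\psi(r)$ vanishes for $r\le 1/8$, so the entire symbol $m$ and all its derivatives are supported in $\{|\xi|\ge 1/8\}$.

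Next I would apply the Faà di Bruno / multivariate chain rule to $m = (\phi\circ\theta)\cdot(\psi\circ r)$. Writing $\partial_{\xi_1}^{\alpha}\partial_{\xi_2}^{\beta} m$ as a sum (via the Leibniz rule) of products of a derivative of $\phi\circ\theta$ and a derivative of $\psi\circ r$, each of these composite derivatives is itself a finite sum of terms of the form $\phi^{(k)}(\theta)\prod_j \partial^{\gamma_j}\theta$ with $\sum_j|\gamma_j|$ equal to the order of differentiation, and similarly for $\psi\circ r$. Since $\phi\in C^\infty(\mathbb{T})$ is bounded with bounded derivatives, and each $\partial^{\gamma_j}\theta$ contributes a factor $(|\xi_1|+|\xi_2|)^{-|\gamma_j|}$, the $\phi\circ\theta$ piece of total order $a$ is $O((|\xi_1|+|\xi_2|)^{-a})$. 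For the $\psi\circ r$ piece of total order $b$: each $\psi^{(l)}(r)$ is bounded (indeed supported in the annulus $1/8\le r \le 1/4$ when $l\ge 1$, where $r\sim 1$, and for $l=0$ simply bounded), and each $\partial^{\gamma}r$ with $|\gamma|\ge 1$ contributes $(|\xi_1|+|\xi_2|)^{1-|\gamma|}$; one checks the product of these factors over a partition of $b$ derivatives yields $O((|\xi_1|+|\xi_2|)^{-b})$ when $b\ge 1$ (the "$+1$" exponents are outnumbered), and trivially $O(1)=O((|\xi_1|+|\xi_2|)^0)$ when $b=0$ (here using $|\xi|\gtrsim 1$ on $\operatorname{supp}\psi$). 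Multiplying the two pieces, with $a+b=|\alpha|+|\beta|$, gives exactly $|\partial_{\xi_1}^{\alpha}\partial_{\xi_2}^{\beta} m| \lesssim_{\alpha,\beta}(|\xi_1|+|\xi_2|)^{-|\alpha|-|\beta|}$, which is \eqref{CoifmanMeyer}.

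I do not anticipate a genuine obstacle here; the lemma is essentially a homogeneity count. The one place that requires a little care, and which I would highlight explicitly rather than sweep under the rug, is the behavior of $\theta$ and its derivatives near the origin and the fact that the $\psi$ cutoff removes this region entirely — so that there is no competition between the degree-$0$ homogeneity of $\phi(\theta)$ and the lower bound $|\xi|\gtrsim 1$ needed to control the $b=0$ (undifferentiated $\psi$) term. It is also worth remarking that the support of $m$ is contained in a conical region (intersected with $\{|\xi|\ge 1/8\}$), which is consistent with, but not needed for, the estimate. Once \eqref{CoifmanMeyer} is established, the Coifman--Meyer theorem quoted after \eqref{CoifmanMeyer} applies and finishes the claim.
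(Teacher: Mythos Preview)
Your proposal is correct and follows essentially the same approach as the paper's proof: both split derivatives via the Leibniz rule between the $\phi(\theta)$ and $\psi(r)$ factors, use the homogeneity bound $|\partial_{\xi_1}^{\tilde\alpha}\partial_{\xi_2}^{\tilde\beta}\theta|\lesssim r^{-\tilde\alpha-\tilde\beta}$ for the angular piece, and exploit the compact $r$-support of $\psi^{(l)}$ (for $l\ge 1$) to make the radial piece harmless. The paper organizes the argument into the two cases ``some derivative hits $\psi$'' versus ``all derivatives hit $\phi$'' rather than writing out Faà di Bruno explicitly, but the content is the same.
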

\begin{proof}
By product rule,
$$\partial_{\xi_1}^{\alpha} \partial_{\xi_2}^{\beta}m(\xi_1,\xi_2)=\sum_{\alpha_1+\alpha_2 = \alpha, \ \beta_1+\beta_2 = \beta} C_{\alpha_1,\alpha_2,\beta_1,\beta_2} \cdot \partial_{\xi_1}^{\alpha_1}\partial_{\xi_2}^{\beta_1}(\phi(\theta)) \cdot \partial_{\xi_1}^{\alpha_2} \partial_{\xi_2}^{\beta_2}(\psi(r)),$$
where $C_{\alpha_1,\alpha_2,\beta_1,\beta_2}$ are suitable multinomial coefficients.

If any of the derivatives fall on the second factor $\psi(r)$, i.e. if $\alpha_2>0$ or $\beta_2>0$, then compactness of $\supp \ \partial_r\psi(r)$ tells us that the term $\partial_{\xi_1}^{\alpha_1}\partial_{\xi_2}^{\beta_1}\phi(\theta) \cdot \partial_{\xi_1}^{\alpha_2} \partial_{\xi_2}^{\beta_2}\psi(r)$ vanishes unless $|\xi_1|+|\xi_2| \sim r \sim 1$. Further, using compactness of $\supp \ \phi \subseteq \mathbb{T}$ as well, this term is bounded. In the non-vanishing case we can thus bound 
$$|\partial_{\xi_1}^{\alpha_1}\partial_{\xi_2}^{\beta_1}\phi(\theta) \cdot \partial_{\xi_1}^{\alpha_2} \partial_{\xi_2}^{\beta_2}\psi(r)| \lesssim 1 \sim (|\xi_1|+|\xi_2|)^{-\alpha-\beta}.$$

Otherwise, all the derivatives fall on the first factor, and we want to bound $[\partial_{\xi_1}^{\alpha} \partial_{\xi_2}^{\beta} \phi(\theta)] \psi(r)$.
The second factor $\psi(r)$ is bounded by assumption.
We can estimate
$$|\partial_{\xi_1}^{\alpha} \partial_{\xi_2}^{\beta} \phi(\theta)| \lesssim (|\xi_1|+|\xi_2|)^{-\alpha-\beta}$$
by chain rule and using the fundamental estimate \begin{equation*}|\partial_{\xi_1}^{\tilde{\alpha}}\partial_{\xi_2}^{\tilde{\beta}}\theta| \lesssim r^{-\tilde{\alpha}-\tilde{\beta}} \sim (|\xi_1|+|\xi_2|)^{-\tilde{\alpha}-\tilde{\beta}} \quad \text{for any } \tilde{\alpha},\tilde{\beta} \in \mathbb{N}.\qedhere \end{equation*}
\end{proof}

We record the following version of the fractional chain rule, which will be used in the next section.

\begin{lemma}[Fractional chain rule]\label{chainrule} Let $s \in (0,1)$ and $1<p<\infty$. For any function $F \in H^s(\mathbb{T}^2)$ with $e^{iF} \in L^{\infty}(\mathbb{T}^2)$ we have that
    \begin{equation}\label{fractionalchainruleinequality}\||\nabla|^se^{iF}\|_{L^p} \lesssim \| |\nabla|^sF\|_{L^p} \|e^{iF}\|_{L^{\infty}}.\end{equation}
    In particular, this estimate holds for any real-valued function $F \in H^s(\mathbb{T}^2)$. 
\end{lemma}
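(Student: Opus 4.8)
The plan is to reduce the torus estimate \eqref{fractionalchainruleinequality} to a corresponding estimate on the Euclidean plane $\mathbb{R}^2$ and then invoke the classical fractional chain rule of Christ--Weinstein (or Staffilani, or the version in Taylor's book) on $\mathbb{R}^2$. Concretely, I would observe that $|\nabla|^s$ on $\mathbb{T}^2$ is the periodization of the Fourier multiplier $|\xi|^s$ on $\mathbb{R}^2$, whose restriction to the lattice $\mathbb{Z}^2$ has every point except the origin as a Lebesgue point; the origin contributes only the mean, which is harmless since $|\nabla|^s$ kills constants. So by a De Leeuw-type transference argument (as already used repeatedly in the paper — compare Proposition \ref{deLeeuw} and the remarks around Lemma \ref{PQR}), it suffices to prove
$$\||\nabla|^s(e^{iG})\|_{L^p(\mathbb{R}^2)} \lesssim \||\nabla|^sG\|_{L^p(\mathbb{R}^2)}\|e^{iG}\|_{L^\infty(\mathbb{R}^2)}$$
for Schwartz $G$. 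Alternatively, and perhaps more cleanly, I would lift the problem directly: given periodic $F$, one cannot make it Schwartz, so the cleaner route is to quote a fractional chain rule that is already available on the torus. I would therefore structure the proof around the statement that the Christ--Weinstein fractional chain rule $\||\nabla|^s(\Phi\circ F)\|_{L^p} \lesssim \|\Phi'\|_{L^\infty}\||\nabla|^sF\|_{L^p}$ holds on $\mathbb{T}^2$ for $s\in(0,1)$, $1<p<\infty$, and any $C^1$ function $\Phi$ with bounded derivative, and then apply it with $\Phi(t) = e^{it}$ (real-valued case) or with $\Phi$ a smooth function agreeing with $e^{it}$ on a neighbourhood of the range of $F$ when $F$ is merely complex-valued with $e^{iF}$ bounded.

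The key steps, in order: (i) State the fractional chain rule on $\mathbb{R}^n$ (Christ--Weinstein) and note it transfers to $\mathbb{T}^2$ by the De Leeuw periodization principle already invoked in the paper, using that $|\xi|^s$ is a Coifman--Meyer-type homogeneous symbol away from the origin and the origin only affects the mean (on which $|\nabla|^s$ vanishes). Here one should be slightly careful: the cleanest transference for a nonlinear operator like $F\mapsto |\nabla|^s(e^{iF})$ is to split $e^{iF} = \intavg e^{iF} + (e^{iF} - \intavg e^{iF})$; $|\nabla|^s$ annihilates the constant, and on the mean-zero part one compares periodic and Euclidean Littlewood--Paley decompositions directly, which is standard. (ii) Apply the chain rule with $\Phi(t)=e^{it}$, $|\Phi'|\equiv 1$, giving \eqref{fractionalchainruleinequality} for real-valued $F$. (iii) For complex-valued $F$ with $e^{iF}\in L^\infty$, pick a smooth compactly-supported-derivative extension $\Phi$ of $z\mapsto e^{iz}$ to a neighbourhood of the (compact, since $F$ is continuous on $\mathbb{T}^2$ for the $F$'s we use) range of $F$ — here we use that $F\in H^s$ with $e^{iF}\in L^\infty$, and in our applications $F\in C(\mathbb{T}^2)$ — with $\|\Phi'\|_{L^\infty}\lesssim \|e^{iF}\|_{L^\infty}$, and conclude as before. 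In fact, for our purposes $F$ is always real-valued (it is $\partial_x^{-1}u$ with $u$ real), so the "in particular" clause is the only case we actually need, and I would emphasize that.

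I expect the main obstacle to be the bookkeeping in the transference step for a genuinely nonlinear operator: De Leeuw's theorem as stated (Proposition \ref{deLeeuw}, Proposition \ref{transferenceparaproduct}) is about linear and bilinear multipliers, whereas $F\mapsto |\nabla|^s(e^{iF})$ is neither. The resolution is that the fractional chain rule inequality on $\mathbb{R}^2$ is itself proved via Littlewood--Paley decomposition and Coifman--Meyer/paraproduct bounds, each piece of which is a multiplier or paraproduct estimate amenable to transference (exactly in the spirit of Lemmas \ref{PQR} and \ref{sderivativesPw} and the remark following them). But rather than re-run that entire machine, the honest and shortest path is simply to cite the fractional chain rule in a form already available on compact manifolds / the torus — e.g.\ the version in Taylor's \emph{Tools for PDE} or the Grafakos--Oh / Bourgain--Li formulations — since this is a completely standard and well-documented fact for $0<s<1$, $1<p<\infty$. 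So the "hard part" is really just choosing which black box to cite; the mathematical content beyond that is routine.
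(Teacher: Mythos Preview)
Your proposal lands in essentially the same place as the paper's proof: both cite a fractional chain rule already established on $\mathbb{T}^2$ (the paper cites a reference adapting Taylor's proof to the periodic setting) and then specialize to $\Phi(z)=e^{iz}$. Your transference discussion is not wrong but is ultimately discarded in favour of this citation, which is exactly what the paper does.

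The one genuine difference is in how the complex-valued case is handled. The paper uses the Christ--Weinstein chain rule in its general form: if $|\Phi(u)-\Phi(v)|\leq |u-v|(G(u)+G(v))$ for some $G:\mathbb{C}\to[0,\infty)$, then $\||\nabla|^s(\Phi\circ F)\|_{L^p}\lesssim \||\nabla|^sF\|_{L^p}\|G\circ F\|_{L^\infty}$. It verifies this hypothesis for $\Phi(z)=e^{iz}$ via the line-integral bound
\[
|e^{iu}-e^{iv}|\leq |u-v|\sup_{t\in[0,1]}|e^{i(v+t(u-v))}|\leq |u-v|\big(|e^{iu}|+|e^{iv}|\big),
\]
so that $G\circ F=|e^{iF}|$ and the factor $\|e^{iF}\|_{L^\infty}$ drops out directly. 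This works for any $F\in H^s(\mathbb{T}^2)$ with $e^{iF}\in L^\infty$, as stated. Your cutoff-extension argument for complex $F$ instead requires $F$ to have compact range, hence $F\in C(\mathbb{T}^2)$, which is \emph{not} implied by $F\in H^s(\mathbb{T}^2)$ for $s\in(0,1)$ on $\mathbb{T}^2$; so as a proof of the lemma as stated, that step has a gap. You correctly flag that only the real-valued case is used downstream, so this does not affect the applications, but the paper's route proves the lemma in the generality claimed.
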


\begin{proof} 
In \cite{fractionalchainruletorus}, the Christ--Weinstein fractional chain rule \cite{MR1124294} is shown for functions on the torus by adapting the proof in \cite[Chapter 2]{Taylor2000ToolsFP} to the periodic setting. 
To apply the endpoint case of \cite[Proposition 3]{fractionalchainruletorus} to the function $v \mapsto e^{iv}$, we verify the hypothesis that $|e^{iv}-e^{iu}| \lesssim |u-v|(G(u)+G(v))$ for some function $G:\mathbb{C} \rightarrow [0,\infty)$.
Indeed, denoting by $[u,v]$ the straight line path connecting $u,v \in \mathbb{C}$, we have that $$|e^{iu} -e^{iv}| \leq  \big| \int_{[u,v]} -i e^{iz} dz \big| \leq |u-v| \sup_{t \in [0,1]} |e^{i(v+t(u-v))}| \leq |u-v|[e^{-\Re(u)}+e^{-\Re(v)}],$$
and \cite[Proposition 3]{fractionalchainruletorus} yields the result. 
\end{proof}

\subsection{Part $\RomanII$ of the bootstrap: estimates on the primitive}\label{partIIbootstrap}
We prove part $\RomanII$ \eqref{II} of the bootstrap conclusion \eqref{C}. All spacetime norms in this section are with respect to the time interval $[0,T']$. In particular, we use the notation $\|\cdot\|_{L^pL^q}$ as an abbreviation for
the mixed Lebesgue spacetime norm $\|\cdot\|_{L^p([0,T'],L^q(\mathbb{T}^2))}$.
\\

Differentiating the defining equation \eqref{gaugesecondappearance} for the gauge transformed function $w$ and rearranging as in \cite{MR2052470} gives 
\begin{equation*} iw_x = P_{+hi}(F_xe^{-iF})= [1 - (P_{lo}+P_{-hi})](P_{+HI}(F_x)e^{-iF}) + P_{+hi}([P_{-HI}+P_{LO}](F_x)e^{-iF}).\end{equation*} We obtain 
\begin{equation}\label{Fxequation}
    P_{+HI}(F_{x}) = ie^{iF}w_x + ie^{iF}E
\end{equation}
with
\begin{equation*}
    E= \underbrace{(P_{lo}+P_{-hi})(P_{+HI}(F_x)e^{-iF})}_{=:E_1} + \underbrace{P_{+hi}(P_{LO}(F_x)e^{-iF})}_{=:E_2} + \underbrace{P_{+hi}(P_{-HI}(F_x)e^{-iF})}_{=:E_3}.
\end{equation*}
Due to the frequency projections we may replace $e^{-iF}$ by $P_{-hi}(e^{-iF})$ and $P_{+hi}(e^{-iF})$ in $E_1$ and $E_3$ respectively. That is, we have $E_1=(P_{lo}+P_{-hi})(P_{+HI}(F_x)P_{-hi}(e^{-iF}))$ and $E_3=P_{+hi}(P_{-HI}(F_x)P_{+hi}(e^{-iF})).$
\\

In the following, we refer to terms involving $w$ and derivatives thereof as main terms and terms involving $E$ and derivatives thereof as error terms.
We may differentiate equation (\ref{Fxequation}) once more, yielding
\begin{equation}\label{Fxxequation}P_{+HI}(F_{xx}) = ie^{iF}w_{xx}+i\partial_x(e^{iF})w_x + ie^{iF}E_x +i\partial_x(e^{iF})E.\end{equation}

In both equations (\ref{Fxequation}) and (\ref{Fxxequation}) we have equality pointwise with both sides being continuous.  Indeed, as already observed $w,F,e^{-iF}$ are in $H^{s_1,s_2}_{\omega}$ due to the definition of $X$-norm. By boundedness property \eqref{boundedness1} in Lemma \ref{anisotropicproperties} and by the algebra property, both sides of (\ref{Fxequation}), (\ref{Fxxequation}) are in $H^{\tilde{s}_1, \tilde{s}_2}_{\omega}$ with $\tilde{s}_1=s_1-2,\tilde{s}_2=\frac{s_1-2}{s_1}s_2$. Provided $s_1$ is sufficiently large, anisotropic Sobolev embedding \eqref{anisotropicembedding} is applicable establishing pointwise equality and continuity. \\

Equations (\ref{Fxequation}) and (\ref{Fxxequation}) are central to gain control on the terms in part $\RomanII$ \eqref{II} of the bootstrap conclusion. This is done in Lemmas \ref{IIb}-\ref{IIc} below. In proving those lemmas, it will be useful to have the function space defined below and the estimates in Lemma \ref{eiFZ} available.

\begin{definition}
    Define $Z$ to be the space $$Z:=\{g:\mathbb{T}^2 \rightarrow \mathbb{C} \ | \  \|g\|_Z:=\|g\|_{H^{\eta,p}}+\|\partial_xg\|_{H^{\eta,p}} < \infty \}.$$
(Here $H^{\eta,p}(\mathbb{T}^2)$ is as usual the Sobolev space defined by the norm $\|\langle \nabla \rangle^{\eta}g\|_{L^p(\mathbb{T}^2)}.$)
\end{definition}

By Sobolev embedding $H^{\eta,p}(\mathbb{T}^2) \hookrightarrow L^{\infty}(\mathbb{T}^2)$ and fractional product rule we have that $Z$ is an algebra with $$\|g\cdot f\|_Z \lesssim \|g\|_Z \cdot \|f\|_Z.$$ Part $\RomanI$ of the bootstrap conclusion, which we have already established, implies that $\|w_{xx}\|_{L^4Z} \lesssim \varepsilon^2.$ Equation \ref{Fxequation} and the algebra property of $Z$ will allow us to conclude $\|P_{+HI}(F_{xx})\|_{L^4Z} \lesssim \varepsilon^2$ once we have established that $\|e^{iF}\|_{L^{\infty}Z} \lesssim 1$ and
$\|E\|_{L^4Z} \lesssim \varepsilon^2$. But this implies the desired bound on the first term in part $\RomanII$ of the bootstrap conclusion \eqref{C} for positive high frequencies. This automatically takes care of the negative high frequencies as well, due to $F$ being real-valued so that $P_{-HI}(F_{xx})=\overline{P_{+HI}(F_{xx})}$.
We will deal with low frequencies separately using energy bounds, see \eqref{lowfrequencies}. The next lemma establishes the desired bound $\|e^{iF}\|_{L^{\infty}Z} \lesssim 1$. The desired bound on the error term, $\|E\|_{L^4Z} \lesssim \varepsilon^2$, will be shown in Lemma \ref{errortermslemma}.

\begin{lemma}\label{eiFZ}
      Assume the bootstrap hypothesis holds and $0 \leq \eta<(2\sigma^2-1)/(2\sigma+2)$. Then $$\|e^{iF}\|_{L^{\infty}Z} \lesssim 1.$$
      More precisely,
      \begin{equation}\label{eiFZ1}\|\langle \nabla \rangle^{\eta}e^{iF}\|_{L^{\infty}L^p} \lesssim 1 \quad \text{and} \quad
         \|\langle \nabla \rangle^{\eta}\partial_xe^{iF}\|_{L^{\infty}L^p} \lesssim \varepsilon^2. 
      \end{equation}
      Further, $e^{iF}, \partial_xe^{iF} \in H^{s+\eta}$ with \begin{equation}\label{eiFZ2}
          \|\langle \nabla \rangle^{r+\eta}e^{iF}\|_{L^{\infty}L^2} \lesssim 1 \quad \text{and} \quad
          \|\langle \nabla \rangle^{r+\eta}\partial_xe^{iF}\|_{L^{\infty}L^2} \lesssim \varepsilon^2.
      \end{equation}
\end{lemma}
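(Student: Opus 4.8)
The plan is to prove Lemma \ref{eiFZ} by reducing everything to the fractional chain rule (Lemma \ref{chainrule}) applied to the real-valued function $F$, together with the norm-growth control for $F$ and its primitive established earlier (Lemmas \ref{Hshnormgrowth} and \ref{Fyqnormgrowth}), and the hypothesis that the initial data is small in $Y$, $\|u_0\|_Y \lesssim \varepsilon^2$. The key point throughout is that $e^{iF}$ has modulus $1$, so $\|e^{iF}\|_{L^\infty} = 1$ pointwise in time, which kills every factor of the form $\|e^{iF}\|_{L^\infty L^\infty}$.

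First I would prove \eqref{eiFZ1}. For the zeroth-order piece, write $\langle\nabla\rangle^\eta e^{iF} = e^{iF} + |\nabla|^\eta e^{iF}$ (up to harmless Fourier-multiplier manipulations), estimate $\|e^{iF}\|_{L^\infty L^p} = 1$ trivially, and apply Lemma \ref{chainrule} to get $\||\nabla|^\eta e^{iF}\|_{L^p} \lesssim \||\nabla|^\eta F\|_{L^p}\|e^{iF}\|_{L^\infty} \lesssim \||\nabla|^\eta F\|_{L^p}$. Then use Gagliardo--Nirenberg / Sobolev embedding on $\mathbb{T}^2$ to bound $\||\nabla|^\eta F\|_{L^p} \lesssim \|\langle\nabla\rangle^\sigma F\|_{L^2} \lesssim \|u_0\|_Y \lesssim \varepsilon^2$, using that $\|\partial_x^{-1}u(t)\|_{H^\sigma_\omega}$ is controlled by the growth bound (Lemma \ref{Hshnormgrowth} applied to $F$, equivalently $\|u\|_X$-control, and Lemma \ref{Fyqnormgrowth}) uniformly on $[0,T']$ — this is exactly the kind of estimate already used in Lemma \ref{BestimateforI} (cf. estimate \eqref{eq16}). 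Since $p<\infty$ and $\sigma>7/8$, the exponent bookkeeping closes provided $\eta < (2\sigma^2-1)/(2\sigma+2)$; this inequality is precisely what guarantees that the relevant Gagliardo--Nirenberg interpolation exponent $\theta$ lies in $[0,1]$ while keeping $\eta/p$ small enough. For the first-order piece, $\partial_x e^{iF} = i(\partial_x F)e^{iF} = iu\, e^{iF}$, so by the fractional product rule on $\mathbb{T}^2$ and the algebra-type estimate, $\|\langle\nabla\rangle^\eta(u e^{iF})\|_{L^p} \lesssim \|\langle\nabla\rangle^\eta u\|_{L^{p_1}}\|e^{iF}\|_{L^\infty} + \|u\|_{L^{p_2}}\|\langle\nabla\rangle^\eta e^{iF}\|_{L^p}$; both factors involving $u$ are $O(\varepsilon^2)$ by the same $Y$-smallness plus norm growth, and $\|\langle\nabla\rangle^\eta e^{iF}\|_{L^p} \lesssim 1$ from the first part, giving the claimed $\varepsilon^2$ bound.

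Next I would prove \eqref{eiFZ2}, which is the same argument with $L^p$ replaced by $L^2$ and $\eta$ replaced by $r+\eta$ (recall $7/8<r<\sigma-\eta$). Again split $\langle\nabla\rangle^{r+\eta}e^{iF} = e^{iF} + |\nabla|^{r+\eta}e^{iF}$; the first term is bounded in $L^2(\mathbb{T}^2)$ since the torus has finite measure and $|e^{iF}|=1$, and the second is handled by Lemma \ref{chainrule} (with $s=r+\eta<1$, which holds after possibly shrinking $\eta$, or by iterating the chain rule / using that $r+\eta<\sigma<1$) to get $\lesssim \||\nabla|^{r+\eta}F\|_{L^2} \lesssim \|\langle\nabla\rangle^\sigma F\|_{L^2} \lesssim \|u_0\|_Y \lesssim 1$ since $r+\eta<\sigma$. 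The first-order bound uses $\partial_x e^{iF}=iue^{iF}$ and the fractional Leibniz rule in $L^2$: $\|\langle\nabla\rangle^{r+\eta}(ue^{iF})\|_{L^2}\lesssim \|\langle\nabla\rangle^{r+\eta}u\|_{L^2}\|e^{iF}\|_{L^\infty}+\|u\|_{L^\infty}\|\langle\nabla\rangle^{r+\eta}e^{iF}\|_{L^2}$; here $\|\langle\nabla\rangle^{r+\eta}u\|_{L^2} = \|\langle\nabla\rangle^{r+\eta}F_x\|_{L^2} \lesssim \|\langle\nabla\rangle^\sigma\langle\partial_x\rangle u_0\|_{L^2}$-type quantity which is $\lesssim \|u_0\|_Y \lesssim \varepsilon^2$ after invoking the norm-growth bound (as in Lemma \ref{wenergyestimate}'s appeal to estimate \eqref{eq5}), and $\|u\|_{L^\infty}\lesssim \|u\|_{H^{1+\sigma,\sigma}_\omega}\lesssim \|u_0\|_Y\lesssim\varepsilon^2$ by anisotropic Sobolev embedding \eqref{anisotropicembedding} and Lemma \ref{Hshnormgrowth}.

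The main obstacle, and the part requiring genuine care rather than routine bookkeeping, is the exponent arithmetic tying together the condition $\eta<(2\sigma^2-1)/(2\sigma+2)$, the choice of $p$, the choice of $r\in(7/8,\sigma-\eta)$, and the Gagliardo--Nirenberg interpolation: one must simultaneously (i) have Sobolev embedding $H^{\eta,p}\hookrightarrow L^\infty$ so $Z\hookrightarrow$ (algebra), (ii) be able to dominate $\||\nabla|^\eta F\|_{L^p}$ and $\||\nabla|^{r+\eta}F\|_{L^2}$ by $\|\langle\nabla\rangle^\sigma F\|_{L^2}$ with a legitimate interpolation exponent in $[0,1]$, and (iii) stay below $s=1$ so that Lemma \ref{chainrule} applies directly. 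I would handle this by fixing $\eta$ first (small, with $\sigma-\eta>7/8$ as already required in Section \ref{setup}), then noting that the stated bound on $\eta$ is exactly the threshold making the $L^p$ Gagliardo--Nirenberg exponent admissible, and finally observing that all the $F$-norms are controlled uniformly on $[0,T']$ because the exponential factors in Lemmas \ref{Hshnormgrowth} and \ref{Fyqnormgrowth} are $O(1)$ under the bootstrap hypothesis (since $\|\partial_x F_{xx}\|_{L^1 L^\infty}=\|P_-F_{xx}\|_{L^1L^\infty}\lesssim 1$ and $\|u\|_{L^1L^\infty}\lesssim 1$ there), combined with $\|u_0\|_Y\lesssim\varepsilon^2$.
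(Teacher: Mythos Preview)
Your overall strategy matches the paper's proof: norm-growth control (Lemmas \ref{Hshnormgrowth}, \ref{Fyqnormgrowth}) under the bootstrap hypothesis gives smallness of $F$ and $u=F_x$ in appropriate Sobolev norms uniformly in time, and then the fractional chain rule (Lemma \ref{chainrule}) and the fractional product rule transfer these to $e^{iF}$ and $\partial_x e^{iF}=iu\,e^{iF}$, exploiting $|e^{iF}|=1$ throughout.

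There is, however, a real gap in your embedding step. You write $\||\nabla|^\eta F\|_{L^p}\lesssim\|\langle\nabla\rangle^\sigma F\|_{L^2}$, invoking isotropic Sobolev/Gagliardo--Nirenberg on $\mathbb{T}^2$. For that embedding one needs $\sigma-\eta\geq 1-2/p$; combined with the requirement $2/p<\eta$ (needed for $H^{\eta,p}\hookrightarrow L^\infty$), this forces $\sigma>1$, which is not assumed. The paper instead uses the \emph{anisotropic} norm $\|F\|_{H^{1+\sigma,\sigma}_\omega}$ --- this is what estimate \eqref{eq16}, which you cite, actually controls --- and the extra tangential derivative in the ``$1+\sigma$'' slot is essential. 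The embedding $H^{1+\sigma,\sigma}_\omega\hookrightarrow H^{\eta,\infty}\hookrightarrow H^{\eta,p}$ comes from combining \eqref{boundedness2} with \eqref{anisotropicembedding} in Lemma \ref{anisotropicproperties}, and the threshold $\eta<(2\sigma^2-1)/(2\sigma+2)$ is \emph{precisely} the condition for \eqref{anisotropicembedding} to apply to $\langle\nabla\rangle^\eta F\in H^{(1+\sigma)(1-\eta/\sigma),\,\sigma-\eta}_\omega$, not a Gagliardo--Nirenberg interpolation constraint. Once you replace the isotropic $H^\sigma$ bound on $F$ by the anisotropic $H^{1+\sigma,\sigma}_\omega$ bound (obtained from \eqref{Hr1rgrowth} together with Lemma \ref{Fyqnormgrowth}, as the paper does), the remainder of your argument goes through essentially as written.
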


\begin{proof}
    The growth of the $H^{1+\sigma, \sigma}$-norm is controlled by energy arguments; Lemma \ref{Hshnormgrowth}
    gives \begin{equation}\label{Hr1rgrowth}\|u(t)\|_{H^{1+\sigma, \sigma}} \lesssim \|u_0\|_{H^{1+\sigma, \sigma}} \exp \big(\int_0^t \|\partial_x u(s)\|_{L^{\infty}} ds \big) \lesssim \|u_0\|_Y \lesssim \varepsilon^2 \lesssim 1,\end{equation}
    where we used $\int_0^t\|\partial_x u(s) \|_{L^{\infty}}ds \leq \exp (\varepsilon) \lesssim 1$ by part $\RomanII$ \eqref{II} of the bootstrap hypothesis. 
    Similarly, by Lemma \ref{Fyqnormgrowth}, we have 
    \begin{align*}
       \|\langle \partial_y \rangle^{\sigma} F(t)\|_{L^2} 
       & \lesssim [\|\langle \partial_y \rangle^{\sigma} F(0)\|_{L^2} + \|\langle \partial_y \rangle^{\sigma} u_0\|_{L^2} ] \exp \Big(\int_0^t \|\partial_xu(s)\|_{L^{\infty}} + \|u(s)\|_{L^{\infty}} ds \Big) \\ &\lesssim \|u_0\|_Y \exp (\varepsilon) \lesssim \varepsilon^2,
    \end{align*}
    again using part $\RomanII$ of the bootstrap hypothesis in the second inequality. Combining with (\ref{Hr1rgrowth}) (and $F_x=u$) yields \begin{equation}\label{eq16} \|F\|_{L^{\infty}H^{1+\sigma, \sigma}_{\omega}} \leq\|F\|_{L^{\infty}H^{2+\sigma,\sigma}_{\omega}} \lesssim \varepsilon^2 \lesssim 1.\end{equation}
    \\

We have an embedding of normed spaces $H_{\omega}^{1+\sigma,\sigma} \hookrightarrow H^{\eta,\infty} \hookrightarrow H^{\eta,p}$ by combining boundedness property (2) in Lemma \ref{anisotropicproperties} and anisotropic Sobolev embedding \eqref{anisotropicembedding}. Invoking the latter is valid by our hypothesis on $\eta$. 
By \eqref{eq16}, this embedding gives
\begin{equation}\label{est0}
\||\nabla|^{\eta}F\|_{L^{\infty}L^p}\lesssim \|\langle \nabla \rangle^{\eta}F\|_{L^{\infty}L^{p}} \lesssim \|F\|_{L^{\infty}H^{1+\sigma,\sigma}_{\omega}} \lesssim \varepsilon^2,
\end{equation}
and by \eqref{Hr1rgrowth},
\begin{equation}\label{est1}
\|F_x\|_{L^{\infty}L^{\infty}}=\|u\|_{L^{\infty}L^{\infty}} \lesssim \|\langle \nabla \rangle^{\eta}F_x\|_{L^{\infty}L^{\infty}}=\|\langle \nabla \rangle^{\eta}u\|_{L^{\infty}L^p} \lesssim \|u\|_{L^{\infty}H^{1+\sigma,\sigma}_{\omega}} \lesssim \varepsilon^2.
\end{equation}
\\

Now we prove (\ref{eiFZ1}). By fractional chain rule \eqref{fractionalchainruleinequality}, estimate \eqref{est0} gives
\begin{equation}\label{est3}\|\langle \nabla \rangle^{\eta}e^{iF}\|_{L^{\infty}L^p} \lesssim \|e^{iF}\|_{L^{\infty}L^p}+\||\nabla|^{\eta}F\|_{L^{\infty}L^p}\|e^{iF}\|_{L^{\infty}L^{\infty}}\lesssim 1 + \varepsilon^2 \lesssim 1.\end{equation}
The second estimate in (\ref{eiFZ1}) follows from fractional product rule and estimates (\ref{est1}), (\ref{est3}):
\begin{equation}\|\langle \nabla \rangle^{\eta}\partial_xe^{iF}\|_{L^{\infty}L^p} \lesssim \|F_x\|_{L^{\infty}L^{\infty}}\|\langle \nabla \rangle^{\eta}e^{iF}\|_{L^{\infty}L^p} + \|\langle \nabla \rangle^{\eta}F_x\|_{L^{\infty}L^p} \|e^{iF}\|_{L^{\infty}L^{\infty}} \lesssim \varepsilon^2.\end{equation}
\\

We turn to (\ref{eiFZ2}).
The first estimate in (\ref{eiFZ2}) reduces to $\||\nabla|^{r+\eta}F\|_{L^{\infty}L^2} \lesssim 1$ by fractional chain rule \eqref{fractionalchainruleinequality}, and this is true by (\ref{eq16}). To obtain the second estimate in (\ref{eiFZ2}), note that by the energy estimate (\ref{Hr1rgrowth}) \begin{equation}\label{eq5}\|\langle \nabla \rangle^{r+\eta}F_x\|_{L^{\infty}L^2} \lesssim \varepsilon^2.\end{equation} Thus, fractional product rule, the first estimate in (\ref{eiFZ2}), and estimate (\ref{est1}) give
\begin{equation*}\|\langle \nabla \rangle^{r+\eta}\partial_xe^{iF}\|_{L^{\infty}L^2} \lesssim \|\langle \nabla \rangle^{r+\eta}F_x\|_{L^{\infty}L^2} \|e^{iF}\|_{L^{\infty}L^{\infty}} +\|F_x\|_{L^{\infty}L^{\infty}}\|\langle \nabla \rangle^{r+\eta}e^{iF}\|_{L^{\infty}L^2} \lesssim \varepsilon^2. \qedhere \end{equation*}
\end{proof}
\hfill

We now bound the first summand in part $\RomanII$ \eqref{II} of the bootstrap conclusion \eqref{C}.
\begin{lemma}\label{IIb} Assume the bootstrap hypothesis holds and $0 \leq \eta<(2\sigma^2-1)/(2\sigma+2)$. We have
    $$\|\langle \nabla \rangle^{\eta}(F_{xx})\|_{L^4L^p} \lesssim \varepsilon^2.$$
\end{lemma}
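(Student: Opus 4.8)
\textbf{Proof plan for Lemma \ref{IIb}.}
The plan is to exploit the decomposition of $F_{xx}$ into positive-high, negative-high, and low tangential frequencies, and then use equation \eqref{Fxxequation} together with the algebra structure of the space $Z$. First I would split
$$\langle \nabla \rangle^{\eta}(F_{xx}) = \langle \nabla \rangle^{\eta}P_{+HI}(F_{xx}) + \langle \nabla \rangle^{\eta}P_{-HI}(F_{xx}) + \langle \nabla \rangle^{\eta}(P_{LO}(F_{xx})),$$
recalling that $1 = P_{+HI}+P_{LO}+P_{-HI}$. Since $F$ is real-valued and $\langle \nabla \rangle^{\eta}$ is a real Fourier multiplier, $\langle \nabla \rangle^{\eta}P_{-HI}(F_{xx}) = \overline{\langle \nabla \rangle^{\eta}P_{+HI}(F_{xx})}$, so the negative-high part has the same $L^4L^p$-norm as the positive-high part and need not be treated separately. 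Thus the task reduces to bounding the positive-high contribution and the low-frequency contribution.

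For the positive-high part I would invoke equation \eqref{Fxxequation}, which expresses $P_{+HI}(F_{xx})$ as a sum of a main term $ie^{iF}w_{xx} + i\partial_x(e^{iF})w_x$ and an error term $ie^{iF}E_x + i\partial_x(e^{iF})E$. By Sobolev embedding $H^{\eta,p}(\mathbb{T}^2)\hookrightarrow L^{\infty}(\mathbb{T}^2)$ and the fractional product rule, $Z$ is a Banach algebra, so it suffices to control each factor in $L^4Z$ or $L^{\infty}Z$ as appropriate and use $\|\langle \nabla \rangle^{\eta}g\|_{L^4L^p} \lesssim \|g\|_{L^4Z}$. Lemma \ref{eiFZ} gives $\|e^{iF}\|_{L^{\infty}Z}\lesssim 1$ (which, unpacking the definition of $Z$, also controls $\|\partial_x e^{iF}\|_{L^{\infty}H^{\eta,p}}$). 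Part $\RomanI$ of the bootstrap conclusion, already established, gives $\|w_x\|_{L^4Z}, \|w_{xx}\|_{L^4Z}\lesssim \varepsilon^2$ (here one uses that $\|\langle \nabla \rangle^{\eta}\partial_x^j w\|_{S([0,T'])}\lesssim\varepsilon^2$ controls both the $L^4L^p$-norm of $\langle\nabla\rangle^\eta\partial_x^j w$ and of $\langle\nabla\rangle^\eta\partial_x^{j+1}w$ via the frequency localization of $w$, so that the $\partial_x$-component of the $Z$-norm is absorbed). Hence the main term is $O(\varepsilon^2)$ in $L^4Z \hookrightarrow L^4 H^{\eta,p}$ by the algebra property. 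The error term is handled by Lemma \ref{errortermslemma}, which asserts $\|E\|_{L^4Z}\lesssim\varepsilon^2$; combined with $\|e^{iF}\|_{L^{\infty}Z}\lesssim 1$ and the algebra property this gives $\|ie^{iF}E_x + i\partial_x(e^{iF})E\|_{L^4 H^{\eta,p}} \lesssim \|e^{iF}\|_{L^{\infty}Z}\|E\|_{L^4Z}\lesssim\varepsilon^2$.

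For the low tangential frequencies, $P_{LO}(F_{xx})$ has tangential frequency support in a fixed bounded region, so $\langle \partial_x \rangle$ acts boundedly there; it therefore suffices to control $\|\langle \nabla \rangle^{\eta}P_{LO}(F_x)\|_{L^4 L^p}$ or a comparable quantity, which follows from the energy bounds: by boundedness property \eqref{boundedness2} in Lemma \ref{anisotropicproperties}, anisotropic Sobolev embedding, and the $H^{1+\sigma,\sigma}_{\omega}$-norm-growth control in Lemma \ref{Hshnormgrowth}, we have $\|\langle\nabla\rangle^\eta P_{LO}(F_{xx})\|_{L^4 L^p}\lesssim T^{1/4}\|u\|_{L^\infty H^{1+\sigma,\sigma}_\omega}\lesssim \|u_0\|_Y\lesssim\varepsilon^2$; this is the estimate labeled \eqref{lowfrequencies} in the text. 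Adding the positive-high, negative-high, and low contributions closes the lemma. The main obstacle is ensuring that the $Z$-norm bounds on $w_x,w_{xx}$ genuinely follow from part $\RomanI$ of the bootstrap conclusion — i.e. that the extra tangential derivative hidden in the $\partial_x$-component of $\|\cdot\|_Z$ is harmless because $w$ is tangentially high-frequency-localized so $\partial_x$ is comparable to $\langle\partial_x\rangle$ there, and that the passage from the Strichartz norm $S([0,T'])$ to $L^4 H^{\eta,p}$ uses only the Sobolev embedding $H^{\eta,p}\hookrightarrow L^\infty$ in reverse form together with boundedness of the relevant Littlewood-Paley pieces; this is why the text chose to work with $H^{\eta,p}$ rather than $L^\infty$ in the first place.
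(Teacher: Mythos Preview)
Your approach is essentially the same as the paper's: decompose via $1=P_{+HI}+P_{LO}+P_{-HI}$, use real-valuedness for the negative-high piece, handle the low-frequency piece via the energy bound \eqref{lowfrequencies}, and handle the positive-high piece via Lemma~\ref{eiFZ}, Lemma~\ref{errortermslemma}, the algebra structure, and Part~$\RomanI$ of the bootstrap conclusion.

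There is one genuine slip. You work with \eqref{Fxxequation} and claim $\|w_{xx}\|_{L^4Z}\lesssim\varepsilon^2$; but $\|w_{xx}\|_Z$ contains $\|w_{xxx}\|_{H^{\eta,p}}$, and Part~$\RomanI$ only reaches $j=2$. Your justification that the $P_{+hi}$-localization of $w$ makes the extra $\partial_x$ harmless is backwards: $P_{+hi}$ gives a \emph{lower} bound on $|\omega\cdot n|$, so it makes $\partial_x^{-1}$ bounded, not $\partial_x$. The fix is immediate: you only need $\|P_{+HI}(F_{xx})\|_{L^4H^{\eta,p}}$, so use the algebra property of $H^{\eta,p}$ (not of $Z$) on each product in \eqref{Fxxequation}; then only $\|w_{xx}\|_{L^4H^{\eta,p}}$ and $\|w_x\|_{L^4H^{\eta,p}}$ are required, both available from Part~$\RomanI$. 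Equivalently --- and this is what the paper does --- use \eqref{Fxequation} instead: bound $\|P_{+HI}(F_x)\|_{L^4Z}\lesssim\|e^{iF}\|_{L^\infty Z}(\|w_x\|_{L^4Z}+\|E\|_{L^4Z})\lesssim\varepsilon^2$ via the $Z$-algebra (here only $\|w_x\|_{L^4Z}$ is needed, which Part~$\RomanI$ does give), and then read off $\|P_{+HI}(F_{xx})\|_{L^4H^{\eta,p}}$ from the $\partial_x$-component of the $Z$-norm. Your remark about a ``reverse Sobolev embedding'' is also unnecessary: the passage from $\|\langle\nabla\rangle^\eta\partial_x^jw\|_{L^4L^\infty}$ to $\|\langle\nabla\rangle^\eta\partial_x^jw\|_{L^4L^p}$ is just the trivial inclusion $L^\infty(\mathbb{T}^2)\hookrightarrow L^p(\mathbb{T}^2)$.
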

\begin{proof}
We prove that $$\|\langle \nabla \rangle^{\eta}P_{LO}(F_{xx})\|_{L^4L^p} \lesssim \varepsilon^2 \quad \text{and} \quad \|\langle \nabla \rangle^{\eta}P_{+HI}(F_{xx})\|_{L^4L^p} \lesssim \varepsilon^2.$$
This is sufficient by Littlewood-Paley decomposition and due to the fact that $P_{-HI}(F_{xx}) = \overline{P_{+HI}(F_{xx})}$ as $F$ is real-valued.
For the low frequency part we estimate 

\begin{align}\begin{split}\label{lowfrequencies}\|\langle \nabla \rangle^{\eta}P_{LO}(F_{xx})\|_{L^4L^p}&  \lesssim \|\langle \nabla \rangle^{\eta}P_{LO}u_x\|_{L^{\infty}L^{\infty}} \\ & \lesssim \|\langle \nabla \rangle^{\eta}u\|_{L^{\infty}H^{1+\sigma-\eta\frac{1+\sigma}{\sigma}, \sigma- \eta}} \lesssim \|u\|_{L^{\infty}H^{1+\sigma,\sigma}} \lesssim\varepsilon^2.\end{split}\end{align}
We used anisotropic Sobolev embedding \eqref{anisotropicembedding}, boundedness of $\partial_xP_{LO}$ on $L^2$-based Sobolev spaces, boundedness property \eqref{boundedness2} in Lemma \ref{anisotropicproperties}, and the energy estimate (\ref{Hr1rgrowth}). We require the hypothesis on $\eta$ for anisotropic Sobolev embedding \eqref{anisotropicembedding}. \\

We turn to the high positive frequency part. Lemma \ref{eiFZ} asserts that $\|e^{iF}\|_{L^{\infty}Z} \lesssim 1$. Due to equation \ref{Fxequation} and the algebra property of $Z$, it suffices to show $\|w_x\|_{L^4Z} \lesssim \varepsilon^2$ and $\|E\|_{L^4Z} \lesssim \varepsilon^2.$
But the former statement is immediate from part $\RomanI$ \eqref{I} of the bootstrap conclusion, which we have already established.  The latter statement is the content of Lemma \ref{errortermslemma} in the next section.
\end{proof}
\hfill

We turn to the second summand in part $\RomanII$ \eqref{II} of the bootstrap conclusion \eqref{C}.
\begin{lemma}\label{IIc}
    Assume the bootstrap hypothesis holds and $0 \leq \eta<(2\sigma^2-1)/(2\sigma+2)$. We have $$\|\langle\nabla\rangle^{r+\eta}F_{xx}\|_{L^{4/3}L^2} \lesssim \varepsilon^2.$$
\end{lemma}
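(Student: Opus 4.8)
The plan is to mirror the structure of the proof of Lemma \ref{IIb}, but now measuring $F_{xx}$ in the higher-regularity norm $\langle \nabla \rangle^{r+\eta}(\cdot)$ in $L^{4/3}_tL^2_x$ rather than $\langle \nabla \rangle^{\eta}(\cdot)$ in $L^4_tL^p_x$. First I would split $F_{xx}=P_{LO}(F_{xx})+P_{+HI}(F_{xx})+P_{-HI}(F_{xx})$ by Littlewood-Paley decomposition in the tangential direction, and use that $P_{-HI}(F_{xx})=\overline{P_{+HI}(F_{xx})}$ since $F$ is real-valued, so it suffices to bound the low-frequency and the positive-high-frequency pieces. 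For the low-frequency part I would argue exactly as in \eqref{lowfrequencies}: using $\|\cdot\|_{L^{4/3}([0,T'],L^2)} \lesssim \|\cdot\|_{L^{\infty}([0,T'],L^2)}$ (valid since $T'\le 1$), anisotropic Sobolev-type estimates, $L^2$-boundedness of $\partial_x P_{LO}$, the boundedness property \eqref{boundedness2}, and the energy estimate \eqref{eq5} from Lemma \ref{eiFZ}, giving $\|\langle \nabla \rangle^{r+\eta}P_{LO}(F_{xx})\|_{L^{4/3}L^2}\lesssim \|\langle \nabla \rangle^{r+\eta}u\|_{L^{\infty}L^2}\lesssim \varepsilon^2$; here one needs $r+\eta<\sigma$, which holds by the standing assumption $7/8<r<\sigma-\eta$.

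For the positive-high-frequency part I would start from equation \eqref{Fxxequation},
\begin{equation*}P_{+HI}(F_{xx}) = ie^{iF}w_{xx}+i\partial_x(e^{iF})w_x + ie^{iF}E_x +i\partial_x(e^{iF})E,\end{equation*}
apply $\langle \nabla \rangle^{r+\eta}$, and use the fractional product rule together with $\langle \xi_1+\xi_2\rangle^{r+\eta}\lesssim \langle \xi_1\rangle^{r+\eta}+\langle \xi_2\rangle^{r+\eta}$ to distribute the derivatives onto each factor. The main-term contributions $\langle \nabla \rangle^{r+\eta}(e^{iF}w_{xx})$ and $\langle \nabla \rangle^{r+\eta}(\partial_x(e^{iF})w_x)$ I would estimate by putting the factor carrying the top-order derivative into $L^{\infty}_tL^2_x$ and the other factor into $L^4_tL^{\infty}_x$ (so the time-integrability works out to $L^{4/3}$ via Hölder, $\frac{1}{4/3}=\frac{1}{4}+1$ — no wait, $\frac14+\frac1\infty = \frac14$, so one should pair $L^{4/3}L^2$ against... let me instead pair $\|\langle\nabla\rangle^{r+\eta}w_{xx}\|_{L^{4/3}L^2}$ with $\|e^{iF}\|_{L^\infty L^\infty}$, and $\|w_{xx}\|_{L^{4}L^\infty}$ with $\|\langle\nabla\rangle^{r+\eta}e^{iF}\|_{L^\infty L^2}$ after Hölder in time with exponents $\tfrac13+\tfrac1\infty$... the cleanest route is: for the term $\langle\nabla\rangle^{r+\eta}$ hitting $w_{xx}$, use $\|\langle\nabla\rangle^{r+\eta}w_{xx}\|_{L^{4/3}L^2}$ times $\|e^{iF}\|_{L^\infty L^\infty}$; for it hitting $e^{iF}$, use $\|\langle\nabla\rangle^{r+\eta}e^{iF}\|_{L^\infty L^2}$ times $\|w_{xx}\|_{L^{4/3}L^\infty}\lesssim T'^{1/2}\|w_{xx}\|_{L^4L^\infty}$, and on the torus $L^2\cdot L^\infty\hookrightarrow L^2$). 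Here part $\RomanI$ of the bootstrap conclusion (already established) gives $\|\langle\nabla\rangle^{r+\eta}w_{xx}\|_{L^\infty L^2}$ and $\|\langle\nabla\rangle^{r+\eta}w_x\|_{L^\infty L^2}\lesssim\varepsilon^2$ via Lemma \ref{wenergyestimate} and Lemma \ref{Strichartzappliedtow}, $\|w_x\|_{L^4L^\infty},\|w_{xx}\|_{L^4L^\infty}\lesssim\varepsilon$, while Lemma \ref{eiFZ} supplies $\|e^{iF}\|_{L^\infty L^\infty}\lesssim 1$, $\|\langle\nabla\rangle^{r+\eta}e^{iF}\|_{L^\infty L^2}\lesssim 1$, $\|\langle\nabla\rangle^{r+\eta}\partial_x e^{iF}\|_{L^\infty L^2}\lesssim\varepsilon^2$, and $\|\partial_x e^{iF}\|_{L^\infty L^\infty}=\|u\|_{L^\infty L^\infty}\lesssim\varepsilon^2$. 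Combining, each main term is $O(\varepsilon^2)$ (in fact gaining a power of $T'$ in several of them).

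For the error-term contributions $\langle\nabla\rangle^{r+\eta}(e^{iF}E_x)$ and $\langle\nabla\rangle^{r+\eta}(\partial_x(e^{iF})E)$ I would invoke the bound $\|E\|_{L^4 Z}\lesssim\varepsilon^2$ from Lemma \ref{errortermslemma} together with the higher-regularity error estimate — I expect the forthcoming Section \ref{sectionerrorterms} also establishes $\|\langle\nabla\rangle^{r+\eta}E\|_{L^{4/3}L^2}\lesssim\varepsilon^2$ and $\|\langle\nabla\rangle^{r+\eta}\partial_x E\|_{L^{4/3}L^2}\lesssim\varepsilon^2$ under the bootstrap hypothesis (the $E_i$ are all of low/negative tangential frequency type except $E_1$ on high frequencies, so after the frequency projections they are controlled by norm-growth bounds on $F$ and smallness of $\|u_0\|_Y$) — and pair these with $\|e^{iF}\|_{L^\infty Z}\lesssim 1$ and the fractional product rule exactly as for the main terms. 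The main obstacle I anticipate is the bookkeeping of Hölder exponents in time so that every product genuinely lands in $L^{4/3}_t$ while each factor is controlled either by part $\RomanI$ of the bootstrap conclusion or by an energy estimate (never by part $\RomanII$ of the hypothesis in a way that would prevent the bootstrap from closing), and keeping track of the precise regularity thresholds on $r,\eta,\sigma$ so that all the anisotropic Sobolev embeddings and the boundedness property \eqref{boundedness2} are applicable. Once these exponents and thresholds are arranged, adding the low-frequency bound, the main-term bounds, and the error-term bounds yields $\|\langle\nabla\rangle^{r+\eta}F_{xx}\|_{L^{4/3}L^2}\lesssim\varepsilon^2$, completing the proof.
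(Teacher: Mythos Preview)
Your proposal is correct and follows essentially the same route as the paper. The paper's proof also works directly from equation \eqref{Fxxequation}, applies the fractional product rule to each of the four summands, and pairs the factors exactly as you describe: the $\langle\nabla\rangle^{r+\eta}$-derivative of $w_{xx}$, $w_x$ (via Lemma \ref{wenergyestimate}) or of $E$, $E_x$ (via \eqref{thirderrorterm}, \eqref{fourtherrorterm} in Lemma \ref{errortermslemma}) goes into $L^\infty_t L^2_x$ or $L^{4/3}_t L^2_x$, while the remaining factor goes into $L^4_t L^\infty_x$ or $L^\infty_t L^\infty_x$ and is controlled by part $\RomanI$ of the bootstrap conclusion or by Lemma \ref{eiFZ}. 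Your H\"older bookkeeping is resolved simply by the embedding $L^q_t\hookrightarrow L^{4/3}_t$ for $q\ge 4/3$ on $[0,T']\subset[0,1]$, so any such pairing lands in $L^{4/3}_t L^2_x$. Two small remarks: (i) the paper omits the explicit low-frequency/conjugate split you spell out, but your treatment of $P_{LO}(F_{xx})$ via \eqref{eq5} and $L^2$-boundedness of $\partial_x P_{LO}$ is exactly what is used elsewhere (cf.\ \eqref{PHIFx432}); (ii) from part $\RomanI$ of the bootstrap \emph{conclusion} you actually have $\|w_x\|_{L^4L^\infty},\|w_{xx}\|_{L^4L^\infty}\lesssim\varepsilon^2$, not just $\varepsilon$, though either suffices here.
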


\begin{proof}
    Apply $\langle \nabla\rangle ^{r+\eta}$ to equation (\ref{Fxxequation}). We begin bounding the contribution by the two main terms using fractional product rule, yielding
    \begin{equation*}
        \|\langle \nabla \rangle^{r+\eta}(e^{iF}w_{xx})\|_{L^{4/3}L^2} \lesssim \|e^{iF}\|_{L^{\infty}L^{\infty}}\|\langle\nabla\rangle^{r+\eta}w_{xx}\|_{L^{\infty}L^2} + \|\langle\nabla\rangle^{r+\eta}e^{iF}\|_{L^{\infty}L^2} \|w_{xx}\|_{L^4L^{\infty}}
        \end{equation*}
   and     
        \begin{equation*} \|\langle\nabla\rangle^{r+\eta}((\partial_xe^{iF})w_{x})\|_{L^{\infty}L^2} \lesssim \|\langle\nabla\rangle^{r+\eta}\partial_xe^{iF}\|_{L^{\infty}L^2} \|w_x\|_{L^4L^{\infty}} + \|\partial_xe^{iF}\|_{L^{\infty}L^{\infty}} \|\langle \nabla \rangle^{r+\eta}w_x\|_{L^4L^{\infty}}.
    \end{equation*}
    The terms $\|\langle \nabla \rangle^{r+\eta}w_x\|_{L^{\infty}L^2}$ and $\|\langle \nabla \rangle^{r+\eta}w_{xx}\|_{L^{\infty}L^2}$ are $O(\varepsilon^2)$ by the energy estimates in Section \ref{sectionenergyestimatepartI}, Lemma \ref{wenergyestimate}. The instances where terms involving $w$ and derivatives thereof are measured in $L^4L^{\infty}$ are $O(\varepsilon^2)$ by part $\RomanI$ \eqref{I} of the bootstrap conclusion which we have already established. 
    The remaining terms are $O(1)$ by Lemma \ref{eiFZ} together with Sobolev embedding, so that the overall expression is $O(\varepsilon^2)$. \\

    Similarly, we bound the contribution by the error terms, replacing the role of $w_x$ by $E$. 
    Fractional product rule gives
\begin{equation*}
\|\langle\nabla\rangle^{r+\eta}ie^{iF}E_x\|_{L^{4/3}L^2} \lesssim \|\langle\nabla\rangle^{r+\eta}e^{iF}\|_{L^{\infty}L^2} \|E_x\|_{L^4L^{\infty}} + \|e^{iF}\|_{L^{\infty}L^{\infty}}\|\langle\nabla\rangle^{r+\eta}E_x\|_{L^{4/3}L^2}
\end{equation*}
and 
\begin{equation*}
    \|\langle \nabla \rangle^{r+\eta}((\partial_xe^{iF})E)\|_{L^{4/3}L^2} \lesssim \|\langle \nabla \rangle^{r+\eta} \partial_xe^{iF}\|_{L^{\infty}L^2}\|E\|_{L^4 L^{\infty}}  + \|\partial_xe^{iF}\|_{L^{\infty}L^{\infty}}\|\langle \nabla \rangle^{r+\eta}E\|_{L^{4/3}L^2}.
\end{equation*}
Thus, it suffices to show that $$\|E\|_{L^4L^{\infty}}+ \|E_x\|_{L^4L^{\infty}}+ \|\langle \nabla \rangle^{r+\eta}E\|_{L^{4/3}L^2}+ \|\langle \nabla \rangle^{r+\eta}E_x\|_{L^{4/3}L^2} \lesssim \varepsilon^2,$$ the other factors being the same bounded factors as before. 
But this is the content of Lemma \ref{errortermslemma} below. 
\end{proof}

\subsection{Part $\RomanII$ of the bootstrap: estimates on error terms}\label{sectionerrorterms}
We prove the error bounds that were used in the previous section. Throughout this section, we use the notation $\|\cdot\|_{L^pL^q}$ as an abbreviation for mixed Lebesgue spacetime norms $\|\cdot\|_{L^p([0,T'],L^q(\mathbb{T}^2))}$.
\begin{lemma}\label{errortermslemma}
    Assume the bootstrap hypothesis holds and $0\leq \eta <(2\sigma^2-1)/(2\sigma+2)$. Then $$\|E\|_{L^4Z} \lesssim \varepsilon^2.$$
    That is, we have the following estimates:   \begin{equation}\label{firsterrorestimate}\|\langle \nabla \rangle^{\eta}E\|_{L^{4}L^p} \lesssim \varepsilon^2. \quad \text{In particular,} \ \|E\|_{L^{4}L^{\infty}} \lesssim \varepsilon^2. \end{equation} \begin{equation}\label{seconderrorestimate}\|\langle \nabla \rangle^{\eta}E_x\|_{L^{4}L^p} \lesssim \varepsilon^2. \quad \text{In particular,} \ \|E_x\|_{L^{4}L^{\infty}} \lesssim \varepsilon^2. \end{equation}
Further, we have that $E,E_x \in L^{4/3}H^{r+\eta}$ with
\begin{equation}\label{thirderrorterm} \|\langle\nabla\rangle^{r+\eta}E\|_{L^{4/3}L^2} \lesssim \varepsilon^2 \end{equation}
\begin{equation}\label{fourtherrorterm} \|\langle\nabla\rangle^{r+\eta}E_x\|_{L^{4/3}L^2} \lesssim \varepsilon^2. \end{equation}
\end{lemma}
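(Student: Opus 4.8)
The error term $E = E_1 + E_2 + E_3$ is, by its defining frequency projections, a collection of paraproduct terms of \emph{low-high} or \emph{low frequency} type: in $E_1 = (P_{lo}+P_{-hi})(P_{+HI}(F_x)P_{-hi}(e^{-iF}))$ the output has bounded tangential frequency, in $E_2 = P_{+hi}(P_{LO}(F_x)e^{-iF})$ the first input has bounded tangential frequency, and in $E_3 = P_{+hi}(P_{-HI}(F_x)P_{+hi}(e^{-iF}))$ the first input has tangential frequency of size $O(1)$. In every case one factor involves only $O(1)$ tangential frequencies, so applying $\langle \nabla \rangle^\eta$, $\partial_x$, or $\langle \nabla \rangle^{r+\eta}$ produces, up to harmless Fourier projections (which are bounded on $H^{\eta,p}$ and on $L^2$ by the discussion after Definition~\ref{Yspace} and the Littlewood--Paley boundedness lemma — this is \emph{why} the space $Z$ is built with $H^{\eta,p}$ rather than $L^\infty$, cf. Remark~\ref{whyLp}), only bounded contributions on the low-frequency factor and derivatives on the high-frequency factor. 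The plan is therefore to run Hölder in space together with fractional product rule / chain rule (Lemma~\ref{chainrule}), Sobolev embedding $H^{\eta,p}(\mathbb{T}^2)\hookrightarrow L^\infty(\mathbb{T}^2)$, and Hölder in time, reducing each of the four estimates \eqref{firsterrorestimate}--\eqref{fourtherrorterm} for each of $E_1,E_2,E_3$ to a product of factors already controlled.

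\textbf{Key steps.} First I would collect the building-block bounds already available under the bootstrap hypothesis: from Lemma~\ref{eiFZ}, $\|e^{iF}\|_{L^\infty Z}\lesssim 1$, $\|\langle\nabla\rangle^{\eta}\partial_x e^{iF}\|_{L^\infty L^p}\lesssim\varepsilon^2$, $\|\langle\nabla\rangle^{r+\eta}e^{iF}\|_{L^\infty L^2}\lesssim 1$, $\|\langle\nabla\rangle^{r+\eta}\partial_x e^{iF}\|_{L^\infty L^2}\lesssim\varepsilon^2$; from \eqref{Hr1rgrowth}, \eqref{est1}, \eqref{eq5} and the embedding $H^{1+\sigma,\sigma}_\omega\hookrightarrow H^{\eta,\infty}$, the bounds $\|u\|_{L^\infty L^\infty}\lesssim\varepsilon^2$, $\|\langle\nabla\rangle^{\eta}F_x\|_{L^\infty L^p}\lesssim\varepsilon^2$ and $\|\langle\nabla\rangle^{r+\eta}F_x\|_{L^\infty L^2}\lesssim\varepsilon^2$; and from the already-established part $\RomanI$ of the bootstrap conclusion together with $iw_x = P_{+hi}(F_x e^{-iF})$, that $\|\langle\nabla\rangle^{\eta}w_x\|_{L^4 L^p}\lesssim\varepsilon^2$. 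Second, for $E_2$ — the simplest — I would write $\langle\nabla\rangle^{\eta}E_2$ using $\langle\xi_1+\xi_2\rangle^\eta\lesssim\langle\xi_1\rangle^\eta+\langle\xi_2\rangle^\eta$, put the $P_{LO}(F_x)$-factor (which lives at $O(1)$ frequency, so $\langle\nabla\rangle^{\eta}P_{LO}F_x$ and $\partial_x P_{LO}F_x$ are comparable to $P_{LO}F_x$) in $L^p$ via $\|\langle\nabla\rangle^{\eta}u\|_{L^\infty L^p}$, and the $e^{-iF}$-factor in $L^\infty$, gaining a factor $T'^{3/4}$ (or $T'^{1/12}$ in the $L^{4/3}$ case) from Hölder in time since $F_x\in L^\infty_t$; the extra tangential derivative $\partial_x$ in \eqref{seconderrorestimate}/\eqref{fourtherrorterm} is absorbed either onto the $O(1)$-frequency factor at no cost or onto $e^{-iF}$ using $\|\langle\nabla\rangle^\eta\partial_x e^{iF}\|_{L^\infty L^p}\lesssim\varepsilon^2$. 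Third, $E_3$ is handled identically: $P_{-HI}(F_x)$ again has $O(1)$ tangential frequency, so the same splitting applies with $P_{+hi}(e^{-iF})=\overline{\cdot}$-type factors estimated by Lemma~\ref{eiFZ}. Fourth, $E_1$ is the one genuinely $F_x$-dependent term: here $P_{+HI}(F_x)$ carries the high frequencies, so I would use \eqref{Fxequation} itself, $P_{+HI}(F_x)=ie^{iF}w_x + ie^{iF}E$, to express it through $w_x$ and $E$, together with the algebra property of $Z$; this gives a self-improving inequality $\|E\|_{L^4 Z}\lesssim \|w_x\|_{L^4 Z} + T'^{\theta}\|E\|_{L^4 Z}$ (the time factor coming from the $P_{lo}+P_{-hi}$ projection not affecting Hölder in time) which closes for $T'\leq T\leq 1$ after absorbing, since $\|w_x\|_{L^4 Z}\lesssim\varepsilon^2$ by part $\RomanI$, or — more cleanly — I would simply observe that $E_1$ only involves $P_{+HI}(F_x)$ paired with an $O(1)$-tangential-frequency factor $P_{-hi}(e^{-iF})$, so after moving $\langle\nabla\rangle^{\eta}$, $\partial_x$, $\langle\nabla\rangle^{r+\eta}$ onto $P_{+HI}(F_x)$ (legal: it carries all the high frequency) and putting $P_{-hi}(e^{-iF})$ in $L^\infty$, each estimate reduces to $\|\langle\nabla\rangle^{\eta}F_x\|_{L^4 L^p}$, $\|\langle\nabla\rangle^{\eta}\partial_x F_x\|_{L^4 L^p}$, and $\|\langle\nabla\rangle^{r+\eta}F_x\|_{L^{4/3}L^2}$, $\|\langle\nabla\rangle^{r+\eta}\partial_x F_x\|_{L^{4/3}L^2}$ — but $\partial_x F_x = F_{xx}=u_x$ and $\langle\nabla\rangle^\eta F_{xx}$, $\langle\nabla\rangle^{r+\eta}F_{xx}$ are exactly the quantities in part $\RomanII$ of the bootstrap hypothesis, so these are $O(\varepsilon)$, not $O(\varepsilon^2)$; to recover the quadratic gain I would instead keep one power of $e^{-iF}-1 = e^{-iF}P_{+hi,\text{etc.}}$ or, cleanest, use the closed-form route above via \eqref{Fxequation}.

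\textbf{Main obstacle.} The genuine difficulty is exactly the last point: the naive estimate of $E_1$ only yields $O(\varepsilon)$ because $\|\langle\nabla\rangle^{r+\eta}F_{xx}\|_{L^{4/3}L^2}$ is merely $O(\varepsilon)$ in the bootstrap hypothesis. The resolution is to \emph{not} estimate $P_{+HI}(F_x)$ by the bootstrap hypothesis directly but to substitute equation \eqref{Fxequation}, writing $E_1=(P_{lo}+P_{-hi})((ie^{iF}w_x+ie^{iF}E)P_{-hi}(e^{-iF}))$, so that the $w_x$-term contributes $\|w_x\|_{L^4 Z}\lesssim\varepsilon^2$ (part $\RomanI$) and the $E$-term contributes $\lesssim\|E\|_{L^4 Z}$ with a small time constant — or even just $\lesssim\varepsilon\cdot\|E\|_{L^4Z}$ using $\|e^{iF}P_{-hi}(e^{-iF})\|_{L^\infty Z}\lesssim\|P_{-hi}(e^{-iF})-\widehat{(e^{-iF})}(0)\|_{\dots}$ which is $O(\varepsilon^2)$ by \eqref{eq16} and chain rule — allowing the $E$-term to be absorbed into the left side. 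One must then also verify the regularity needed to make these manipulations pointwise-valid, but this is routine given $F,w\in C([0,T],H^{s_1,s_2}_\omega)$, $H^{s_1,s_2}_\omega$ an algebra, and the boundedness properties \eqref{boundedness1}, \eqref{boundedness2} of Lemma~\ref{anisotropicproperties}, exactly as in the proof of Lemma~\ref{eiFZ}.
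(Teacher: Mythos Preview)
Your proposal contains a genuine error and misses the simple observation that resolves your ``main obstacle''.

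First, the error: you write that in $E_3 = P_{+hi}(P_{-HI}(F_x)P_{+hi}(e^{-iF}))$ ``$P_{-HI}(F_x)$ again has $O(1)$ tangential frequency''. This is false --- $P_{-HI}$ projects to \emph{high negative} tangential frequencies (the symbol is $\psi_{+hi}(-\omega\cdot n/4)$), not to bounded frequencies. Hence $E_3$ is not analogous to $E_2$ but is exactly symmetric to $E_1$ (interchange positive and negative frequencies), and must be treated the same way.

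Second, and more importantly: your ``main obstacle'' for $E_1$ --- that when $\partial_x$ lands on $P_{+HI}(F_x)$ one only gets $O(\varepsilon)$ from the bootstrap hypothesis --- is real, but the resolution is far simpler than your self-improving substitution of \eqref{Fxequation}. The point is that the other factor $P_{-hi}(e^{-iF})$ is itself $O(\varepsilon^2)$ in every norm that appears. Since $P_{-hi}$ has tangential frequency support bounded away from zero, one has $\|\langle\nabla\rangle^\eta P_{-hi}(e^{-iF})\|_{L^p}\sim\||\nabla|^\eta P_{-hi}(e^{-iF})\|_{L^p}\lesssim\||\nabla|^\eta e^{-iF}\|_{L^p}$, and by the fractional chain rule (Lemma~\ref{chainrule}) this is $\lesssim\||\nabla|^\eta F\|_{L^p}$, which is $O(\varepsilon^2)$ by \eqref{est0}. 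The same argument gives $\|\langle\nabla\rangle^{r+\eta}P_{-hi}(e^{-iF})\|_{L^2}\lesssim\varepsilon^2$ and $\|\langle\nabla\rangle^\eta\partial_x P_{-hi}(e^{-iF})\|_{L^p}\lesssim\varepsilon^2$ (the paper records these as Lemma~\ref{usefulestimates2}). Thus the $F_{xx}$--factor at $O(\varepsilon)$ is always paired with a $P_{\pm hi}(e^{-iF})$--factor at $O(\varepsilon^2)$, and the product is $O(\varepsilon^3)$: nothing needs to be absorbed. Ironically you do arrive at this observation in your final parenthetical (``$\|e^{iF}P_{-hi}(e^{-iF})\|\ldots$ is $O(\varepsilon^2)$''), but only as a coefficient in the unnecessary self-referential scheme, without realizing it dissolves the obstacle outright.
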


\begin{proof}
Recall that
\begin{equation*}
    E= \underbrace{(P_{lo}+P_{-hi})(P_{+HI}(F_x)P_{-hi}(e^{-iF}))}_{=E_1} + \underbrace{P_{+hi}(P_{LO}(F_x)e^{-iF})}_{=E_2} + \underbrace{P_{+hi}(P_{-HI}(F_x)P_{+hi}(e^{-iF}))}_{=E_3}.
\end{equation*}
For each estimate (\ref{firsterrorestimate})-(\ref{fourtherrorterm}) and each term $E_i,i=1,2,3$, the first step in below calculations is to apply fractional product rule, and $L^p$-boundedness of the Littlewood-Paley projections involved, see Remark \ref{whyLp}. Each of the resulting factors is then estimated by either the bootstrap hypothesis,
Lemma \ref{eiFZ}, Lemma \ref{furtherestimates}, or Lemma \ref{usefulestimates2} giving the overall bound $O(\varepsilon^2)$.
The required estimates on terms arising from the factors $P_{-hi}e^{-iF}$ and $P_{+hi}e^{-iF}$ in the terms $E_1$ and $E_3$ are proven in Lemma \ref{usefulestimates2} below. The required estimates on the terms arising from $P_{\pm HI}(F_x)$ and $P_{LO}(F_x)$ are collected in Lemma \ref{furtherestimates}. The terms arising from the factor $e^{-iF}$ in $E_2$ can be controlled using Lemma \ref{eiFZ}. We now provide the details.
\\

First, we prove estimate (\ref{firsterrorestimate}). We begin estimating $\|\langle \nabla \rangle^{\eta}E_1\|_{L^4L^p}$. By fractional product rule in the spatial variables and H\"older in time, we have
\begin{align*} &\|\langle \nabla \rangle^{\eta}E_1\|_{L^4L^p} \lesssim \|\langle \nabla \rangle^{\eta}P_{+HI}(F_x)\|_{L^4L^p} \|P_{-hi}e^{-iF}\|_{L^{\infty}L^{\infty}} +\|P_{+HI}(F_x)\|_{L^{4}L^{\infty}} \|\langle \nabla \rangle^{\eta}P_{-hi}e^{-iF}\|_{L^{\infty}L^p}.\end{align*}
Lemma \ref{usefulestimates2} below asserts that $\|P_{-hi}e^{-iF}\|_{L^4L^p}+\|\langle \nabla \rangle^{\eta}P_{-hi}e^{-iF}\|_{L^{\infty}L^p} = O(\varepsilon^2)$. Invoke estimate (\ref{PHIFxLp}) in Lemma \ref{furtherestimates}
to conclude that the overall expression is $O(\varepsilon^2)$.
The term $\|\langle \nabla \rangle^{\eta}E_3\|_{L^4L^p}$ can be estimated to be $O(\varepsilon^2)$ analogously, using the estimates in Lemma \ref{usefulestimates2} with $P_{-hi}(e^{-iF})$ replaced by $P_{+hi}(e^{-iF})$, and the estimate (\ref{PHIFxLp}) in Lemma \ref{furtherestimates} with $P_{+HI}$ replaced by $P_{-HI}$. Further, we estimate by fractional product rule 
\begin{equation*}
    \|\langle \nabla \rangle^{\eta}E_2\|_{L^4L^p} \lesssim \|\langle \nabla \rangle^{\eta}P_{LO}(F_x)\|_{L^4L^p} \|e^{-iF}\|_{L^{\infty}L^{\infty}} + \|P_{LO}(F_x)\|_{L^{4}L^{\infty}} \|\langle \nabla \rangle^{\eta}e^{-iF}\|_{L^{\infty}L^p}
\end{equation*}
which is $O(\varepsilon^2)$ due to estimate (\ref{LoFxLp}) in Lemma \ref{furtherestimates} and Lemma \ref{usefulestimates2}. \\

Next, we prove estimate (\ref{seconderrorestimate}). We begin with the contribution by $E_1$. By boundedness of frequency projections on $L^p$, fractional chain rule in the spatial variables and H\"older in time, we have 
\begin{align*} & \|\langle \nabla \rangle^{\eta}(E_1)_x\|_{L^4L^p} \\ & \lesssim \|\langle \nabla \rangle^{\eta}P_{+HI}(F_{xx})\|_{L^4L^p} \|P_{-hi}e^{-iF}\|_{L^{\infty}L^{\infty}} + \|P_{+HI}(F_{xx})\|_{L^4L^{\infty}} \|\langle \nabla \rangle^{\eta}P_{-hi}e^{-iF}\|_{L^{\infty}L^{p}} \\ & + \|\langle \nabla \rangle^{\eta}P_{+HI}(F_x)\|_{L^4L^p} \|\partial_xP_{-hi}(e^{-iF})\|_{L^{\infty}L^{\infty}} + \|P_{+HI}(F_x)\|_{L^4L^{\infty}} \|\langle \nabla \rangle^{\eta} \partial_xP_{-hi}(e^{-iF})\|_{L^{\infty}L^{p}}.\end{align*}
By the bootstrap hypothesis, $\|\langle \nabla \rangle^{\eta}P_{+HI}(F_{xx})\|_{L^4L^p} + \|P_{+HI}(F_{xx})\|_{L^4L^{\infty}} = O(\varepsilon)$. Each of the remaining factors is $O(\varepsilon^2)$ either by Lemma \ref{usefulestimates2} below or by (\ref{PHIFxLp}) in Lemma \ref{furtherestimates}, thus bounding the entire expression by $O(\varepsilon^2)$. The contribution of $E_3$ can be shown to be $O(\varepsilon^2)$ analogously. Finally, the contribution of $E_2$ is  \begin{align*}\|\langle \nabla \rangle^{\eta}(E_2)_x\|_{L^4L^p} & \lesssim \|\langle \nabla \rangle^{\eta}P_{LO}(F_{xx})\|_{L^4L^p}\|e^{-iF}\|_{L^{\infty}L^{\infty}} + \|P_{LO}(F_{xx})\|_{L^{4}L^{\infty}}\|\langle \nabla \rangle^{\eta}e^{-iF}\|_{L^{\infty}L^p} \\ & + \|\langle \nabla \rangle^{\eta}P_{LO}(F_{x})\|_{L^4L^p}\|\partial_xe^{-iF}\|_{L^{\infty}L^{\infty}} + \|P_{LO}(F_{x})\|_{L^{4}L^{\infty}}\|\langle \nabla \rangle^{\eta}\partial_xe^{-iF}\|_{L^{\infty}L^p} \\ & \lesssim \varepsilon^2.\end{align*}
Lemma \ref{eiFZ} asserts that all the factors arising from $e^{-iF}$ are $O(1)$, while estimates (\ref{LoFxLp}) and (\ref{LoFxxLp}) in Lemma \ref{furtherestimates} bound the remaining factors by $O(\varepsilon^2)$. \\

Next, we turn to (\ref{thirderrorterm}). By fractional product rule,
\begin{align*} & \|\langle \nabla \rangle^{r+\eta}E_1\|_{L^{4/3}L^2} \\ & \lesssim \|\langle \nabla \rangle^{r+\eta}P_{+HI}(F_x)\|_{L^{4/3}L^2} \|P_{-hi}(e^{-iF})\|_{L^{\infty}L^{\infty}} + \|P_{+HI}(F_x)\|_{L^4L^{\infty}} {\|\langle \nabla \rangle^{r+\eta} P_{-hi}(e^{-iF})\|_{L^{\infty}L^{2}}}.\end{align*}
We have $\|\langle \nabla \rangle^{r+\eta}P_{+HI}(F_x)\|_{L^{4/3}L^2} = O(\varepsilon^2)$ 
by estimate (\ref{PHIFx432}) 
and $\|P_{+HI}(F_x)\|_{L^4L^{\infty}} = O(\varepsilon^2)$
by estimate (\ref{PHIFxLp}) in Lemma \ref{furtherestimates}. The other two factors are bounded in Lemma \ref{usefulestimates2}, thus bounding the entire expression by $O(\varepsilon^2)$. Analogously, the expression $\|\langle \nabla \rangle^{\eta}E_3\|_{L^{4/3}L^2}$ is $O(\varepsilon^2)$. The contribution by $E_2$ is controlled by
$$\|\langle \nabla \rangle^{r+\eta}E_2\|_{L^{4/3}L^2} \lesssim \|\langle \nabla \rangle^{r+\eta}P_{LO}(F_x)\|_{L^{4/3}L^2}\|e^{-iF}\|_{L^{\infty}L^{\infty}} + \|P_{LO}(F_x)\|_{L^{\infty}L^{\infty}}\|\langle \nabla \rangle^{r+\eta}e^{-iF}\|_{L^{\infty}L^2}.$$
We invoke estimates (\ref{PHIFx432}), (\ref{LOFxinfty}), and Lemma \ref{eiFZ} (together with Sobolev embedding) to conclude that the overall expression is $O(\varepsilon^2)$. \\

We turn to estimate (\ref{fourtherrorterm}). First, we bound the contribution by $E_1$.
\begin{align*}& \|\langle \nabla \rangle^{r+\eta}(E_1)_x\|_{L^{4/3}L^2} \\ & \lesssim \|\langle \nabla \rangle^{r+\eta}P_{+HI}(F_{xx})\|_{L^{4/3}L^2} \|P_{-hi}e^{-iF}\|_{L^{\infty}L^{\infty}}  + \|P_{+HI}(F_{xx})\|_{L^4L^{\infty}} \|\langle \nabla \rangle^{r+\eta}P_{-hi}e^{-iF}\|_{L^{\infty}L^2} \\ & + \|\langle \nabla \rangle^{r+\eta}P_{+HI}(F_x)\|_{L^{4/3}L^2} \|\partial_xP_{-hi}(e^{-iF})\|_{L^{\infty}L^{\infty}}  + \|P_{+HI}(F_x)\|_{L^4L^{\infty}} \|\langle \nabla \rangle^{r+\eta} \partial_xP_{-hi}(e^{-iF})\|_{L^{\infty}L^{2}} \\ & \lesssim \varepsilon^2.\end{align*}
By bootstrap hypothesis, we have $\|\langle \nabla \rangle^{r+\eta}P_{+HI}(F_{xx})\|_{L^{4/3}L^2} + \|P_{+HI}(F_{xx})\|_{L^4L^{\infty}} = O(\varepsilon)$. 
Further, we again have $\|\langle \nabla \rangle^{r+\eta}P_{+HI}(F_x)\|_{L^{4/3}L^2} = O(\varepsilon^2)$ by estimate (\ref{PHIFx432}) in Lemma \ref{furtherestimates} and $\|P_{+HI}(F_x)\|_{L^4L^{\infty}}=O(\varepsilon^2)$ by estimate (\ref{PHIFxLp}) in Lemma \ref{furtherestimates}. 
The remaining factors are bounded in Lemma \ref{usefulestimates2}.
Analogously, we have $$\|\langle \nabla \rangle^{r+\eta}(E_3)_x\|_{L^{4/3}L^2} \lesssim \varepsilon^2.$$
Finally, we bound the contribution by $E_2$.
\begin{align*}
    & \|\langle \nabla \rangle^{r+\eta}(E_2)_x\|_{L^{4/3}L^2} \\ & \lesssim \|\langle \nabla \rangle^{r+\eta}P_{LO}(F_{xx})\|_{L^{4/3}L^2}\|e^{-iF}\|_{L^{\infty}L^{\infty}} + \|P_{LO}(F_{xx})\|_{L^4L^{\infty}}\|\langle \nabla \rangle^{r+\eta} e^{-iF}\|_{L^{\infty}L^2} \\ & + \|\langle \nabla \rangle^{r+\eta}P_{LO}(F_x)\|_{L^{4/3}L^2}\|\partial_x e^{-iF}\|_{L^{\infty}L^{\infty}} + \|P_{LO}(F_x)\|_{L^{\infty}L^{\infty}}\|\langle \nabla \rangle^{r+\eta}\partial_xe^{-iF}\|_{L^{\infty}L^2}  = O(\varepsilon^2).
\end{align*}
We used $\|\langle \nabla \rangle^{r+\eta}P_{LO}(F_{xx})\|_{L^{4/3}L^2}+ \|\langle \nabla \rangle^{r+\eta}P_{LO}(F_x)\|_{L^{4/3}L^2} =O(\varepsilon^2)$ by estimate (\ref{PHIFx432}) in Lemma \ref{furtherestimates} and $L^2$-boundedness of $P_{LO}\circ \partial_x$. As before, we have $\|P_{LO}(F_x)\|_{L^4L^{\infty}} \lesssim \varepsilon^2$ by (\ref{LoFxLp}) and $\|P_{LO}(F_{xx})\|_{L^4L^{\infty}} \lesssim \varepsilon^2$ by (\ref{LoFxxLp}) in Lemma \ref{furtherestimates}, respectively. 
The remaining factors are bounded by Lemma \ref{usefulestimates2}.
\end{proof}

\begin{remark}\label{whyLp}
Our argument uses boundedness of the operators $P_{+hi}$ and $P_{lo}+P_{-hi}$ appearing in the terms $E_1, E_2$ and $E_3$. This is the reason for working with the space $H^{\eta,p}(\mathbb{T}^2)$ instead of $L^{\infty}(\mathbb{T}^2)$.
\end{remark}

Lemma \ref{furtherestimates} below collects the estimates on terms arising from the factors $P_{\pm HI}(F_x)$ and $P_{LO}(F_x)$ that were used in the proof of Lemma \ref{errortermslemma}. Lemma \ref{usefulestimates2} establishes the estimates on terms arising from the factors $P_{-hi}e^{-iF}$ and $P_{+hi}e^{-iF}$. 

\begin{lemma}\label{furtherestimates}
    Assume the bootstrap hypothesis holds and $0 \leq \eta < (2\sigma^2-1)/(2\sigma+2)$. We have the following estimates:
    \begin{equation}\label{PHIFxLp}
        \|P_{+HI}(F_x)\|_{L^4L^{\infty}} \lesssim \|\langle \nabla \rangle^{\eta}P_{+HI}(F_x)\|_{L^4L^p} = O(\varepsilon^2)
    \end{equation}
    \begin{equation}\label{LoFxLp}
        \|P_{LO}(F_x)\|_{L^4L^{\infty}} \lesssim \|\langle \nabla \rangle^{\eta}P_{LO}(F_x)\|_{L^4L^p} = O(\varepsilon^2)
    \end{equation}
\begin{equation}\label{LoFxxLp}
    \|P_{LO}(F_{xx})\|_{L^4L^{\infty}}\lesssim\|\langle \nabla \rangle^{\eta}P_{LO}(F_{xx})\|_{L^4L^p} = O(\varepsilon^2)
\end{equation}
\begin{equation}\label{LOFxinfty}
    \|P_{LO}(F_x)\|_{L^{\infty}L^{\infty}}=O(\varepsilon^2)
\end{equation}
\begin{equation}\label{PHIFx432}
    \|\langle \nabla \rangle^{r+\eta}P_{+HI}(F_x)\|_{L^{4/3}L^2} = O(\varepsilon^2) \quad \text{and} \quad \|\langle \nabla \rangle^{r+\eta}P_{LO}(F_x)\|_{L^{4/3}L^2} = O(\varepsilon^2)
\end{equation}
Analogous estimates to (\ref{PHIFxLp}) and (\ref{PHIFx432}) hold when replacing the Littlewood-Paley projection $P_{+HI}$ by $P_{-HI}$.
\end{lemma}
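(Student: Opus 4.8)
The plan is to derive all estimates in Lemma~\ref{furtherestimates} from the energy bounds already at our disposal, namely the $H^{1+\sigma,\sigma}_\omega$ and $\langle\partial_y\rangle^\sigma\partial_x^{-1}$ growth bounds (Lemmas~\ref{Hshnormgrowth} and \ref{Fyqnormgrowth}, as invoked in the proof of Lemma~\ref{eiFZ}, giving \eqref{eq16} and \eqref{eq5}: $\|F\|_{L^\infty H^{2+\sigma,\sigma}_\omega}\lesssim\varepsilon^2$ and $\|\langle\nabla\rangle^{r+\eta}F_x\|_{L^\infty L^2}\lesssim\varepsilon^2$), combined with Littlewood--Paley localization, anisotropic Sobolev embedding \eqref{anisotropicembedding}, boundedness property \eqref{boundedness2} in Lemma~\ref{anisotropicproperties}, and the trivial gain of $T'{}^{\alpha}\lesssim 1$ from time-integration on $[0,T']\subseteq[0,1]$.

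The key steps, in order: (i) For the high-frequency estimates \eqref{PHIFxLp} (and its $P_{-HI}$ analogue), observe that on the range of $P_{+HI}$ the tangential frequency is $\gtrsim 1$, so $\langle\nabla\rangle^\eta P_{+HI}(F_x)$ has the same size as $\langle\nabla\rangle^{\eta}P_{+HI}(u)$, and we bound
$\|\langle\nabla\rangle^\eta P_{+HI}(F_x)\|_{L^4 L^p}\lesssim T'{}^{1/4}\|\langle\nabla\rangle^\eta u\|_{L^\infty L^p}\lesssim \|u\|_{L^\infty H^{1+\sigma-\eta(1+\sigma)/\sigma,\ \sigma-\eta}_\omega}\lesssim\|u\|_{L^\infty H^{1+\sigma,\sigma}_\omega}\lesssim\varepsilon^2$,
using \eqref{boundedness2} and then \eqref{anisotropicembedding} (valid by the hypothesis $\eta<(2\sigma^2-1)/(2\sigma+2)$, which is exactly what makes the embedding indices admissible), with the $L^\infty$ bound following from $H^{\eta,p}\hookrightarrow L^\infty$. (ii) For \eqref{LoFxLp}, \eqref{LoFxxLp}, \eqref{LOFxinfty}: on the support of $P_{LO}$ the frequency is bounded, so $\langle\nabla\rangle^\eta$ and the extra $\partial_x$ in \eqref{LoFxxLp} are harmless bounded multipliers, and the estimates follow from the $L^\infty L^\infty$ (hence also $L^4 L^\infty$, $L^4 L^p$) bound $\|P_{LO}u\|_{L^\infty L^\infty}\lesssim\|u\|_{L^\infty L^\infty}\lesssim\|u\|_{L^\infty H^{1+\sigma,\sigma}_\omega}\lesssim\varepsilon^2$, exactly as in \eqref{lowfrequencies} and \eqref{est1}. (iii) For \eqref{PHIFx432}, again replace $P_{+HI}(F_x)$ by a frequency-localized piece of $u=F_x$; the frequency being $\gtrsim 1$ lets us write $\langle\nabla\rangle^{r+\eta}P_{+HI}(F_x)\lesssim\langle\nabla\rangle^{r+\eta}P_{+HI}(u)$, and then
$\|\langle\nabla\rangle^{r+\eta}P_{+HI}(F_x)\|_{L^{4/3}L^2}\lesssim T'{}^{3/4}\|\langle\nabla\rangle^{r+\eta}u\|_{L^\infty L^2}\lesssim\|\langle\nabla\rangle^{r+\eta}F_x\|_{L^\infty L^2}\lesssim\varepsilon^2$
by \eqref{eq5} (recall $r<\sigma-\eta$, so $r+\eta<\sigma$ and $\langle\nabla\rangle^{r+\eta}F_x$ is controlled by $\langle\nabla\rangle^\sigma F_x$, i.e. by $\|u\|_{H^{1+\sigma,\sigma}_\omega}$); the $P_{LO}(F_x)$ half is even easier since $\langle\nabla\rangle^{r+\eta}P_{LO}$ is bounded. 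The $P_{-HI}$ analogues follow verbatim since $F$ is real-valued.

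I expect the estimates themselves to be essentially routine given the infrastructure; the only point requiring a little care is \emph{bookkeeping of the index arithmetic} — checking that each application of \eqref{boundedness2} together with \eqref{anisotropicembedding} is legitimate under the standing hypothesis $0\le\eta<(2\sigma^2-1)/(2\sigma+2)$ (and $7/8<r<\sigma-\eta$), so that the resulting anisotropic exponents $(s_1',s_2')$ satisfy $s_1',s_2'>1/2$ with $1/s_1'+1/s_2'<2$. This is the same condition already verified in the proof of Lemma~\ref{eiFZ}, so no new obstruction arises; one simply has to record that the embeddings $H^{1+\sigma,\sigma}_\omega\hookrightarrow H^{\eta,\infty}\hookrightarrow H^{\eta,p}\hookrightarrow L^\infty$ and $H^{\sigma}_\omega\hookrightarrow L^2$-type bounds used above are all within the admissible range. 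In short, every displayed estimate reduces to: localize in frequency, trade the localization for an honest power of $u$ or $F_x$, use a time-H\"older gain of $T'{}^{\alpha}\lesssim1$, and close with one of the two energy bounds \eqref{eq16}/\eqref{eq5} via anisotropic Sobolev embedding.
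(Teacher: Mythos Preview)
Your overall approach is correct and essentially identical to the paper's own proof, which also reduces everything to the previously established energy bounds \eqref{est1}, \eqref{eq5}, \eqref{lowfrequencies}, and \eqref{Hr1rgrowth} together with $L^p$-boundedness of the directional Littlewood--Paley projections.

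There is, however, one misstatement you should correct. In step (ii) you write that ``on the support of $P_{LO}$ the frequency is bounded, so $\langle\nabla\rangle^\eta$ and the extra $\partial_x$ in \eqref{LoFxxLp} are harmless bounded multipliers,'' and in step (iii) that ``$\langle\nabla\rangle^{r+\eta}P_{LO}$ is bounded.'' This is not true: $P_{LO}$ is a projection only in the \emph{tangential} frequency $\omega\cdot n$, while the normal frequency $\omega^\perp\cdot n$ is unconstrained on its range. Hence $\langle n\rangle^\eta$ (and $\langle n\rangle^{r+\eta}$) are unbounded on the Fourier support of $P_{LO}$, and the operators $\langle\nabla\rangle^\eta P_{LO}$, $\langle\nabla\rangle^{r+\eta}P_{LO}$ are \emph{not} bounded in the sense you claim. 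The extra $\partial_x$ \emph{is} harmless for the reason you give, but $\langle\nabla\rangle^\eta$ is not.

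The fix is immediate and costs nothing: since $\langle\nabla\rangle^\eta$ and $P_{LO}$ commute (both are Fourier multipliers), write $\langle\nabla\rangle^\eta P_{LO}(F_x)=P_{LO}(\langle\nabla\rangle^\eta u)$ and then invoke $L^p$-boundedness of $P_{LO}$ together with $\|\langle\nabla\rangle^\eta u\|_{L^\infty L^p}\lesssim\|u\|_{L^\infty H^{1+\sigma,\sigma}_\omega}\lesssim\varepsilon^2$ from \eqref{est1}. This is exactly what \eqref{lowfrequencies} and \eqref{est1} do, and you already cite those equations, so the actual logic you intend is sound --- only the sentence asserting that $\langle\nabla\rangle^\eta$ is ``harmless on the range of $P_{LO}$'' needs to be rephrased.
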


\begin{proof} These estimates have essentially already been established as part of the calculations in Section \ref{partIIbootstrap}. Estimates (\ref{PHIFxLp}) and (\ref{LoFxLp}) are just (\ref{est1}) (and $L^p$-boundedness of Littlewood-Paley projections). Estimate (\ref{PHIFx432}) is (\ref{eq5}) (and $L^2$-boundedness of Littlewood-Paley projections). Estimate (\ref{LoFxxLp}) is just (\ref{lowfrequencies}). Estimate (\ref{LOFxinfty}) follows directly from the energy bound (\ref{Hr1rgrowth}) by $L^2$-boundedness  of $P_{LO}$ and anisotropic Sobolev embedding \eqref{anisotropicembedding}. 
\end{proof}

\begin{lemma}\label{usefulestimates2}
    Assume the bootstrap hypothesis holds and $0 \leq \eta < (2\sigma^2-1)/(2\sigma+2)$. We have the following estimates:
     \begin{equation}\label{fiftherror} \|\langle \nabla \rangle^{\eta}\partial_xP_{-hi}(e^{-iF})\|_{L^{\infty}L^p} \lesssim \varepsilon^2. \quad \text{In particular,} \ \|\partial_x P_{-hi}(e^{-iF})\|_{L^{\infty}L^{\infty}} \lesssim \varepsilon^2. \end{equation}
    \begin{equation}\label{sixtherror} \|\langle \nabla \rangle^{\eta}P_{-hi}e^{-iF}\|_{L^{\infty}L^p} \lesssim \varepsilon^2. \quad \text{In particular,} \ \|P_{-hi}e^{-iF}\|_{L^{\infty}L^{\infty}} \lesssim \varepsilon^2. \end{equation}
    Further,
    \begin{equation}\label{seventherror}\|\langle \nabla \rangle^{r+\eta}\partial_x P_{-hi}(e^{-iF})\|_{L^{\infty}L^2} \lesssim \varepsilon^2 \quad \text{and} \quad \|\langle \nabla \rangle^{r+\eta}P_{-hi} e^{-iF}\|_{L^{\infty}L^2} \lesssim \varepsilon^2. \end{equation}
    Analogous estimates hold with $P_{-hi}$ replaced by $P_{+hi}$.
\end{lemma}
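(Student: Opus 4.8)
\textbf{Proof strategy for Lemma \ref{usefulestimates2}.}
The plan is to reduce every estimate to bounds on $F$ and $F_x=u$ already obtained in Lemma \ref{eiFZ}, exploiting the fact that the low/high frequency projections $P_{-hi}$ and $P_{+hi}$ are bounded on every $L^q(\mathbb{T}^2)$ with $1<q<\infty$ (hence on the Sobolev spaces $H^{\eta,p}$ and $H^{r+\eta,2}$) so that applying them costs nothing. First I would observe that $e^{-iF}=1-P_{-hi}(e^{-iF})-P_{lo}(e^{-iF})-\ldots$ is not quite the right decomposition; instead, since $P_{-hi}e^{-iF}=P_{-hi}(e^{-iF}-1)$ (the constant $1$ is killed by $P_{-hi}$), and writing $g:=e^{-iF}$, each quantity to be estimated is of the form $\langle\nabla\rangle^a\partial_x^jP_{-hi}g$ with $j\in\{0,1\}$, $a\in\{\eta,r+\eta\}$, and the norms are $L^\infty_tL^p$ or $L^\infty_tL^2$.

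For \eqref{sixtherror} and the second half of \eqref{seventherror}: by $L^p$- (resp.\ $L^2$-)boundedness of $P_{-hi}$, $\|\langle\nabla\rangle^{\eta}P_{-hi}g\|_{L^\infty L^p}\lesssim\|\langle\nabla\rangle^{\eta}g\|_{L^\infty L^p}$ and $\|\langle\nabla\rangle^{r+\eta}P_{-hi}g\|_{L^\infty L^2}\lesssim\|\langle\nabla\rangle^{r+\eta}g\|_{L^\infty L^2}$; the right-hand sides are $O(\varepsilon^2)$ not $O(1)$ because $P_{-hi}$ annihilates the constant, so one in fact controls $\langle\nabla\rangle^{\eta}(g-1)$ and $\langle\nabla\rangle^{r+\eta}(g-1)$, and by the fractional chain rule (Lemma \ref{chainrule}), $\||\nabla|^{\eta}(e^{-iF}-1)\|_{L^p}\lesssim\||\nabla|^{\eta}e^{-iF}\|_{L^p}\lesssim\||\nabla|^{\eta}F\|_{L^p}\|e^{-iF}\|_{L^\infty}$, and by \eqref{est0} this is $O(\varepsilon^2)$; similarly $\||\nabla|^{r+\eta}(e^{-iF}-1)\|_{L^2}\lesssim\||\nabla|^{r+\eta}F\|_{L^2}=O(\varepsilon^2)$ by \eqref{eq16}. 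More carefully: apply $P_{-hi}$ first, so $\langle\nabla\rangle^{\eta}P_{-hi}g=\langle\nabla\rangle^{\eta}P_{-hi}(g-1)$ has $L^p$-norm $\lesssim\|\langle\nabla\rangle^{\eta}(g-1)\|_{L^p}\lesssim\||\nabla|^{\eta}(g-1)\|_{L^p}+\|g-1\|_{L^p}$, and both terms are $O(\varepsilon^2)$: the first by fractional chain rule applied to $g-1$ via $|e^{-iu}-e^{-iv}|\le|u-v|$, the second since $|g-1|=|e^{-iF}-1|\lesssim|F|\lesssim\|F\|_{L^\infty}\lesssim\varepsilon^2$ by anisotropic Sobolev embedding and \eqref{eq16}. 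The ``In particular'' clauses follow from Sobolev embedding $H^{\eta,p}(\mathbb{T}^2)\hookrightarrow L^\infty(\mathbb{T}^2)$.

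For \eqref{fiftherror} and the first half of \eqref{seventherror}: here $\partial_xP_{-hi}g=P_{-hi}(\partial_xg)=P_{-hi}(-iF_xe^{-iF})$, so by $L^p$-boundedness of $P_{-hi}$ and the fractional product rule,
\begin{align*}
\|\langle\nabla\rangle^{\eta}\partial_xP_{-hi}(e^{-iF})\|_{L^\infty L^p}
&\lesssim\|\langle\nabla\rangle^{\eta}(F_xe^{-iF})\|_{L^\infty L^p}\\
&\lesssim\|\langle\nabla\rangle^{\eta}F_x\|_{L^\infty L^p}\|e^{-iF}\|_{L^\infty L^\infty}+\|F_x\|_{L^\infty L^\infty}\|\langle\nabla\rangle^{\eta}e^{-iF}\|_{L^\infty L^p},
\end{align*}
which is $O(\varepsilon^2)$ by \eqref{est1}, \eqref{est3} and $\|e^{-iF}\|_{L^\infty}=1$; and
\begin{align*}
\|\langle\nabla\rangle^{r+\eta}\partial_xP_{-hi}(e^{-iF})\|_{L^\infty L^2}
&\lesssim\|\langle\nabla\rangle^{r+\eta}F_x\|_{L^\infty L^2}\|e^{-iF}\|_{L^\infty L^\infty}+\|F_x\|_{L^\infty L^\infty}\|\langle\nabla\rangle^{r+\eta}e^{-iF}\|_{L^\infty L^2}
\end{align*}
which is $O(\varepsilon^2)$ by \eqref{eq5}, \eqref{eiFZ2} and \eqref{est1}. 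The estimates with $P_{-hi}$ replaced by $P_{+hi}$ are identical since $P_{+hi}$ is likewise bounded on $L^q$, $1<q<\infty$, and also annihilates constants. The only mild subtlety — and the one place I would be careful — is confirming that the implicit constants in the fractional product rule on $\mathbb{T}^2$ are available with one factor in $L^\infty$; this is standard (it follows from Lemma \ref{chainrule}-type arguments and Coifman--Meyer on the torus, or from the algebra-type estimates used throughout Section \ref{partIIbootstrap}), so no new obstacle arises. Thus every displayed estimate is $O(\varepsilon^2)$, completing the proof.
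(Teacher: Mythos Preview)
Your proof is correct and follows essentially the same approach as the paper: reduce everything to bounds on $F$ and $F_x$ via the fractional product and chain rules together with $L^p$-boundedness of the projections, then invoke the estimates \eqref{est0}, \eqref{est1}, \eqref{eq5}, \eqref{eq16} established in Lemma~\ref{eiFZ}. For \eqref{fiftherror} and the first half of \eqref{seventherror} your argument is verbatim the paper's.

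The only cosmetic differences are in \eqref{sixtherror} and the second half of \eqref{seventherror}. For \eqref{sixtherror} the paper observes directly that the frequency support of $P_{-hi}$ is bounded away from zero, so $\langle\nabla\rangle^{\eta}\sim|\nabla|^{\eta}$ there, and then applies the fractional chain rule; you instead subtract the constant $1$ (killed by $P_{-hi}$) and split $\langle\nabla\rangle^{\eta}(g-1)$ into its homogeneous and zeroth-order parts. For the second half of \eqref{seventherror} the paper simply reduces to the first half via $L^2$-boundedness of $\partial_x^{-1}$ on high-frequency functions, whereas you again use the constant-subtraction device. Both variants are equally short and rely on the same underlying inputs.
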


\begin{proof} For both of (\ref{fiftherror}) and (\ref{sixtherror}), the second statement follows from the first by Sobolev embedding. 
By $L^p$-boundedness of $P_{-hi}$ and fractional product rule we estimate
\begin{equation*} \|\langle \nabla \rangle^{\eta}\partial_xP_{-hi}(e^{-iF})\|_{L^{\infty}L^p} \lesssim \|\langle \nabla \rangle^{\eta}F_x\|_{L^{\infty}L^p}\|e^{-iF}\|_{L^{\infty}L^{\infty}}+\|F_x\|_{L^{\infty}L^{\infty}}\|\langle \nabla \rangle^{\eta}e^{-iF}\|_{L^{\infty}L^p}. \end{equation*} and conclude (\ref{fiftherror}) by invoking estimate \eqref{est1} 
and Lemma \ref{eiFZ}.\\

We turn to (\ref{sixtherror}). 
By fractional chain rule \eqref{fractionalchainruleinequality} and using that the frequency support is bounded away from zero, we have $$\|\langle \nabla \rangle^{\eta}P_{-hi}e^{-iF}\|_{L^{\infty}L^p} \sim \||\nabla|^{\eta}P_{-hi}e^{-iF}\|_{L^{\infty}L^p} \lesssim \||\nabla|^{\eta}F\|_{L^{\infty}L^p} \|e^{-iF}\|_{L^{\infty}L^{\infty}}.$$
But this is $O(\varepsilon^2)$ by \eqref{est0}. \\

Finally, we show (\ref{seventherror}). The second statement follows from the first by $L^2$-boundedness of $1/\partial_x$ on high-frequency functions. We estimate by fractional product rule
$$\|\langle \nabla \rangle^{r+\eta}\partial_x P_{-hi}(e^{-iF})\|_{L^{\infty}L^2} \lesssim \|\langle \nabla \rangle^{r+\eta}F_x\|_{L^{\infty}L^2}\|e^{-iF}\|_{L^{\infty}L^{\infty}} + \|F_x\|_{L^{\infty}L^{\infty}}\|\langle \nabla \rangle^{r+\eta}e^{-iF}\|_{L^{\infty}L^2}.$$
By estimates (\ref{est1}) and (\ref{eq5}) in the proof of Lemma \ref{eiFZ} we have $\|\langle \nabla \rangle^{r+\eta}u\|_{L^{\infty}L^2}, \|u\|_{L^{\infty}L^{\infty}} \lesssim \varepsilon^2$. The remaining factors are $O(1)$ by Lemma \ref{eiFZ}, so that the entire expression is $O(\varepsilon^2)$.
\end{proof}

\section{Finishing the proof: constructing local solutions for initial data in $Y$}\label{sectionfinisihingtheproof}
In this section we put everything so far together and prove our main theorem, Theorem \ref{maintheorem}. Combining the a-priori bound in Theorem \ref{aprioriformal} and the bound on norm growth in Proposition \ref{normgrowthX} with the local wellposedness theory in $X$ (Theorem \ref{wellposednessinX}), we obtain solutions to the Cauchy problem for regularized initial data on a \textit{uniform} time interval. 
Theorem \ref{maintheorem} then follows by standard arguments using Gronwall's inequality and compactness arguments. We provide the details.

\begin{proof}[Proof of Theorem \ref{maintheorem}]
Given initial data $u_0 \in B_Y(0,\rho_0)$, regularize by frequency cut-off yielding a sequence of approximate initial data $u_0^N = P_{N,N}u_0 \in Y \cap X$ with $u_0^N \rightarrow u_0$ in $Y$. See \eqref{generalprojection} for the definition of $P_{N,N}$. Using the a-priori bound in Theorem \ref{aprioriformal} and the bound on norm growth in Proposition \ref{normgrowthX} we can iterate the local wellposedness theory in $X$ (Theorem \ref{wellposednessinX}) and get solutions $u^N \in C([0,1],X)$ to the Cauchy problem with initial data $u_0^N$ on the uniform time interval $[0,1]$.

\begin{lemma}\label{Cauchynessfinalapproximation}
    The sequence of emanating solutions $u^N \in C([0,1],X)$ is Cauchy in $C([0,1],H_{\omega}^{1+\tilde{\sigma},\tilde{\sigma}})$ for any $\tilde{\sigma}<\sigma$.
\end{lemma}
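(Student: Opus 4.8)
The plan is to show the sequence $u^N$ is Cauchy in $C([0,1],L^2)$ first and then upgrade to $C([0,1],H^{1+\tilde\sigma,\tilde\sigma}_\omega)$ by interpolation against a uniform high-regularity bound. For the $L^2$-piece, set $v=u^N-u^M$; since both $u^N,u^M$ are genuine solutions (living in $C([0,1],X)\subseteq C([0,1],H^{s_1,s_2}_\omega)$), the difference satisfies $v_t=-Hv_{xx}+\partial_x(v(u^N+u^M))$. Testing against $v$, using anti-selfadjointness of $-H\partial_{xx}$ and integration by parts exactly as in Section \ref{sectionuniqueness}, gives
\begin{equation*}
\frac{d}{dt}\|v(t)\|_{L^2}^2 \lesssim \|v(t)\|_{L^2}^2\big[\|\partial_xu^N(t)\|_{L^\infty}+\|\partial_xu^M(t)\|_{L^\infty}\big].
\end{equation*}
Gronwall's inequality \eqref{Gronwall} then yields
\begin{equation*}
\|u^N-u^M\|_{C([0,1],L^2)}\le \|u_0^N-u_0^M\|_{L^2}\exp\Big(c\int_0^1\|\partial_xu^N(s)\|_{L^\infty}+\|\partial_xu^M(s)\|_{L^\infty}\,ds\Big).
\end{equation*}
The exponential factor is uniformly bounded in $N,M$: by Hölder in time $\int_0^1\|\partial_xu^N\|_{L^\infty}\le\|\partial_xu^N\|_{L^4([0,1],L^\infty)}$, and Theorem \ref{aprioriformal} bounds this by $M(\rho_0)$ since $\|u_0^N\|_Y\le\|u_0\|_Y<\rho_0$ (frequency truncation does not increase the $Y$-norm). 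Since $u_0^N\to u_0$ in $Y$, hence in $L^2$, the data differences $\|u_0^N-u_0^M\|_{L^2}\to0$, so $u^N$ is Cauchy in $C([0,1],L^2)$.

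Next I would record a \emph{uniform} bound on $u^N$ in $C([0,1],H^{1+\sigma,\sigma}_\omega)$. By the embedding $Y\hookrightarrow$ ``$F,u\in H^{1+\sigma,\sigma}_\omega$'' (more precisely $\|u_0^N\|_{H^{1+\sigma,\sigma}_\omega}\lesssim\|u_0^N\|_Y\lesssim\rho_0$) together with the anisotropic Sobolev norm-growth control in Lemma \ref{Hshnormgrowth},
\begin{equation*}
\|u^N(t)\|_{H^{1+\sigma,\sigma}_\omega}\le\|u_0^N\|_{H^{1+\sigma,\sigma}_\omega}\exp\Big(\int_0^1\|\partial_xu^N(s)\|_{L^\infty}\,ds\Big)\lesssim\rho_0\,e^{M(\rho_0)},
\end{equation*}
uniformly in $N$. (One should check Lemma \ref{Hshnormgrowth} applies with the pair $(\tilde s_1,\tilde s_2)=(1+\sigma,\sigma)$, which is covered since $\sigma<1$.) Now interpolate: for any $\tilde\sigma<\sigma$ the anisotropic interpolation inequality \eqref{interpolationestimate} gives, with $\theta=\theta(\tilde\sigma,\sigma)\in(0,1)$,
\begin{equation*}
\|u^N-u^M\|_{C([0,1],H^{1+\tilde\sigma,\tilde\sigma}_\omega)}\lesssim\|u^N-u^M\|_{C([0,1],H^{1+\sigma,\sigma}_\omega)}^{\theta}\cdot\|u^N-u^M\|_{C([0,1],L^2)}^{1-\theta}.
\end{equation*}
The first factor is bounded uniformly by the triangle inequality and the uniform $H^{1+\sigma,\sigma}_\omega$ bound, while the second factor tends to $0$ as $N,M\to\infty$. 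Hence $u^N$ is Cauchy in $C([0,1],H^{1+\tilde\sigma,\tilde\sigma}_\omega)$.

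The main obstacle is a bookkeeping one rather than an analytic one: ensuring the parameters line up so that $Y^\sigma_\omega\hookrightarrow H^{1+\sigma,\sigma}_\omega$ holds (immediate from the definition of the $Y$-norm, \eqref{Ynorm}, since $\langle\nabla\rangle^\sigma\langle\partial_x\rangle\gtrsim\langle\partial_y\rangle^\sigma+\langle\partial_x\rangle^{1+\sigma}$ up to constants on the relevant frequency regimes — this requires a short symbol comparison), and that Lemma \ref{Hshnormgrowth} may legitimately be invoked at regularity $(1+\sigma,\sigma)$ with $\sigma<1$ so the hypothesis $1\ge\tilde s_2\ge0$ is met. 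One also has to be slightly careful that $u^N\in C([0,1],X)$ indeed implies membership in $C([0,1],H^{1+\sigma,\sigma}_\omega)$ at the regularity level needed to run the energy estimate; but $X=X^{s_1,s_2}_\omega$ with $s_1$ large and $s_2\in(1/2,1)$ controls $H^{s_1,s_2}_\omega\supseteq H^{1+\sigma,\sigma}_\omega$ only after checking $s_1\ge1+\sigma$ and $s_2\ge\sigma$, which is where the freedom to take $s_1$ large and $\sigma<s_2$ (guaranteed since $7/8<\sigma<1$ and $1/2<s_2<1$ can be chosen with $\sigma<s_2$, or else a further interpolation step is inserted) is used. Modulo these checks, the argument is the standard Bona--Smith-type approximation scheme.
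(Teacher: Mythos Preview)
Your proposal is correct and follows essentially the same route as the paper's own proof: an $L^2$-Gronwall argument on the difference using anti-selfadjointness and the a-priori bound from Theorem~\ref{aprioriformal}, followed by interpolation against the uniform $H^{1+\sigma,\sigma}_\omega$ bound coming from Lemma~\ref{Hshnormgrowth}. One remark: your parameter worries at the end are overcautious --- Lemma~\ref{Hshnormgrowth} only requires $\tilde s_2\in[0,1]$, not $\tilde s_2\le s_2$, so there is no need to arrange $\sigma<s_2$; the embedding $X\hookrightarrow H^{s_1,s_2}_\omega$ already furnishes enough regularity to run the energy computation at level $(1+\sigma,\sigma)$.
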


\begin{proof}
    First, we show that the sequence is Cauchy in $C([0,1],L_{\omega}^2)$ by an easy energy argument using boundedness of $\|\partial_xu^N\|_{L^{\infty}}$ uniformly in $N$. We calculate using anti-selfadjointness of $H\partial_{xx}$ and integration by parts as several times before that 
    \begin{align*}
\frac{d}{dt}\|u^N-u^M\|_{L^2}^2 & = \frac{1}{2} \int_{\mathbb{T}^2} (u^N-u^M)^2 \partial_x(u^N+u^M) \leq \frac{1}{2} \|u^N-u^M\|_{L^2}^2[\|\partial_xu^N\|_{L^{\infty}}+\|\partial_xu^M\|_{L^{\infty}}].
    \end{align*}
    Gronwall's inequality yields
    $$\|(u^N-u^M)(t)\|_{L^2}^2 \leq \|u^N(0)-u^M(0)\|_{L^2}^2 \exp \Big(\frac{1}{2} \int_0^t \|\partial_xu^N(s)\|_{L^{\infty}}+\|\partial_xu^M(s)\|_{L^{\infty}} ds \Big)$$
    so that by the uniform a-priori bound in Theorem \ref{aprioriformal}
    $$\|(u^N-u^M)(t)\|_{L_t^{\infty}([0,1],L^2)}^2 \lesssim \|u^N(0)-u^M(0)\|_{L^2}^2 \rightarrow 0 \quad \text{as} \quad N>M\rightarrow \infty.$$
Further, the solutions $u^N \in C([0,1],X)$ also satisfy an a-priori bound 
\begin{equation}\label{secondaprioribound}
    \|u^N\|_{L^{\infty}([0,1],H^{1+\sigma, \sigma}_{\omega})} \lesssim \|u_0^N\|_{H_{\omega}^{1+\sigma,\sigma}} \lesssim \|u_0\|_{Y_{\omega}^{\sigma}}
\end{equation}
uniformly in $N$. This follows by the same argument as the a-priori bound in Lemma \ref{Hshbound}, albeit using the actual Benjamin-Ono equation instead of a regularization and Theorem \ref{aprioriformal}. The calculations in this Gronwall argument are justified for functions in $C([0,1],X)$ by similar considerations as in Section \ref{sectionenergyestimatepartI}.
Interpolation (Lemma \ref{anisotropicproperties}) between Cauchyness $\|u^N-u^M\|_{C([0,1],L^2)} = o_{N,M \rightarrow \infty}(1)$ 
and the a-priori bound (\ref{secondaprioribound}) yields Cauchyness in $C([0,1],H_{\omega}^{1+\tilde{\sigma},\tilde{\sigma}})$ for any $\tilde{\sigma}<\sigma$.
\end{proof}

\begin{lemma}
The limit function $u:=H_{\omega}^{1+\tilde{\sigma}, \tilde{\sigma}}\textit{- }\kern-2pt lim_{N \rightarrow \infty} u^N$ belongs to the space $$C([0,1],H_{\omega}^{1+\tilde{\sigma}, \tilde{\sigma}}) \cap L^4([0,1],V^{1,\infty}_{\omega}).$$
\end{lemma}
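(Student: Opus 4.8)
The plan is to show two separate facts: that the limit $u$ inherits continuity into $H_{\omega}^{1+\tilde\sigma,\tilde\sigma}$ from the Cauchy convergence established in Lemma \ref{Cauchynessfinalapproximation}, and that $u$ additionally lies in $L^4([0,1],V^{1,\infty}_\omega)$ by extracting a weak-$*$ limit point and identifying it with $u$. The first part is essentially immediate: $C([0,1],H_{\omega}^{1+\tilde\sigma,\tilde\sigma})$ is a Banach space, the sequence $(u^N)$ is Cauchy in it by Lemma \ref{Cauchynessfinalapproximation}, hence it converges to some limit which (being a uniform limit in $t$) is continuous, and this limit coincides with $u$ by definition. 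So the bulk of the work is the $L^4_tV^{1,\infty}_\omega$ statement.

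For the second part, I would first record that the approximating solutions $u^N$ satisfy the uniform a-priori bound of Theorem \ref{aprioriformal}: since $\|u_0^N\|_Y \le \|u_0\|_Y < \rho_0$ (frequency truncation does not increase the $Y$-norm, as noted after Theorem \ref{aprioriformal} and used for $\rho_0$), we get $\|u^N\|_{L^4([0,1],L^\infty(\mathbb T^2))} + \|\partial_x u^N\|_{L^4([0,1],L^\infty(\mathbb T^2))} \le M(\|u_0\|_Y)$ uniformly in $N$. Recalling $V^{1,\infty}_\omega = H^{1,0,\infty}_\omega$ with norm $\|\langle\partial_x\rangle u\|_{L^\infty}$, this says exactly that $(u^N)$ is bounded in $L^4([0,1],V^{1,\infty}_\omega)$. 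Now $L^4([0,1],V^{1,\infty}_\omega)$ is (isometrically, via the Fourier-side description $\langle\partial_x\rangle u \in L^4_t L^\infty_x$ composed with the quasi-periodic identification) a dual space — it is the dual of $L^{4/3}([0,1], (V^{1,\infty}_\omega)_*)$ where $(V^{1,\infty}_\omega)_*$ is the predual of $V^{1,\infty}_\omega(\mathbb T^2)$, or more concretely one works with $L^\infty(\mathbb T^2) = (L^1(\mathbb T^2))^{*}$ and pulls $\langle\partial_x\rangle$ through. Hence the bounded sequence $(u^N)$ has a weak-$*$ convergent subsequence $u^{N_k} \rightharpoonup^* v$ in $L^4([0,1],V^{1,\infty}_\omega)$, with $\|v\|_{L^4([0,1],V^{1,\infty}_\omega)} \le \liminf_k \|u^{N_k}\|_{L^4([0,1],V^{1,\infty}_\omega)} \le M(\|u_0\|_Y)$ by weak-$*$ lower semicontinuity of the norm.

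It then remains to identify $v = u$ almost everywhere. This is where I would be careful but expect no genuine obstacle: $u^{N_k} \to u$ strongly in $C([0,1],H_{\omega}^{1+\tilde\sigma,\tilde\sigma})$, which in particular forces $u^{N_k} \to u$ in, say, the distributional sense on $[0,1]\times\mathbb T^2$ (or in $L^2([0,1]\times\mathbb T^2)$ after using the embedding $H^{1+\tilde\sigma,\tilde\sigma}_\omega \hookrightarrow L^2$). On the other hand $u^{N_k} \rightharpoonup^* v$ in $L^4_t V^{1,\infty}_\omega$ implies $u^{N_k} \to v$ in the sense of distributions as well (testing against smooth compactly supported functions lies in the predual). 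Since distributional limits are unique, $v = u$, and therefore $u \in L^4([0,1],V^{1,\infty}_\omega)$ with the stated quantitative bound inherited from $M(\|u_0\|_Y)$. The only mildly delicate point is making the duality/predual structure of $L^4([0,1],V^{1,\infty}_\omega)$ precise enough to invoke Banach–Alaoglu; the clean way is to transfer everything to the torus, write $\langle\partial_x\rangle u^{N_k}$ as a bounded sequence in $L^4([0,1],L^\infty(\mathbb T^2))=(L^{4/3}([0,1],L^1(\mathbb T^2)))^*$, extract a weak-$*$ limit there, and then undo $\langle\partial_x\rangle$. I would phrase the argument in that form.

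\begin{proof}
By Lemma \ref{Cauchynessfinalapproximation}, the sequence $(u^N)$ is Cauchy in the Banach space $C([0,1],H_{\omega}^{1+\tilde\sigma,\tilde\sigma})$, so it converges there; the limit is precisely $u$ by definition, and $u \in C([0,1],H_{\omega}^{1+\tilde\sigma,\tilde\sigma})$.

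It remains to show $u \in L^4([0,1],V^{1,\infty}_\omega)$. Since frequency truncation does not increase the $Y$-norm, $\|u_0^N\|_Y \le \|u_0\|_Y <\rho_0$, and Theorem \ref{aprioriformal} applied on $[0,1]$ gives, uniformly in $N$,
\begin{equation*}
\|u^N\|_{L^4([0,1],L^\infty(\mathbb T^2))} + \|\partial_x u^N\|_{L^4([0,1],L^\infty(\mathbb T^2))} \le M(\|u_0\|_Y).
\end{equation*}
Recalling that $\|f\|_{V^{1,\infty}_\omega} = \|\langle \partial_x \rangle g\|_{L^\infty(\mathbb T^2)}$ for $f(x)=g(\omega x)$ and $\|\langle\partial_x\rangle g\|_{L^\infty} \lesssim \|g\|_{L^\infty}+\|\partial_x g\|_{L^\infty}$, the above bound says that $(u^N)$ is bounded in $L^4([0,1],V^{1,\infty}_\omega)$. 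Transferring to the torus, the sequence $\langle\partial_x\rangle g^N$ (where $u^N(x)=g^N(\omega x)$) is bounded in $L^4([0,1],L^\infty(\mathbb T^2))$, which is the dual of the separable space $L^{4/3}([0,1],L^1(\mathbb T^2))$. By Banach--Alaoglu there is a subsequence with $\langle\partial_x\rangle g^{N_k} \rightharpoonup^{*} G$ in $L^4([0,1],L^\infty(\mathbb T^2))$, and by weak-$*$ lower semicontinuity of the norm, $\|G\|_{L^4([0,1],L^\infty(\mathbb T^2))} \le \liminf_k \|\langle\partial_x\rangle g^{N_k}\|_{L^4([0,1],L^\infty(\mathbb T^2))} \le M(\|u_0\|_Y)$.

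On the other hand, $u^{N_k} \to u$ in $C([0,1],H_{\omega}^{1+\tilde\sigma,\tilde\sigma})$, hence (via the embedding $H^{1+\tilde\sigma,\tilde\sigma}_\omega(\mathbb T^2)\hookrightarrow L^2(\mathbb T^2)$) $g^{N_k}\to g$ in $C([0,1],L^2(\mathbb T^2))$, where $u(x)=g(\omega x)$; in particular $\langle\partial_x\rangle g^{N_k}\to \langle\partial_x\rangle g$ in the sense of distributions on $[0,1]\times\mathbb T^2$. Testing against $C_c^\infty((0,1)\times\mathbb T^2)\subseteq L^{4/3}([0,1],L^1(\mathbb T^2))$ shows that the weak-$*$ limit agrees distributionally with $\langle\partial_x\rangle g$, so by uniqueness of distributional limits $G=\langle\partial_x\rangle g$. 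Therefore $\langle\partial_x\rangle g \in L^4([0,1],L^\infty(\mathbb T^2))$ with norm bounded by $M(\|u_0\|_Y)$, i.e. $u \in L^4([0,1],V^{1,\infty}_\omega)$. Combining with the first paragraph, $u \in C([0,1],H_{\omega}^{1+\tilde\sigma,\tilde\sigma}) \cap L^4([0,1],V^{1,\infty}_\omega)$.
\end{proof}
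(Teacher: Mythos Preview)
Your proof is correct and follows essentially the same strategy as the paper's: membership in $C([0,1],H^{1+\tilde\sigma,\tilde\sigma}_\omega)$ comes from the Cauchy property of Lemma \ref{Cauchynessfinalapproximation}, and membership in $L^4([0,1],V^{1,\infty}_\omega)$ comes from the uniform a-priori bound of Theorem \ref{aprioriformal}, weak-$*$ sequential compactness via Banach--Alaoglu, and identification of the weak-$*$ limit with $u$ by uniqueness of distributional limits. The only difference is in how the dual-space structure is set up: the paper works with the map $u \mapsto (u,\partial_x u)$ into $L^4_t\bigl(L^\infty(\mathbb T^2)\times L^\infty(\mathbb T^2)\bigr) = \bigl(L^{4/3}_t(L^1\times L^1)\bigr)^*$, whereas you pass through $\langle\partial_x\rangle u$ in $L^4_t L^\infty(\mathbb T^2)$. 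The paper's product-space version has the minor advantage that it sidesteps your claimed inequality $\|\langle\partial_x\rangle g\|_{L^\infty} \lesssim \|g\|_{L^\infty}+\|\partial_x g\|_{L^\infty}$, which is delicate at $p=\infty$ (it would amount to an $L^\infty$ multiplier bound for $\langle\xi\rangle/(1+i\xi)$); working directly with the pair $(u,\partial_x u)$ avoids that issue entirely.
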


\begin{proof}
    The closed unit ball in $L^4([0,1],V^{1,\infty}_{\omega})$ is weakly$^*$ sequentially compact, in the sense that every bounded sequence $u_n$ in $L_t^4([0,1],V^{1,\infty}_{\omega})$ has a subsequence $u_{n_k}$ such that both $u_{n_k}\rightharpoonup \tilde{u} $ and $\partial_x u_{n_k} \rightharpoonup \partial_x \tilde{u} $ weakly$^*$ in $L_t^4([0,1],L^{\infty}(\mathbb{T}^2))$ for some function $\tilde{u}\in L_t^4([0,1],V^{1,\infty}_{\omega})$. This follows by considering the map $$V^{1,\infty}_{\omega} \rightarrow L_t^4([0,1],L^{\infty}(\mathbb{T}^2) \times L^{\infty}(\mathbb{T}^2)), \ u \mapsto (u,\partial_xu)$$ and weak$^*$ compactness of the space $L_t^4([0,1],L^{\infty}(\mathbb{T}^2) \times L^{\infty}(\mathbb{T}^2))$ by Banach-Alaoglu, the latter space being the dual of the separable normed vector space $L_t^{4/3}([0,1],L^1(\mathbb{T}^2) \times L^1(\mathbb{T}^2))$. Indeed, if $u_{n_k}\rightharpoonup \tilde{u} $ and $\partial_x u_{n_k} \rightharpoonup v $ weakly$^*$ in $L_t^4([0,1],L^{\infty})$, one can verify that 
    $\partial_x\tilde{u} = v$ as distributions by testing against $\psi \in C^{\infty}_c([0,1] \times \mathbb{T}^2) \subseteq L^{4/3}([0,1],L^{\infty}(\mathbb{T}^2))$. 

    Now weak$^*$ convergence in $L^4_t([0,1],L^{\infty}(\mathbb{T}^2))$ and convergence in $C([0,1],H_{\omega}^{1+\tilde{\sigma}, \tilde{\sigma}})$ both imply convergence as distributions, so that $u = \tilde{u}$ as distributions. Thus, \begin{equation*} u \in C([0,1],H_{\omega}^{1+\tilde{\sigma}, \tilde{\sigma}}) \cap L^4([0,1], V^{1,\infty}_{\omega}).\qedhere \end{equation*}
\end{proof}

\begin{lemma}
    The limit function $u\in C([0,1],H_{\omega}^{1+\tilde{\sigma}, \tilde{\sigma}})$ 
    is a distributional solution to the Benjamin--Ono equation with initial data $u_0$.
\end{lemma}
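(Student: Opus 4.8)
The plan is to show that the limit $u$ obtained in the previous lemmas satisfies the distributional formulation of the Benjamin--Ono equation by passing to the limit in the corresponding identity for the approximating solutions $u^N$. First I would recall that each $u^N \in C([0,1],X)$ is a classical solution to the Benjamin--Ono equation with initial data $u_0^N$ (by the wellposedness theory in $X$, Theorem \ref{wellposednessinX}, iterated on $[0,1]$), so in particular for every test function $\psi \in C_c^\infty([0,1]\times\mathbb{R})$ we have
\begin{equation*}
\int_{[0,1]}\int_{\mathbb{R}} u^N \cdot \partial_t\psi \ dxdt = \int_{[0,1]}\int_{\mathbb{R}} Hu^N\cdot \partial_{xx}\psi + (u^N)^2\cdot \partial_x\psi \ dxdt,
\end{equation*}
where the spatial integral is understood via the identification of quasi-periodic functions with their torus representatives and the associated averaging (as in \eqref{conserved1}); alternatively one works directly on $\mathbb{T}^2$ against the pulled-back test function. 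This identity holds because $u^N$ solves the integral equation pointwise, and after integrating against $\psi$ one integrates by parts in $x$ and $t$, which is legitimate since $u^N(t)\in X \hookrightarrow H^{s_1,s_2}_\omega \hookrightarrow C(\mathbb{T}^2)$ with enough regularity (Lemma \ref{anisotropicproperties}) and $u^N$ is $C^1$ in time by the proof in Section \ref{sectionsolution}.

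Next I would pass to the limit $N\to\infty$ term by term. For the linear term $\int u^N\partial_t\psi$, convergence follows from $u^N \to u$ in $C([0,1],H^{1+\tilde\sigma,\tilde\sigma}_\omega) \hookrightarrow C([0,1],L^2(\mathbb{T}^2))$ (Lemma \ref{Cauchynessfinalapproximation}), which in particular gives convergence in the sense of distributions. For the term $\int Hu^N\partial_{xx}\psi$, the Hilbert transform $H$ is bounded on $L^2(\mathbb{T}^2)$ (it is the multiplier $-i[P_+-P_-]$), so $Hu^N \to Hu$ in $C([0,1],L^2)$ and again convergence against the fixed smooth $\partial_{xx}\psi$ follows. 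The only genuinely nonlinear term is $\int (u^N)^2\partial_x\psi$; here I would write $(u^N)^2 - u^2 = (u^N-u)(u^N+u)$ and estimate
\begin{equation*}
\|(u^N)^2-u^2\|_{C([0,1],L^1)} \lesssim \|u^N-u\|_{C([0,1],L^2)}\big(\|u^N\|_{C([0,1],L^2)}+\|u\|_{C([0,1],L^2)}\big),
\end{equation*}
which tends to zero since $\|u^N-u\|_{C([0,1],L^2)} \to 0$ and the $L^2$-norms are uniformly bounded by the a-priori bound \eqref{secondaprioribound} (and $H^{1+\sigma,\sigma}_\omega \hookrightarrow L^2$). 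Testing the $L^1$-difference against $\partial_x\psi \in L^\infty$ closes this term.

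Finally I would verify that the initial data is attained: $u(0) = \lim_N u^N(0) = \lim_N u_0^N = u_0$, the first equality by continuity of $u^N$ into the relevant space and the uniform Cauchy estimate, the last by $u_0^N = P_{N,N}u_0 \to u_0$ in $Y$ (hence in $L^2$, and in $H^{1+\tilde\sigma,\tilde\sigma}_\omega$ by the embeddings $H^{1+\sigma}_\omega \hookrightarrow Y$ discussed in the introduction together with interpolation). This identifies $u$ as a distributional solution with initial data $u_0$, completing the proof. I do not expect any serious obstacle here: all convergences are at the crude level of $C([0,1],L^2)$ or weaker, and the only point requiring a little care is keeping the bookkeeping between the quasi-periodic picture on $\mathbb{R}$ and the torus picture consistent — the distributional formulation in the statement of Theorem \ref{maintheorem} is phrased on $\mathbb{R}$, so one must either restrict to periodization-compatible test functions or note that the pairing $\int_{[0,1]}\int_{\mathbb{R}} fg$ for quasi-periodic $f,g$ is interpreted via the averaging in \eqref{conserved1}, under which all of the above $L^2(\mathbb{T}^2)$ and $L^1(\mathbb{T}^2)$ convergences transfer directly.
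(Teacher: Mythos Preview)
Your overall strategy is correct and matches the paper's: write the distributional identity for each $u^N$ and pass to the limit term by term. However, there is a small gap in the transfer between the torus and the line.

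The distributional formulation in Theorem~\ref{maintheorem} tests against $\psi \in C_c^\infty([0,1]\times\mathbb{R})$, so the integrals $\int_{\mathbb{R}} u^N \partial_t\psi\,dx$ are genuine integrals over $\mathbb{R}$ of a bounded quasi-periodic function against a compactly supported test function, not averages. Your proposed interpretation via the averaging in \eqref{conserved1} does not apply here, since $\partial_t\psi$ is not quasi-periodic. More importantly, convergence $u^N \to u$ in $C([0,1],L^2(\mathbb{T}^2))$ does \emph{not} by itself imply convergence of $\int_{\mathbb{R}} u^N(t,x)\partial_t\psi(t,x)\,dx$: the trace map from $L^2(\mathbb{T}^2)$ to functions on the line $\{\omega x : x\in\mathbb{R}\}$ is not bounded, so $L^2(\mathbb{T}^2)$-closeness of the torus representatives says nothing about pointwise or locally integrable closeness on $\mathbb{R}$.

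The fix is immediate and is what the paper does: use the anisotropic Sobolev embedding $H^{1+\tilde\sigma,\tilde\sigma}_\omega \hookrightarrow C(\mathbb{T}^2)$ (Lemma~\ref{anisotropicproperties}, valid since $\tilde\sigma$ can be taken larger than $1/\sqrt{2}$) to upgrade your convergence to $u^N \to u$, $Hu^N \to Hu$, and $(u^N)^2 \to u^2$ in $C([0,1],C(\mathbb{T}^2))$, hence uniformly on $[0,1]\times\mathbb{R}$. Then dominated convergence on the compact support of $\psi$ closes every term. Your $L^1(\mathbb{T}^2)$ estimate for the nonlinear term is unnecessary once you have uniform convergence.
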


\begin{proof}
Each $u^N$ is a classical solution, and thus in particular a distributional solution satisfying 
$$\int_{[0,1]} \int_{\mathbb{R}} u^N\cdot \partial_t \psi \ dxdt = \int_{[0,1]} \int_{\mathbb{R}} Hu^N \cdot \partial_{xx}\psi + (u^N)^2 \cdot \partial_x\psi \ dx dt$$ for any $\psi \in C_c^{\infty}([0,1] \times \mathbb{R}).$
By convergence in $C([0,1], H^{\tilde{\sigma},1+\tilde{\sigma}}_{\omega})$, boundedness of the quasi-periodic Hilbert transform $H$ on $L^2$-based Sobolev spaces, and by anisotropic Sobolev embedding \eqref{anisotropicembedding} we have convergence $u^N \rightarrow u$, $Hu^N \rightarrow Hu$ and $(u^N)^2 \rightarrow u^2$ in $C([0,1],C(\mathbb{R}))$. By bounded convergence, the limit function $u$ is a distributional solution.
\end{proof}

\begin{lemma}
    The solution is unique in $C([0,1],H_{\omega}^{1+\tilde{\sigma}, \tilde{\sigma}}) \cap L^4([0,1],V^{1,\infty}_{\omega})$ among limits of smooth solutions. 
\end{lemma}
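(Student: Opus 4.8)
The plan is to prove uniqueness by a Gronwall argument at the level of the $L^2_\omega$-norm, exactly in the spirit of the uniqueness proofs in Sections~\ref{sectionuniqueness} and~\ref{sectionCauchybound}. Suppose $u,v \in C([0,1],H_\omega^{1+\tilde\sigma,\tilde\sigma}) \cap L^4([0,1],V^{1,\infty}_\omega)$ are two distributional solutions to the Benjamin--Ono equation with the same initial data $u_0$, each arising as a limit of smooth solutions. The difference $w=u-v$ solves, in the distributional sense, the equation $w_t + Hw_{xx} = \partial_x(w(u+v))$. The aim is to test this against $w$ and integrate by parts to obtain
\begin{equation*}
\frac{d}{dt}\|w(t)\|_{L^2}^2 = -\int_{\mathbb{T}^2} w^2\,\partial_x(u+v) \lesssim \|w(t)\|_{L^2}^2\big[\|\partial_x u(t)\|_{L^\infty} + \|\partial_x v(t)\|_{L^\infty}\big],
\end{equation*}
where the contribution of $H\partial_{xx}$ drops out by anti-selfadjointness. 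Since $u,v\in L^4([0,1],V^{1,\infty}_\omega)$, the quantity $\int_0^1 \|\partial_x u(s)\|_{L^\infty} + \|\partial_x v(s)\|_{L^\infty}\,ds$ is finite (indeed $\lesssim \|\partial_x u\|_{L^4L^\infty} + \|\partial_x v\|_{L^4L^\infty}$ by Hölder in time), so Gronwall's inequality \eqref{Gronwall}, together with $w(0)=0$, forces $\|w(t)\|_{L^2}=0$ for all $t\in[0,1]$. Pointwise equality then follows from the embedding $H^{1+\tilde\sigma,\tilde\sigma}_\omega \hookrightarrow C(\mathbb{T}^2)$ in Lemma~\ref{anisotropicproperties}, valid since $\tilde\sigma$ can be taken $>1/2$ (as $\sigma>7/8$).

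The subtlety, and the step I expect to be the main obstacle, is justifying the integration by parts: a priori $u$ and $v$ are only known to lie in $C([0,1],H_\omega^{1+\tilde\sigma,\tilde\sigma})$ with $\tilde\sigma<\sigma<1$, so $w$ itself is not smooth enough for the formal computation $\int w\cdot w_t = \frac12\frac{d}{dt}\|w\|_{L^2}^2$ and the pairing $\int w H w_{xx}$ to be manipulated directly. This is precisely where the hypothesis \emph{limit of smooth solutions} is used. The plan is to exploit that $u = \lim u^{(n)}$, $v=\lim v^{(n)}$ for smooth solutions $u^{(n)},v^{(n)}$ converging in the relevant topology (and, by the construction in this section and the a-priori bound in Theorem~\ref{aprioriformal}, with $\|\partial_x u^{(n)}\|_{L^4L^\infty}$, $\|\partial_x v^{(n)}\|_{L^4L^\infty}$ uniformly bounded). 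Running the Gronwall estimate for the smooth differences $w^{(n,m)} = u^{(n)} - v^{(m)}$, where the integration by parts is fully rigorous, yields
\begin{equation*}
\|(u^{(n)}-v^{(m)})(t)\|_{L^2} \leq \|(u_0^{(n)} - v_0^{(m)})\|_{L^2}\exp\Big(c\int_0^1 \|\partial_x u^{(n)}(s)\|_{L^\infty} + \|\partial_x v^{(m)}(s)\|_{L^\infty}\,ds\Big),
\end{equation*}
and then passing to the limit $n,m\to\infty$, using $u_0^{(n)}, v_0^{(m)} \to u_0$ in $L^2_\omega$ and the uniform exponential factor, gives $\|u(t)-v(t)\|_{L^2}=0$. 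One must check that the approximating sequences can be taken with the same initial data in the limit (which is automatic, since both converge to $u_0$), and that the uniform bound on $\|\partial_x u^{(n)}\|_{L^4L^\infty}$ indeed holds — this is guaranteed by applying Theorem~\ref{aprioriformal} to the smooth solutions $u^{(n)}$ once $\|u_0\|_Y \leq \rho_0$ is small enough, together with the stability of this bound under the regularization.

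As a cleaner alternative that avoids re-deriving the smooth estimates, one can observe that the distributional formulation already licenses testing against $w$ itself after a mollification in the spatial variable: convolving the equation for $w$ with a Littlewood-Paley-type smoothing $P_q$ (projection to $|\omega\cdot k|,|\omega^\perp\cdot k|\le q$, see \eqref{generalprojection}), pairing $P_q w$ with $\partial_t P_q w$, and using that $P_q w \in C([0,1],C^\infty(\mathbb{T}^2))$ makes every manipulation legitimate; the commutator $[P_q,\cdot]$ errors arising from $P_q\partial_x(w(u+v))$ are controlled by a Kato--Ponce-type estimate (Lemma~\ref{KatoPonceVegaQuasih}) and vanish as $q\to\infty$ because $w, u, v$ lie in $H^{1+\tilde\sigma,\tilde\sigma}_\omega$ with $\tilde\sigma>1/2$ and $\partial_x u,\partial_x v \in L^\infty$. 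Either route reduces the statement to the same Gronwall inequality, and since $w(0)=0$ the conclusion is immediate. I would present the limit-of-smooth-solutions route, as it is the shortest given everything already set up in this section, and remark that the mollification argument shows uniqueness in fact holds in the full class $C([0,1],H^{1+\tilde\sigma,\tilde\sigma}_\omega)\cap L^4([0,1],V^{1,\infty}_\omega)$ without the limit hypothesis.
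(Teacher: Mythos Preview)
Your proposal is correct and follows essentially the same route as the paper: run the $L^2$ Gronwall estimate on the smooth approximating sequences $u^{(n)}-v^{(m)}$, use the uniform bound on $\|\partial_x u^{(n)}\|_{L^4L^\infty}+\|\partial_x v^{(m)}\|_{L^4L^\infty}$ to control the exponential factor, and pass to the limit. One small imprecision: for the \emph{arbitrary} second approximating sequence $v^{(m)}$ you cannot invoke Theorem~\ref{aprioriformal} (nothing guarantees $\|v^{(m)}(0)\|_Y\le\rho_0$); the uniform bound comes instead from the assumed convergence $v^{(m)}\to v$ in $L^4([0,1],V^{1,\infty}_\omega)$, which is exactly how the paper argues.
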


\begin{proof}
Suppose that $\tilde{u}$ is the limit in the sense of $C([0,1],H_{\omega}^{1+\tilde{\sigma},\tilde{\sigma}}) \cap L^4([0,1],V^{1,\infty}_{\omega})$ of smooth solutions $\tilde{u}^N$ with initial data $\tilde{u}^N(0) \rightarrow u(0)$ in $H_{\omega}^{1+\tilde{\sigma}, \tilde{\sigma}}$ (in fact, continuity $\tilde{u}^N:[0,1] \rightarrow X$ suffices). Let $u \in C([0,1],H^{1+\tilde{\sigma},\tilde{\sigma}}_{\omega}) \cap L^4([0,1],V_{\omega}^{1,\infty})$ be the solution as already constructed, and let $u^N$ be the approximating sequence as in the construction. The same Gronwall argument as before yields
$$\|(\tilde{u}^N - u^N)(t)\|_{L^2} \lesssim \|\tilde{u}^N(0)-u^N(0)\|_{L^2} \cdot \exp \bigg( \frac{1}{2} \int_0^t \|\partial_x \tilde{u}^N(s)\|_{L^{\infty}} + \|\partial_x u^N(s)\|_{L^{\infty}} ds \bigg).$$
The factor involving the exponential is uniformly bounded in $N$ and $t \in [0,1]$ due to Theorem \ref{aprioriformal} and convergence of $\partial_x\tilde{u}^N$ in $L^4_t([0,1],L^{\infty}(\mathbb{T}^2)) \hookrightarrow L^1_t([0,1],L^{\infty}(\mathbb{T}^2))$. 
Interpolating with the a-priori estimate $\|u^N\|_{L^{\infty}([0,1],H^{1+\tilde{\sigma}, \tilde{\sigma}}_{\omega})} \lesssim \|u_0^N\|_{H_{\omega}^{1+\tilde{\sigma},\tilde{\sigma}}}$ (see (\ref{secondaprioribound})) gives $$\|\tilde{u}^N - u^N\|_{C([0,1],C(\mathbb{T}^2))}\lesssim\|\tilde{u}^N - u^N\|_{C([0,1],H_{\omega}^{1+\tilde{\tilde{\sigma}},\tilde{\tilde{\sigma}}})} \lesssim \|\tilde{u}^N(0)-u^N(0)\|_{H_{\omega}^{\tilde{\tilde{\sigma}},1+\tilde{\tilde{\sigma}}}}.$$ So, taking $N \rightarrow \infty$ yields $\tilde{u} = u$ pointwise. 
\end{proof}

\begin{lemma}\label{Lipschitzmaintheorem}
    The data-to-solution map is Lipschitz continuous from $B_Y(0,\rho_0)$ equipped with the $L^2$-norm to $C([0,1],L^2)$. 
    The data-to-solution map is also continuous from $B_Y(0,\rho_0)$ equipped with the $Y$-norm to $C([0,1],H_{\omega}^{1+\tilde{\sigma}, \tilde{\sigma}})$.
\end{lemma}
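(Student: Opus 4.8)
\textbf{Proof plan for Lemma \ref{Lipschitzmaintheorem}.} The plan is to run, essentially, the same Gronwall argument used in the proof of Lemma \ref{Cauchynessfinalapproximation}, but now comparing solutions emanating from \emph{different} initial data $u_0, v_0 \in B_Y(0,\rho_0)$ rather than different regularizations of the same data. First I would recall that by the construction already carried out, for initial data $u_0 \in B_Y(0,\rho_0)$ the solution $u$ is obtained as a limit $u = C([0,1],H^{1+\tilde\sigma,\tilde\sigma}_\omega)\text{-}\lim_N u^N$ where $u^N$ solves Benjamin--Ono with data $u_0^N = P_{N,N}u_0$, and similarly $v = \lim_N v^N$ with $v^N$ emanating from $v_0^N = P_{N,N}v_0$; crucially all these approximate solutions live on the \emph{uniform} interval $[0,1]$ and satisfy the a-priori bound $\|\partial_x u^N\|_{L^1([0,1],L^\infty)} + \|\partial_x v^N\|_{L^1([0,1],L^\infty)} \lesssim 1$ from Theorem \ref{aprioriformal} (provided $\rho_0$ is small enough that $\|u_0^N\|_Y, \|v_0^N\|_Y \leq \rho_0$, which holds since frequency truncation does not increase the $Y$-norm).

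For the Lipschitz statement, set $w^N := u^N - v^N$. As in the proof of Lemma \ref{Cauchynessfinalapproximation}, $w^N$ solves $w^N_t + Hw^N_{xx} = \partial_x(w^N(u^N+v^N))$, and integration by parts together with anti-selfadjointness of $H\partial_{xx}$ gives
\begin{equation*}
\frac{d}{dt}\|w^N(t)\|_{L^2}^2 \leq \tfrac{1}{2}\|w^N(t)\|_{L^2}^2\big[\|\partial_x u^N(t)\|_{L^\infty} + \|\partial_x v^N(t)\|_{L^\infty}\big].
\end{equation*}
Gronwall's inequality \eqref{Gronwall} then yields $\|w^N(t)\|_{L^2}^2 \leq \|u_0^N - v_0^N\|_{L^2}^2 \exp\big(c\int_0^1 \|\partial_x u^N\|_{L^\infty} + \|\partial_x v^N\|_{L^\infty}\,ds\big)$, and by the a-priori bound the exponential factor is bounded by an absolute constant uniformly in $N$. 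Since $\|u_0^N - v_0^N\|_{L^2} = \|P_{N,N}(u_0 - v_0)\|_{L^2} \leq \|u_0 - v_0\|_{L^2}$, we get $\|u^N - v^N\|_{C([0,1],L^2)} \lesssim \|u_0 - v_0\|_{L^2}$ with an absolute constant; letting $N\to\infty$ (using that $u^N \to u$, $v^N \to v$ in $C([0,1],L^2)$, which follows from convergence in $C([0,1],H^{1+\tilde\sigma,\tilde\sigma}_\omega)$) gives $\|u - v\|_{C([0,1],L^2)} \lesssim \|u_0 - v_0\|_{L^2}$, the desired Lipschitz bound.

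For continuity from the $Y$-norm to $C([0,1],H^{1+\tilde\sigma,\tilde\sigma}_\omega)$, I would combine the $L^2$-Lipschitz bound with the uniform high-regularity a-priori estimate \eqref{secondaprioribound}, $\|u^N\|_{L^\infty([0,1],H^{1+\sigma,\sigma}_\omega)} \lesssim \|u_0\|_{Y}$, and its analogue for $v^N$, via interpolation (Lemma \ref{anisotropicproperties}). Concretely, given $u_0^{(k)} \to u_0$ in $Y$, denote the corresponding solutions $u^{(k)}$ and $u$. The sequence $\{u_0^{(k)}\} \cup \{u_0\}$ is precompact in $Y$, hence bounded, so all the $u^{(k)}$ satisfy a common bound in $L^\infty([0,1],H^{1+\sigma,\sigma}_\omega)$; interpolating the estimate $\|u^{(k)} - u\|_{C([0,1],L^2)} \lesssim \|u_0^{(k)} - u_0\|_{L^2} \leq \|u_0^{(k)} - u_0\|_Y \to 0$ against this uniform bound (using $\|f\|_{H^{1+\tilde\sigma,\tilde\sigma}_\omega} \lesssim \|f\|_{L^2}^{1-\theta}\|f\|_{H^{1+\sigma,\sigma}_\omega}^\theta$ for an appropriate $\theta = \theta(\tilde\sigma,\sigma) \in (0,1)$ given by \eqref{interpolationestimate}) shows $\|u^{(k)} - u\|_{C([0,1],H^{1+\tilde\sigma,\tilde\sigma}_\omega)} \to 0$. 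The main (minor) subtlety to address is the justification of the Gronwall computation for the difference of two $C([0,1],X)$-solutions that are only limits of smooth solutions: one either argues directly at the level of the smooth approximants $u^N, v^N$ (which are trigonometric polynomials, so all manipulations are classical) and passes to the limit at the end, as done above, or invokes the regularity considerations of Section \ref{sectionenergyestimatepartI}; working with the approximants is cleanest and is the route I would take.
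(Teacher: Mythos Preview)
Your proposal is correct and follows essentially the same approach as the paper: a Gronwall argument on $\|u^N - v^N\|_{L^2}$ at the level of the regularized solutions using the uniform a-priori bound from Theorem \ref{aprioriformal}, then passage to the limit $N\to\infty$, and finally interpolation against \eqref{secondaprioribound} for the $Y$-topology statement. Your write-up is in fact somewhat more detailed than the paper's (you make explicit that $\|P_{N,N}(u_0 - v_0)\|_{L^2} \le \|u_0 - v_0\|_{L^2}$ and spell out the sequential continuity argument), but the underlying argument is identical.
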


\begin{proof}
Let $u_0, \tilde{u}_0 \in B_Y(0,\rho_0)$, and denote by $u_N, \tilde{u}_N:[0,1] \rightarrow X$ the emanating solutions for regularized data $u_0^N, \tilde{u}_0^N$. As before, a Gronwald argument gives 
$$\|(u^N - \tilde{u}^N)(t)\|_{L^2} \lesssim \|u_0^N - \tilde{u}_0^N\|_{L^2} \cdot \exp \bigg( \frac{1}{2} \int_0^t \|\partial_xu^N(s)\|_{L^{\infty}}+\|\partial_x \tilde{u}^N(s)\|_{L^{\infty}}ds \bigg),$$
and the factor involving the exponential is uniformly bounded in $N$ and $t \in [0,1]$ due to Theorem \ref{aprioriformal}. 
Taking $N \rightarrow \infty$ yields Lipschitzness
$$\|u - \tilde{u}\|_{C([0,1],L^2)} \lesssim \|u_0 - \tilde{u}_0\|_{L^2}.$$ 
Interpolating with \eqref{secondaprioribound} yields the second statement.
\end{proof}
\noindent Lemmas \ref{Cauchynessfinalapproximation}-\ref{Lipschitzmaintheorem} together complete the proof of Theorem \ref{maintheorem}. 
\end{proof}

\section{Diophantine conditions}\label{Diophantinesection}
In this short section we discuss how imposing Diophantine conditions on the frequency vector $\omega=(\omega_1,\omega_2)$ allows us to embed ordinary Sobolev spaces $H^{s}_{\omega}$ of quasi-periodic functions into the space $Y^{\sigma}_{\omega}$. As a corollary, we obtain Theorem \ref{maincorollary}. We discuss the notion of badly approximable numbers and Roth-Liouville irrationality measure, in so far as they are relevant to obtaining embeddings. 
For a more general discussion, we refer the reader to \cite{MR2953186}. \\

 Let $\alpha \in \mathbb{R}$ be a given real number, and $\nu \geq 1$. If there exist infinitely many pairs of coprime numbers $(p,q), q>0$ such that \begin{equation}\label{RothLiouvillecondition}\Big|\alpha - \frac{p}{q}\Big| < \frac{1}{q^{\nu}},\end{equation} then the sequence $\frac{p}{q}$ as $q \rightarrow \infty$ is in some sense a good approximation of $\alpha$. The larger $\nu$, the better the approximation. The Liouville-Roth irrationality measure $\mu(\alpha)$ is defined to be the supremum of all $\nu$ for which such a sequence exists. Consequently, given any $\varepsilon>0$, we have that \begin{equation}\label{consequenceRothLiouville}\Big|\alpha - \frac{m}{n}\Big| \gtrsim_{\varepsilon} \frac{1}{n^{\mu(\alpha)+\varepsilon}} \quad \text{for all rational numbers } \ \frac{m}{n}\end{equation}

Now let $\omega=(\omega_1,\omega_2) \in \mathbb{R}^2$ be a frequency vector and assume $\alpha:=\omega_1/\omega_2$ has Roth-Liouville irrationality measure $\mu(\alpha)$. Then
\begin{equation}\label{consequence RothLiouville2}
    |\omega \cdot n|^{-1} \lesssim_{\varepsilon} \langle |n|\rangle^{\mu(\alpha)-1+\varepsilon}
\end{equation}
due to
$$|\omega \cdot n|=\Big|\alpha+\frac{n_2}{n_1}\Big| |\omega_2 n_1| \gtrsim |n_1|^{-\mu(\alpha)-\varepsilon} |n_1| \gtrsim \langle |n| \rangle^{-\mu(\alpha)+1-\varepsilon},$$
and thus
\begin{equation}\label{nablaboundRothLiouville}
    \|\partial_x^{-1}u\|_{L^2} \lesssim \|\langle \nabla \rangle^{\mu(\alpha)-1+\varepsilon}u\|_{L^2}.
\end{equation}
This gives the following embedding.

\begin{lemma}\label{embeddingirrationalitymeasure}
    Suppose $\omega=(\omega_1,\omega_2) \in \mathbb{R}^2$ is a frequency vector and the ratio of its entries $\alpha:=\omega_1/\omega_2$ is irrational and has Roth-Liouville irrationality measure $\mu(\alpha)$. Then 
    \begin{equation*}
        H^{s}_{\omega} \hookrightarrow Y^{\sigma}_{\omega}, \quad \text{whenever} \ 0 \leq \sigma<s-\mu(\alpha)+1.
    \end{equation*}
\end{lemma}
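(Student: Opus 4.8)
\textbf{Proof plan for Lemma \ref{embeddingirrationalitymeasure}.}
The strategy is to reduce the embedding $H^s_\omega \hookrightarrow Y^\sigma_\omega$ to a pointwise comparison of Fourier multiplier symbols on the lattice $\mathbb{Z}^2$, using Plancherel's theorem on $\mathbb{T}^2$. Recall from Definition \ref{Yspace} that, for a function $g$ on $\mathbb{T}^2$,
\begin{equation*}
\|g\|_{Y^\sigma_\omega(\mathbb{T}^2)} = \|\langle \nabla \rangle^\sigma \langle \partial_x \rangle g\|_{L^2(\mathbb{T}^2)} + \|\langle \nabla \rangle^\sigma \partial_x^{-1} g\|_{L^2(\mathbb{T}^2)},
\end{equation*}
so it suffices to show that each of the two terms is controlled by $\|g\|_{H^s(\mathbb{T}^2)} = \|\langle \nabla \rangle^s g\|_{L^2(\mathbb{T}^2)}$, and then transfer the estimate to the quasi-periodic spaces via the isometric identifications in Definitions \ref{Yspace} and \ref{anisotropicSobolev}. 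Since $C^\infty(\mathbb{T}^2)$ is dense in both spaces by definition, it is enough to prove the bound for trigonometric polynomials.

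The first step handles the term $\|\langle \nabla \rangle^\sigma \langle \partial_x \rangle g\|_{L^2}$. On the Fourier side this multiplier has symbol $\langle |n|\rangle^\sigma \langle \omega \cdot n \rangle$; since $|\omega \cdot n| \lesssim |n|$, we have $\langle \omega \cdot n \rangle \lesssim \langle |n| \rangle$, so the symbol is bounded by $\langle |n| \rangle^{\sigma+1} \leq \langle |n| \rangle^s$ as soon as $\sigma + 1 \leq s$, which holds because $\sigma < s - \mu(\alpha) + 1 \leq s - 1$ using $\mu(\alpha) \geq 2$ (every irrational has irrationality measure at least $2$; cf. Remark \ref{badlyapproximable}). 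Plancherel then gives $\|\langle \nabla \rangle^\sigma \langle \partial_x \rangle g\|_{L^2} \lesssim \|g\|_{H^s}$. The second step handles $\|\langle \nabla \rangle^\sigma \partial_x^{-1} g\|_{L^2}$: here I would invoke estimate \eqref{nablaboundRothLiouville}, namely $\|\partial_x^{-1} u\|_{L^2} \lesssim \|\langle \nabla \rangle^{\mu(\alpha) - 1 + \varepsilon} u\|_{L^2}$, applied with $u = \langle \nabla \rangle^\sigma g$ (note $\langle \nabla \rangle^\sigma$ commutes with $\partial_x^{-1}$ since both are Fourier multipliers, and $\langle \nabla \rangle^\sigma$ preserves the property of having vanishing mean). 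This yields $\|\langle \nabla \rangle^\sigma \partial_x^{-1} g\|_{L^2} \lesssim \|\langle \nabla \rangle^{\sigma + \mu(\alpha) - 1 + \varepsilon} g\|_{L^2}$, which is $\lesssim \|g\|_{H^s}$ provided $\sigma + \mu(\alpha) - 1 + \varepsilon \leq s$, i.e. $\sigma \leq s - \mu(\alpha) + 1 - \varepsilon$. Since $\sigma < s - \mu(\alpha) + 1$ by hypothesis, such an $\varepsilon > 0$ can be chosen, and the implicit constant depends on that $\varepsilon$ but this is harmless.

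Combining the two steps gives $\|g\|_{Y^\sigma_\omega(\mathbb{T}^2)} \lesssim \|g\|_{H^s(\mathbb{T}^2)}$ for all trigonometric polynomials $g$, hence for all $g \in H^s(\mathbb{T}^2)$ by density, and finally $\|f\|_{Y^\sigma_\omega} = \|g\|_{Y^\sigma_\omega(\mathbb{T}^2)} \lesssim \|g\|_{H^s(\mathbb{T}^2)} = \|f\|_{H^s_\omega}$ for the associated quasi-periodic functions $f(x) = g(\omega x)$, which is precisely the claimed embedding $H^s_\omega \hookrightarrow Y^\sigma_\omega$. The only genuinely substantive ingredient is \eqref{nablaboundRothLiouville}, which in turn rests on the arithmetic lower bound \eqref{consequence RothLiouville2} for $|\omega \cdot n|$ coming from the definition of the irrationality measure; this has already been carried out in the excerpt, so there is no real obstacle here — the proof is essentially a bookkeeping exercise in symbol comparison. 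The one point to be slightly careful about is the bookkeeping with $\varepsilon$ and making sure the strict inequality $\sigma < s - \mu(\alpha) + 1$ leaves room to absorb the loss, which it does.
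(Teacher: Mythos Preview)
Your proposal is correct and follows the same approach as the paper: the paper's ``proof'' is simply the sentence ``This gives the following embedding'' after establishing \eqref{nablaboundRothLiouville}, and you have spelled out exactly the symbol comparison that this sentence implicitly invokes. The only addition you make is the explicit treatment of the first term $\|\langle \nabla\rangle^\sigma\langle\partial_x\rangle g\|_{L^2}$ via $\langle\omega\cdot n\rangle \lesssim \langle |n|\rangle$ and $\mu(\alpha)\geq 2$, which the paper leaves to the reader; this is entirely in keeping with the paper's intent.
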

Combining this in the case $\mu(\alpha)=2$ with Theorem \ref{maintheorem}, yields Theorem \ref{maincorollary}. \\

 A few remarks are in order: A rational number has irrationality measure $1$. Every irrational number has irrationality measure at least $2$, as a consequence of Dirichlet's approximation theorem. Almost all real numbers have irrationality measure exactly $2$ \cite[Appendix E]{MR2953186}! Numbers with irrationality measure $\infty$ are called Liouville numbers. \\

\begin{remark}\label{badlyapproximable}
We comment on the notion of badly approximable numbers in relation to above discussion. An irrational number $\alpha$ is called \textit{badly approximable} iff there is some bound $M$ on all the $a_i$'s appearing in its continued fraction expansion  $\alpha = [a_0;a_1, a_2, . . . ]$. For a discussion of continued fractions see for example \cite[Chapter 3]{MR2723325}. 
The terminology stems from the following characterization.

\begin{proposition}
\cite[Proposition 3.10]{MR2723325}:
    An irrational number $\alpha$ is badly approximable if and only if \begin{equation}\label{badapproximabilitycondition}\bigg|\alpha - \frac{p}{q} \bigg| \gtrsim \frac{1}{q^2} \quad \text{for all rational numbers} \ \frac{p}{q}.\end{equation}
\end{proposition}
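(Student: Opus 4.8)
The plan is to prove both implications of the equivalence between an irrational $\alpha$ being badly approximable (bounded partial quotients $a_i$ in its continued fraction expansion) and the Diophantine lower bound \eqref{badapproximabilitycondition}. The key tool throughout is the theory of continued fraction convergents $p_k/q_k$ of $\alpha$, which satisfy the recurrences $p_k = a_k p_{k-1} + p_{k-2}$, $q_k = a_k q_{k-1} + q_{k-2}$, the determinant identity $p_k q_{k-1} - p_{k-1} q_k = (-1)^{k-1}$, and the two-sided approximation estimate
\begin{equation*}
\frac{1}{q_k(q_{k+1}+q_k)} \leq \left| \alpha - \frac{p_k}{q_k} \right| \leq \frac{1}{q_k q_{k+1}}.
\end{equation*}
Combined with $q_{k+1} = a_{k+1} q_k + q_{k-1}$, these give $\left| \alpha - p_k/q_k \right| \sim 1/(a_{k+1} q_k^2)$ up to absolute constants. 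I would also recall the classical fact that the convergents are \emph{best approximations}: for any $p/q$ with $0 < q < q_{k+1}$ one has $|q_k \alpha - p_k| \leq |q\alpha - p|$, so in particular the convergents realize the smallest values of $q|\alpha - p/q|$.

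For the direction ``badly approximable $\Rightarrow$ \eqref{badapproximabilitycondition}'': suppose all $a_i \leq M$. Given an arbitrary rational $p/q$ in lowest terms, choose $k$ so that $q_k \leq q < q_{k+1}$. By the best-approximation property, $|q\alpha - p| \geq |q_k \alpha - p_k| \gtrsim 1/q_{k+1} \gtrsim 1/(M q_k) \geq 1/(Mq)$ — using $q_{k+1} = a_{k+1} q_k + q_{k-1} \leq (M+1)q_k$ — which rearranges to $|\alpha - p/q| \gtrsim_M 1/q^2$. (One must also dispatch the trivial case where $p/q$ is far from $\alpha$, say $|\alpha - p/q| \geq 1$, separately; this is immediate.) For the converse, I argue by contraposition: if $\alpha$ is not badly approximable, the partial quotients are unbounded, so there is a subsequence $a_{k+1} \to \infty$; then for those indices $|\alpha - p_k/q_k| \leq 1/(q_k q_{k+1}) \leq 1/(a_{k+1} q_k^2) = o(1) \cdot q_k^{-2}$, so no uniform constant can work in \eqref{badapproximabilitycondition}, i.e. the bound fails.

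The main obstacle — really the only non-formulaic point — is marshalling the best-approximation inequality cleanly and making sure the implicit constants in \eqref{badapproximabilitycondition} are genuinely absolute versus $M$-dependent in the right places (the proposition as stated in \cite{MR2723325} has an absolute $\gtrsim$, so the constant is allowed to depend on $\alpha$, which is fine since it depends only through $M$). Since the statement is quoted directly from \cite[Proposition 3.10]{MR2723325} and is a standard result in the metric theory of continued fractions, in the actual paper I would simply cite it rather than reproduce the argument; the sketch above is what the citation stands for. No step here interacts with the Benjamin--Ono machinery — this proposition is used only in Remark \ref{badlyapproximable} to note that badly approximable numbers have irrationality measure $2$ and hence fall under Theorem \ref{maincorollary}.
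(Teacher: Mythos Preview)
The paper does not prove this proposition at all; it is stated with a citation to \cite[Proposition 3.10]{MR2723325} and used as a black box. Your sketch is the standard continued-fraction argument and is correct, and you yourself already note that in context one would simply cite the reference rather than reproduce the proof --- that is exactly what the paper does.
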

\noindent A badly approximable number has Roth-Liouville irrationality measure $2$. Indeed, if (\ref{RothLiouvillecondition}) were to hold for infinitely many coprime $(p,q),q>0$ for some $\nu>2$, then (\ref{badapproximabilitycondition}) would fail for any implicit constant. While the badly approximable numbers form a set of measure zero \cite[Section 3.3]{MR2723325}, almost all real numbers have Roth-Liouville irrationality measure exactly $2$ (as mentioned before). 

But based on \eqref{badapproximabilitycondition} one can say slightly more than based on just $\mu(\alpha)=2$: the estimates \eqref{consequenceRothLiouville}, \eqref{consequence RothLiouville2} and \eqref{nablaboundRothLiouville} hold for $\mu(\alpha)=2$ even \textit{without} the $\varepsilon$-derivative loss, and we have the embedding $$H^{1+\sigma}_{\omega} \hookrightarrow Y^{\sigma}_{\omega}.$$
This is a slight improvement over the statement of Lemma \ref{embeddingirrationalitymeasure} with $\mu(\alpha)=2$; The strict inequality $\sigma<s-\mu(\alpha)+1$ in Lemma \ref{embeddingirrationalitymeasure} is necessitated by the $\varepsilon$-derivative loss in (\ref{nablaboundRothLiouville}). 
However, as the threshold $\sigma>7/8$ in our main theorem (Theorem \ref{maintheorem}) is a strict inequality, this does not yield any improvement with regards to Theorem \ref{maincorollary}. 
\end{remark}

\printbibliography 

\end{document}